\documentclass[11pt,reqno]{amsart}
\usepackage[T1]{fontenc}
\usepackage[utf8]{inputenc}
\usepackage{lmodern,amsmath,amssymb,amsthm,mathtools,mathrsfs}
\usepackage[margin=1in]{geometry}
\usepackage{microtype}
\usepackage{graphicx,tikz,placeins}
\graphicspath{{Figures/}}
\usepackage[hidelinks,bookmarksnumbered]{hyperref}
\allowdisplaybreaks[2]
\numberwithin{equation}{section}
\newtheorem{theorem}{Theorem}[section]
\newtheorem{proposition}[theorem]{Proposition}
\newtheorem{lemma}[theorem]{Lemma}
\newtheorem{corollary}[theorem]{Corollary}
\theoremstyle{definition}
\newtheorem{example}[theorem]{Example}
\theoremstyle{remark}
\newtheorem{remark}[theorem]{Remark}
\newcommand{\C}{\mathbb C}
\newcommand{\R}{\mathbb R}
\newcommand{\N}{\mathbb N}
\newcommand{\Sh}{\mathcal K}
\newcommand{\AllSh}{\mathcal S}
\newcommand{\dd}{\,\mathrm d}
\newcommand{\eps}{\varepsilon}
\newcommand{\Ree}{\operatorname{Re}}
\newcommand{\supp}{\operatorname{supp}}
\newcommand{\conv}{\operatorname{conv}}
\newcommand{\dist}{\operatorname{dist}}
\newcommand{\divz}{\operatorname{div}_0}
\newcommand{\Area}{\operatorname{Area}}
\newcommand{\Hull}{\mathsf H}
\newcommand{\ShadowPot}{\mathcal V}
\newcommand{\centroid}{\mathfrak c}
\newcommand{\Blev}{\mathcal B}

\newcommand{\rootmu}{\mu^{\mathrm{root}}}
\newcommand{\rootC}{\mathscr C}
\newcommand{\LogDer}{\Lambda}
\DeclareMathOperator{\acc}{acc}

\hypersetup{ pdftitle={The full shadow of a polynomial I. The limiting core and the transient set}, pdfauthor={Christian Hagg and Boris Shapiro}, pdfsubject={Polynomial derivative zeros and their limiting supports}, pdfkeywords={Gauss--Lucas theorem, derivative zeros, Hausdorff convergence, logarithmic potentials}}
\title[The full shadow of a polynomial I] {The full shadow of a polynomial I. The limiting core and the transient set}
\author{Christian H\"agg}
\address{Department of Mathematics, Stockholm University, SE--106 91 Stockholm, Sweden}
\email{hagg@math.su.se}
\author{Boris Shapiro}
\address{Department of Mathematics, Stockholm University, SE--106 91 Stockholm, Sweden}
\email{shapiro@math.su.se}
\date{September 15, 2026}
\subjclass[2020]{Primary 30C15, 31A35; Secondary 26C10}
\keywords{Gauss--Lucas theorem, derivative zeros, Hausdorff convergence, logarithmic potentials}
\begin{document}
\begin{abstract}
The full shadow of a complex polynomial \(P\) of degree \(d\ge2\) is the closure of all zeros of \((P^n)^{(m)}\), \(n\ge1\), \(0\le m<dn\). We prove that it consists of a compact connected limiting core containing all roots and critical points of \(P\), together with isolated transient zeros, each occurring at only finitely many powers. The core is the Hausdorff limit of the zero sets pooled over all derivative orders as \(n\to\infty\) and the support of their limiting normalized root-counting measure. For \(m/n\to\alpha\in(0,d)\), we establish weak convergence of normalized root-counting measures and Hausdorff convergence of zero sets to the limiting supports, both stable under coefficient perturbations. We exhibit quartics with infinitely many transients. A bounded operator constructed from polynomial differentiators realizes the full shadow, core, and transients as its spectrum, Fredholm essential spectrum, and discrete spectrum, respectively.
\end{abstract}
\maketitle
\setcounter{tocdepth}{2}
\begingroup\small\tableofcontents\endgroup

\section{Introduction}\label{sec:intro}
The Gauss--Lucas theorem places every zero of each nonzero derivative \((P^n)^{(m)}\) of a positive integer power \(P^n\) of a polynomial \(P\) in the convex hull \(\conv Z(P)\) of its roots; see, for example, \cite[Theorem~(6,1), p.~22, and Exercise~2, p.~24]{Marden}. (Unless stated otherwise, we assume that \(P\) is a monic polynomial of degree \(d\ge2\).)

This paper contains the initial results of the study of the smallest closed set \(\AllSh_P\) containing all such zeros. (For further information see \cite{Shadow 2}.) The set \(\AllSh_P\) carries a natural limiting root-counting measure whose closed support can be strictly smaller than \(\AllSh_P\). The resulting decomposition is given by
\begin{equation}\label{eq:split}
 \AllSh_P=\Sh_P\cup T_P,
\end{equation}
where \(\Sh_P\) is the closed support of this limiting measure and every point of \(T_P\) is isolated in \(\AllSh_P\). We will show that the set \(T_P\) is locally finite in \(\C\setminus\Sh_P\) and may accumulate only on \(\Sh_P\).

Figure~\ref{fig:shadows} shows, for a cubic and a quintic polynomial, all zeros of all derivatives of a certain high power \(P^n\). By Theorem~\ref{thm:intro-sets} below, such complete zero sets approximate \(\Sh_P\) as the power \(n\) grows.

\begin{figure}[hb]
\centering
\includegraphics[height=0.37\textwidth]{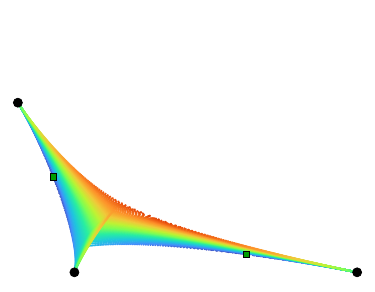}\hfill
\includegraphics[height=0.37\textwidth]{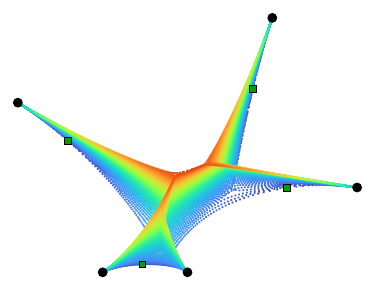}
\caption{All zeros of all derivatives \((P^n)^{(m)}\), \(0<m<n\deg P\). Left: \(P(z)=(z-1)(z-6)(z-3i)\) and \(n=80\), about \(19\,000\) zeros. Right: \(P(z)=z(z-1)(z-3-i)(z-2-3i)(z+1-2i)\) and \(n=50\), about \(25\,000\) zeros. A zero of \((P^n)^{(m)}\) is colored according to the ratio \(m/n\in(0,\deg P)\), from blue for small ratios to red for ratios close to \(\deg P\). Black disks: the roots of \(P\); green squares: the critical points of \(P\).}
\label{fig:shadows}
\end{figure}

\subsection{The full shadow of a polynomial}
Obviously any nonzero polynomial may be normalized to be monic without changing the objects studied below. For a nonzero polynomial \(H\), let \(Z(H)\) denote its set of distinct zeros and let \(\operatorname{ord}_a(H)\) be the multiplicity of \(H\) at \(a\). With \(\delta_a\) denoting unit mass at \(a\), we identify the positive zero divisor of \(H\) with the associated \emph{atomic measure}
\[
 \divz H:=\sum_{a\in Z(H)}\operatorname{ord}_a(H)\,\delta_a.
\]
For nonconstant \(H\), the measure
\[
\mu_H^{\mathrm{root}}=\frac{1}{\deg H}\divz H
\]
is called the \emph{normalized root-counting measure} of \(H\).

\medskip
Write \(\N=\{1,2,\ldots\}\) and set
\begin{equation}\label{eq:descendants}
 R_{P;m,n}(z):=(P^n)^{(m)}(z),\qquad n\in\N,\quad 0\le m<dn.
\end{equation}
Following \cite{BHS}, we call these polynomials the \emph{Rodrigues descendants} of \(P\). We include in their list derivative order zero: \(R_{P;0,n}=P^n\), whose distinct zero set is \(Z(P)\). The first excluded order is \(m=dn\), which gives a nonzero constant and hence no zeros; larger orders give the zero polynomial and are also excluded. Thus the ratio \(m/n\) ranges over \([0,d)\).

For a positive integer \(n\), the \(n\)-th \emph{complete zero set} is defined by
\begin{equation}\label{eq:pooled-definitions}
 \mathcal Z_{P,n}:=\bigcup_{m=0}^{dn-1}Z(R_{P;m,n}).
\end{equation}
A \emph{finite-level zero} of \(P\) is a point of some \(\mathcal Z_{P,n}\). The \emph{full shadow of \(P\)} is
\begin{equation}\label{eq:all-shadow}
 \AllSh_P:=\overline{\bigcup_{n\ge1}\mathcal Z_{P,n}}.
\end{equation}
Note that the union of all zeros before taking the closure is countable. Its finite \(M\)-th truncation is
\[
 \mathcal F_{P,M}:=\bigcup_{1\le n\le M}\mathcal Z_{P,n}.
\]
Repeated application of Gauss--Lucas gives \(\mathcal Z_{P,n}\subseteq\Hull_P:=\conv Z(P)\); in particular, \(\AllSh_P\) is compact.

(For completeness, for \(P\) being a nonzero constant we set its full shadow to be empty, and for a polynomial \(P\) with one distinct root its full shadow coincides with this root).

\medskip
The degree of the Rodrigues descendant \(R_{P;m,n}\) equals \(dn-m\). Its normalized root-counting measure is then given by
\begin{equation}\label{eq:zero-measure}
 \mu_{P;m,n}^{\mathrm{root}}:=\frac{1}{dn-m}\divz R_{P;m,n}.
\end{equation}
Collecting together derivatives of all orders, with multiplicities, gives the root-counting measure
\begin{equation}\label{eq:pooled-measure}
 \begin{gathered}
 \overline\mu_{P,n}^{\mathrm{root}}:=\frac1{\mathcal N_{d,n}}
       \sum_{m=0}^{dn-1}\divz R_{P;m,n},\\
 \mathcal N_{d,n}:=\sum_{m=0}^{dn-1}(dn-m)=\frac{dn(dn+1)}2.
 \end{gathered}
\end{equation}
Thus the set \(\mathcal Z_{P,n}\) records locations of roots, whereas the measure \(\overline\mu_{P,n}^{\mathrm{root}}\) also records their multiplicities.

\medskip
We write \(\Longrightarrow\) for weak convergence of finite measures and \(\supp\nu\) for the closed support of a measure \(\nu\). For nonempty compact sets, we use the Hausdorff distance
\[
 d_{\mathrm H}(A,B):=\max\left\{
  \sup_{a\in A}\dist(a,B),\ \sup_{b\in B}\dist(b,A)\right\}.
\]
Convergence of polynomials denoted by \(\to\) always means coefficient convergence in a fixed degree unless another convention is explicitly stated. We use \(\longrightarrow\) with its usual contextual meaning; when its arguments are compact sets, it denotes convergence in Hausdorff distance.

\medskip
Let us now formulate four main results of this paper.

\subsection{The limiting measures and the limiting core}
Our first result is a substantial strengthening of the main result of \cite{BHS}. In particular, it concerns both every zero of the respective Rodrigues descendants and their limiting distribution. For a compactly supported measure \(\mu\) on \(\C\), let \(\supp\mu\) denote its closed support.

\begin{theorem}[Fixed-ratio convergence]\label{thm:intro-fixed-ratio}
For every monic polynomial \(P\) of degree \(d\ge2\) and every \(\alpha\in(0,d)\), there is a probability measure \(\mu_{\alpha,P}\) supported in \(\Hull_P\) such that for any sequence \(\{m_n\}\) satisfying \(0\le m_n<dn\) with \(m_n/n\to\alpha\)

 \smallskip
\noindent
{\rm(i)}
\[
 \mu_{P;m_n,n}^{\mathrm{root}}\Longrightarrow\mu_{\alpha,P},\qquad
 Z(R_{P;m_n,n})\longrightarrow\supp\mu_{\alpha,P}
 \quad\text{in Hausdorff distance}.
\]

\smallskip
\noindent
{\rm(ii)} Moreover, if \(\{P_n\}\) is a sequence of monic polynomials of degree \(d\) converging coefficientwise to \(P\), i.e. \(P_n\to P\), then
\[
 \mu_{P_n;m_n,n}^{\mathrm{root}}\Longrightarrow\mu_{\alpha,P},\qquad
 Z(R_{P_n;m_n,n})\longrightarrow\supp\mu_{\alpha,P}
 \quad\text{in Hausdorff distance}.
\]
\medskip

\smallskip
\noindent
{\rm(iii)} The measure \(\mu_{\alpha,P}\) depends only on \((P,\alpha)\). It is jointly weakly continuous in these parameters, and its support is jointly Hausdorff-continuous.

\smallskip
\noindent
{\rm(iv)} At fixed degree, convergence of test-function integrals is uniform on compact coefficient families and compact subintervals of \((0,d)\); the corresponding support convergence is uniform in Hausdorff distance.
\end{theorem}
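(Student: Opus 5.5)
The plan is to work with the logarithmic potential and the asymptotics of \((P^n)^{(m)}\) given by a saddle-point representation. By Cauchy's formula,
\[
 R_{P;m,n}(z)=\frac{m!}{2\pi i}\oint\frac{P(\zeta)^n}{(\zeta-z)^{m+1}}\dd\zeta
 =\frac{m!}{2\pi i}\oint e^{n\Phi_{\alpha_n}(\zeta;z)}\frac{\dd\zeta}{\zeta-z},
\]
with \(\alpha_n=m_n/n\) and \(\Phi_\alpha(\zeta;z)=\log P(\zeta)-\alpha\log(\zeta-z)\). I would first prove that
\[
 \frac1n\log\Bigl|\frac{R_{P;m_n,n}(z)}{m_n!}\Bigr|\longrightarrow U_{\alpha,P}(z)
\]
in \(L^1_{\mathrm{loc}}\), and locally uniformly off a compact set. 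Here \(U_{\alpha,P}(z)\) is a ``max over relevant saddles'' of \(\Ree\Phi_\alpha(\zeta;z)\). The saddle equation is \(P'(\zeta)/P(\zeta)=\alpha/(\zeta-z)\), an algebraic equation of degree \(d\) in \(\zeta\), so \(U_{\alpha,P}\) is continuous and subharmonic. Near \(z=\infty\), the contour can be taken to be a large circle, and one checks directly that \(U_{\alpha,P}(z)=(d-\alpha)\log|z|+O(1)\).

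Define
\[
 \mu_{\alpha,P}:=\frac{1}{2\pi(d-\alpha)}\Delta U_{\alpha,P}.
\]
Its total mass is one by the growth at infinity. The weak convergence in (i) then follows from the standard fact that \(L^1_{\mathrm{loc}}\) convergence of \(\frac1{dn-m_n}\log|R_{P;m_n,n}|\) implies weak convergence of the Laplacians. Support in \(\Hull_P\) follows from Gauss--Lucas. For the \(L^1_{\mathrm{loc}}\) convergence I would avoid a full steepest-descent analysis. Instead I would use compactness: the potentials \(\frac1n\log|R/m!|\) are uniformly bounded above on compacts, since the Cauchy bound with a contour at distance \(r\) gives \(\max_{|\zeta - z|=r}|P|^n r^{-m}\). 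Every subsequence therefore has a further subsequence converging to some subharmonic limit. I would then identify the limit as \(U_{\alpha,P}\) on the unbounded component of the complement of a candidate support, where it is harmonic and is determined by its expansion at \(\infty\) via the dominant saddle. Uniqueness of the limit then follows from the fact that a measure supported in \(\Hull_P\) is determined by its potential outside a set whose complement is connected. If that fails, I would fall back on an exact identity: the coefficients of \((P^n)^{(m)}\) are polynomial in \(n,m\), which gives convergence of all moments \(\int z^k\,\dd\mu\). Moments determine a measure only with additional information, and for planar measures one needs the full potential, so this fallback is weaker.

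For (ii), perturbing \(P\to P_n\) changes \(\Phi\) by \(o(1)\) uniformly on contours away from the zeros of \(P\), so the upper bounds and the identification at infinity persist. This gives the same limit. Part (iii) follows by a diagonal argument from (ii), applied also with \(\alpha\) varying: if \(P_k\to P\) and \(\alpha_k\to\alpha\), choose \(n_k\) and \(m_{n_k}\) with \(m_{n_k}/n_k\) close to \(\alpha_k\), so that \(\mu_{\alpha_k,P_k}\Longrightarrow\mu_{\alpha,P}\). Part (iv) is the usual upgrade of pointwise continuity on a compact parameter set to uniformity, by contradiction with a sequence.

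The main obstacle is Hausdorff convergence of the zero sets, not just of the measures. One direction is automatic: every point of \(\supp\mu_{\alpha,P}\) is a limit of zeros, by weak convergence. The other direction requires that no zeros of \(R_{P;m_n,n}\), even of bounded number, stay away from the support. This is exactly the phenomenon producing transients in \(T_P\) at fixed ratio zero, so the argument must use \(\alpha>0\). My approach is to show that on compact subsets \(K\) of \(\C\setminus\supp\mu_{\alpha,P}\), a single saddle strictly dominates, uniformly for \(\alpha'\) near \(\alpha\) and \(P'\) near \(P\). This would give
\[
 R_{P;m,n}(z)=C_n\,g(z)^n\,h(z)\bigl(1+o(1)\bigr)
\]
uniformly on \(K\), with \(g\) and \(h\) zero-free, so that Hurwitz's theorem excludes zeros. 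Establishing that the complement of the support is exactly the single-dominant-saddle region, and that \(h\) does not vanish there, is the delicate point. I would attempt it by showing that \(\supp\mu_{\alpha,P}\) is the closure of the anti-Stokes set where two saddle values tie.
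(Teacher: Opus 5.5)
Your handling of (iii) and (iv) by diagonal sequences and compactness matches the paper. The two load-bearing steps, however, have genuine gaps.

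\textbf{The limit is not identified.} Knowing a subsequential limit of the potentials on the unbounded component of the complement of a candidate support, or its expansion at $\infty$, does not determine a measure supported in $\Hull_P$. By balayage, normalized arclength on a circle and the unit mass at its centre have the same potential outside the disk. Your moment fallback sees only this same exterior information; the paper makes exactly this point about exterior Cauchy-transform limits.

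What you need is the pointwise limit of $\frac1n\log|R_{P;m_n,n}(z)/m_n!|$ at almost every $z\in\C$, including inside the hull. With the common Gauss--Lucas support, uniform integrability of logarithmic potentials and Vitali's theorem then give $L^1_{\mathrm{loc}}$ convergence of the whole sequence, so the limit does not depend on subsequences.

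That in turn requires a precise rule for which saddle contributes, and ``max over relevant saddles'' leaves exactly this undefined. The maximum over all saddle heights is the wrong function: a critical height can exceed the correct value, and a saddle at that value need not contribute (Remark~\ref{rem:not-max}). The paper's rule is the min--max barrier $\Blev_P(z,\alpha)=\inf_\gamma\max_{w\in\gamma}\bigl(\log|P(w)|-\alpha\log|w-z|\bigr)$, taken over Jordan curves winding once about $z$. It is continuous and attained at a finite saddle. At almost every $z$ it is the height of a single simple saddle carried by a persistent one-saddle separating contour, because $\partial_z(\Xi_i-\Xi_j)=\alpha/(\omega_i-z)-\alpha/(\omega_j-z)\neq0$ confines height ties to a null set (Lemma~\ref{lem:regular}). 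A Laplace expansion with nonvanishing amplitude at such points gives the almost-everywhere limit $\Blev_P(z,\alpha)$. Until such a rule is fixed, the asserted continuity and subharmonicity of your $U_{\alpha,P}$ cannot even be checked.

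\textbf{The Hausdorff upper estimate is left open.} Characterizing the support as the closure of the set where two saddle values tie is not correct as stated, since ties involving non-connecting saddles are irrelevant. The missing idea is that harmonicity forces a single simple saddle (Lemma~\ref{lem:harmonic-gate}):
\begin{itemize}
\item Off $\supp\mu_{\alpha,P}$ the barrier is harmonic.
\item A path joining the two positive ends through any connecting saddle $w_0$ gives a lower touching function with $2\partial_z\Blev_P=\alpha/(w_0-z)$, so there is exactly one connecting saddle.
\item Then $w(z)=z+\alpha/(2\partial_z\Blev_P)$ is holomorphic with $F_{P,\alpha}(w(z))=z$, and differentiating gives $F_{P,\alpha}'(w(z))\neq0$, so the saddle is simple.
\item Its contour persists in $P$, $z$ and $\alpha$, which yields your zero-free asymptotic uniformly.
\end{itemize}

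\textbf{The roots of $P$ are omitted.} The argument above does not apply there.
\begin{itemize}
\item If $a$ has multiplicity $r$ and $\alpha=r$, the choice $m_n=rn-1$ leaves a simple zero at $a$ at every level. So Hausdorff convergence requires $a\in\supp\mu_{r,P}$, even though the atom vanishes at this ratio. The paper proves this separately (Lemma~\ref{lem:threshold-root}). If a disk $D$ about $a$ missed the support, then $M(\beta)=(d-\beta)\mu_{\beta,P}(D)$ would be differentiable at $\beta=r$ with $M(r)=0$. That contradicts $M(\beta)\ge r-\beta$ for $\beta<r$, which follows from the atom formula.
\item For $\alpha>r$ with $a\notin\supp\mu_{\alpha,P}$, you must still exclude zeros when the observation point sits at $a$. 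In (ii) you must also handle roots of $P_n$ that split near $a$ and collide with $z$. This needs the coalesced landscape $(r-\alpha)\log|w-a|+\log|G(w)|$, where $P=(w-a)^rG$, together with uniform attachments near the split roots (Lemma~\ref{lem:merged-end}).
\end{itemize}
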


The measure \(\mu_{\alpha,P}\) was first described in \cite{BHS} using the same notation. Under the assumption that \(P\) and \(P'\) have simple roots, \cite[Theorems~1.1 and~1.7]{BHS} obtained both the algebraic equation satisfied almost everywhere by its Cauchy transform and a logarithmic-potential formula in terms of the tropical trace of the saddle landscape. Here we remove that simplicity assumption and give the following direct min--max barrier characterization of the potential. Section~\ref{sec:limits} proves
\begin{equation}\label{eq:fixed-potential-preview}
 U_{\mu_{\alpha,P}}(z)=
 \frac{\Blev_P(z,\alpha)-\mathfrak h_d(\alpha)}{d-\alpha},
 \qquad
 \mathfrak h_d(\alpha)=d\log d-\alpha\log\alpha
 -(d-\alpha)\log(d-\alpha),
\end{equation}
where \(\Blev_P(z,\alpha)\) is the min--max barrier separating \(z\) from infinity for the logarithmic landscape \(\log|P(w)|-\alpha\log|w-z|\). Thus \(\mu_{\alpha, P}\) is concretely determined by an explicit min--max barrier.

\medskip
Define the \emph{limiting core} of \(P\) by
\begin{equation}\label{eq:core-union}
 \Sh_P:=\overline{\bigcup_{0<\alpha<d}\supp \mu_{\alpha,P}}.
\end{equation}
Observe that \(\Sh_P\subseteq \AllSh_P\). The \emph{shadow measure} \(\rho_P\) supported on \(\Sh_P\) is the weighted average
\begin{equation}\label{eq:rho-intro}
 \rho_P:=\frac2{d^2}\int_0^d(d-\alpha)\mu_{\alpha,P}\dd\alpha.
\end{equation}

\begin{remark}
The above integral is understood in the weak sense: its integral against a continuous test function is the corresponding scalar integral of the test-function integrals. The weight reflects the degree \(dn-m\) of each descendant. For constants and polynomials with one distinct root, the core is, respectively, empty or the root singleton.
\end{remark} 

\begin{theorem}[All orders and coefficient stability]\label{thm:intro-sets}
For any sequence \(\{P_n\}\) of monic degree \(d\) polynomials such that \(P_n\to P\) coefficientwise,

\smallskip
\noindent
{\rm (i)}
\[
 \overline\mu_{P_n,n}^{\mathrm{root}}\Longrightarrow\rho_P,
 \qquad \supp\rho_P=\Sh_P,
\]
and
\begin{equation}\label{eq:two-set-limits-intro}
 \mathcal Z_{P_n,n}\longrightarrow\Sh_P,
 \qquad \mathcal F_{P_n,n}\longrightarrow\AllSh_P
 \quad\text{in Hausdorff distance}.
\end{equation}

\smallskip
\noindent
{\rm (ii)}
The maps \(P\mapsto\rho_P\), \(P\mapsto\Sh_P\), and \(P\mapsto\AllSh_P\) are continuous in their respective topologies. For each continuous test function, the measure convergence is uniform on compact coefficient families of fixed degree, and both set convergences are uniform in Hausdorff distance. Moreover,
\begin{equation}\label{eq:tail-core-intro}
 \Sh_P=\bigcap_{M\ge1}
       \overline{\bigcup_{n\ge M}\mathcal Z_{P,n}}.
\end{equation}
\end{theorem}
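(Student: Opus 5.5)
We derive everything from Theorem~\ref{thm:intro-fixed-ratio}, in particular its uniform form (iv) and the perturbation statement (ii). \emph{Measure convergence.} For a continuous test function \(f\), write
\[
\int f\,\dd\overline\mu_{P_n,n}^{\mathrm{root}}=\frac{1}{\mathcal N_{d,n}}\sum_{m=0}^{dn-1}(dn-m)\int f\,\dd\mu_{P_n;m,n}^{\mathrm{root}}.
\]
This is a Riemann sum for \(\frac{2}{d^2}\int_0^d(d-\alpha)\,g(\alpha)\dd\alpha\) with \(g(\alpha)=\int f\,\dd\mu_{\alpha,P}\), since \((dn-m)/\mathcal N_{d,n}\approx \frac{2}{d^2 n}(d-m/n)\). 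We fix \(\eps>0\) and split the orders \(m\) according to whether \(m/n\in[\eps,d-\eps]\).

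On the middle range, Theorem~\ref{thm:intro-fixed-ratio}(iv), together with the perturbation clause (ii), gives \(\int f\,\dd\mu_{P_n;m,n}^{\mathrm{root}}=g(m/n)+o(1)\) uniformly in \(m\). This uniformity follows by a standard contradiction-with-subsequences argument, since \(\{P_n\}\cup\{P\}\) is compact. The map \(\alpha\mapsto g(\alpha)\) is continuous by (iii), so the Riemann sum converges.

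On the two end ranges the weights sum to \(O(\eps)\) and \(|f|\le\|f\|_\infty\) on \(\Hull_{P_n}\), which lies in a fixed compact set. Letting \(\eps\to0\) gives \(\overline\mu_{P_n,n}^{\mathrm{root}}\Longrightarrow\rho_P\). The identity \(\supp\rho_P=\Sh_P\) follows from continuity of \(\alpha\mapsto\supp\mu_{\alpha,P}\). If \(z\in\supp\mu_{\alpha_0,P}\) with \(\alpha_0\in(0,d)\), then every neighborhood of \(z\) has positive \(\mu_{\alpha,P}\)-mass for \(\alpha\) near \(\alpha_0\), by weak continuity and lower semicontinuity of mass on open sets. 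Hence it has positive \(\rho_P\)-mass. Conversely, \(\rho_P\) is carried by \(\Sh_P\).

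\emph{Set convergence \(\mathcal Z_{P_n,n}\to\Sh_P\).} There are two inclusions.
\begin{itemize}
\item[(a)] Every \(z\in\Sh_P\) is approximated by points of \(\mathcal Z_{P_n,n}\). For \(z\in\supp\mu_{\alpha,P}\), choose \(m_n\approx\alpha n\) and apply Theorem~\ref{thm:intro-fixed-ratio}(ii). Compactness of \(\Sh_P\) and the density of such \(z\) make this uniform.
\item[(b)] Every zero of \(R_{P_n;m,n}\) is close to \(\Sh_P\), uniformly in \(m\). For \(m/n\) in a compact subinterval of \((0,d)\), this follows from Hausdorff convergence in (ii) plus subsequences.
\end{itemize}

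The \textbf{main obstacle} is the endpoint regimes \(m/n\to0\) and \(m/n\to d\), where Theorem~\ref{thm:intro-fixed-ratio} says nothing. There every zero must be shown to lie near \(Z(P)\), respectively near the centroid of \(Z(P)\) or near the critical points. I would argue via the barrier formula \eqref{eq:fixed-potential-preview} together with direct estimates.
\begin{itemize}
\item For \(m=o(n)\), write \((P^n)^{(m)}=P^{n-m}\cdot Q\). Here \(Q\) is a polynomial in \(P,P',\dots\) of degree \(\le dm\), dominated by \(n^m P'^m\). Away from \(Z(P)\cup Z(P')\), a Rouché or saddle-point comparison shows \(P^{n-m}(nP')^m\) dominates.
\item Zeros near critical points are allowed, since \(Z(P)\) and the critical points lie in \(\Sh_P\). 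This inclusion is itself to be proved as a limit of \(\supp\mu_{\alpha,P}\) as \(\alpha\to0\) and \(\alpha\to d\), using \eqref{eq:fixed-potential-preview}.
\item For \(dn-m=o(n)\), the low-degree descendant has zeros near the centroid, which also lies in the limit of \(\supp\mu_{\alpha,P}\).
\end{itemize}
I expect this endpoint analysis, especially treating arbitrarily fast \(m_n/n\to0\) uniformly in perturbations \(P_n\), to be the hardest part.

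\emph{The tail formula \eqref{eq:tail-core-intro} and \(\mathcal F_{P_n,n}\to\AllSh_P\).}
\begin{itemize}
\item \(\Sh_P\) is contained in every tail closure by (a). The reverse inclusion follows from (b): a point in every tail closure is a limit of zeros at levels \(n_k\to\infty\), hence lies in \(\Sh_P\).
\item For \(\mathcal F_{P_n,n}\), fix \(M\). Zeros of \(R_{P_n;m,k}\) with \(k\le M\) converge to those of \(R_{P;m,k}\) by Hurwitz, so \(\mathcal F_{P_n,M}\to\mathcal F_{P,M}\). Levels \(k\in(M,n]\) contribute points within \(\eta(M)\) of \(\Sh_P\subseteq\AllSh_P\), uniformly in \(n\), by (b) applied to the pair of indices with \(k\) large. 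Here \(\eta(M)\to0\).
\item \(\mathcal F_{P,M}\) increases to a dense subset of \(\AllSh_P\). Together these give the Hausdorff convergence.
\end{itemize}

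\emph{Continuity in (ii).} Continuity of the maps follows by applying (i) to a constant sequence along each \(P_n\), combined with a diagonal argument. Uniformity on compact coefficient families follows by the usual contradiction argument extracting convergent subsequences.
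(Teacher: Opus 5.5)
Your treatment of the interior ratios is sound and follows essentially the paper's assembly (Theorems~\ref{thm:averaging}, \ref{thm:no-outliers}, \ref{thm:cumulative-shadow} and Corollary~\ref{cor:set-stability}). This covers the Riemann-sum averaging, \(\supp\rho_P=\Sh_P\), the tail formula, the prefixes \(\mathcal F_{P_n,n}\), and continuity and uniformity by diagonal and subsequence arguments.

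The genuine gap is the endpoint regime, which you flag but do not resolve. It cannot be skipped: orders with \(m/n\to0\) or \(m/n\to d\) occur at every level, and \(m=1\) already puts zeros exactly on \(Z(P')\). Without it the upper estimate in \(\mathcal Z_{P_n,n}\to\Sh_P\) is unproved.

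\textbf{The lower endpoint.} The one concrete mechanism you offer does not work. Write \((P^n)^{(m)}(z)/m!=[t^m]P(z+t)^n\). A saddle computation shows that this coefficient, divided by the leading Leibniz term \(\binom nm P(z)^{n-m}P'(z)^m\), is \(\exp\bigl(\tfrac{m^2}{n}\tfrac{P(z)P''(z)}{2P'(z)^2}(1+o(1))\bigr)\). So for \(\sqrt n\ll m\ll n\) that term does not dominate, and Rouch\'e against \(P^{n-m}(nP')^m\) is unavailable. The hard case is this intermediate range; bounded \(m\) is the easy case, so the difficulty is not ``arbitrarily fast'' \(m_n/n\to0\). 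What is needed is a saddle-point estimate for \([t^k]\mathscr A_z(t)^n\) on the coefficient circle through the moving saddle \(t_*\approx k/(n\,\mathfrak b(z))\). It must be uniform in \(z\), in all \(k=o(n)\) (bounded or not), and in coefficient perturbations; this is Lemma~\ref{lem:small-coeff}.

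\textbf{The upper endpoint.} You must show that \emph{all} zeros approach \(\centroid_P=\frac1d\sum_j r_ja_j\), the centroid of the root multiset rather than of \(Z(P)\). Your sketch only says that some zeros lie near it. The paper applies the same lemma after the reversal \(\mathscr A_z(t)=t^dP(z+t^{-1})\). Then \([t^k]\mathscr A_z(t)^n=(P^n)^{(dn-k)}(z)/(dn-k)!\), and the linear coefficient \(d(z-\centroid_P)\) plays the role of \(P'/P\) (Theorem~\ref{thm:endpoint-zeros}).

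\textbf{The core contains the distinguished points.} The inclusion \(Z(P)\cup Z(P')\cup\{\centroid_P\}\subseteq\Sh_P\) is only announced, yet the upper estimate needs it.
\begin{itemize}
\item Roots are easy: for \(\alpha<r_j\), the multiplicity \(r_jn-m\) forces an atom at \(a_j\).
\item The centroid follows once the upper-endpoint exclusion is available, since the nonempty supports then shrink to it as \(\alpha\uparrow d\).
\item A critical point \(c\notin Z(P)\) needs a real argument. The paper uses Hurwitz (Theorem~\ref{thm:endpoint-support}). If \(\supp\mu_{\alpha_j,P}\) avoided a disk about \(c\) for \(\alpha_j\downarrow0\), then \(C_{\mu_{\alpha_j,P}}\to P'/(dP)\) uniformly on a smaller disk, and this limit vanishes at \(c\).
\end{itemize}
That contradiction requires the Cauchy transform to be zero-free off \(\supp\mu_{\alpha,P}\). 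This comes from \((d-\alpha)C_{\mu_{\alpha,P}}(z)=2\partial_z\Blev_P(z,\alpha)=\alpha/(w_\alpha(z)-z)\) at the barrier-selected saddle (Lemma~\ref{lem:harmonic-gate}, Proposition~\ref{prop:selected-C}). The potential formula by itself does not supply it.

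Supplying the uniform small-order coefficient estimate, the reversal, and the Hurwitz argument would close your proof along the paper's lines.
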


\subsection{Some properties of the limiting core and of the transient set}
The set
\begin{equation}\label{eq:trans}
 T_P:=\AllSh_P\setminus\Sh_P
\end{equation}
is called \emph{transient} and its points are called \emph{transient zeros}.

For a set \(E\subseteq\C\), let \(\acc (E)\) be the set of its accumulation points, i.e., limits of sequences of pairwise distinct points of \(E\). A set is \emph{perfect} if it is closed and has no isolated points. The \emph{perfect kernel} of a closed set is its largest perfect subset.

\begin{theorem}\label{thm:p1-perfect-summary} For any polynomial \(P\) of degree at least \(2\),

\smallskip
\noindent
{\rm (i)}
The limiting core \(\Sh_P\) is a compact connected subset of \(\Hull_P\) containing all roots and all critical points of \(P\), as well as the centroid of its root multiset.

\smallskip
\noindent
{\rm (ii)}
Every point of \(T_P\) is an actual finite-level zero, is isolated in \(\AllSh_P\), and occurs at only finitely many power levels. The set \(T_P\) is locally finite in \(\C\setminus\Sh_P\): every compact subset of \(\C\setminus\Sh_P\) meets it in finitely many points. Consequently the limiting core and the full shadow of \(P\) have the same interior and planar area.

\smallskip
\noindent
{\rm (iii)}
If \(P\) has at least two distinct roots, then
\[
 \acc(\AllSh_P)=\Sh_P=\acc(\Sh_P),
\]
and \(\Sh_P\) is the perfect kernel of \(\AllSh_P\). For every nonconstant \(P\),
\[
 \Sh_P=\bigcap_{k\ge1}\AllSh_{P^k},
 \qquad \AllSh_{P^k}\longrightarrow\Sh_P
 \quad\text{in Hausdorff distance as }k\to\infty.
\]
\smallskip
\noindent
{\rm (iv)}
The family \(\{\AllSh_{P^k}\}\) is nested along divisibility,
\[
 k\mid j\quad\Longrightarrow\quad
 \AllSh_{P^j}\subseteq\AllSh_{P^k},
\]
but need not be nested in the natural order \(k=1,2,\ldots\); indeed, \(\AllSh_{P^2}\) and \(\AllSh_{P^3}\) can be incomparable.
\end{theorem}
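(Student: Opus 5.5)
The plan is to obtain everything from Theorem~\ref{thm:intro-fixed-ratio}(iii) (Hausdorff continuity of \(\alpha\mapsto S_\alpha:=\supp\mu_{\alpha,P}\)) and Theorem~\ref{thm:intro-sets}, in particular the tail formula \eqref{eq:tail-core-intro}. One identity will be used repeatedly. Since \((P^k)^n=P^{kn}\), we have \(\mathcal Z_{P^k,n}=\mathcal Z_{P,kn}\), and therefore
\[
\AllSh_{P^k}=\overline{\bigcup_{n\ge1}\mathcal Z_{P,kn}}.
\]

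\emph{Part (i).} Membership of the special points is immediate from \eqref{eq:tail-core-intro}. For every \(n\), the set \(\mathcal Z_{P,n}\) contains:
\begin{itemize}
\item the roots of \(P\) (take \(m=0\));
\item the critical points of \(P\), because \((P^n)'=nP^{n-1}P'\);
\item the centroid, because \((P^n)^{(dn-1)}\) is linear and vanishes exactly at the mean of the roots.
\end{itemize}
These points lie in every tail union, hence in \(\Sh_P\). Compactness and \(\Sh_P\subseteq\Hull_P\) follow from Gauss--Lucas.

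For connectedness, suppose \(\Sh_P=A\sqcup B\) with \(A,B\) disjoint, nonempty and closed, and with the centroid \(\centroid\in A\). Let \(\eta=\dist(A,B)>0\). For each \(\alpha\in(0,d)\), set \(\phi(\alpha)=(S_\alpha\cap A\neq\emptyset,\;S_\alpha\cap B\ne\emptyset)\). Since \(S_\alpha\) moves continuously in Hausdorff distance and \(A,B\) are \(\eta\)-separated, each coordinate of \(\phi\) is locally constant. Hence \(\phi\) is constant on \((0,d)\).

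As \(\alpha\to d\), the sets \(S_\alpha\) should shrink to \(\{\centroid\}\). I would derive this from uniform convergence near \(m=dn-1\) (Theorem~\ref{thm:intro-fixed-ratio}(iv)), or from the potential formula \eqref{eq:fixed-potential-preview}. Granting it, \(S_\alpha\) misses \(B\) for \(\alpha\) near \(d\), hence for all \(\alpha\). Then \(\bigcup_\alpha S_\alpha\subseteq A\), so \(B=\emptyset\) by \eqref{eq:core-union}, a contradiction. I expect the main obstacle to be this endpoint behaviour \(S_\alpha\to\{\centroid\}\) as \(\alpha\to d\). It needs a separate estimate: high derivatives of \(P^n\) behave like \((z-\centroid)^{dn-m}\) plus lower-order terms, and I would try a Grace-type or Taylor-expansion argument around \(\centroid\).

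\emph{Part (ii).} Take \(z\in T_P\). By \eqref{eq:tail-core-intro} there are \(M\) and a neighbourhood \(V\) of \(z\) disjoint from \(\bigcup_{n\ge M}\mathcal Z_{P,n}\). Hence \(V\cap\AllSh_P=V\cap\mathcal F_{P,M-1}\), which is finite. So \(z\) is an actual zero, isolated in \(\AllSh_P\), and it occurs only at levels \(n<M\).

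For local finiteness, let \(L\subseteq\C\setminus\Sh_P\) be compact. Hausdorff convergence \(\mathcal Z_{P,n}\to\Sh_P\) gives \(n_0\) such that \(\mathcal Z_{P,n}\cap L=\emptyset\) for all \(n\ge n_0\). The remaining finitely many levels contribute finitely many points.

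Since \(T_P\) is countable, it has zero area. Moreover, no open disc in \(\AllSh_P\) can contain a point of \(T_P\), because such a point is isolated in \(\AllSh_P\). Therefore \(\AllSh_P\) and \(\Sh_P\) have the same interior and the same area.

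\emph{Parts (iii) and (iv).} By part (ii), \(\acc(\AllSh_P)\subseteq\Sh_P\). If \(P\) has two distinct roots, then \(\Sh_P\) is connected with at least two points, so it has no isolated points. This gives \(\Sh_P\subseteq\acc(\Sh_P)\subseteq\acc(\AllSh_P)\), so \(\Sh_P\) is perfect. Every perfect subset of \(\AllSh_P\) avoids the isolated points \(T_P\), so \(\Sh_P\) is the perfect kernel.

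For the powers, the identity above gives two inclusions:
\begin{itemize}
\item \(\AllSh_{P^k}\supseteq\Sh_P\), since the subsequence \(\mathcal Z_{P,kn}\) also converges to \(\Sh_P\) by Theorem~\ref{thm:intro-sets};
\item \(\AllSh_{P^k}\subseteq\overline{\bigcup_{n\ge k}\mathcal Z_{P,n}}\).
\end{itemize}
By \eqref{eq:tail-core-intro}, the right-hand sets decrease to \(\Sh_P\). Decreasing compact sets converge in Hausdorff distance to their intersection, which gives both the intersection formula and \(\AllSh_{P^k}\to\Sh_P\). If \(P\) has one distinct root \(a\), every set involved equals \(\{a\}\).

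Nesting along divisibility is immediate from the identity: if \(k\mid j\), then \(\{jn\}\subseteq\{kn\}\). Incomparability of \(\AllSh_{P^2}\) and \(\AllSh_{P^3}\) requires an explicit example. I would choose a polynomial with a transient zero occurring only at level \(n=2\) and another occurring only at level \(n=3\), and verify this by exact computation of low-degree descendants.
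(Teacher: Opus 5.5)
Your arguments for (ii), for the power and divisibility statements, and for the membership of the roots, critical points and centroid are correct. The last of these works because those points lie in every $\mathcal Z_{P,n}$ and hence in $\Sh_P$ by the tail formula. Your locally-constant argument for connectedness is the same mechanism as Lemma~\ref{lem:connected-union}.

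\textbf{Gap in (i).} The step you grant, $\supp\mu_{\alpha,P}\to\{\centroid_P\}$ as $\alpha\uparrow d$, is essential. Without a singleton fiber, a constant value $(\text{true},\text{true})$ of your $\phi$ gives no contradiction. So connectedness in (i) is unproved, and so are the identities $\acc(\AllSh_P)=\Sh_P=\acc(\Sh_P)$ and the perfect-kernel claim in (iii), which depend on it.

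This endpoint limit is not contained in the results you cite. Theorem~\ref{thm:intro-fixed-ratio}(iv) is uniform only on compact subintervals of $(0,d)$; in the paper the endpoint limit is proved before, and used for, the tail formula you rely on. A Taylor expansion about $\centroid_P$ does not work as it stands either. With $\centroid_P=0$ and $k=dn-m$, the $z^{k-2}$ coefficient of the monic normalization of $(P^n)^{(m)}$ is in general of order $k^2/n$. Coefficient-size root bounds are then $\gtrsim k/\sqrt n$, which does not tend to zero once $k\gtrsim\sqrt n$.

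The paper proves the limit as Theorem~\ref{thm:endpoint-support}. A diagonal argument with fixed-ratio convergence reduces it to Theorem~\ref{thm:endpoint-zeros}: descendants with $m_n/n\to d$ have no zeros on compact sets avoiding $\centroid_P$. That rests on a uniform saddle-point estimate (Lemma~\ref{lem:small-coeff}) for $[t^k]\mathscr A_z(t)^n$ with $\mathscr A_z(t)=t^dP(z+t^{-1})$, taken on a coefficient circle of radius comparable to $k/n$. The key input is that the linear coefficient $d(z-\centroid_P)$ is bounded away from zero off the centroid.

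\textbf{Gap in (iv).} The incomparability is an existence statement, and you give neither an example nor a way to certify one. Exact computation of low-degree descendants cannot show that a point is a zero at no level divisible by $3$, since there are infinitely many such levels. Nor can it show that the point lies off $\Sh_P$. You need that before Theorem~\ref{thm:literal}, applied to $P^3$ with $\Sh_{P^3}=\Sh_P$, turns ``no finite-level zero at these levels'' into ``not in $\AllSh_{P^3}$''.

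In Proposition~\ref{prop:non-nesting} the paper proceeds as follows.
\begin{itemize}
\item It takes $P=\Pi_2$ and uses Proposition~\ref{prop:infinite-ray} to get $\Sh_P\cap(0,\infty)=\varnothing$. That proposition rests on an exact modulus inequality making suitable circles about each $x>0$ one-saddle separating contours.
\item It takes positive zeros $\xi_2$ of $(P^2)^{(5)}$ and $\xi_3$ of $(P^3)^{(6)}$.
\item It certifies, by exact Bernstein expansions on isolating intervals, that all Taylor coefficients of $P(\xi_2+u)^3$ are positive. Positivity survives multiplication, so no coefficient of $P(\xi_2+u)^{3j}$ vanishes.
\item For $\xi_3$ it checks that no coefficient of $P(\xi_3+u)^2$ vanishes and that the fourth and sixth powers have positive coefficients. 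Every even $N\ge4$ is $4j$ or $6+4j$, so no even level is missed.
\end{itemize}
This device is what handles all levels in a residue class at once.
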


\medskip
\begin{remark}
By Theorem~\ref{thm:p1-perfect-summary}(i) the set \(\Sh_P\) is a canonical connected refinement of \(\conv Z(P)\) from the point of view of the Gauss--Lucas theorem, i.e., it is a canonically defined connected subset of \(\conv Z(P)\) containing all the critical points of \(P\), while \(\AllSh_P\) retains the exact finite-level exceptions. Moreover, in many numerical experiments the critical points of \(P\) lie on the boundary of \(\Sh_P\). Therefore \(\Sh_P\) might play an important role in the geometry of polynomials. For classical background on the geometry of polynomial zeros and critical points, see \cite[Chapters~I--II]{Marden}.
\end{remark}

\medskip
Interestingly, the set \(T_P\) of transient zeros can be infinite already if \(P\) has degree four.

\begin{theorem}[Infinitely many transients]\label{thm:intro-infinite}
For \(0<a<4\), let
\[
 \Pi_a(z)=z^4+az^3-(a-a^2/8)z^2+az+1.
\]
Its core is disjoint from \((0,\infty)\). For every sufficiently large odd \(n\), the polynomial \((\Pi_a^n)^{(2n)}\) has a simple positive zero
\[
 x_n=\frac{\sigma_a}{n}+O(n^{-2}),
\]
where \(\sigma_a\) is the unique solution in \((0,\pi/\sqrt{16-a^2})\) of
\[
 e^{(a+4)\sigma_a/2}
   =\frac4{\sqrt{4-a}}
       \cos\!\left(\frac{\sqrt{16-a^2}}2\sigma_a\right).
\]
Hence \(T_{\Pi_a}\) is infinite. Every sufficiently large even power level also contains a positive transient, after a bounded shift of the derivative order from \(2n\).
\end{theorem}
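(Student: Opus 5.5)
Three things have to be shown for Theorem~\ref{thm:intro-infinite}: the core avoids \((0,\infty)\); for large odd \(n\) there is a positive zero of \((\Pi_a^n)^{(2n)}\) at scale \(1/n\); and these zeros give infinitely many transients.

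\emph{The core avoids \((0,\infty)\).} I would use the potential formula \eqref{eq:fixed-potential-preview} and show that every \(x>0\) lies outside \(\supp\mu_{\alpha,\Pi_a}\), uniformly in \(\alpha\) on compact subintervals of \((0,4)\). Near the endpoints \(\alpha\to0\) and \(\alpha\to4\) I would control the supports directly by small-\(\alpha\) and large-\(\alpha\) analyses. The plan is:
\begin{itemize}
\item Note that \(\Pi_a\) is palindromic and has positive coefficients except for the \(z^2\) term. On \((0,\infty)\), compare the barrier \(\Blev(x,\alpha)\) with \(\log|\Pi_a(x)|-\alpha\cdot(\cdot)\). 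Outside the support, the function \(U_\mu\) is harmonic and equals the "escape" value along a path in the positive real direction.
\item Check that the saddle landscape has no critical configuration on the positive axis. This is where the specific coefficient \(a-a^2/8\) enters: it should make \(\Pi_a\) factor as a product of two quadratics whose roots lie in the left half-plane or on the unit circle, away from \((0,\infty)\).
\item Since \(0\) is not a root (\(\Pi_a(0)=1\)) and the hull contains \(0\), the core hugs the roots. I expect a computation showing \(\Pi_a(x)>0\) and \(\Pi_a'\neq 0\) on \([0,\infty)\), combined with barrier monotonicity, to give \(\Sh_{\Pi_a}\cap[0,\infty)=\emptyset\).
\item Since the core is closed, it then has positive distance \(\eta\) from \(0\).
\end{itemize}

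\emph{Local asymptotics at the origin.} Write \(\Pi_a(z)=\exp(g(z))\) near \(0\), with \(g(z)=az-(a-a^2/8)z^2-\tfrac{a^2}{2}z^2+\dots\). Then
\[
(\Pi_a^n)^{(2n)}(z)=\frac{(2n)!}{2\pi i}\oint \Pi_a(w)^n (w-z)^{-2n-1}\,dw.
\]
I would evaluate this by steepest descent at \(z=\sigma/n\). The relevant saddles of \(n\log\Pi_a(w)-2n\log(w-z)\) satisfy \(\Pi_a'/\Pi_a=2/(w-z)\), which amounts to the critical points of \(\Pi_a\) shifted.

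Alternatively, and this is the route I would commit to, use the exact expansion in coefficients. With \(\Pi_a^n=\sum_k c_{n,k}z^k\),
\[
(\Pi_a^n)^{(2n)}(z)=\sum_{k\ge 2n} c_{n,k}\,\frac{k!}{(k-2n)!}\,z^{k-2n}.
\]
By palindromicity \(c_{n,k}=c_{n,4n-k}\), so the sum is governed by the low coefficients of \(\Pi_a^n\), read from the top. For \(z=\sigma/n\) the sum becomes a Riemann-type sum. After extracting the dominant two complex-conjugate saddle contributions, the rescaled sum converges to a combination
\[
A\,e^{(a+4)\sigma/2} - B\cos\!\Big(\tfrac{\sqrt{16-a^2}}{2}\sigma+\phi\Big).
\]
Here the real saddle gives the exponential term, and the conjugate pair with modulus-one ratio gives the cosine. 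The numbers \(\sqrt{16-a^2}\), \(a+4\) and \(\sqrt{4-a}\) arise from the discriminant of the quadratic factor \(w^2+\tfrac a2 w+1\) together with the Stirling factors. The parity of \(n\) fixes a sign \((-1)^n\) in front of the oscillatory term, which is why odd \(n\) give the stated equation. The limit function has a simple zero at \(\sigma_a\) with nonzero derivative there, because the exponential is increasing and the cosine is decreasing on that interval. Hurwitz's theorem, or the implicit function theorem applied to the \(O(1/n)\)-uniform expansion, then gives a simple real zero \(x_n=\sigma_a/n+O(n^{-2})\). This step is the main obstacle: it requires uniform error bounds in the saddle-point analysis and the exact identification of the constants.

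\emph{Conclusion.} Since \(x_n\to0\) and \(0\notin\Sh_{\Pi_a}\), all but finitely many \(x_n\) lie in \(\C\setminus\Sh_{\Pi_a}\), hence in \(T_{\Pi_a}\). They are pairwise distinct for large \(n\), so \(T_{\Pi_a}\) is infinite; this is consistent with Theorem~\ref{thm:p1-perfect-summary}(ii), since the \(x_n\) accumulate only at \(0\)... and \(0\) must then lie in the core. This contradicts the first step, so I would recheck it. The resolution is that \(0\in\Sh_{\Pi_a}\): the core is disjoint only from the open ray \((0,\infty)\), and \(0\) is a boundary point of it. Then the transients \(x_n\in(0,\eta)\) accumulate at \(0\in\Sh\), as allowed, and the first step must establish \(\Sh\cap(0,\infty)=\emptyset\) with \(0\in\Sh\).

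For even \(n\) the sign flips. Shifting the derivative order to \(2n+j\) with bounded \(j\) changes the phase \(\phi\) by a fixed amount, which restores a sign change on \((0,\infty)\); the same Hurwitz argument then applies.
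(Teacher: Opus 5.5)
Your proposal has two genuine gaps. The first is in the proof that the core misses \((0,\infty)\), and as written that argument does not work. The criterion you suggest is \(\Pi_a>0\) and \(\Pi_a'\neq0\) on \([0,\infty)\), plus an unspecified ``barrier monotonicity''. It holds equally at \(x=0\), since \(\Pi_a(0)=1\) and \(\Pi_a'(0)=a\), yet \(0\in\supp\mu_{2,\Pi_a}\subseteq\Sh_{\Pi_a}\). This is exactly the contradiction you ran into at the end, and you resolved it by changing the target, not the argument. Information on the ray can at best supply the lower (ascending-path) half of the min--max formula. What is missing is, for each \(x>0\) and \(\alpha\in(0,4)\), a separating contour with a single simple maximum.

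The paper obtains this contour from an exact modulus certificate, \(|\Pi_a(\rho e^{i\phi})|\le\Pi_a(\rho)\) (Lemma~\ref{lem:infinite-modulus}, via \(\Pi_a(r)^2-|\Pi_a(re^{i\theta})|^2=2(1-\cos\theta)r^4\mathcal E_a\) with \(\mathcal E_a\ge0\)). It combines this with the strict increase of \(r\mapsto r\Pi_a'(x+r)/\Pi_a(x+r)\) from \(0\) to \(4\). Together these give a unique saddle \(x+r\), where the positive ray has its unique minimum. The circle \(|w-x|=r\) has its strict nondegenerate maximum at the same point, because \(|x+re^{i\theta}|<x+r\) for \(\theta\notin2\pi\mathbb Z\) and \(\Pi_a\) increases on \((0,\infty)\). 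This one-saddle contour excludes \(x\) from every \(\supp\mu_{\alpha,\Pi_a}\). The endpoint sets \(Z(\Pi_a)\cup Z(\Pi_a')\) and \(\{-a/4\}\) also miss \(x\), and compactness of the endpoint-completed family (Theorem~\ref{thm:endpoint-support}) finishes without any uniformity in \(\alpha\). At \(x=0\), \(\alpha=2\), the circle is the unit circle and acquires the extra equality points \(e^{\pm i\theta_a}\). That is precisely why the origin lies in the core.

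The second gap is in the asymptotic. You write down the right shape but give no mechanism producing one real and two conjugate contributions of the same exponential size, which is the whole content of this step. The missing observation is what happens on the unit circle, \(w=e^{i\theta}\). Palindromicity makes \(e^{-2i\theta}\Pi_a(e^{i\theta})=f_a(\theta)=2\cos2\theta+2a\cos\theta-\beta_a\) real. It is a quadratic in \(\cos\theta\) whose maximum \(M_a=(a+4)^2/8\) at \(\theta=0\) and minimum \(-M_a\) at \(\cos\theta=-a/4\) have equal modulus. That, not a factorization, is what \(\beta_a=a-a^2/8\) is designed for.

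Cauchy's formula on this circle gives the integrand \(f_a(\theta)^ne^{-ih\theta}(1-\tfrac{\sigma}{n}e^{-i\theta})^{-2n-h-1}\approx f_a^ne^{-ih\theta}\exp(2\sigma e^{-i\theta})\). Three-point Laplace then yields \(e^{2\sigma}+(-1)^n\chi_ae^{-a\sigma/2}\cos(\omega_a\sigma+h\theta_a)\):
the sign \((-1)^n\) comes from \(f_a(\pm\theta_a)=-M_a\);
the oscillation comes from \(2\sigma e^{\mp i\theta_a}=-a\sigma/2\mp i\omega_a\sigma\);
and \(\chi_a=2\sqrt{\varsigma_0/\varsigma_\pm}\) comes from the curvatures \(\varsigma_0=16/(a+4)\) and \(\varsigma_\pm=4(4-a)/(a+4)\) of \(\log|f_a|\), not from Stirling factors.

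Your coefficient route is equivalent in principle, since \(c_{n,2n+j}=\frac1{2\pi}\int_{-\pi}^{\pi}f_a(\theta)^ne^{-ij\theta}\dd\theta\). However, the terms that matter at \(z=\sigma/n\) are these middle coefficients with \(j=O(1)\), not low coefficients read from the top. Their uniform asymptotics require exactly this analysis.

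For even \(n\) you must also exhibit a bounded shift \(h\) with \(\chi_a\cos(h\theta_a)<-1\); the paper uses \(h=2\) for \(a\le2\) and \(h=1\) for \(a>2\). A phase shift alone does not force a sign change; for example, \(h=0\) gives no positive zero at this scale. Once both steps are supplied, your conclusion is correct: every positive descendant zero is transient, and \(0\in\Sh_{\Pi_a}\) is then forced rather than contradictory.
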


In particular, transient zeros may occur at arbitrarily large power levels, so the exact full shadow cannot in general be replaced by its asymptotic core after discarding any fixed finite truncation.

\medskip
Figure~\ref{fig:pi2-transients} shows the case \(a=2\). The core of \(\Pi_2\) consists of a segment of the negative real axis, two arms ending at the nonreal roots, and a two-dimensional region between them, which has a corner at the origin. The zeros of the first power levels spill over into the sector around the positive real axis, which is disjoint from the core. Every compact subset of that sector meets the zero sets of only finitely many power levels, but the positive transients of Theorem~\ref{thm:intro-infinite} approach the corner point \(0\) like \(\sigma_2/n\). The boundary arcs through the corner are not traced by endpoints of the supports \(\supp\mu_{\alpha,\Pi_2}\); see Subsection~\ref{rem:boundary-mechanisms}.

\begin{figure}[htbp]
\centering
\begin{minipage}[c]{0.44\textwidth}\centering
 \includegraphics[width=\linewidth]{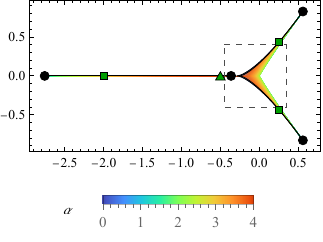}\\[1pt]
 {\small (a)}\\[8pt]
 \includegraphics[width=\linewidth]{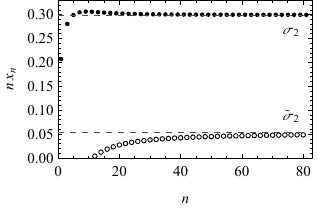}\\[1pt]
 {\small (c)}
\end{minipage}\hfill
\begin{minipage}[c]{0.54\textwidth}\centering
 \includegraphics[width=\linewidth]{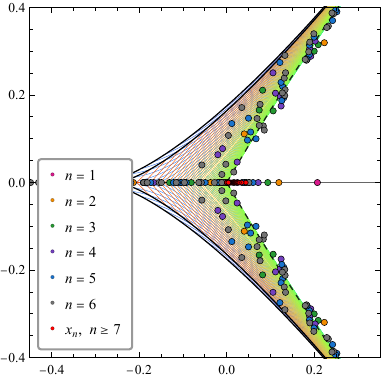}\\[1pt]
 {\small (b)}
\end{minipage}
\caption{The limiting core and the transient zeros of \(\Pi_2(z)=z^4+2z^3-\frac32z^2+2z+1\). (a) The core \(\Sh_{\Pi_2}\) as the union of the supports \(\supp\mu_{\alpha,\Pi_2}\), \(\alpha=k/24\), colored by \(\alpha\). The black curves are the paths of the endpoints of these supports as \(\alpha\) varies; the endpoints are critical values of \(F_{\Pi_2,\alpha}\). Black disks: roots; green squares: critical points; green triangle: the centroid \(-\frac12\). (b) The dashed box in (a), with the zeros of all descendants \((\Pi_2^n)^{(m)}\), \(0<m<4n\), at the power levels \(n\le6\) (colored by \(n\)), and the positive transients \(x_n\) of Theorem~\ref{thm:intro-infinite} for \(7\le n\le81\) (red; derivative order \(2n\) for odd \(n\) and \(2n+2\) for even \(n\)). All positive zeros are transients; among those of the levels \(n=2\) and \(n=3\) are the points \(\xi_2\approx0.1191\) and \(\xi_3\approx0.0936\) of Proposition~\ref{prop:non-nesting}. Dashed: \(\supp\mu_{2,\Pi_2}\). Near the corner at the origin the boundary of the core is formed by supports with \(\alpha\) close to \(2\). (c) \(nx_n\) against \(n\) for odd \(n\) (filled) and even \(n\) (open). The dashed lines are the limits \(\sigma_2\approx0.2995\) and \(\widetilde\sigma_2\approx0.0544\) of Theorem~\ref{thm:infinite-transients}.}
\label{fig:pi2-transients}
\end{figure}

\subsection{Short literature survey on repeated differentiation}

Repeated differentiation has been studied as a macroscopic flow of zero distributions through nonlocal transport equations and free-convolution models, including the real and rotationally invariant settings; see \cite{SteinerbergerTransport,ORourkeSteinerberger,SteinerbergerFree, HoskinsKabluchko,KabluchkoUnitary,COR}. Most of these works concern asymptotic zero distributions. In the periodic real-root setting, Kiselev and Tan \cite{KiselevTan} rigorously relate Steinerberger's nonlocal PDE to the evolution under repeated differentiation of a class of trigonometric polynomials and obtain an all-time quantitative bound on the error between each consecutive-root gap and the gap predicted by the corresponding PDE density. Mart\'inez-Finkelshtein and Rakhmanov develop a general Cauchy-transform flow and its Burgers/free-convolution connections in \cite{MFR}; their companion chapter \cite{MFRWeighted} gives a direct weighted-equilibrium treatment for zeros on \([-1,1]\) with initial arcsine distribution.

For a compactly supported finite measure \(\nu\), its Cauchy transform is \(C_\nu(z):=\int(z-\xi)^{-1}\dd\nu(\xi)\) off its support. Mart\'inez-Finkelshtein and Rakhmanov \cite[Theorem~1.1]{MFR} prove convergence of such transforms outside a common compact convex enclosure for general polynomial arrays. As they note after that theorem, an exterior transform need not have a unique representing planar measure. Their result therefore does not by itself identify the full planar limit considered here or exclude individual zeros inside the enclosure.

B\o gvad and the present authors established the fixed-ratio measure limit when \(P\) and \(P'\) have simple roots, see \cite[Theorem~1.1]{BHS}. Their Remark~1.3 identifies that restriction as apparently redundant, and Remark~1.8 already allows arbitrary sequences \(m_n/n\to\alpha\). Thus the sequential formulation is not itself new. The contributions here are removal of the simplicity restriction, convergence of the complete zero sets with coefficient stability, the connected structure of the limiting core, and the decomposition of the full shadow into the limiting core and transient zeros. At the lower endpoint, Theorem~\ref{thm:sublinear-counts} also gives exact cluster counts, uniformly for every sublinear range of derivative orders: \(nr_j-m\) zeros remain near a root \(a_j\), while \(m\varkappa_i\) zeros cluster near a nonroot critical point \(c_i\). The word \emph{shadow} also appears in \cite[Section~7, item~3]{BHS} for the region associated with limiting supports. The full shadow discussed here retains, in addition, all finite-level zeros; it is distinguished throughout from the limiting core.

\subsection{Structure of the paper}


Section~\ref{sec:barrier} establishes the contour geometry, and Section~\ref{sec:limits} proves the fixed-ratio limits and the behavior at roots needed for complete zero convergence. Section~\ref{sec:endpoints} treats the endpoint ratios. Section~\ref{sec:all-orders} combines these results to determine the limiting core and the full shadow. Section~\ref{sec:infinite-transients} describes the quartic example. At the ends of the relevant arguments we state explicitly how the later numbered results assemble into Theorems~\ref{thm:intro-fixed-ratio}--\ref{thm:intro-infinite}. Appendix~\ref{app:variation} gives the full first-variation formula at possibly degenerate saddles; Appendix~\ref{app:equal-height} records an example showing that equal critical heights do not imply equal contour contributions. Appendix~\ref{app:differentiator-spectrum} realizes the full shadow, limiting core, and transient set as the spectrum, Fredholm essential spectrum, and discrete spectrum of a bounded operator assembled from differentiators, and identifies its limiting mixed traces.

The figures were computed numerically. Zeros of descendants were found by high-precision polynomial root finding. The supports \(\supp\mu_{\alpha,P}\) were traced as curves along which two connecting saddles of the landscape defined in Section~\ref{sec:barrier} have the same height, starting from critical values of \(F_{P,\alpha}\). The connecting saddles were identified by following steepest-ascent lines of the landscape.

\FloatBarrier
\section{The logarithmic landscape, its barrier and contours}\label{sec:barrier}

\subsection{Basic notation and definitions}
Write
\begin{equation}\label{eq:root-data}
 P(z)=\prod_{j=1}^{s}(z-a_j)^{r_j},\qquad
 r_j\in\N,\qquad \sum_{j=1}^{s}r_j=d,
\end{equation}
where the \(a_j\) are distinct, and put \(p_j=r_j/d\). The root multiset repeats each \(a_j\) exactly \(r_j\) times. Its root probability measure and centroid are
\[
 \mu_P^{\mathrm{root}}=\frac1d\divz P=\sum_{j=1}^{s}p_j\delta_{a_j},
 \qquad
 \centroid_P:=\sum_{j=1}^{s}p_j a_j.
\]
The logarithmic derivative of \(P\) and the Cauchy transform of its root measure are denoted by
\[
 \LogDer_P:=\frac{P'}P,\qquad
 \rootC_P:=C_{\mu_P^{\mathrm{root}}}=\frac1d\LogDer_P.
\]
Thus \(\Hull_P\) denotes the convex hull of \(Z(P)\), \(\supp\mu_{\alpha,P}\) the support of \(\mu_{\alpha,P}\), and \(\Sh_P\) the limiting core of \(P\).

\medskip
For a compactly supported finite Borel measure \(\nu\), our conventions are
\begin{equation}\label{eq:potential-conventions}
 U_\nu(z)=\int\log|z-\xi|\dd\nu(\xi),\qquad
 C_\nu(z)=\int\frac{\dd\nu(\xi)}{z-\xi}.
\end{equation}
The Cauchy transform is locally integrable in the plane and holomorphic off \(\supp\nu\). With \(z=x+iy\) and
\[
 \partial_z:=\tfrac12(\partial_x-i\partial_y),\qquad
 \partial_{\bar z}:=\tfrac12(\partial_x+i\partial_y),\qquad
 \Delta:=\partial_x^2+\partial_y^2,
\]
we have \(2\partial_zU_\nu=C_\nu\) off \(\supp\nu\) and \(\Delta U_\nu=2\pi\nu\), or equivalently \(\partial_{\bar z}C_\nu=\pi\nu\), as distributions; see \cite{Ransford} for these potential-theoretic conventions.

An \emph{atom} of \(\nu\) is a point of positive \(\nu\)-mass, and \(\nu^{\mathrm{at}}\) denotes the sum of its point masses. For a Borel map \(f\), the pushforward is \((f_*\nu)(E):=\nu(f^{-1}(E))\). We identify \(\C\) with \(\R^2\) and write \(\Area\) for planar area.

The notation \(O(g_n)\) and \(o(g_n)\) has its usual uniform meaning in the stated parameter range; subscripts indicate allowed dependence of the constants. Primes on one-variable holomorphic functions denote holomorphic derivatives; accumulation set is denoted by \(\acc\). The coefficient of \(t^k\) in a Taylor expansion of \(f(t)\) is \([t^k]f(t)\).

\subsection{The landscape and the reduced saddle equation}
Fix \(P\), \(z\notin Z(P)\) and \(0<\alpha<d\). Below all real logarithms are natural; complex logarithms denote local branches. Define the \emph{logarithmic landscape} of this data as
\begin{equation}\label{eq:landscape}
 L_{P,z,\alpha}(w):=\log|P(w)|-\alpha\log|w-z|.
\end{equation}
Locally this is the real part of the phase function
\[
 \Phi_{P,z,\alpha}(w):=\log P(w)-\alpha\log(w-z).
\]
The landscape tends to \(+\infty\) at \(w=z\) and \(w=\infty\), its two \emph{positive ends}, and to \(-\infty\) at the roots, its \emph{negative ends}. A \emph{finite saddle} is a point \(w\notin Z(P)\cup\{z\}\) at which \(\Phi_{P,z,\alpha}'(w)=0\); equivalently,
\begin{equation}\label{eq:finite-saddle}
 (w-z)P'(w)=\alpha P(w).
\end{equation}
The value of \(L_{P,z,\alpha}\) at a finite saddle is its \emph{critical height}.

To remove the repeated-root factors from the polynomial equation, set
\begin{equation}\label{eq:reduced-polynomials}
 Q_P(w):=\prod_{j=1}^s(w-a_j),\qquad
 N_P(w):=\sum_{j=1}^s r_j\prod_{k\ne j}(w-a_k).
\end{equation}
Then
\[
 \LogDer_P(w)=\frac{P'(w)}{P(w)}=\frac{N_P(w)}{Q_P(w)}.
\]
The polynomial \(N_P\) has degree \(s-1\) and leading coefficient \(d\), and it is coprime to \(Q_P\). For a finite saddle, \eqref{eq:finite-saddle} is therefore equivalent to
\begin{equation}\label{eq:reduced-saddle}
 (w-z)N_P(w)-\alpha Q_P(w)=0.
\end{equation}
This reduced saddle polynomial has degree \(s\) and leading coefficient \(d-\alpha\ne0\). The multiplicity of a finite saddle is its multiplicity as a zero of this reduced polynomial. It is \emph{simple}, or \emph{nondegenerate}, when that multiplicity is one; since \(Q_P(w)(w-z)\ne0\) at a finite saddle, this is equivalent to \(\Phi_{P,z,\alpha}''(w)\ne0\).

Define the reduced quotient and the rational saddle map by
\begin{equation}\label{eq:F}
 h_P(w):=\frac{Q_P(w)}{N_P(w)}=\frac{P(w)}{P'(w)},\qquad
 F_{P,\alpha}(w):=w-\alpha h_P(w).
\end{equation}
The saddle equation is \(F_{P,\alpha}(w)=z\); no zero of \(Q_P\) is a finite saddle when \(z\notin Z(P)\). At \(\alpha=1\), the rational map is exactly Newton's map; for general \(\alpha\), its Newton correction is multiplied by \(\alpha\).

\subsection{Separation of the positive ends}
For \(\ell\in\R\), consider the compact subset of the Riemann sphere \(\bar \C\) given by the condition
\[
 E_\ell(P,z,\alpha):=
 \{w\in\C\setminus(Z(P)\cup\{z\}):L_{P,z,\alpha}(w)\ge \ell\}
 \cup\{z,\infty\}.
\]
The \emph{barrier level} is
\begin{equation}\label{eq:barrier-def}
 \Blev_P(z,\alpha):=
 \sup\{\ell\in\R:z\text{ and }\infty\text{ lie in the same connected component of
 }E_\ell(P,z,\alpha)\}.
\end{equation}
It is the largest level at which the two positive ends remain connected inside \(E_\ell\). A value \(\ell\) is \emph{regular} if the level set \(\{L_{P,z,\alpha}=\ell\}\) contains no finite saddle.

Let \(\mathscr J_z\) be the family of piecewise \(C^1\) Jordan curves in \(\C\setminus(Z(P)\cup\{z\})\) with winding number one about \(z\). The next lemma proves the min--max description of the barrier together with the separation properties needed below.

\medskip
Suppress the parameters and write \(L=L_{P,z,\alpha}\) and \(E_\ell= E_\ell(P,z,\alpha)\). Near a root \(a_j\) of \(P\), \(L(w)=r_j\log|w-a_j|+O(1)\); near \(z\), \(L(w)=-\alpha\log|w-z|+O(1)\); and at infinity, \(L(w)=(d-\alpha)\log|w|+o(1)\).

The local normal form at a regular point is a smooth level arc, and at a finite saddle it is a finite union of analytic arcs with alternating sectors. Consequently the level and superlevel sets used below are locally path-connected; their connected components are path-connected.

\begin{lemma}[Separation and the min--max formula]\label{lem:barrier}
For any real \(\ell\), the set \(E_\ell\) has at most two connected components. For sufficiently large positive \(\ell\) it has two, one containing \(z\) and one containing infinity respectively. For sufficiently large negative \(\ell\) the set \(E_\ell\) is connected. If \(B=\Blev_P(z,\alpha)\), then \(E_\ell\) is connected for \(\ell\le B\) and has two components for \(\ell>B\). Moreover,
\[
 B=\inf_{\gamma\in\mathscr J_z}\max_{w\in\gamma}L(w).
\]
The value \(B\) is a critical height attained at a finite saddle.
\end{lemma}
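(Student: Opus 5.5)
The plan is to use the harmonicity of \(L\) off its singular set together with the maximum principle, and then to locate the barrier level by a Morse-type change-of-topology argument.

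\emph{Step 1: at most two components.} Let \(C\) be a component of \(E_\ell\) that contains neither \(z\) nor \(\infty\). Then \(C\) is a compact subset of \(\C\setminus(Z(P)\cup\{z\})\), and on a neighbourhood of \(C\) the function \(L\) is harmonic; it is pluriharmonic there as the real part of local branches of \(\Phi_{P,z,\alpha}\). Take a slightly smaller superlevel set \(\{L>\ell-\eps\}\) whose component containing \(C\) is a bounded open set \(W\) avoiding \(z\) and the roots, with \(L=\ell-\eps\) on \(\partial W\). The maximum principle then forces \(L\le\ell-\eps\) on \(W\), which contradicts \(L\ge\ell\) on \(C\). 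To make the choice of \(\eps\) and \(W\) legitimate I would use the local path-connectedness noted before the lemma and a standard compactness argument, which gives a component-wise separation of \(C\) from the rest of \(E_\ell\). Hence every component contains \(z\) or \(\infty\), so there are at most two.

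\emph{Step 2: extreme levels and monotonicity.} By the stated asymptotics, for \(\ell\) large the set \(E_\ell\) lies inside a small disc around \(z\) together with the complement of a large disc, so it has two components. For \(\ell\) very negative, \(E_\ell\) is the sphere minus small discs around the roots, which is connected. Since the sets \(E_\ell\) decrease as \(\ell\) increases, connectedness at a level \(\ell\) implies connectedness at every lower level; by Step 1 the only alternative is exactly two components. It remains to see that \(E_B\) itself is connected. Each \(E_\ell\) is compact, and \(E_B=\bigcap_{\ell<B}E_\ell\) is a decreasing intersection of compact connected sets (containing \(z\) and \(\infty\)), so it is connected. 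This gives the dichotomy with threshold \(B\).

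\emph{Step 3: min--max formula.} Suppose \(\gamma\in\mathscr J_z\) and \(\max_\gamma L<\ell\). Then \(\gamma\) is disjoint from \(E_\ell\), and \(\gamma\) separates \(z\) from \(\infty\), so \(E_\ell\) is disconnected and \(\ell>B\). This gives \(\inf\max\ge B\). Conversely, let \(\ell>B\). Then \(E_\ell=K_z\sqcup K_\infty\), and the open set \(\{L<\ell\}\cup Z(P)\) separates the two compact pieces. Choose a piecewise \(C^1\) Jordan curve in that open set winding once about \(z\) and avoiding the finitely many roots; a polygonal curve in a component of the complement of \(K_\infty\) containing \(K_z\) works, after a small perturbation. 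Its maximum is \(<\ell\), so \(\inf\max\le B\).

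\emph{Step 4: \(B\) is a saddle height.} This is the step I expect to be the main obstacle. Suppose no finite saddle has height in \([B-\eps,B+\eps]\); there are finitely many saddles, so such an \(\eps\) exists if \(B\) is not a critical height. Near \(\ell=B\) the ends contribute nothing, since \(L\) is \(\pm\infty\) there. The normalized gradient flow of \(L\), cut off away from \(z\), the roots and \(\infty\), then gives a deformation retraction of \(E_{B-\eps}\) onto \(E_{B+\eps}\); the regular-level normal form stated before the lemma supplies the local structure needed for this. Such a retraction preserves the number of components, which contradicts the jump from one to two components at \(B\). To handle a possibly degenerate saddle I rely only on the absence of critical points in the slab, so degeneracy plays no role. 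The existence of at least one saddle follows from the reduced saddle polynomial \eqref{eq:reduced-saddle}, which has degree \(s\ge1\) and whose roots avoid \(Z(P)\cup\{z\}\) when \(z\notin Z(P)\).
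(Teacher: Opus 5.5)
Your proposal is correct and follows essentially the same route as the paper. The maximum principle forces every superlevel component to contain a positive end. The extreme-level descriptions, monotonicity and the nested-intersection property of continua give the dichotomy at \(B\). The min--max formula comes from the duality between joining paths and separating Jordan curves. The flow of \(\nabla L/|\nabla L|^2\) across a saddle-free level band shows that \(B\) is a critical height.

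The differences from the paper are cosmetic. You apply the maximum principle on a component of a slightly lower superlevel set, and you get the lower min--max bound from the separation dichotomy rather than from a connecting path. One small point: in Step 2, connectedness passes to lower levels because of Step 1 (every component contains an end), not because of nesting alone.
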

\begin{proof}
Every superlevel component contains a positive end. Otherwise the component is compactly contained in the harmonicity domain of \(L\) and has nonempty interior: a regular boundary point has a superlevel side, whereas an isolated stationary point cannot be a local maximum of a nonconstant harmonic function by the maximum principle. There are only two positive ends, so there are at most two components.

For large positive \(\ell\), the superlevel set consists of disjoint neighborhoods of \(z\) and infinity. For sufficiently negative \(\ell\), it contains the complement of small disjoint disks about the roots; an additional component would again have to contain a positive end. It is therefore connected. As the level decreases, the positive-end components can merge but cannot separate. The connected levels form a half-line whose endpoint belongs to it, by the nested-intersection property of compact connected sets. The high- and low-level descriptions show that this endpoint is finite.

For \(\ell<B\), there is a path in \(E_\ell\) from \(z\) to infinity, after an arbitrarily small decrease in the level if needed. Every curve in \(\mathscr J_z\) meets this path, and hence has maximum at least \(\ell\). Conversely, if \(\ell>B\) is regular, a Jordan curve in \(\{L<\ell\}\) separates the two compact superlevel components. One may take the outer boundary of a small regular neighborhood of the component containing \(z\) and perturb it to avoid the finitely many roots. Letting \(\ell\downarrow B\) proves the min--max formula.

Finally, on a compact level band without saddles, the vector field \(\nabla L/|\nabla L|^2\) identifies neighboring levels. Finite level bands avoid all ends. Their topology can therefore change only at a saddle, so \(B\) is a critical height.
\end{proof}

\begin{lemma}[Continuity of the barrier]\label{lem:barrier-cont}
For monic polynomials \(P\) of a fixed degree \(d\), \(\Blev_P(z,\alpha)\) is jointly continuous in the coefficients of \(P\), in \(z\notin Z(P)\), and in \(0<\alpha<d\).
\end{lemma}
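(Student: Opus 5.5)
We prove continuity through the min--max formula of Lemma~\ref{lem:barrier}, establishing upper and lower semicontinuity separately. Fix a base point \((P_0,z_0,\alpha_0)\) with \(z_0\notin Z(P_0)\), and write \(B_0=\Blev_{P_0}(z_0,\alpha_0)\). Let \((P_k,z_k,\alpha_k)\to(P_0,z_0,\alpha_0)\). The roots of \(P_k\) converge, as a multiset, to those of \(P_0\), so all of them eventually lie in a small neighborhood \(D_\delta\) of \(Z(P_0)\): a union of disks of radius \(\delta\), disjoint from \(z_0\) and from a fixed disk around \(z_0\). On any compact set \(K\subset\C\setminus(D_\delta\cup\{z_0\})\), the landscapes \(L_k:=L_{P_k,z_k,\alpha_k}\) converge uniformly to \(L_0\), since \(\log|P_k|\to\log|P_0|\) uniformly where \(P_0\) is bounded away from zero and \(\alpha_k\log|w-z_k|\to\alpha_0\log|w-z_0|\) uniformly away from \(z_0\).

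\emph{Upper semicontinuity.} Given \(\eps>0\), Lemma~\ref{lem:barrier} provides a Jordan curve \(\gamma\in\mathscr J_{z_0}\) with \(\max_\gamma L_0<B_0+\eps\). The curve \(\gamma\) is a compact set avoiding \(Z(P_0)\cup\{z_0\}\), so for small \(\delta\) it lies in such a \(K\). For large \(k\) it still winds once around \(z_k\) and avoids \(Z(P_k)\), so \(\gamma\in\mathscr J_{z_k}\). Uniform convergence then gives \(\limsup_k\Blev_{P_k}(z_k,\alpha_k)\le B_0+\eps\).

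\emph{Lower semicontinuity.} This is the main obstacle, because a curve for \(L_k\) may pass near the moving roots, where uniform convergence fails. We argue instead with the connectivity side of Lemma~\ref{lem:barrier}. For \(\ell<B_0\), the set \(E_\ell(P_0,z_0,\alpha_0)\) is connected and contains \(z_0\) and \(\infty\), so by path-connectedness there is a path \(\beta\) from \(z_0\) to \(\infty\) with \(L_0\ge\ell\) along it. Since \(L_0\to-\infty\) at the roots, \(\beta\) avoids a neighborhood of \(Z(P_0)\).

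Three pieces of the path need control:

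\begin{itemize}
\item Near \(z_0\): we have \(L_k(w)\ge-\alpha_k\log|w-z_k|-C\) uniformly for \(w\) in a fixed small disk around \(z_0\). Shrinking the disk, its whole image lies above \(\ell\), so the endpoint segment of \(\beta\) can be rerouted to \(z_k\).
\item Near \(\infty\): likewise \(L_k(w)\ge(d-\alpha_k)\log|w|-C\) for large \(|w|\), uniformly in \(k\).
\item The compact middle portion of \(\beta\): here \(L_k\ge L_0-o(1)\ge\ell-o(1)\).
\end{itemize}

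Together these show that \(z_k\) and \(\infty\) are joined inside \(E_{\ell-\eps}(P_k,z_k,\alpha_k)\) for large \(k\). Hence \(\liminf_k\Blev_{P_k}(z_k,\alpha_k)\ge\ell-\eps\), and letting \(\ell\uparrow B_0\) and \(\eps\downarrow0\) completes the proof.

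The key uniformity checks are the lower bounds near \(z\) and near \(\infty\). The constant \(C\) must be uniform over the convergent family, which holds because the roots of \(P_k\) stay in a fixed compact set away from the disk around \(z_0\), and the exponents satisfy \(\alpha_k\in[\alpha_0/2,(d+\alpha_0)/2]\).
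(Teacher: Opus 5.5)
Your proposal is correct and follows essentially the same route as the paper. The upper estimate uses a near-optimal curve in \(\mathscr J_{z_0}\) that persists under perturbation. The lower estimate uses a path joining the two positive ends inside a superlevel set, truncated near \(z_0\) and near \(\infty\), and reattached there. The reattachment relies on the uniform end bounds that come from \(\alpha_k\) and \(d-\alpha_k\) staying positive and from the polynomials staying nonzero near the observation point. Your explicit bookkeeping of the constant \(C\) and of the bounds on \(\alpha_k\) just spells out what the paper compresses into one line.
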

\begin{proof}
Write \(B=\Blev_P(z,\alpha)\). For the upper estimate, choose a curve in \(\mathscr J_z\) whose maximum is within \(\eps\) of \(B\). The curve avoids the roots, the landscape varies uniformly on it, and its winding number about the moving observation point is unchanged by a small perturbation.

For the lower estimate, join the positive ends by a path in \(\{L>B-\eps\}\) and truncate it in small neighborhoods of \(z\) and infinity. The remaining compact path avoids all roots and persists under perturbation. The end neighborhoods can be chosen so that the perturbed landscapes are uniformly above \(B-2\eps\): the polynomial stays nonzero near the moving observation point, and \(\alpha\) and \(d-\alpha\) stay positive. Reattach the path to the two ends and apply \eqref{eq:barrier-def}. Since \(\eps\) is arbitrary, the two estimates prove continuity. Root collisions away from \(z\) do not affect the argument.
\end{proof}

\subsection{Connecting saddles and barrier contours}
Fix \((P,z,\alpha)\) and write \(B=\Blev_P(z,\alpha)\). The strict superlevel set \(\{L_{P,z,\alpha}>B\}\), with its positive ends adjoined, has exactly two components, denoted by \(\Omega_z\) and \(\Omega_\infty\). Each contains a positive end, by the maximum principle; a path joining the ends in the strict superlevel would contradict the definition of \(B\). Moreover,
\[
 E_B=\overline{\Omega_z}\cup\overline{\Omega_\infty}.
\]
Indeed, every point of the critical level has a higher-level sector. The closures meet because \(E_B\) is connected. They meet only at saddles, since the higher side of a regular level arc is locally connected.

A \emph{connecting saddle} is an element of \(\overline{\Omega_z}\cap\overline{\Omega_\infty}\). A saddle is \emph{at barrier height} if its height equals \(B\); such a saddle need not be connecting. If there is exactly one connecting saddle and it is simple, we call it the \emph{separating saddle}.

A \emph{barrier contour} is a positively oriented piecewise smooth Jordan curve in \(\mathscr J_z\) which passes once through each connecting saddle and has \(L_{P,z,\alpha}<B\) everywhere else. A \emph{one-saddle separating contour} is a barrier contour with one simple connecting saddle, smooth through that saddle, along which the landscape has a nondegenerate maximum: its second derivative in a regular real parametrization is negative. Thus \emph{connecting} is a topological condition, \emph{separating} means unique and simple, and a one-saddle separating contour is the analytic contour used in the Gaussian asymptotic.

Figure~\ref{fig:landscape} shows these objects for a quartic polynomial. At an observation point off the limiting support \(\supp\mu_{\alpha,P}\) the components \(\Omega_z\) and \(\Omega_\infty\) touch at a single simple saddle, see Lemmas~\ref{lem:multiple-gates} and~\ref{lem:harmonic-gate} below. At a point of \(\supp\mu_{\alpha,P}\) whose connecting saddles are simple, there are at least two of them, since a single simple connecting saddle makes the barrier harmonic nearby.

\begin{figure}[htbp]
\centering
\includegraphics[width=0.49\textwidth]{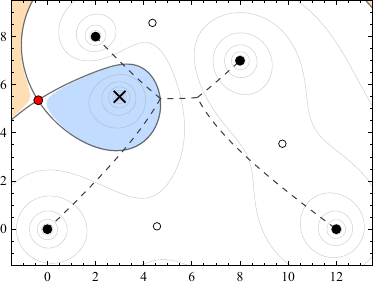}\hfill
\includegraphics[width=0.49\textwidth]{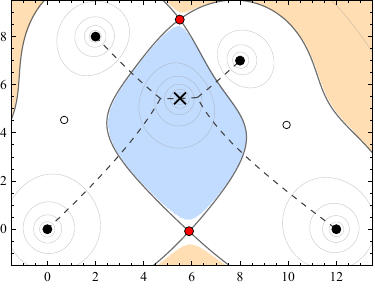}
\caption{The landscape \(L_{P,z,\alpha}\) in the \(w\)-plane for \(P(z)=z(z-12)(z-2-8i)(z-8-7i)\) and \(\alpha=\frac{13}{10}\): contour lines (gray) and the critical level \(\{L_{P,z,\alpha}=\Blev_P(z,\alpha)\}\) (black). The two components \(\Omega_z\) (blue) and \(\Omega_\infty\) (orange) of \(\{L_{P,z,\alpha}>\Blev_P(z,\alpha)\}\) touch at the connecting saddles (red); the other finite saddles are white, the roots of \(P\) are black disks, and the cross is the observation point \(z\). Dashed: \(\supp\mu_{\alpha,P}\), drawn in the same plane for orientation. Left: \(z=3+\frac{11}2i\notin\supp\mu_{\alpha,P}\), with one simple connecting saddle. Right: \(z\in\supp\mu_{\alpha,P}\), with two connecting saddles of the same height.}
\label{fig:landscape}
\end{figure}

\begin{remark}[Which saddles matter]\label{rem:not-max}
A critical height can exceed \(B\): the barrier is not the maximum over all saddles. Nor does equality with \(B\) force a saddle to contribute to a contour. What matters is contact between the two positive-end components. The distinction already occurs for quadratic polynomials.
\end{remark}

Near a finite saddle \(w_j\), choose logarithms and write
\[
 \Phi(w)=\log P(w)-\alpha\log(w-z),\qquad \Xi_j=\Phi(w_j).
\]
Its critical height is \(\Ree\Xi_j\). Its \emph{phase order} \(q_j\ge2\) is the first nonzero derivative order of \(\Phi-\Xi_j\):
\[
 \Phi'(w_j)=\cdots=\Phi^{(q_j-1)}(w_j)=0,
 \qquad \Phi^{(q_j)}(w_j)\ne0.
\]
Factoring this zero and taking a local holomorphic \(q_j\)th root gives a coordinate \(v_j\), with \(v_j(w_j)=0\) and \(v_j'(w_j)\ne0\), such that
\[
 \Phi(w)=\Xi_j-v_j(w)^{q_j}.
\]
The \emph{higher sectors} and \emph{lower sectors} are the alternating regions where \(-\Ree(v_j^{q_j})\) is positive and negative, respectively. The central rays of the lower sectors, called \emph{decreasing rays}, have the form \(v_j=\zeta t\) with \(t\ge0\) and \(\zeta^{q_j}=1\). When \(q_j=2\), the two decreasing rays form the \emph{descending axis}. A simple saddle has phase order two. Enumerating the connecting saddles as \(w_1,\ldots,w_k\), the reduced saddle equation gives
\begin{equation}\label{eq:phase-order-budget}
 q_j\ge2,\qquad \sum_{j=1}^k(q_j-1)\le s.
\end{equation}

We require a precise fact about the outer boundary of a level set. In the next lemma, a \emph{finite level graph} means a compact level set made of finitely many smooth arcs meeting at finitely many saddle vertices; a smooth closed component is regarded as a cycle with an auxiliary vertex. To \emph{fill} a compact planar set is to adjoin every bounded component of its complement.

\begin{lemma}[Boundary cycle of a full finite-graph continuum]
\label{lem:full-boundary-cycle}
Let \(K\subset\C\) be nonempty, compact, connected, and \emph{regular closed}, that is, \(K=\overline{\operatorname{int}K}\). Assume that \(U:=\C\setminus K\) is connected, and suppose that \(\partial K\) is contained in a finite embedded graph \(G\). At every vertex \(v\in\partial K\), assume that the local sectors belonging to \(U\) form one nonempty proper cyclic block. Then \(\partial K\) is a single Jordan cycle in \(G\). At each vertex it uses exactly the two half-edges that bound the \(U\)-block.
\end{lemma}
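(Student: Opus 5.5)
The plan is to give a purely combinatorial/topological argument on the finite graph \(G\): build the boundary as a closed walk in \(G\), show that the walk closes up, and then show that it is simple and exhausts \(\partial K\).

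\emph{Step 1: local structure.} First I will describe \(\partial K\) near each of its points.
\begin{itemize}
\item At an interior point of an edge of \(G\) lying in \(\partial K\), the two local sides are \(K\) and \(U\). Both cannot be \(U\), because \(K\) is regular closed, so a point of \(K\) has interior points of \(K\) arbitrarily close. Both cannot be \(K\), because the point lies in \(\partial K\). Hence exactly one side is \(U\).
\item At a vertex \(v\in\partial K\), the hypothesis says the \(U\)-sectors form one nonempty proper cyclic block. This block is bounded by exactly two half-edges of \(G\) at \(v\) (these coincide as one edge only in degenerate loop situations, which I handle separately). These two half-edges lie in \(\partial K\), since they separate a \(U\)-sector from a \(K\)-sector.
\item Every other half-edge at \(v\) has \(K\)-sectors on both sides, or \(U\)-sectors on both sides. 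In the second case the half-edge lies in the interior of \(U\)'s closure and is not in \(\partial K\), since \(K\) is closed and regular.
\end{itemize}
So \(\partial K\) is a subgraph \(G_0\) of \(G\) in which every vertex has degree exactly \(2\). Auxiliary vertices on smooth closed components behave the same way.

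\emph{Step 2: decomposition into Jordan cycles.} A finite graph with all degrees \(2\) is a disjoint union of simple cycles \(C_1,\dots,C_r\), each of them a Jordan curve. It remains to show \(r=1\).
\begin{itemize}
\item Each \(C_i\) bounds a Jordan domain \(D_i\).
\item Since \(U\) is connected and unbounded, and \(U\cap C_i=\emptyset\), the set \(U\) lies entirely inside or entirely outside each \(D_i\). It cannot lie inside, being unbounded, so \(U\subseteq \C\setminus\overline{D_i}\). Hence \(D_i\subseteq K\) for every \(i\), because \(D_i\) is open and misses \(U\).
\item Locally along \(C_i\), one side lies in \(D_i\subseteq K\), so the \(U\)-side must be the exterior side.
\end{itemize}

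\emph{Step 3: ruling out several cycles.} Suppose \(r\ge2\).
\begin{itemize}
\item If \(C_j\subset D_i\), then points of \(C_j\) lie in the open set \(D_i\subseteq K\), so they are interior points of \(K\). This contradicts \(C_j\subseteq\partial K\).
\item So the closed domains \(\overline{D_i}\) are pairwise disjoint: the cycles are disjoint, and no cycle lies in another's domain.
\item The set \(\C\setminus\bigcup\overline{D_i}\) is an open set whose boundary is \(\bigcup C_i\). Any point of it near a cycle lies on the \(U\)-side, hence in \(U\). A point of \(K\) in this open set would be an interior point of \(K\), or else a boundary point not on any cycle, which is impossible. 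So \(U=\C\setminus\bigcup\overline{D_i}\), and therefore \(K=\bigcup\overline{D_i}\).
\item This set is disconnected when \(r\ge2\), contradicting the connectedness of \(K\). Hence \(r=1\).
\end{itemize}
Step 1 then also gives the claim about the two half-edges used at each vertex.

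\emph{Main obstacle.} The delicate point is Step 1's degree count: verifying that a half-edge flanked by two \(U\)-sectors is not in \(\partial K\), and handling loop edges. For the first, I will note that \(K\) is regular closed, so \(\partial K=\partial(\operatorname{int}K)\). Every boundary point therefore borders a \(K\)-sector, which rules out such half-edges. Loops at a vertex are handled by subdividing with an auxiliary vertex.
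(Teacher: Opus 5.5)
Your proof is correct in substance, but it closes the argument by a different route from the paper. Your Step~1 is the same local analysis the paper uses. Every open edge and every half-edge lies in \(\partial K\) exactly when its two sides carry different labels. So \(\partial K\) is a finite embedded graph with all degrees two, and at each vertex it uses the two half-edges bounding the \(U\)-block.

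The difference is in proving that there is only one cycle. The paper gets connectedness of \(\partial K\) in one line from the unicoherence of the plane: \(K\) and \(\overline U\) are closed and connected, they cover \(\C\), and they intersect exactly in \(\partial K\). You instead apply the Jordan curve theorem cycle by cycle. Connectedness and unboundedness of \(U\) put each Jordan domain \(D_i\) inside \(K\). This excludes nesting and forces \(K=\bigcup_i\overline{D_i}\), so connectedness of \(K\) gives \(r=1\). Your route avoids unicoherence and yields slightly more, namely that \(K\) is the closed Jordan domain bounded by its boundary cycle. The paper's route is shorter and needs no bookkeeping of nesting or of complementary components.

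Two of your steps need an extra sentence.

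\emph{Disjointness of the closed domains.} Pairwise disjointness of the \(\overline{D_i}\) uses that two Jordan domains with disjoint boundaries are either disjoint or nested. Nesting would put one cycle inside the other's domain, which you have excluded.

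\emph{No interior points of \(K\) outside the domains.} Put \(W:=\C\setminus\bigcup_i\overline{D_i}\). Knowing that points of \(W\) near a cycle lie in \(U\) does not yet give \(W\cap\operatorname{int}K=\varnothing\); you need a connectedness argument. Since \(W\) misses \(\partial K=\bigcup_i C_i\), each component of \(W\) is the disjoint union of the open sets \(W\cap\operatorname{int}K\) and \(W\cap U\), and hence lies in one of them. A bounded component has a boundary point on some \(C_i\), so it contains points of \(U\). An unbounded component contains points outside the compact set \(K\). Thus \(W\subseteq U\), and your conclusion \(K=\bigcup_i\overline{D_i}\) follows.
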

\begin{proof}
In a small neighborhood of a regular point of \(G\), each of the two sides of the edge lies entirely in \(U\) or entirely in \(\operatorname{int}K\), because \(\partial K\subset G\). If both sides lie in \(U\), regular closedness implies that the intervening edge also lies in \(U\): a point of \(K\) there could not be approached by interior points of \(K\). If both sides lie in \(\operatorname{int}K\), closedness of \(K\) puts the whole neighborhood in \(K\). Thus a regular point lies in \(\partial K\) exactly when the two sides have opposite labels, and then the boundary is locally the entire edge. In particular, each open edge of \(G\) is either wholly in \(\partial K\) or disjoint from it.

The same argument applies to the half-edges in a vertex neighborhood: a half-edge belongs to \(\partial K\) exactly when its adjacent sectors have different labels. Half-edges between two \(U\)-sectors belong to \(U\), so no boundary slit is overlooked. The single nonempty proper \(U\)-block therefore has exactly two boundary half-edges. Every boundary vertex has degree two in the boundary subgraph. This is a nonempty finite embedded graph with degree two at every vertex.

The plane is unicoherent: two closed connected sets covering it have connected intersection. Applying this to \(K\) and \(\overline U\) shows that \(\partial K=K\cap\overline U\) is connected. A connected finite graph of degree two is a single cycle, and its embedded realization is a Jordan curve. This includes a loop edge or a pair of parallel edges.
\end{proof}

\begin{lemma}[Outer-boundary regularization]\label{lem:outer-regularization}
Let \(G=\{L_{P,z,\alpha}=B\}\), let \(\Omega_{\mathrm{ext}}\) be the unbounded component of \(\C\setminus\overline{\Omega_z}\), and set
\[
 \widehat{\Omega}_z:=\C\setminus\Omega_{\mathrm{ext}}.
\]
Thus \(\widehat{\Omega}_z\) is the filled set of \(\overline{\Omega_z}\). Then \(\partial\widehat{\Omega}_z\) is one Jordan cycle in the finite level graph \(G\). Every connecting saddle lies on this cycle and is visited once. At such a saddle the higher sectors belonging to \(\Omega_z\) and \(\Omega_\infty\) form two cyclic blocks. The two lower sectors between these blocks are the exterior passages on the two sides of the cycle.

Choose disjoint small disks about the vertices of this cycle and disjoint thin collars about its remaining edges. Its edges can be moved into \(\{L_{P,z,\alpha}<B\}\) within the collars. In each nonconnecting vertex disk they can be joined in the lower sector on the exterior side. In a connecting vertex disk they can instead be joined through the vertex along any prescribed pair of central rays in the two exterior lower sectors. These replacements give a single Jordan curve, with no repeated vertex or additional component.
\end{lemma}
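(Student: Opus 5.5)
The plan is to apply Lemma~\ref{lem:full-boundary-cycle} to \(K=\widehat\Omega_z\) with \(G=\{L_{P,z,\alpha}=B\}\), and then do the collar-and-disk surgery by hand.

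\textbf{Verifying the hypotheses.} First, \(G\) is a finite level graph. The level \(B\) is bounded away from the ends, since \(L\to\pm\infty\) there. Hence \(G\) is a compact real-analytic level set of a harmonic function. It has finitely many saddle vertices, because the reduced saddle equation \eqref{eq:reduced-saddle} has at most \(s\) roots, and its arcs are smooth elsewhere. Next, \(\overline{\Omega_z}\) is compact and connected. Filling it gives \(K\) compact and connected, with \(U=\Omega_{\mathrm{ext}}\) connected by construction. Regular closedness holds because every point of \(\partial\Omega_z\subset G\) lies in the closure of a higher sector belonging to \(\Omega_z\); filling only adds interior. Finally \(\partial K\subset\partial\Omega_z\subset G\).

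\textbf{The one-block condition.} At a vertex \(v\in\partial K\), the \(U\)-sectors must form one nonempty proper cyclic block. Near a saddle of phase order \(q\) the sectors alternate higher and lower. The higher sectors belong to \(\Omega_z\), to \(\Omega_\infty\), or to neither. The sectors lying in \(K\) are the \(\Omega_z\)-higher sectors, together with any lower or higher sectors enclosed by loops of \(\overline{\Omega_z}\).

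I expect this to be the main obstacle. The argument I would try is planar separation. Suppose two \(U\)-sectors at \(v\) were separated on both sides by \(K\)-sectors. Choose two \(\Omega_z\)-higher sectors, one on each side. They are joined through \(\Omega_z\) by a path, and closing this path through \(v\) gives a Jordan curve inside \(K\cup\{v\}\). One of the two \(U\)-sectors then lies inside this curve. That sector would be a bounded complementary region, so it would have been filled and could not lie in \(U\), a contradiction.

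Properness and nonemptiness at \(v\) both follow from \(v\in\partial K\). The lemma then yields a single Jordan cycle that uses exactly two half-edges at each vertex.

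\textbf{Connecting saddles.} Each connecting saddle lies in \(\overline{\Omega_z}\cap\overline{\Omega_\infty}\). Since \(\Omega_\infty\subset U\), such a saddle lies on \(\partial K\), and it is visited once because the cycle is Jordan. At the saddle, the \(\Omega_\infty\)-higher sectors lie in \(U\). By the same separation argument, the \(\Omega_z\)-sectors form one cyclic block and the \(\Omega_\infty\)-sectors form another. Each sector strictly between the two blocks must be lower; otherwise it would be a higher sector of \(\Omega_z\) or \(\Omega_\infty\) joining a block. Since the \(U\)-block is a single block, there is exactly one such lower sector on each side. These two lower sectors are the exterior passages.

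\textbf{Surgery.} Off the vertex disks each edge is a regular level arc. Flowing along \(-\nabla L\) for a short time inside its collar moves the edge into \(\{L<B\}\) on the \(U\)-side. At a nonconnecting vertex the two boundary half-edges bound the \(U\)-block, which is a lower sector or contains one adjacent to them. There the pushed arcs are joined by an arc in that exterior lower sector near \(v\). At a connecting vertex the arcs are routed radially into \(v\) along the chosen central rays of the two exterior lower sectors, where \(L<B\) except at \(v\).

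All modifications are local and disjoint and follow the cyclic order of the original Jordan cycle. The result is therefore a single Jordan curve, isotopic to \(\partial\widehat\Omega_z\), with no extra components or repeated vertices.
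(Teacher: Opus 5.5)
Your route is the paper's: apply Lemma~\ref{lem:full-boundary-cycle} to \(K=\widehat\Omega_z\), \(U=\Omega_{\mathrm{ext}}\), get the one-block condition by planar separation, then do the collar-and-disk surgery. Closing a path in \(\Omega_z\) through \(v\) and using that \(U\) is connected, unbounded and disjoint from that curve is a legitimate substitute for the paper's noncrossing observation.

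The gap is in the sentence ``Choose two \(\Omega_z\)-higher sectors, one on each side.'' You yourself allow \(K\)-sectors lying in bounded components of \(\C\setminus\overline{\Omega_z}\). Nothing you say prevents one of the two separating intervals from consisting only of such hole sectors. In that case there is no \(\Omega_z\)-sector through which to close your curve, and the argument stops. The paper treats exactly this point separately. Two adjacent sectors that both lie outside \(\overline{\Omega_z}\) are joined across the intervening level ray, which away from \(v\) has a neighborhood missing \(\Omega_z\). So they lie in the same component of \(\C\setminus\overline{\Omega_z}\). Hence a \(K\)-sector adjacent to a \(U\)-sector must be an \(\Omega_z\)-sector, and each interval between two \(U\)-blocks contains one.

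Relatedly, your claim that a higher sector may belong to neither \(\Omega_z\) nor \(\Omega_\infty\) is false, since \(\{L>B\}\) has exactly these two components. You silently use the correct version later (``otherwise it would be a higher sector of \(\Omega_z\) or \(\Omega_\infty\)''). With it, every hole sector is a lower sector whose two neighbours are both \(\Omega_z\)-sectors.

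The same adjacency fact is needed, and missing, in later steps.
\begin{itemize}
\item At a connecting saddle, the single-block property alone does not show that the two interface lower sectors are exterior rather than holes. They lie in \(U\) because each is adjacent to an \(\Omega_\infty\)-sector.
\item At a nonconnecting vertex, the \(U\)-block cannot contain a higher sector: such a sector would be an \(\Omega_\infty\)-sector and would make \(v\) connecting. So the block is exactly one lower sector. Your hedge ``or contains one adjacent to them'' would not let you join the two pushed arcs below \(B\).
\item The pushing step also uses that the \(U\)-side of a boundary edge is its lower side. This holds because the higher half-disk at a point of \(\partial\Omega_z\) meets, hence lies in, \(\Omega_z\).
\end{itemize}
Finally, finiteness of \(G\) needs more than finiteness of the saddle set. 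You must also rule out infinitely many edges or closed regular components. The paper does this by noting that their accumulation in the compact set \(G\) would contradict the local regular or saddle normal form. These are all short repairs, after which your proof coincides with the paper's.
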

\begin{proof}
The level set \(G\) stays in a compact region away from all ends. The reduced saddle equation gives only finitely many critical points. At a regular point the real-analytic implicit-function theorem gives one smooth level arc, while at a saddle the phase normal form gives finitely many analytic branches. If \(G\) had infinitely many distinct edges or closed regular components, points on them would accumulate in \(G\); the corresponding local normal form would then contain more than its one regular branch or its finitely many saddle branches. Thus \(G\) is a finite embedded graph.

Put \(K=\widehat{\Omega}_z\) and \(U=\Omega_{\mathrm{ext}}\). Then \(K\) is compact and connected, \(U=\C\setminus K\) is connected, and \(\partial K\subseteq\partial\Omega_z\subseteq G\). Moreover \(\Omega_\infty\subset U\), so filling cannot capture the infinity component. The set \(\overline{\Omega_z}\) is regular closed. Each bounded component of its complement is open and is included in \(K\); together with \(\Omega_z\) these components have closure \(K\). Thus \(K\) is also regular closed, as required in Lemma~\ref{lem:full-boundary-cycle}.

We use the following noncrossing observation. Two disjoint connected open sets cannot occupy four alternating sectors at one vertex. Indeed, join two sector points in each open set, truncate the two disjoint arcs outside a small disk, and extend their endpoints radially to the circle. In the complementary disk on the sphere, two disjoint arcs would then join alternating boundary points, contrary to the Jordan curve theorem.

At a connecting saddle, apply this observation to \(\Omega_z\) and \(\Omega_\infty\), reading only the cyclic order of the higher sectors. Since every higher sector has one of these two labels and both labels occur, the sectors carrying each label form one cyclic block.

Now consider all local sectors at a vertex of \(\partial K\). Every sector outside \(\overline{\Omega_z}\) belongs to one component of \(\C\setminus\overline{\Omega_z}\). Two adjacent such sectors must have the same component label: otherwise a thin neighborhood crossing the intervening level ray, but avoiding the vertex, would join them in \(\C\setminus\overline{\Omega_z}\). Hence sectors with different complement labels are separated in the cyclic order by an \(\Omega_z\)-sector. Consequently, if the unbounded component \(U\) occurred in two cyclic blocks, each of the two intervals between those blocks would contain an \(\Omega_z\)-sector. This would give four alternating sectors belonging to the two connected open sets \(U\) and \(\Omega_z\), contrary to the noncrossing observation. Thus the sectors belonging to \(U\) form one block at every vertex of \(\partial K\).

Lemma~\ref{lem:full-boundary-cycle} now shows that \(\partial K\) is one Jordan cycle and that each of its vertices is visited once. At a connecting saddle, the two interfaces between the \(\Omega_z\)- and \(\Omega_\infty\)-blocks are lower sectors in \(U\); these are precisely the two exterior passages. Every connecting saddle lies on the cycle, because it is approached both from \(\Omega_z\) and from \(\Omega_\infty\subset U\).

For the regularization, choose vertex disks with disjoint closures, each meeting the cycle only in its two incident arcs, and choose disjoint rectangular collars around the remaining edges. Move each regular edge slightly into its lower \(U\)-side. In a nonconnecting vertex disk, join the displaced ends inside the unique exterior lower sector. In a connecting disk, join each displaced end to the prescribed central ray in its exterior lower sector and join the two rays at the vertex. Each replacement is a simple arc with the same two boundary endpoints as the original cycle segment. The disks and collars are disjoint, so the replacements preserve one Jordan curve and introduce neither a repeated vertex nor an additional component. The curve keeps \(z\) on its bounded side and infinity on its unbounded side.
\end{proof}

\begin{lemma}[A barrier contour through all connecting saddles]
\label{lem:arbitrary-contour}
There is a barrier contour \(\gamma\) for \((P,z,\alpha)\). Near each \(w_j\) there is a holomorphic coordinate \(v_j\), with \(v_j(w_j)=0\) and \(v_j'(w_j)\ne0\), such that
\[
 \Phi(w)=\Xi_j-v_j(w)^{q_j}.
\]
The contour enters and leaves \(w_j\) along distinct decreasing rays \(v_j=\zeta_j^-t\) and \(v_j=\zeta_j^+t\), \(t\ge0\), where \((\zeta_j^\pm)^{q_j}=1\). Outside small coordinate neighborhoods its compact remainder has a strict height gap below \(B=\Blev_P(z,\alpha)\).
\end{lemma}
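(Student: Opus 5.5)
The contour will come directly from the regularization in Lemma~\ref{lem:outer-regularization}; what remains is to choose the central rays there and then verify the strict height gap. Put \(B=\Blev_P(z,\alpha)\) and list the connecting saddles as \(w_1,\dots,w_k\). Lemma~\ref{lem:barrier} shows that \(B\) is a critical height, and since \(E_B=\overline{\Omega_z}\cup\overline{\Omega_\infty}\) is connected, the closures meet, so \(k\ge1\). At each \(w_j\) let \(q_j\) be its phase order. The normal form \(\Phi=\Xi_j-v_j^{q_j}\) was already established before \eqref{eq:phase-order-budget}, so the coordinates \(v_j\) are available. We also need \(\Ree\Xi_j=B\). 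This holds because \(w_j\) is a limit of points of \(\Omega_z\), where \(L>B\), and of points in the complementary lower sectors, so continuity gives \(L(w_j)=B\).

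Lemma~\ref{lem:outer-regularization} says that \(\partial\widehat\Omega_z\) is a single Jordan cycle in the level graph \(G\). Each connecting saddle lies on it exactly once. At each such saddle, two exterior lower sectors separate the \(\Omega_z\)-block from the \(\Omega_\infty\)-block. These two sectors are distinct, and each has a central ray \(v_j=\zeta t\) with \(\zeta^{q_j}=1\). We take \(\zeta_j^-\) and \(\zeta_j^+\) to be these two central rays; they are distinct because the sectors are. The regularization then produces a single Jordan curve \(\gamma\), and \(\gamma\) keeps \(z\) inside and \(\infty\) outside. Along a decreasing ray we have \(L=B-t^{q_j}\), so \(L<B\) away from \(w_j\) itself. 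The regularization can also be carried out so that \(\gamma\) avoids the roots, since the roots have \(L=-\infty\) and lie off \(G\). Orienting \(\gamma\) positively then gives winding number one about \(z\), so \(\gamma\in\mathscr J_z\).

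The remaining point is the height gap, namely that \(L<B\) everywhere on \(\gamma\) except at the \(w_j\), with a uniform gap outside small coordinate disks. The pieces of \(\gamma\) come in three kinds.
\begin{itemize}
\item The displaced edges lie in the open set \(\{L<B\}\) and are compact.
\item In a nonconnecting vertex disk, the joining arc lies inside an open lower sector. It therefore stays strictly below \(B\), including at its corner near the vertex, provided the arc is kept away from the vertex.
\item In a connecting disk, the curve runs along the rays, where \(L=B-t^{q_j}\).
\end{itemize}
Every point of \(\gamma\) outside the disks \(\{|v_j|<\eps\}\) therefore has \(L<B\). By compactness the maximum of \(L\) over this remainder is strictly less than \(B\), which gives the gap.

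The step needing the most care is at the nonconnecting vertices of the cycle. These are saddles at height \(B\) (or regular vertices) that are touched only by \(\Omega_z\) and the exterior component \(U\). The concern is that such a vertex might be forced onto \(\gamma\), which the definition of a barrier contour does not allow. Lemma~\ref{lem:outer-regularization} guarantees a unique exterior lower sector there, because \(U\) forms a single block. So the curve can be routed through that sector at a positive distance from the vertex, and the vertex is avoided. This completes the construction of the barrier contour.
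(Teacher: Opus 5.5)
Your proposal is correct and follows the paper's proof. You apply the outer-boundary regularization of Lemma~\ref{lem:outer-regularization} with the central rays of the two exterior lower sectors prescribed at each connecting saddle, observe that the resulting Jordan curve winds once about \(z\), avoids the roots and lies in \(\{L<B\}\) away from the connecting saddles, and obtain the strict gap from compactness of the remainder; your additional checks (that \(\Ree\Xi_j=B\), that \(L=B-t^{q_j}\) on decreasing rays, and the routing past nonconnecting vertices) only make explicit what the paper's short argument leaves implicit.
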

\begin{proof}
Use the local coordinates constructed above and the two lower sectors between the higher-sector blocks identified in Lemma~\ref{lem:outer-regularization}. Apply that lemma with their central rays prescribed in each connecting disk. The resulting curve winds once about \(z\), avoids all roots, and has \(L<B\) except at the connecting saddles. On the compact remainder outside smaller disks, the continuous function \(B-L\) is strictly positive, hence has a positive minimum. This is the claimed gap. When \(k=1\) and \(q_1=2\), the two rays form a smooth descending axis and the contour is a one-saddle separating contour.
\end{proof}

Figure~\ref{fig:connecting-contour} illustrates the local passages for a simple saddle and for a phase-order-four saddle; the global curve is the regularized outer boundary from Lemma~\ref{lem:outer-regularization}.

\begin{figure}[ht]
\centering
\begin{tikzpicture}[scale=1.04,>=stealth]
 \begin{scope}[xshift=-2.4cm]
  \fill[black!9] (0,0)--(45:1.25) arc (45:135:1.25)--cycle;
  \fill[black!20] (0,0)--(225:1.25) arc (225:315:1.25)--cycle;
  \draw[dashed] (0,0) circle (1.25);
  \foreach \a in {45,135,225,315}{\draw[thin] (0,0)--(\a:1.25);}
  \draw[very thick,->] (-1.52,0)--(-.3,0);
  \draw[very thick] (-.3,0)--(.3,0);
  \draw[very thick,->] (.3,0)--(1.52,0);
  \fill (0,0) circle (1.6pt);
  \node at (0,.76) {\(\Omega_z\)}; \node at (0,-.76) {\(\Omega_\infty\)};
  \node[below] at (0,-1.38) {\(q=2\)};
 \end{scope}
 \begin{scope}[xshift=2.4cm]
  \foreach \a/\b in {22.5/67.5,112.5/157.5}
    {\fill[black!9] (0,0)--(\a:1.25) arc (\a:\b:1.25)--cycle;}
  \foreach \a/\b in {202.5/247.5,292.5/337.5}
    {\fill[black!20] (0,0)--(\a:1.25) arc (\a:\b:1.25)--cycle;}
  \draw[dashed] (0,0) circle (1.25);
  \foreach \a in {22.5,67.5,112.5,157.5,202.5,247.5,292.5,337.5}
    {\draw[thin] (0,0)--(\a:1.25);}
  \draw[very thick,->] (-1.52,0)--(-.3,0);
  \draw[very thick] (-.3,0)--(.3,0);
  \draw[very thick,->] (.3,0)--(1.52,0);
  \fill (0,0) circle (1.6pt);
  \node at (.59,.65) {\(\Omega_z\)}; \node at (-.59,.65) {\(\Omega_z\)};
  \node at (.61,-.65) {\(\Omega_\infty\)}; \node at (-.61,-.65) {\(\Omega_\infty\)};
  \node[below] at (0,-1.38) {\(q=4\)};
 \end{scope}
\end{tikzpicture}
\caption{Local contour passages in the coordinate \(L-B=-\Ree(v^q)\). Shaded sectors are above the barrier; unshaded sectors are below it. Each superlevel label occupies one cyclic block. The thick arrows use the two lower sectors separating those blocks. For \(q=4\) other lower sectors lie within a block and are not the outer-boundary passage. The global contour is obtained by regularizing the single outer boundary cycle.}
\label{fig:connecting-contour}
\end{figure}
\FloatBarrier

\subsection{Simple connecting saddles and persistent contours}
At a simple saddle, a \emph{Morse coordinate} is a holomorphic local coordinate \(\zeta\) in which the landscape is \(\psi+\Ree(\zeta^2)\), where \(\psi\) is the critical height. A small coordinate disk will be called a Morse disk. In this coordinate the descending axis is the imaginary axis, with height \(\psi-|\zeta|^2\).

\begin{lemma}[The maximum over connecting saddles]\label{lem:multiple-gates}
Suppose the connecting saddles at \((P,z_0,\alpha_0)\) are simple, and enumerate them as \(w_1,\ldots,w_k\). Other saddles may be multiple or have coincident heights. The connecting saddles continue simply on a parameter neighborhood, with real critical heights \(\psi_j(z,\alpha)\), and
\begin{equation}\label{eq:multiple-gate-max}
 \Blev_P(z,\alpha)=\max_{1\le j\le k}\psi_j(z,\alpha).
\end{equation}
This description persists under coefficient perturbation. If \(k=1\), the saddle has a locally persistent one-saddle separating contour.
\end{lemma}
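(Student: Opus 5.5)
The plan is to prove \eqref{eq:multiple-gate-max} by a two-sided comparison that uses the min--max formula of Lemma~\ref{lem:barrier} for the upper bound and the barrier contour of Lemma~\ref{lem:arbitrary-contour} for the lower bound.

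\emph{Step 1: continuation of the connecting saddles.} Each \(w_j\) is a simple zero of the reduced saddle polynomial \eqref{eq:reduced-saddle}. That polynomial has leading coefficient \(d-\alpha\neq0\) and depends polynomially on \((z,\alpha)\) and on the coefficients of \(P\). The implicit function theorem therefore gives holomorphic (real-analytic in \(\alpha\)) continuations \(w_j(z,\alpha,P)\) that remain simple saddles. Their heights \(\psi_j=L_{P,z,\alpha}(w_j(\cdot))\) are continuous. Morse disks about \(w_j\) can also be chosen to vary continuously, with uniform radii and uniform Morse constants on a small parameter neighborhood.

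\emph{Step 2: the upper bound \(B\le\max_j\psi_j\).} Take the barrier contour \(\gamma\) of Lemma~\ref{lem:arbitrary-contour} at \((P,z_0,\alpha_0)\). Outside the Morse disks it has a strict gap \(L\le B_0-2\eta\). Inside each disk it runs along the descending axis (the imaginary axis in the Morse coordinate), where \(L=\psi_j-|\zeta|^2\). For nearby parameters, deform \(\gamma\) only inside the disks so that it follows the moved descending axis through the moved saddle \(w_j(z,\alpha)\), interpolating in an annulus where \(L<\psi_j-c\). On the remainder, uniform convergence of the landscape on compacts avoiding roots and \(z\) preserves the gap, and the winding number about \(z\) is unchanged. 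The maximum of \(L\) on the deformed curve is then \(\max_j\psi_j\), provided \(\psi_j\) stays within \(\eta\) of \(B_0\); this holds because every connecting saddle at the base point has height exactly \(B_0\). Lemma~\ref{lem:barrier} then gives \(B\le\max_j\psi_j\).

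\emph{Step 3: the lower bound \(B\ge\psi_j\) for each \(j\).} This is the main obstacle, since a priori a saddle could move off the barrier and stop being connecting. The plan is to show it still links the two ends. At the base point, \(w_j\) is connecting, so by Lemma~\ref{lem:outer-regularization} its two higher sectors belong to \(\Omega_z\) and \(\Omega_\infty\) respectively. Choose points \(p\in\Omega_z\) and \(q\in\Omega_\infty\) on the ascending axis inside the Morse disk. Join \(p\) to \(z\) and \(q\) to \(\infty\) by paths in the strict superlevel sets; after truncating near the ends, these are compact paths with \(L\ge B_0+\eta'\) on them. These paths persist under perturbation, as in the proof of Lemma~\ref{lem:barrier-cont}. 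In the moved Morse disk, the ascending axis through \(w_j(z,\alpha)\) carries \(L=\psi_j+|\zeta|^2\ge\psi_j\) and reaches neighborhoods of \(p\) and \(q\), where \(L>\psi_j\) for small perturbations. Concatenating gives a path from \(z\) to \(\infty\) in \(E_{\psi_j}\), so \(B\ge\psi_j\) by \eqref{eq:barrier-def}. The delicate point is that the ascending half-axes at the moved saddle must land in the persistent neighborhoods of \(p\) and \(q\). This follows from the continuity of the Morse coordinate in Step 1.

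\emph{Step 4: persistence and the case \(k=1\).} All estimates above are uniform in the coefficients, so the description persists under coefficient perturbation. When \(k=1\), the deformed contour from Step 2 passes through the single simple saddle smoothly along its descending axis, with a nondegenerate maximum \(-|\zeta|^2\). It is therefore a locally persistent one-saddle separating contour, and \(B=\psi_1\).
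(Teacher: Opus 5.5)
Your proposal is correct and follows essentially the same route as the paper. The lower bound \(\Blev_P\ge\psi_j\) comes from an ascending path through each continued simple saddle, attached to persistent end paths in \(\Omega_z\) and \(\Omega_\infty\), and the upper bound \(\Blev_P\le\max_j\psi_j\) comes from the min--max formula applied to the barrier contour of Lemma~\ref{lem:arbitrary-contour}, rerouted along the continued descending axes. (Your opening sentence swaps which tool gives which bound, though Steps~2--3 do it correctly. Also, for coefficient perturbations that split multiple roots you should continue \(w_j\) through the unreduced equation \((w-z)P'(w)=\alpha P(w)\), which has the same simple zero off the roots, rather than through the reduced polynomial, whose coefficients are not polynomial in those of \(P\); both points are minor fixes.)
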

\begin{proof}
Choose disjoint small Morse disks about the connecting saddles at the base parameter. At each saddle, the two higher-level sectors lead to different positive ends. Join them to the ends within the corresponding strict components. Outside the saddle disk, the compact part of each path has a positive height margin. These paths above the barrier and their end attachments persist under perturbation. The path through \(w_j\) therefore proves \(\Blev_P\ge\psi_j\) nearby.

For the reverse inequality use the contour of Lemma~\ref{lem:arbitrary-contour} at the base parameter. It passes through all connecting saddles once and has a strict height gap away from their disjoint Morse disks. Its single outer-boundary cycle is supplied by Lemma~\ref{lem:outer-regularization}.

After smaller saddle neighborhoods are removed, the compact remainder has a strict gap. Under perturbation, it and its connections to the local decreasing arcs remain below every nearby \(\psi_j\). The arcs pass through the continued simple saddles, so the maximum on the perturbed contour is \(\max_j\psi_j\). The min--max formula proves the upper bound in \eqref{eq:multiple-gate-max}. For \(k=1\) this is a one-saddle separating contour. All constructions persist in the coefficients as well as in \((z,\alpha)\).
\end{proof}

A \emph{regular chamber} is a connected open parameter region in which no saddle branches coalesce and no two continued critical heights coincide. For fixed \(P\) and \(\alpha\), we call an observation point \(z\) \emph{regular} if all finite saddles are simple and have pairwise distinct critical heights.

\begin{lemma}[Regular observation points]\label{lem:regular}
Fix \(P\) and \(0<\alpha<d\). Almost every observation point, with respect to planar Lebesgue measure, is regular. At a regular point there is exactly one saddle at barrier height, and it has a locally persistent one-saddle separating contour. Persistence holds also under coefficient perturbations.
\end{lemma}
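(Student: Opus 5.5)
Fix \(P\) and \(\alpha\). The plan is to treat the two defects separately: coalescing saddles and coincident critical heights. In each case the bad set of observation points \(z\) should be a real-analytic subvariety of positive codimension, hence Lebesgue-null. The uniqueness and contour statements will then follow from Lemmas~\ref{lem:barrier} and~\ref{lem:multiple-gates}.

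\emph{Simplicity.} The finite saddles are the solutions \(w\) of \(F_{P,\alpha}(w)=z\), where \(F_{P,\alpha}\) is the rational map \eqref{eq:F}. A saddle is multiple exactly when \(F_{P,\alpha}'(w)=0\), so the observation points with a multiple saddle are contained in the critical values of \(F_{P,\alpha}\). I would add the values \(z=F_{P,\alpha}(w)\) for which \(w\) is a common zero of \((w-z)N_P-\alpha Q_P\) and \(Q_P\), since such \(w\) are not finite saddles. Because \(F_{P,\alpha}\) is a nonconstant rational map, this set is finite.

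\emph{Coincident heights.} Off this finite set \(S\), the saddles \(w_1(z),\dots,w_s(z)\) are locally holomorphic functions of \(z\). The height differences
\[
g_{ij}(z)=\Ree\bigl(\Phi_{P,z,\alpha}(w_i(z))-\Phi_{P,z,\alpha}(w_j(z))\bigr)
\]
are then locally real parts of holomorphic functions \(G_{ij}\). Since \(\Phi'=0\) at saddles, \(\partial_z\) of a critical value is \(\partial_z\Phi=\alpha/(w-z)\). Hence
\[
G_{ij}'(z)=\frac{\alpha}{w_i-z}-\frac{\alpha}{w_j-z},
\]
which is nonzero because \(w_i\ne w_j\). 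So each \(g_{ij}\) is a harmonic function with nonvanishing gradient, and its zero set is locally a real-analytic curve, which has measure zero. To pass from local to global, cover \(\C\setminus S\) by countably many simply connected disks on which branches are chosen. Monodromy only permutes the labels of the saddles, and the branch choices of the logarithm do not affect real parts. The bad set is therefore a countable union of null sets together with \(S\), which proves the almost-everywhere statement.

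\emph{Barrier height and contour.} At a regular \(z\), Lemma~\ref{lem:barrier} says that \(B\) is attained as a critical height at some finite saddle. Distinct heights make that saddle unique among all saddles at height \(B\). Every connecting saddle lies at height \(B\), since it lies in \(\partial\Omega_z\subset\{L=B\}\) as noted before the definition of connecting saddles. The connecting set is nonempty, so there is exactly one connecting saddle, and it is simple. Lemma~\ref{lem:multiple-gates} with \(k=1\) then gives the locally persistent one-saddle separating contour, including persistence under coefficient perturbation. Regularity itself is an open condition, because simple saddles and strict height inequalities persist under perturbation.

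The main obstacle is the measure-zero claim for coincident heights. It needs the gradient computation above, namely that \(\partial_z\) of a critical value is \(\alpha/(w-z)\) by the envelope principle, together with careful handling of the global multivaluedness of the saddle branches.
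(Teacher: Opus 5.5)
Your proposal is correct and follows essentially the same route as the paper. You exclude the finitely many branch values of \(F_{P,\alpha}\); your extra exceptional set is just \(Z(P)\), since \(F_{P,\alpha}(a_j)=a_j\). You then use \(\partial_z\Xi_i=\alpha/(\omega_i-z)\) to show that height coincidences are locally real-analytic arcs, and a countable cover finishes the measure-zero claim; finally Lemma~\ref{lem:barrier} and distinct heights give a unique simple connecting saddle, whose persistent one-saddle contour comes from Lemma~\ref{lem:multiple-gates} (the paper additionally cites Lemma~\ref{lem:arbitrary-contour} for constructing the contour itself).
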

\begin{proof}
The numerator and denominator of \(F_{P,\alpha}\) are coprime, since \(\alpha\ne0\) and \(Q_P,N_P\) are coprime. This rational map therefore has degree \(s\). Its branch values, meaning images of its critical points, form a finite set. Away from them and the roots of \(P\), write the inverse branches locally as \(\omega_1(z),\ldots,\omega_s(z)\). Choose local logarithms and put
\[
 \Xi_i(z,\alpha):=\Phi_{P,z,\alpha}(\omega_i(z,\alpha)).
\]
The saddle equation gives
\begin{equation}\label{eq:phase-z}
 \partial_z\Xi_i=\frac{\alpha}{\omega_i-z}.
\end{equation}
For \(i\ne j\) the derivative of \(\Xi_i-\Xi_j\) never vanishes. Equality of their real critical heights thus defines a smooth real-analytic arc locally and has planar measure zero. A countable cover proves the almost-everywhere assertion.

At a regular point, Lemma~\ref{lem:barrier} puts the barrier at a critical height. Distinctness of the heights leaves one connecting saddle, and regularity makes it simple. Lemma~\ref{lem:arbitrary-contour} provides the one-saddle separating contour; Lemma~\ref{lem:multiple-gates} provides its persistence. In the quadratic normal coordinate the landscape has the form \(B+\Ree(\zeta^2)\), and the contour follows the imaginary axis, where it equals \(B-|\zeta|^2\). This is the separating-contour geometry used in \cite[Section~4.2]{BHS}; the argument here only requires the signs at the ends, not simple roots of \(P\) or \(P'\).
\end{proof}

\subsection{Harmonicity forces a single simple saddle}
\begin{lemma}[A harmonic barrier has a one-saddle separating contour]\label{lem:harmonic-gate}
Fix \(P\) and \(0<\alpha<d\). If \(\Blev_P(\cdot,\alpha)\) is harmonic on an open disk \(D\subset\C\setminus Z(P)\), then every \(z\in D\) has a one-saddle separating contour whose unique maximum is one simple saddle. The contour and its strict gap away from the saddle persist under small changes of \(P\), \(z\), and \(\alpha\). No simplicity or distinct-height condition is imposed on the other saddles.
\end{lemma}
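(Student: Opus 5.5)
The plan is to first convert harmonicity of \(\Blev_P(\cdot,\alpha)\) into a single holomorphic saddle branch on \(D\). Then I would show that this branch is the only connecting saddle and that it is simple. Since \(B:=\Blev_P(\cdot,\alpha)\) is harmonic on \(D\), the function \(g:=2\partial_zB\) is holomorphic on \(D\).

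By Lemma~\ref{lem:regular}, regular points are dense in \(D\). Near a regular point \(z_1\), Lemma~\ref{lem:multiple-gates} with \(k=1\) gives \(B=\Ree\Xi_i\) for one inverse branch \(\omega_i\) of \(F_{P,\alpha}\). Then \eqref{eq:phase-z} gives \(g=\alpha/(\omega_i-z)\) there. Define
\[
 \omega(z):=z+\frac{\alpha}{g(z)}\qquad(z\in D).
\]
The identity \(F_{P,\alpha}(\omega(z))=z\) holds near \(z_1\), so by the identity theorem it holds on all of \(D\) away from zeros of \(g\). A zero of \(g\) would force \(\omega\to\infty\), which is impossible since \(F_{P,\alpha}(w)\sim(1-\alpha/d)w\) at infinity. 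Hence \(\omega\) is holomorphic on \(D\). It never hits a root, because \(F_{P,\alpha}\) fixes the roots while \(z\notin Z(P)\). Differentiating \(F_{P,\alpha}(\omega(z))=z\) gives \(F_{P,\alpha}'(\omega)\,\omega'=1\), so \(F_{P,\alpha}'(\omega(z))\ne0\). Thus \(\omega(z)\) is a simple saddle for every \(z\in D\), including points that are branch values for other saddles.

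Next I would check that \(\psi(z):=L_{P,z,\alpha}(\omega(z))\) equals \(B\) on all of \(D\). By \eqref{eq:phase-z}, \(\psi\) is harmonic with \(2\partial_z\psi=g\), so \(B-\psi\) is locally constant. It vanishes near \(z_1\), and \(D\) is connected, so \(B\equiv\psi\). Hence at every \(z_0\in D\) the simple saddle \(\omega(z_0)\) sits at barrier height.

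It remains to show that \(\omega(z_0)\) is the only connecting saddle at \(z_0\), and that it is connecting at all. Suppose \(w_2\ne\omega(z_0)\) is connecting at \(z_0\).

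First take \(w_2\) simple. It continues as a simple saddle branch \(\omega_2\) with height \(\psi_2\). The path-persistence half of the proof of Lemma~\ref{lem:multiple-gates} applies to any single simple connecting saddle, and gives \(B\ge\psi_2\) near \(z_0\), with equality at \(z_0\). Then \(B-\psi_2\ge0\) is harmonic and vanishes at an interior point, so it vanishes identically by the minimum principle. Hence \(\partial_z\psi_2=\partial_zB\), and \eqref{eq:phase-z} forces \(\omega_2=\omega\), a contradiction.

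\textbf{Main obstacle: a degenerate \(w_2\).} Here \(z_0\) is a branch value and the nearby saddles split. I would first establish a lower bound
\[
 B(z)\ge\max_i\Ree\Xi_i(z),
\]
where \(i\) runs over the split saddles near \(w_2\). The argument is that the two higher-sector blocks at \(w_2\) remain joined, inside a small disk, through some split saddle. This is a Morse-type argument in the local coordinate \(\Phi=\Xi-v^{q}\) plus a small perturbation.

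Each difference \(\Xi_i-\Xi_\omega\) vanishes at \(z_0\) and is a Puiseux series in \((z-z_0)^{1/p}\). Its derivative \(\alpha/(\omega_i-z)-\alpha/(\omega-z)\) is not identically zero. If the series has a genuinely fractional leading term, its real part is positive in some direction from \(z_0\). That contradicts \(B-\Ree\Xi_i\ge0\) together with \(B=\Ree\Xi_\omega\). Otherwise the branch is single-valued and the simple-case minimum-principle argument applies. Carrying out this Puiseux sign analysis rigorously is the step I expect to need the most care.

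Finally, \(\omega(z_0)\) is indeed connecting: Lemma~\ref{lem:barrier} makes \(B\) the height of some connecting saddle, and every other candidate has been excluded. Lemma~\ref{lem:arbitrary-contour} with \(k=1\), \(q_1=2\) then yields the one-saddle separating contour. The persistence clause of Lemma~\ref{lem:multiple-gates} for \(k=1\) gives stability under small changes of \(P\), \(z\), and \(\alpha\), with no hypothesis on the remaining saddles.
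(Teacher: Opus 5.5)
Your construction of the holomorphic branch \(\omega\) (density of regular points, identity theorem, \(F_{P,\alpha}'(\omega)\omega'=1\), \(B\equiv\psi\)) is correct. So is your exclusion of a second \emph{simple} connecting saddle by the minimum principle.

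The gap is the degenerate case, which you flag but do not close. The bound \(B(z)\ge\max_i\Ree\Xi_i(z)\) over all split saddles does not follow from a local Morse-type argument, and it fails in the local model. Suppose \(w_2\) has phase order three with a single \(\Omega_{z_0}\)-sector. In the model \(-\Ree(v^3-\epsilon v)\) it splits into two simple saddles at opposite heights. For suitable \(\arg\epsilon\), the lone sector reaches the other block only through the \emph{lower} one. Your own justification ("joined through some split saddle") gives at most \(B(z)\ge\Ree\Xi_{i(z)}(z)\) for some \(z\)-dependent \(i(z)\). That is not the per-branch inequality \(B-\Ree\Xi_i\ge0\) your next step uses.

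The Puiseux dichotomy is also aimed at the wrong thing. For every split branch, \(\partial_z\Xi_i=\alpha/(\omega_i-z)\to\kappa_2:=\alpha/(w_2-z_0)\). Hence the leading term of \(\Ree(\Xi_i-\Xi_\omega)\) is always the linear term \(\Ree((\kappa_2-\kappa_\omega)\delta)\), where \(\kappa_\omega:=\alpha/(\omega(z_0)-z_0)\). It is never a fractional one. Moreover, an integral leading term would not make a branch single-valued. Once the common linear term is noticed, even \(B\ge\min_i\Ree\Xi_i\) would suffice, but you have not proved that either.

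The missing idea, which is the paper's route, is not to follow the saddle at all. Take any connecting saddle \(w_0\) at \(z_0\), of any phase order. Join a higher sector of \(\Omega_{z_0}\) through \(w_0\) to one of \(\Omega_\infty\), and continue inside the strict components to the two ends. After truncation this is a fixed compact path \(\gamma_0\), avoiding \(z_0\) and the roots. On it \(L_{P,z_0,\alpha}\) has its unique minimum \(B(z_0)\) at \(w_0\), and its end attachments stay strictly above \(B(z_0)\) under small perturbation.

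Hence \(B(z)\ge\underline B(z):=\min_{w\in\gamma_0}L_{P,z,\alpha}(w)\) near \(z_0\), with equality at \(z_0\). Uniqueness of the minimizer gives \(\underline B(z_0+\delta)=B(z_0)+\Ree(\alpha\delta/(w_0-z_0))+o(|\delta|)\). Since \(B\) is differentiable at \(z_0\), testing \(\delta\) and \(-\delta\) forces \(2\partial_zB(z_0)=\alpha/(w_0-z_0)\).

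So every connecting saddle equals \(z_0+\alpha/(2\partial_zB(z_0))\), and uniqueness is immediate. Then \(w(z)=z+\alpha/(2\partial_zB(z))\) is holomorphic with \(F_{P,\alpha}(w(z))=z\), and simplicity follows from \(F_{P,\alpha}'(w(z))w'(z)=1\). This handles all phase orders at once and makes your regular-point continuation and minimum-principle step unnecessary. Your closing appeal to Lemma~\ref{lem:arbitrary-contour} and Lemma~\ref{lem:multiple-gates} with \(k=1\) then goes through unchanged.
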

\begin{proof}
Fix \(z_0\in D\), put \(L_0=L_{P,z_0,\alpha}\) and \(B_0=\Blev_P(z_0,\alpha)\), and choose a connecting saddle \(w_0\). Its local level set has finitely many alternating higher and lower sectors. Join a higher sector from \(\Omega_{z_0}\) to one from \(\Omega_\infty\) through \(w_0\), and continue within the strict components to the ends. Truncation gives a compact path \(\gamma_0\) on which \(L_0\) has its unique minimum \(B_0\) at \(w_0\). The end attachments stay strictly above \(B_0\) under perturbation, so for \(z\) near \(z_0\),
\[
 \Blev_P(z,\alpha)\ge \underline B(z):=\min_{w\in\gamma_0}L_{P,z,\alpha}(w),
 \qquad \underline B(z_0)=B_0.
\]
The truncated path avoids \(z_0\) and all roots. Its unique minimizing point and uniform first-order parameter expansion give
\begin{equation}\label{eq:gate-gradient}
 \underline B(z_0+\delta)=B_0+
 \Ree\!\left(\frac{\alpha}{w_0-z_0}\delta\right)+o(|\delta|).
\end{equation}
More explicitly, minimizers for \(z_0+\delta\) approach \(w_0\) by uniqueness and compactness. Evaluating first at \(w_0\) and then at a perturbed minimizer gives the two bounds in \eqref{eq:gate-gradient}.

The barrier is differentiable at \(z_0\). Compare its differential with that of the lower touching function in the directions \(\delta\) and \(-\delta\). We obtain
\begin{equation}\label{eq:gate-selected}
 2\partial_z\Blev_P(z_0,\alpha)=\frac{\alpha}{w_0-z_0}.
\end{equation}
Distinct connecting saddles would give distinct right-hand sides. There is therefore exactly one connecting saddle at each point of \(D\), and the derivative in \eqref{eq:gate-selected} never vanishes. Harmonicity makes
\[
 w(z):=z+\frac{\alpha}{2\partial_z\Blev_P(z,\alpha)}
\]
holomorphic on \(D\). This is the connecting saddle, and \(F_{P,\alpha}(w(z))=z\). Differentiation gives \(F_{P,\alpha}'(w(z))w'(z)=1\), so the saddle is simple. Lemma~\ref{lem:multiple-gates}, with \(k=1\), now supplies the one-saddle separating contour and its persistence in every parameter.
\end{proof}

\section{Zero limits for fixed ratio \texorpdfstring{\(\alpha\in(0,d)\)}{alpha in (0,d)}; proof of Theorem~\ref{thm:intro-fixed-ratio}}
\label{sec:limits}

We start by determining the limiting root-counting measure at an interior ratio \(\alpha=\lim_{n\to \infty}\frac{m_n}{n}\in (0,d)\). A one-saddle separating contour identifies its potential. Harmonicity of that potential then excludes zeros off the support. The only additional issue is behavior at a root when its atomic mass disappears; we treat that threshold explicitly before proving Hausdorff convergence.

The leading-coefficient normalization will be expressed through
\[
 \mathfrak h_d(\alpha):=d\log d-\alpha\log\alpha-(d-\alpha)\log(d-\alpha),
 \qquad 0<\alpha<d,
\]
with continuous endpoint values \(\mathfrak h_d(0)=\mathfrak h_d(d)=0\).

\subsection{The one-saddle asymptotic and logarithmic potentials}
\begin{proposition}[Uniform one-saddle asymptotic]\label{prop:uniform-saddle}
Suppose \((z_0,\alpha_0)\) admits a one-saddle separating contour as in Lemma~\ref{lem:regular} or Lemma~\ref{lem:harmonic-gate}. On a neighborhood, let \(w_*(z,\alpha)\) be the separating saddle and put \(\Xi_*(z,\alpha)=\Phi_{P,z,\alpha}(w_*(z,\alpha))\). These functions can be chosen holomorphic in \(z\) and real-analytic in \(\alpha\). There is a nonvanishing amplitude \(\mathfrak a(z,\alpha)\) of the same regularity such that, at the exact ratio \(\alpha=m/n\),
\begin{equation}\label{eq:saddle-asymptotic}
 \frac{R_{P;m,n}(z)}{m!}
 =\frac{e^{n\Xi_*(z,\alpha)}}{\sqrt n}
 \bigl(\mathfrak a(z,\alpha)+O(n^{-1})\bigr),
 \qquad \mathfrak a(z,\alpha)\ne0.
\end{equation}
The expansion is uniform on smaller compact neighborhoods, also when the monic degree-\(d\) polynomial varies in its coefficients. In particular, the Rodrigues descendants are zero-free on such compact parameter sets for all sufficiently large \(n\).
\end{proposition}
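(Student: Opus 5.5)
Start from the Cauchy integral formula for the \(m\)th derivative:
\[
 \frac{R_{P;m,n}(z)}{m!}=\frac1{2\pi i}\oint_\gamma\frac{P(w)^n}{(w-z)^{m+1}}\dd w
 =\frac1{2\pi i}\oint_\gamma e^{n\Phi_{P,z,\alpha}(w)}\frac{\dd w}{w-z},\qquad \alpha=m/n .
\]
Here \(\gamma\in\mathscr J_z\) is any contour winding once about \(z\) and avoiding the roots. Note that \(e^{n\Phi}\) is single-valued: \(P^n\) is entire and \((w-z)^{-m}\) is single-valued because \(m\) is an integer. Take \(\gamma\) to be the one-saddle separating contour supplied by Lemma~\ref{lem:regular} or Lemma~\ref{lem:harmonic-gate}. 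On a small Morse disk about \(w_*\) the landscape has a nondegenerate maximum along \(\gamma\). On the compact remainder, \(L_{P,z,\alpha}\le \Ree\Xi_*-\eta\) for some gap \(\eta>0\). By Lemma~\ref{lem:multiple-gates} this contour, its gap, and its Morse disk persist for \((z,\alpha,P)\) in a small compact neighborhood of the base data. The saddle \(w_*(z,\alpha)\) is a simple root of the reduced saddle equation \eqref{eq:reduced-saddle}. By the implicit function theorem it is therefore holomorphic in \(z\), real-analytic in \(\alpha\), and continuous (indeed analytic) in the coefficients of \(P\). Consequently \(\Xi_*=\Phi(w_*)\) has the same regularity, once a local branch of the logarithm is fixed consistently with the branch used on \(\gamma\).

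The analytic core is the standard Laplace method with an \(n\)-independent amplitude \(g(w)=(w-z)^{-1}\). In the Morse coordinate, write \(\Phi(w)=\Xi_*-\zeta^2\) (up to the sign and rotation convention of the descending axis). The local integral becomes
\[
 e^{n\Xi_*}\int e^{-n\zeta^2}g(w(\zeta))\,w'(\zeta)\dd\zeta ,
\]
integrated along the real direction where \(-\zeta^2\) is real and negative. Expanding \(g(w(\zeta))w'(\zeta)=c_0+c_1\zeta+c_2\zeta^2+\cdots\), the odd terms vanish. The result is \(e^{n\Xi_*}n^{-1/2}(\sqrt\pi c_0/(\pi i)\cdot\tfrac12\cdot 2+O(n^{-1}))\), so the \(O(n^{-1})\) relative error comes from the symmetry of the even Gaussian moments. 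Explicitly,
\[
 \mathfrak a=\frac{1}{2\pi i}\,\frac{\sqrt{2\pi}}{(w_*-z)\sqrt{\Phi''(w_*)}},
\]
with the square-root branch fixed by the direction in which \(\gamma\) traverses the descending axis. This amplitude is nonzero because \(\Phi''(w_*)\ne0\) and \(w_*\ne z\). The truncation of the Gaussian to the Morse disk, together with the contribution of the remainder of \(\gamma\), is bounded by \(e^{n(\Ree\Xi_*-\eta')}\) times a constant. That is exponentially smaller than the main term. Uniformity follows because all Taylor coefficients \(c_k\), the Morse coordinate change, the disk radius and the gap \(\eta\) depend continuously on \((z,\alpha,P)\) on a compact set. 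Here I would use the parameter-dependent Morse lemma, which is holomorphic and hence uniform.

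One point needs care. The statement requires \(\alpha=m/n\) exactly, and \(\alpha\) varies with \((m,n)\). The estimates must therefore hold uniformly for \(\alpha\) in a compact interval around \(\alpha_0\), not at a fixed \(\alpha\). The uniform persistence above provides exactly this, since all bounds are taken over the compact parameter neighborhood. Finally, zero-freeness follows because \(|\mathfrak a|\) is bounded below on the compact set. The bracket \(\mathfrak a+O(n^{-1})\) is therefore nonzero for \(n\) large, and \(e^{n\Xi_*}\ne0\).

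I expect the main obstacle to be the uniform Morse normalization across parameters, including coefficient perturbations of \(P\) in which the other saddles may collide or be degenerate. This is handled by working only near the single simple saddle and using the strict gap elsewhere; nothing is assumed about the other saddles. I would first verify that the persistence statement of Lemma~\ref{lem:multiple-gates} indeed yields a common contour, up to a small local deformation near \(w_*\), on which the gap is uniform. The Laplace estimate then goes through verbatim.
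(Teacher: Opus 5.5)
Your proposal is correct and follows essentially the same route as the paper: Cauchy's integral over the persistent one-saddle separating contour, a parameter-dependent holomorphic Morse coordinate at the simple saddle, a Gaussian integral in which the odd term cancels, an exponentially small remainder from the uniform height gap, and compactness to get uniformity and zero-freeness. Only your explicit constants slip. With \(\Phi(W(t))=\Xi_*-t^2\) the amplitude is \(\sqrt\pi\,W'(0)/\bigl(2\pi i(w_*-z)\bigr)=\sqrt{2\pi}/\bigl(2\pi i(w_*-z)\sqrt{-\Phi''(w_*)}\bigr)\), so it has \(\sqrt{-\Phi''}\) rather than \(\sqrt{\Phi''}\), and \(\sqrt\pi c_0/(2\pi i)\) rather than \(\sqrt\pi c_0/(\pi i)\); neither change affects nonvanishing or uniformity.
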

\begin{proof}
Cauchy's formula gives
\begin{equation}\label{eq:cauchy}
 \frac{R_{P;m,n}(z)}{m!}=
 \frac1{2\pi i}\oint_\gamma
 \exp\bigl(n[\log P(w)-\alpha\log(w-z)]\bigr)
 \frac{\dd w}{w-z}.
\end{equation}
At the exact ratio \(\alpha=m/n\) the integrand is single-valued, so the value of the integral is independent of the local logarithm branches. Shrink the parameter neighborhood and then take a compact subset of it. Persistence of the simple saddle and of the separating contour provides a family \(\gamma_\lambda\), where \(\lambda=(P,z,\alpha)\), with uniformly bounded length and a number \(c>0\) such that
\begin{equation}\label{eq:uniform-saddle-gap}
 L_\lambda(w)\le \Ree\Xi_\lambda-c
\end{equation}
away from one fixed-size saddle disk. The contour tubes and the saddle disks stay a positive distance from \(z\) and from every root lying on them. This remains true under coefficient perturbation, even if roots split elsewhere.

The holomorphic Morse lemma with parameters gives a coordinate \(t\mapsto W_\lambda(t)\), defined for \(|t|<2\delta\) with one common \(\delta>0\), such that
\[
 W_\lambda(0)=w_*(\lambda),\qquad
 \Phi_\lambda(W_\lambda(t))=\Xi_\lambda-t^2.
\]
Choose the sign of \(t\) so that a short oriented passage of \(\gamma_\lambda\) enters the negative descending sector and leaves the positive one. In the Morse coordinate, replace this passage by \([-\delta,\delta]\) and two straight endpoint connectors within those sectors; a local primitive preserves the integral. The parameter interval and sector margins can be chosen uniformly, and both the connectors and the complementary contour arc have a uniform strict height gap. Include the connectors in the remainder, reducing \(c\) if necessary. All derivatives of \(W_\lambda\) needed below are uniformly bounded, and \(|W_\lambda'(0)|\) and \(|W_\lambda(0)-z|\) are uniformly bounded away from zero. The local contribution to \eqref{eq:cauchy} is therefore
\[
 \frac{e^{n\Xi_\lambda}}{2\pi i}
 \int_{-\delta}^{\delta}e^{-nt^2}A_\lambda(t)\dd t,
 \qquad
 A_\lambda(t):=\frac{W_\lambda'(t)}{W_\lambda(t)-z}.
\]
Write \(A_\lambda(t)=A_\lambda(0)+A_{1,\lambda}t+t^2E_\lambda(t)\), with \(E_\lambda\) uniformly bounded. The odd term integrates to zero, and comparison with the full Gaussian integral gives, uniformly in \(\lambda\),
\[
 \int_{-\delta}^{\delta}e^{-nt^2}A_\lambda(t)\dd t
 =\sqrt{\frac\pi n}\left(A_\lambda(0)+O(n^{-1})\right).
\]
By \eqref{eq:uniform-saddle-gap}, the remainder of the contour is \(O(e^{n(\Ree\Xi_\lambda-c)})\) in absolute value, uniformly in all parameters; after the factor \(e^{n\Xi_\lambda}n^{-1/2}\) is removed, this is \(O(n^{-1})\).

Consequently \eqref{eq:saddle-asymptotic} holds with
\[
 \mathfrak a(\lambda)=\frac{\sqrt\pi}{2\pi i}
 \frac{W_\lambda'(0)}{w_*(\lambda)-z}.
\]
The orientation is incorporated in the chosen sign of \(t\), so the two local halves add rather than cancel. The displayed amplitude is nonzero and has a uniform positive lower bound on compact parameter sets. The parameter-dependent Morse lemma gives its stated regularity, and all constants above remain uniform for nearby monic degree-\(d\) coefficients. This proves both the expansion and the uniform zero-free conclusion.
\end{proof}

\begin{lemma}[Compactness of logarithmic potentials]\label{lem:pot-compact}
If probability measures \(\nu_n\) are supported in one compact set and \(\nu_n\Longrightarrow\nu\), then \(U_{\nu_n}\to U_\nu\) in \(L^1_{\mathrm{loc}}(\C)\). Their potentials are uniformly integrable on each bounded planar set. The same assertion holds for any family with that common support whose closure is weakly compact.
\end{lemma}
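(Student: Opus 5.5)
The plan is to reduce the claim to a uniform local estimate near the logarithmic singularity, and then combine it with dominated convergence.

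\emph{Step 1: uniform local integrability.} Let \(K\) be the common compact support and \(D\) a bounded disk. Because \(K\) is bounded, \(\log|z-\xi|\) is bounded above uniformly for \(z\in D\) and \(\xi\in K\). So only the negative part of the potential needs control. For uniform integrability on \(D\) I will bound \(\int_D |U_{\nu}|^2\dd A\) uniformly over all probability measures \(\nu\) supported in \(K\). By Jensen (or Cauchy--Schwarz) in \(\xi\) and Fubini,
\[
 \int_D |U_\nu(z)|^2\dd A(z)\le \int_K\int_D \bigl(\log|z-\xi|\bigr)^2\dd A(z)\dd\nu(\xi)\le \sup_{\xi\in K}\int_D(\log|z-\xi|)^2\dd A(z)<\infty,
\]
and the last supremum is finite because \((\log|w|)^2\) is locally integrable in the plane. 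A uniform \(L^2(D)\) bound gives uniform integrability on \(D\). This bound uses only the common support, so it applies verbatim to any family supported in \(K\), which also covers the last sentence of the statement.

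\emph{Step 2: truncation.} For \(\eps>0\) set \(\log_\eps|w|:=\max(\log|w|,\log\eps)\), which is continuous and bounded on bounded sets. Then \(U^\eps_\nu(z)=\int\log_\eps|z-\xi|\dd\nu(\xi)\) is continuous in \(z\). Weak convergence gives \(U^\eps_{\nu_n}(z)\to U^\eps_\nu(z)\) for each \(z\). The convergence is uniform on \(D\), since the functions \(\xi\mapsto\log_\eps|z-\xi|\) restricted to \(K\) form an equicontinuous bounded family as \(z\) ranges over \(\overline D\). Hence \(U^\eps_{\nu_n}\to U^\eps_\nu\) in \(L^1(D)\).

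\emph{Step 3: truncation error.} This is the main point, and it requires an estimate uniform in \(n\). The difference \(U^\eps_\nu-U_\nu\ge0\) has
\[
 \int_D (U^\eps_\nu-U_\nu)\dd A\le \int_K\int_{|z-\xi|<\eps}\log\frac{\eps}{|z-\xi|}\dd A(z)\dd\nu(\xi)=\frac{\pi\eps^2}{2},
\]
uniformly in the probability measure \(\nu\). A three-\(\eps\) argument, combining this uniform bound with Step 2, then gives \(U_{\nu_n}\to U_\nu\) in \(L^1(D)\).

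For a weakly compact family, the same reasoning shows that \(\nu\mapsto U_\nu\) is continuous from weak convergence into \(L^1_{\mathrm{loc}}\). The family of potentials is therefore relatively compact, and uniform integrability on each bounded set was already established in Step 1. No step here seems delicate. The only care needed is to make the truncation error uniform in \(n\), which the explicit \(\eps^2\) bound provides.
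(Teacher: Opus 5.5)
Your proposal is correct and follows essentially the paper's route. Both truncate the kernel at \(\log\eps\), bound the discarded part uniformly in the measure by Fubini (your exact value \(\pi\eps^2/2\) sharpens the paper's \(O(\eps^2(1+|\log\eps|))\)), and pass to the limit in the bounded continuous truncated potentials. The one minor difference is that you obtain uniform integrability from a uniform \(L^2(D)\) bound via Jensen, whereas the paper splits \(\int_E|\log|z-\xi||\) at \(|z-\xi|=\eps\); either works and uses only the common support.
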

\begin{proof}
Truncate the kernel to \(\max\{\log|z-\xi|,\log\eps\}\). On compact products this is bounded and continuous, so the corresponding potentials converge in \(L^1_{\mathrm{loc}}\). The discarded part has \(L^1\) norm at most a constant times \(\eps^2(1+|\log\eps|)\), uniformly in \(\xi\). For uniform integrability, integrate \(|\log|z-\xi||\) on a set of small area and split at \(|z-\xi|=\eps\). The near part has the preceding uniform bound, and the far part is bounded by a constant times the area. Fubini's theorem proves the assertions.
\end{proof}

\begin{theorem}[The fixed-ratio limit]\label{thm:fixed}
Let \(P\) be monic of degree \(d\ge2\), and let \(0\le m_n<dn\) with \(m_n/n\to\alpha\in(0,d)\). Then
\[
 \mu_{P;m_n,n}^{\mathrm{root}}=\frac1{dn-m_n}\divz R_{P;m_n,n}
\]
converges weakly to a probability measure \(\mu_{\alpha,P}\) supported in \(\Hull_P\). The logarithmic potentials converge in \(L^1_{\mathrm{loc}}\) to
\begin{equation}\label{eq:fixed-potential-intro}
 U_{\mu_{\alpha,P}}(z)=
 \frac{\Blev_P(z,\alpha)-\mathfrak h_d(\alpha)}{d-\alpha}.
\end{equation}
At a root, the right-hand side means its subharmonic extension. The limit is independent of the integer sequence. More generally, if monic degree-\(d\) polynomials \(P_n\) converge in coefficients to \(P\), then the normalized root-counting measures of \((P_n^n)^{(m_n)}\) have the same limit \(\mu_{\alpha,P}\).
\end{theorem}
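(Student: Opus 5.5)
The plan is to identify the limit through logarithmic potentials. Write \(\nu_n:=\mu_{P;m_n,n}^{\mathrm{root}}\). By Gauss--Lucas all \(\nu_n\) are supported in \(\Hull_P\), so the family is weakly precompact. By Lemma~\ref{lem:pot-compact}, along any weakly convergent subsequence \(\nu_{n_k}\Longrightarrow\nu\) we also have \(U_{\nu_{n_k}}\to U_\nu\) in \(L^1_{\mathrm{loc}}\). A compactly supported probability measure is determined by its potential. So it suffices to show that every subsequential limit has potential equal to the right-hand side of \eqref{eq:fixed-potential-intro}, as an \(L^1_{\mathrm{loc}}\) function. This also gives independence of the sequence and the perturbed statement at once: the same argument runs with \(P_n\) in place of \(P\), because all the inputs are uniform in the coefficients.

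\textbf{Step 1 (pointwise limit at regular points).} Since \(R_{P;m,n}\) has leading coefficient \(d n(dn-1)\cdots(dn-m+1)=(dn)!/(dn-m)!\), we have
\[
 U_{\nu_n}(z)=\frac{1}{dn-m_n}\Bigl(\log|R_{P;m_n,n}(z)|-\log\frac{(dn)!}{(dn-m_n)!}\Bigr).
\]
Fix a regular observation point \(z\) for \(\alpha\). By Lemma~\ref{lem:regular} it has a locally persistent one-saddle separating contour, with \(\Ree\Xi_*=\Blev_P(z,\alpha)\).

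The asymptotic \eqref{eq:saddle-asymptotic} is stated at the exact ratio \(\alpha=m/n\). We apply Proposition~\ref{prop:uniform-saddle} at \(\alpha_n=m_n/n\to\alpha\), which lies in the compact parameter neighbourhood where the expansion is uniform. This gives
\[
 \tfrac1n\log|R_{P;m_n,n}(z)/m_n!|=\Ree\Xi_*(z,\alpha_n)+O(n^{-1}\log n).
\]
Continuity of \(\Blev_P\) (Lemma~\ref{lem:barrier-cont}) identifies the limit of \(\Ree\Xi_*(z,\alpha_n)\) as \(\Blev_P(z,\alpha)\). By Stirling,
\[
 \tfrac1n\log\frac{(dn)!\,}{m_n!\,(dn-m_n)!}\to\mathfrak h_d(\alpha).
\]
Dividing by \((dn-m_n)/n\to d-\alpha\) then gives \(U_{\nu_n}(z)\to(\Blev_P(z,\alpha)-\mathfrak h_d(\alpha))/(d-\alpha)\) at every regular \(z\). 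By Lemma~\ref{lem:regular} this holds for almost every \(z\).

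\textbf{Step 2 (from pointwise a.e.\ to \(L^1_{\mathrm{loc}}\)).} Along the subsequence, \(U_{\nu_{n_k}}\to U_\nu\) in \(L^1_{\mathrm{loc}}\), so a further subsequence converges almost everywhere. Hence \(U_\nu\) equals the barrier expression almost everywhere, and therefore as \(L^1_{\mathrm{loc}}\) functions. Since \(U_\nu\) is subharmonic, it is the unique subharmonic representative. This justifies the phrase ``subharmonic extension'' at the roots, where \(\Blev_P\) is undefined. The limit \(\nu=\frac{1}{2\pi}\Delta U_\nu\) is thus independent of the subsequence, so the whole sequence converges, and \(\supp\nu\subseteq\Hull_P\).

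For perturbed \(P_n\to P\), the uniformity in the coefficients stated in Proposition~\ref{prop:uniform-saddle} and the persistence of contours in Lemma~\ref{lem:regular} give the same pointwise limit at the regular points of \(P\). The rest of the argument is unchanged.

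\textbf{Main obstacle.} The delicate point is Step 1's use of the saddle asymptotic at the varying ratios \(\alpha_n\) with a contour that is uniform. We must make sure that the compact parameter neighbourhood of Proposition~\ref{prop:uniform-saddle} really contains \((z,\alpha_n)\) for all large \(n\). We also need the error to be \(o(n)\) after taking logarithms, which holds because the amplitude \(\mathfrak a\) is bounded below uniformly. A further subtlety is that only almost-everywhere convergence is available, so no zero-exclusion statement is needed here. Identifying the measure relies entirely on the \(L^1_{\mathrm{loc}}\) potential argument, which needs no lower bound off regular points.
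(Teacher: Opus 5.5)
Your proposal is correct and follows essentially the paper's route: almost-everywhere convergence of the normalized potentials at regular points via Proposition~\ref{prop:uniform-saddle} at the ratios \(m_n/n\) together with Stirling, then identification of every subsequential weak limit through its potential. The paper reaches \(L^1_{\mathrm{loc}}\) convergence of the whole sequence through uniform integrability and Vitali rather than through a further a.e.-convergent subsequence, and it adds one detail you omit: \(U_\nu\) agrees a.e. with the barrier expression, which is continuous off \(Z(P)\) by Lemma~\ref{lem:barrier-cont}, so the submean characterization of subharmonic functions gives the formula pointwise everywhere off the roots.
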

\begin{proof}
Gauss--Lucas gives a common compact support. For almost every \(z\), Lemma~\ref{lem:regular} applies at \((z,\alpha)\), and Proposition~\ref{prop:uniform-saddle} applies at \((z,m_n/n)\) for large \(n\). Therefore
\[
 \frac1n\log\left|\frac{R_{P;m_n,n}(z)}{m_n!}\right|
 \longrightarrow\Blev_P(z,\alpha).
\]
The leading coefficient of \(R_{P;m_n,n}/m_n!\) is \(\binom{dn}{m_n}\), and Stirling's formula gives
\[
 \frac1n\log\binom{dn}{m_n}\longrightarrow \mathfrak h_d(\alpha).
\]
After division by \((dn-m_n)/n\), the normalized logarithmic potentials converge almost everywhere to \eqref{eq:fixed-potential-intro}. Their root measures have one common compact support, so the uniform-integrability conclusion of Lemma~\ref{lem:pot-compact} applies without first assuming weak convergence. Vitali's theorem on each bounded disk therefore gives \(L^1_{\mathrm{loc}}\) convergence. Taking distributional Laplacians identifies every weakly convergent subsequence, and compactness gives convergence of the full sequence to a positive measure of mass one.

The subharmonic representative of the limiting potential supplies its extension at the roots. Off the roots, Lemma~\ref{lem:barrier-cont} gives a continuous expression agreeing almost everywhere with that potential. The submean characterization of the subharmonic representative gives equality everywhere there.

For varying polynomials \(P_n\), their root convex hulls still lie in one compact set. At almost every limiting observation point, the selected simple saddle and its separating contour for \((P,z,\alpha)\) persist at \((P_n,z,m_n/n)\) by Proposition~\ref{prop:uniform-saddle}. The same almost-everywhere and compactness argument applies. Root splitting may change the degree of the reduced saddle equation, but it does not affect this selected saddle or the contour tube; any additional saddle branches created near the split roots are irrelevant to the local asymptotic.
\end{proof}

\subsection{Atoms and the selected inverse branch}
\begin{proposition}[The fixed-ratio atoms]\label{prop:atoms}
For \(x\in\R\), write \(x_+=\max\{x,0\}\). The atomic part of the limiting measure \(\mu_{\alpha,P}\) is
\begin{equation}\label{eq:fixed-atoms}
 \mu_{\alpha,P}^{\mathrm{at}}=
 \sum_{j=1}^s\frac{(r_j-\alpha)_+}{d-\alpha}\delta_{a_j}.
\end{equation}
When \(\alpha<r_j\), the root \(a_j\) is isolated in \(\supp\mu_{\alpha,P}\). The potential is continuous off \(Z(P)\). Wherever one saddle branch realizes the barrier, the potential is that branch's critical height minus \(\mathfrak h_d(\alpha)\), divided by \(d-\alpha\).
\end{proposition}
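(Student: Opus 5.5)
Since \(U_{\mu_{\alpha,P}}=(\Blev_P(\cdot,\alpha)-\mathfrak h_d(\alpha))/(d-\alpha)\) by Theorem~\ref{thm:fixed}, the mass of \(\mu_{\alpha,P}\) at a point \(a\) equals \(\lim_{\eps\to0}\) of the flux of \(U_{\mu_{\alpha,P}}\) through \(|z-a|=\eps\) divided by \(2\pi\). Equivalently, an atom of size \(c\) at \(a\) corresponds to \(U(z)=c\log|z-a|+O(1)\) with bounded remainder near \(a\). We therefore study the local behavior of the barrier \(\Blev_P(z,\alpha)\), first near the roots \(a_j\) and then near points off \(Z(P)\).

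\emph{Step 1 (no atoms off the roots; continuity).} Off \(Z(P)\), Lemma~\ref{lem:barrier-cont} shows that \(\Blev_P(\cdot,\alpha)\) is continuous. A measure whose potential is finite and continuous near a point has no atom there, since an atom \(c\delta_a\) forces \(U\to-\infty\) at \(a\). This gives continuity of the potential off \(Z(P)\). If one saddle branch \(w_*\) realizes the barrier, then by Lemma~\ref{lem:barrier} and the formula above, \(U\) equals \((\Ree\Xi_*-\mathfrak h_d(\alpha))/(d-\alpha)\) at every point off \(Z(P)\), and therefore at every point where this holds.

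\emph{Step 2 (asymptotics of the barrier at a root).} Fix \(a_j\) and let \(z\to a_j\). The plan is to show
\[
 \Blev_P(z,\alpha)=(r_j-\alpha)_+\log|z-a_j|+O(1).
\]
We work with rescaled coordinates \(w=a_j+(z-a_j)u\). Near \(a_j\) the landscape is
\[
 r_j\log|w-a_j|-\alpha\log|w-z|+C_j+o(1)=(r_j-\alpha)\log|z-a_j|+\bigl(r_j\log|u|-\alpha\log|u-1|\bigr)+O(1).
\]
\emph{Case \(\alpha<r_j\).} A circle \(|w-a_j|=\kappa|z-a_j|\) with fixed \(\kappa>1\) encloses \(z\) and has maximum \((r_j-\alpha)\log|z-a_j|+O(1)\). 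By the min--max formula this gives the upper bound. For the lower bound, we must produce a path from \(z\) to \(\infty\) staying above \((r_j-\alpha)\log|z-a_j|-C\). Inside the rescaled picture, the model landscape \(r_j\log|u|-\alpha\log|u-1|\) has positive ends at \(u=1\) and \(u=\infty\). Its barrier is a finite constant, attained by a path to \(|u|=R\). Continuing radially outward in \(w\), the function \(r_j\log|w-a_j|-\alpha\log|w-z|\) increases like \((r_j-\alpha)\log|w-a_j|\) once \(|w-a_j|\gg|z-a_j|\). The path then reaches a fixed neighborhood of size \(\rho\), after which the height is \(O(1)\ge(r_j-\alpha)\log|z-a_j|\). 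From there a fixed path to \(\infty\) avoiding roots works, and this yields the lower bound.
\emph{Case \(\alpha\ge r_j\).} At the fixed scale, a separating curve must encircle \(z\) but may leave \(a_j\) inside. Curves of size comparable to \(\rho\) give \(\Blev=O(1)\) from above. From below, the curves of \(\mathscr J_z\) either stay near \(a_j\), where the landscape grows as \(|w-a_j|\) shrinks toward \(|z-a_j|\), or reach size \(\rho\), where the height is \(\ge -C\). A path from \(z\) leaving radially has height \(\ge (r_j-\alpha)\log|w-a_j|-C\ge -C'\), because \(r_j-\alpha\le0\) and \(|w-a_j|\le\rho\). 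This bounds the barrier below.

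\emph{Step 3 (conclusion and isolation).} Dividing by \(d-\alpha\) gives the atom weights \((r_j-\alpha)_+/(d-\alpha)\) through the local Riesz decomposition. The remaining superharmonic part \(U-c\log|z-a_j|\) is bounded near \(a_j\), hence carries no atom there. Together with Step 1 this yields \eqref{eq:fixed-atoms}. For isolation when \(\alpha<r_j\), we check that for \(0<|z-a_j|\) small exactly one simple saddle realizes the barrier. In rescaled coordinates this is the unique nonroot saddle \(u=r_j/(r_j-\alpha)\) of the model. All other saddles stay at fixed distance with heights \(O(1)\), strictly larger than \((r_j-\alpha)\log|z-a_j|\), so they are not connecting. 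Lemma~\ref{lem:multiple-gates} with \(k=1\) makes \(\Blev\) equal to a harmonic critical height in a punctured disk, so \(\supp\mu_{\alpha,P}\) misses that punctured disk.

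The main obstacle is the uniform lower bound in Step 2, that is, controlling paths across the transition scale \(|z-a_j|\ll|w-a_j|\ll1\). A related difficulty is proving that the far saddles are nonconnecting, which requires a comparison of heights \(O(1)\) versus \(\to-\infty\).
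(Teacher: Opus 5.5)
Your proof is correct, but it obtains the masses by a different route from the paper.

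\textbf{The paper's route.} It analyses the inverse branches of \(F_{P,\alpha}\) near a root \(a\) of multiplicity \(r\). The branch through \(a\) has critical height \((r-\alpha)\log|z-a|\) plus a harmonic function, and all other heights stay bounded.
\begin{itemize}
\item For \(\alpha<r\), the lower bound on the atom comes from the exact multiplicity \(rn-m\) of \(a\) in \(R_{P;m,n}\) together with weak convergence. The resulting forced atom drives the potential to \(-\infty\), which selects the root branch and yields the exact mass and isolation.
\item For \(\alpha>r\), a fixed contour rules out the root branch.
\item The threshold \(\alpha=r\) needs a separate local-degree argument (\(\mathfrak e_a\ge2\), \(w-a\asymp(z-a)^{1/\mathfrak e_a}\)) to show that every critical height stays bounded.
\end{itemize}

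\textbf{Your route.} You estimate the barrier directly through the min--max formula. From above you use the circle \(|w-a_j|=\kappa|z-a_j|\) or a fixed circle; from below you use a path from \(z\) to infinity. This gives \(\Blev_P(z,\alpha)=(r_j-\alpha)_+\log|z-a_j|+O(1)\) in all three regimes at once. You need neither the finite-level multiplicity count nor a special treatment of \(\alpha=r_j\). In exchange, the paper's branch analysis shows explicitly which saddle branch realizes the barrier near each root in every regime, including the local structure at the threshold. Your isolation argument is the paper's: only the root branch has height tending to \(-\infty\), so it is the unique, simple connecting saddle on a punctured disk.

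Three points to tighten.
\begin{itemize}
\item Pass from \(U=c\log|z-a_j|+O(1)\) to \(\mu_{\alpha,P}(\{a_j\})=c\) via circle means. The quantity \(\frac1{2\pi}\int_0^{2\pi}U(a_j+te^{i\theta})\,\mathrm d\theta=\int\max\{\log t,\log|\xi-a_j|\}\,\mathrm d\mu_{\alpha,P}(\xi)\) is convex in \(\log t\), with slope tending to \(\mu_{\alpha,P}(\{a_j\})\) as \(t\downarrow0\). Your ``equivalently'' holds only in this direction. Also, \(U-c\log|z-a_j|\) is a priori the potential of the signed measure \(\mu_{\alpha,P}-c\delta_{a_j}\), and the paper's potentials are subharmonic, not superharmonic.
\item Your ``main obstacle'' is not one. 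On the ray \(w=z+t(z-a_j)/|z-a_j|\) one has \(|w-z|\le|w-a_j|\), so \(L_{P,z,\alpha}(w)\ge(r_j-\alpha)\log|w-a_j|-C\) until \(|w-a_j|=\rho\). This gives the lower bound for \(\alpha\ge r_j\) directly, and for \(\alpha<r_j\) as well, since \(|w-a_j|\ge|z-a_j|\) along the ray.
\item Justify that the other saddles stay a fixed distance from \(a_j\). Since \(F_{P,\alpha}'(a_j)=1-\alpha/r_j\ne0\), the map has local degree one at \(a_j\). The remaining preimages of points near \(a_j\) stay in a compact set away from \(Z(P)\) and from the poles of \(F_{P,\alpha}\), because \(F_{P,\alpha}(a_k)=a_k\) and \(F_{P,\alpha}(\infty)=\infty\). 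Their heights are therefore bounded, and no saddles occur at intermediate scales.
\end{itemize}
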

\begin{proof}
Continuity off the roots follows from Lemma~\ref{lem:barrier-cont} and rules out atoms there. Fix a root \(a\) of multiplicity \(r\). For \(m<rn\), its multiplicity in \(R_{P;m,n}\) is exactly \(rn-m\). Thus if \(\alpha<r\), weak convergence forces an atom of mass at least \((r-\alpha)/(d-\alpha)\).

Since \(h_P(w)=(w-a)/r+O((w-a)^2)\), the inverse branch of \(F_{P,\alpha}\) through \(a\), for \(\alpha\ne r\), satisfies
\[
 w-a=\frac{r}{r-\alpha}(z-a)+O((z-a)^2),\qquad
 w-z=\frac{\alpha}{r-\alpha}(z-a)+O((z-a)^2).
\]
Its critical height is \((r-\alpha)\log|z-a|\) plus a harmonic function; all other heights remain bounded near \(a\). For \(\alpha<r\), the forced atom makes the limiting potential tend uniformly to \(-\infty\) as \(z\to a\), because the potential of the remaining positive measure is locally bounded above. Hence the root branch is selected on a punctured neighborhood. This gives the exact mass and isolation.

For \(\alpha>r\), the root branch tends to \(+\infty\), whereas a fixed contour around \(a\) bounds the barrier from above. The root branch is therefore unselected, and the selected heights remain bounded. At \(\alpha=r<d\), the one-root case \(P(w)=(w-a)^d\) cannot occur. The order of vanishing of \(F_{P,r}(w)-a\) at \(w=a\) (the local mapping degree) is therefore some \(\mathfrak e_a\ge2\). On the corresponding inverse branches, \(w-a\) has order \((z-a)^{1/\mathfrak e_a}\) and \((w-z)/(w-a)\to1\). The logarithmic terms cancel, so the critical heights are bounded here as well. There is no atom at the threshold \(\alpha=r\), completing \eqref{eq:fixed-atoms}.
\end{proof}

\begin{proposition}[The selected branch of \(w_\alpha(z)\) off the support]\label{prop:selected-C}
Set
\[
 C_{\alpha,P}(z):=C_{\mu_{\alpha,P}}(z),\qquad
 \widehat C_{\alpha,P}(z):=(d-\alpha)C_{\alpha,P}(z).
\]
The identity
\begin{equation}\label{eq:reduced-symbol}
 \widehat C_{\alpha,P}(z)^s
 Q_P\left(z+\frac{\alpha}{\widehat C_{\alpha,P}(z)}\right)
 -\widehat C_{\alpha,P}(z)^{s-1}
 N_P\left(z+\frac{\alpha}{\widehat C_{\alpha,P}(z)}\right)=0
\end{equation}
holds almost everywhere in the plane and everywhere off \(\supp\mu_{\alpha,P}\). After cancellation of the displayed denominators, the left-hand side is a polynomial in \(\widehat C_{\alpha,P}(z)\) with constant term \(\alpha^{s-1}(\alpha-d)\). Thus \(C_{\alpha,P}\) has no zeros off its support.

On each component of \(\C\setminus\supp\mu_{\alpha,P}\),
\begin{equation}\label{eq:w-selected}
 w_\alpha(z):=z+\frac{\alpha}{\widehat C_{\alpha,P}(z)}
\end{equation}
is the holomorphic inverse branch selected by the barrier, and
\begin{equation}\label{eq:C-selected}
 F_{P,\alpha}(w_\alpha(z))=z,
 \qquad \widehat C_{\alpha,P}(z)=\frac{P'(w_\alpha(z))}{P(w_\alpha(z))},
 \qquad F_{P,\alpha}'(w_\alpha(z))\ne0.
\end{equation}
The last derivative is with respect to \(w\). At a point of \(Z(P)\setminus\supp\mu_{\alpha,P}\), selection means the unique holomorphic continuation of the barrier-selected branch from a punctured neighborhood.
\end{proposition}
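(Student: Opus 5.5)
The plan is to work first on the open set where the barrier is realized by a single simple saddle branch, and then to push everything to the whole complement of the support by harmonicity. Recall from Theorem~\ref{thm:fixed} that \((d-\alpha)U_{\mu_{\alpha,P}}=\Blev_P(\cdot,\alpha)-\mathfrak h_d(\alpha)\), so that \(\widehat C_{\alpha,P}=2\partial_z\Blev_P(\cdot,\alpha)\) as distributions, and classically off \(\supp\mu_{\alpha,P}\).

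\emph{Step 1: off the support.} Off \(\supp\mu_{\alpha,P}\cup Z(P)\) the barrier is harmonic. Lemma~\ref{lem:harmonic-gate} then applies on small disks, and its proof is exactly what is needed. There is a unique connecting saddle \(w(z)=z+\alpha/(2\partial_z\Blev_P)\), it is holomorphic, it satisfies \(F_{P,\alpha}(w(z))=z\), and \(F'_{P,\alpha}(w(z))\ne0\) by \eqref{eq:gate-selected}. The saddle equation \eqref{eq:finite-saddle} rewritten at \(w=w_\alpha(z)\) gives \((w-z)P'(w)/P(w)=\alpha\). Hence \(P'(w_\alpha)/P(w_\alpha)=\alpha/(w_\alpha-z)=\widehat C_{\alpha,P}\), which is \eqref{eq:C-selected}.

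Next substitute \(w=z+\alpha/\widehat C\) into the reduced saddle equation \eqref{eq:reduced-saddle}, \((w-z)N_P(w)-\alpha Q_P(w)=0\). This reads \((\alpha/\widehat C)N_P(w)-\alpha Q_P(w)=0\). Multiplying by \(-\widehat C^{s}/\alpha\) gives \eqref{eq:reduced-symbol}. Since \(w-z\ne0\), the Cauchy transform \(\widehat C\) is finite and nonzero there.

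For the constant term, expand \(Q_P(z+\alpha/c)\) and \(N_P(z+\alpha/c)\) in powers of \(c\). The term of order \(c^0\) in \(c^sQ_P(z+\alpha/c)\) is \(\alpha^s\), coming from the leading coefficient \(1\) of \(Q_P\). The term of order \(c^0\) in \(c^{s-1}N_P(z+\alpha/c)\) is \(d\alpha^{s-1}\), coming from the leading coefficient \(d\) of \(N_P\). The constant term is therefore \(\alpha^{s}-d\alpha^{s-1}=\alpha^{s-1}(\alpha-d)\ne0\). So the polynomial is never zero at \(c=0\), and \(C_{\alpha,P}\) has no zeros off the support. This is consistent with \(\widehat C=\alpha/(w-z)\).

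\emph{Step 2: roots off the support.} At a root \(a\notin\supp\mu_{\alpha,P}\) we must have \(\alpha\ge r\) by Proposition~\ref{prop:atoms}, since otherwise \(a\) would be an atom. On a punctured neighborhood, Step 1 gives a holomorphic branch. The function \(\widehat C\) is holomorphic across \(a\) and is nonzero, being the uniform limit of the polynomial relation with nonvanishing constant term. Hence \(w_\alpha\) extends holomorphically, and this is the unique continuation. The relation \(F_{P,\alpha}(w_\alpha)=z\) persists by continuity. The nonvanishing of \(F'\) at the point over \(a\) needs a separate check. If it failed, the inverse would be branched there, contradicting single-valued holomorphy of \(w_\alpha\) on the full disk together with \(F\circ w_\alpha=\mathrm{id}\). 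Indeed, differentiating the identity gives \(F'(w_\alpha)w_\alpha'=1\).

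\emph{Step 3: almost everywhere.} The support has two parts. At a.e.\ regular point (Lemma~\ref{lem:regular}) of the support, the barrier is locally the critical height of one simple branch on a neighborhood, by the single-saddle persistence of Lemma~\ref{lem:multiple-gates}. There the barrier is locally harmonic, contradicting membership in the support unless that set is null. More carefully, the set of support points where one branch realizes the barrier on an open neighborhood is empty in the support's interior. So a.e.\ point of the support lies on the equal-height arcs, which are null, or in the nonregular set, which is also null. This is the step I expect to be delicate.

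The cleaner route, which I will take, avoids that dichotomy. At every regular point \(z\), the barrier equals \(\max_j\psi_j\) near \(z\) with a unique maximizer. So \(\Blev\) coincides with one harmonic \(\psi_j\) on a neighborhood of \(z\), and \(2\partial_z\Blev=\alpha/(w_j-z)\) there classically. The distributional \(\widehat C\) agrees a.e.\ with this pointwise derivative, because \(\Blev\) is locally Lipschitz off the roots. Repeating the algebra of Step 1 then gives \eqref{eq:reduced-symbol} at almost every point.
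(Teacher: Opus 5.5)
Your proof is correct, but for the off-support assertions it takes a different route from the paper. The paper never uses Lemma~\ref{lem:harmonic-gate} here. It differentiates the selected critical height only on regular chambers, using \eqref{eq:phase-z}, to get \(\widehat C_{\alpha,P}=\alpha/(w_\alpha-z)\) and hence \eqref{eq:reduced-symbol} almost everywhere. It then notes that off the support the left side of \eqref{eq:reduced-symbol} is a holomorphic polynomial expression in \(\widehat C_{\alpha,P}\), so the identity extends to all of \(\C\setminus\supp\mu_{\alpha,P}\). Finally it derives \eqref{eq:C-selected} algebraically: \(\widehat C_{\alpha,P}Q_P(w_\alpha)=N_P(w_\alpha)\), with \(\widehat C_{\alpha,P}\ne0\) and \(Q_P,N_P\) coprime, keeps \(w_\alpha\) off the roots.

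You instead apply Lemma~\ref{lem:harmonic-gate} at every nonroot point off the support and read off \(w_\alpha(z)\) directly as the unique simple connecting saddle. You then continue across roots off the support by a removable-singularity argument. Your route gives ``selected by the barrier'' a pointwise meaning at every nonroot point of \(\C\setminus\supp\mu_{\alpha,P}\), rather than chamberwise selection plus analytic continuation. The paper's route is shorter, does not need the touching-function argument behind Lemma~\ref{lem:harmonic-gate}, and treats roots off the support uniformly through coprimality.

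Your Step~3 (the ``cleaner route'') is exactly the paper's almost-everywhere argument, and the muddled first paragraph can be dropped. The Lipschitz remark is unnecessary, because the barrier is harmonic on the open set of regular points. That same fact (Lemma~\ref{lem:multiple-gates} with \(k=1\)) puts \(\supp\mu_{\alpha,P}\setminus Z(P)\) inside the null nonregular set, so the almost-everywhere claim follows at once from Steps~1--2.

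Two small corrections. First, \(F_{P,\alpha}'(w(z))\ne0\) comes from differentiating \(F_{P,\alpha}(w(z))=z\), not from \eqref{eq:gate-selected}. Second, at a root \(a\) off the support you should say why \(w_\alpha(a)\) is not a pole of \(F_{P,\alpha}\): this holds because \(F_{P,\alpha}(w_\alpha(z))=z\) stays bounded as \(z\to a\).
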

\begin{proof}
On a regular chamber, differentiate the selected critical height in \eqref{eq:fixed-potential-intro}. The identities \(2\partial_z\Ree\Xi_i=\partial_z\Xi_i\) and \eqref{eq:phase-z} give \(\widehat C_{\alpha,P}(z)=\alpha/(w_\alpha(z)-z)\). Substitution into the reduced saddle equation yields \eqref{eq:reduced-symbol}. The exceptional set has planar measure zero. Off the support, the resulting polynomial identity is holomorphic and hence holds everywhere. Its nonzero constant term proves nonvanishing of the Cauchy transform.

The branch cannot meet a root of \(Q_P\), since \(\widehat C_{\alpha,P}(z)Q_P(w_\alpha(z))-N_P(w_\alpha(z))=0\) and \(Q_P,N_P\) are coprime. Division gives the first two identities in \eqref{eq:C-selected}; differentiating \(F_{P,\alpha}(w_\alpha(z))=z\) gives the last.
\end{proof}

\subsection{Continuity and root thresholds}
\begin{lemma}[Joint weak continuity and uniformity]\label{lem:alpha-cont}
In fixed degree \(d\), \((P,\alpha)\mapsto\mu_{\alpha,P}\) is jointly weakly continuous for \(0<\alpha<d\). For every compact family \(\mathscr P\) of monic degree-\(d\) polynomials, every compact interval \(I\subset(0,d)\), and every continuous function \(\varphi:\C\to\C\),
\[
 \sup_{\substack{P\in\mathscr P,\ 0\le m<dn\\ m/n\in I}}
 \left|\int\varphi\dd\mu_{P;m,n}^{\mathrm{root}}
       -\int\varphi\dd\mu_{m/n,P}\right|\longrightarrow0.
\]
In particular these assertions hold with \(P\) fixed and \(\varphi\) continuous on \(\Hull_P\).
\end{lemma}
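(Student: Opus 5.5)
The plan is to derive both assertions from one diagonal argument that upgrades the sequential statement of Theorem~\ref{thm:fixed} with varying polynomials, and then to read joint continuity off the potential formula \eqref{eq:fixed-potential-intro}.

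\emph{Joint continuity.} Let \((P_k,\alpha_k)\to(P,\alpha)\) with \(\alpha\in(0,d)\). All measures \(\mu_{\alpha_k,P_k}\) are supported in one compact set, because the hulls \(\Hull_{P_k}\) stay bounded. By Lemma~\ref{lem:pot-compact}, any weakly convergent subsequence has potentials converging in \(L^1_{\mathrm{loc}}\). By Lemma~\ref{lem:barrier-cont}, \(\Blev_{P_k}(z,\alpha_k)\to\Blev_P(z,\alpha)\) at every \(z\notin Z(P)\), hence almost everywhere. Since \(\mathfrak h_d\) and \(d-\alpha\) are continuous, the potentials \eqref{eq:fixed-potential-intro} converge almost everywhere. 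I expect the uniform integrability from Lemma~\ref{lem:pot-compact} together with Vitali's theorem to identify the \(L^1_{\mathrm{loc}}\) limit with \(U_{\mu_{\alpha,P}}\). Taking Laplacians then identifies every subsequential limit as \(\mu_{\alpha,P}\), and compactness gives weak convergence of the full family.

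\emph{Uniformity.} I will argue by contradiction. If the supremum does not tend to zero, there are \(\eps>0\), integers \(n_k\to\infty\), polynomials \(P_k\in\mathscr P\), and orders \(m_k\) with \(m_k/n_k\in I\) and
\[
 \Bigl|\int\varphi\dd\mu^{\mathrm{root}}_{P_k;m_k,n_k}-\int\varphi\dd\mu_{m_k/n_k,P_k}\Bigr|\ge\eps .
\]
Using compactness of \(\mathscr P\times I\), I pass to a subsequence with \(P_k\to P\in\mathscr P\) and \(m_k/n_k\to\alpha\in I\). The second integral then tends to \(\int\varphi\dd\mu_{\alpha,P}\) by the joint continuity just proved. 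For the first integral I need the varying-polynomial clause of Theorem~\ref{thm:fixed}, but along an arbitrary subsequence of levels \(n_k\) rather than along all \(n\).

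\emph{Main obstacle.} The difficulty is to check that the proof of Theorem~\ref{thm:fixed} does not use the fact that every \(n\) occurs. I would rerun it along \(n_k\). At almost every \(z\), Lemma~\ref{lem:regular} provides a persistent one-saddle separating contour for \((P,z,\alpha)\). Proposition~\ref{prop:uniform-saddle} is uniform over nearby coefficients and ratios, so it applies to \((P_k,z,m_k/n_k)\) and yields
\[
 n_k^{-1}\log\bigl|R_{P_k;m_k,n_k}(z)/m_k!\bigr|\to\Blev_P(z,\alpha).
\]
Stirling's formula handles the leading coefficient, and Vitali's theorem again gives \(L^1_{\mathrm{loc}}\) convergence of the potentials. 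Hence the first integral also tends to \(\int\varphi\dd\mu_{\alpha,P}\), which contradicts the \(\eps\)-gap.

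\emph{Fixed-\(P\) case.} Both assertions apply directly with \(\mathscr P=\{P\}\). Since every measure involved is supported in \(\Hull_P\), a test function continuous on \(\Hull_P\) can be replaced by a Tietze extension to \(\C\).
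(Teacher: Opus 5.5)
Your proposal is correct, but its joint-continuity step takes a different route from the paper.

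\textbf{Joint continuity.} The paper argues diagonally. It chooses increasing \(n_j\) so that the root measure of \((P_j^{n_j})^{(\lfloor\alpha_jn_j\rfloor)}\) is within \(1/j\) of \(\mu_{\alpha_j,P_j}\) in a metric for weak convergence on a common disk. It then applies the varying-polynomial clause of Theorem~\ref{thm:fixed} to this diagonal sequence.

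You instead read continuity directly off the potential formula \eqref{eq:fixed-potential-intro}. That formula holds off \(Z(P_k)\), and Lemma~\ref{lem:barrier-cont} gives convergence at every \(z\notin Z(P)\). The uniform integrability of Lemma~\ref{lem:pot-compact}, combined with Vitali's theorem, then identifies the \(L^1_{\mathrm{loc}}\) limit and hence the weak limit.

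Your version shows that joint continuity follows purely from potential theory, using only the barrier formula and continuity of the barrier. It needs no finite-level approximants and no varying-polynomial asymptotics. The paper's route is shorter, because it reuses exactly the sequential statement that the uniformity step needs anyway.

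\textbf{Uniformity.} This step is the same compactness-and-contradiction argument as in the paper. The obstacle you flag goes away without rerunning any proof. Take the \(n_k\) strictly increasing, and for \(n\notin\{n_k\}\) set \(P_n=P\) and \(m_n=\lfloor\alpha n\rfloor\). Then Theorem~\ref{thm:fixed} applies verbatim to the padded sequence. The paper uses this padding tacitly, also in its diagonal step.
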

\begin{proof}
Let \((P_j,\alpha_j)\to(P,\alpha)\). Their root hulls lie in one compact disk, on which weak convergence of probability measures is metrizable. By the fixed-polynomial statement of Theorem~\ref{thm:fixed}, choose increasing \(n_j\) such that the normalized root-counting measure of \((P_j^{n_j})^{(\lfloor\alpha_jn_j\rfloor)}\) is within \(1/j\) of \(\mu_{\alpha_j,P_j}\) in that metric. Its derivative ratio tends to \(\alpha\). The varying-polynomial statement of the same theorem therefore gives convergence to \(\mu_{\alpha,P}\) and proves joint continuity.

If the uniform assertion failed, choose a sequence of coefficients and ratios in \(\mathscr P\times I\) with positive discrepancy. A convergent subsequence contradicts Theorem~\ref{thm:fixed} and the continuity just proved. For a fixed \(P\), only the restriction of \(\varphi\) to \(\Hull_P\) is relevant, giving the last assertion.
\end{proof}

A weak limit can miss a bounded number of zeros. Away from the roots, the harmonic-barrier lemma excludes this possibility. At a root, we must first understand the ratio where its atom vanishes.

\begin{lemma}[The threshold root stays in the support]\label{lem:threshold-root}
If \(a\) has multiplicity \(r<d\), then
\[
 a\in\supp\mu_{\alpha,P}\qquad(0<\alpha\le r).
\]
At \(\alpha=r\), the atom therefore disappears into the remaining support, not across a support-free neighborhood.
\end{lemma}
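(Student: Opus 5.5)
For \(0<\alpha<r\) the statement is already contained in Proposition~\ref{prop:atoms}: there \(\mu_{\alpha,P}\) has an atom of mass \((r-\alpha)/(d-\alpha)>0\) at \(a\), so \(a\in\supp\mu_{\alpha,P}\). The plan is therefore to treat only the threshold \(\alpha=r\), arguing by contradiction. Suppose \(a\notin\supp\mu_{r,P}\). Then there is a disk \(D\) about \(a\), disjoint from the support. On \(D\) the potential \(U_{\mu_{r,P}}\) is harmonic, and by \eqref{eq:fixed-potential-intro} so is \(\Blev_P(\cdot,r)\) on \(D\setminus\{a\}\).

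The first step is to use Proposition~\ref{prop:selected-C}. It gives a holomorphic branch \(w_r(z)\) on \(D\setminus\{a\}\) with \(F_{P,r}(w_r(z))=z\), and the selection there says this branch continues holomorphically across \(a\). Let \(w_0=w_r(a)\), so \(F_{P,r}(w_0)=a\) and \(F_{P,r}'(w_0)\ne0\). From \eqref{eq:w-selected}, the Cauchy transform satisfies \(\widehat C_{r,P}(a)=r/(w_0-a)\) whenever \(w_0\ne a\).

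The second step is to exclude \(w_0=a\). As computed in the proof of Proposition~\ref{prop:atoms}, \(F_{P,r}(w)-a\) vanishes at \(w=a\) to order \(\mathfrak e_a\ge2\). Hence \(F_{P,r}'(a)=0\), which is incompatible with \(F_{P,r}'(w_0)\ne0\). So \(w_0\ne a\), and \(w_0\) is a genuine finite saddle for the observation point \(a\).

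The third step, which I expect to be the main obstacle, is to derive a contradiction from harmonicity near \(a\). I would compare masses through the Cauchy transform. Off the support, \(\widehat C_{r,P}\) is holomorphic near \(a\), so \(\oint_{|z-a|=\eps}\widehat C_{r,P}\,dz=0\). On the other hand, I would try to use the varying-ratio limits \(\alpha\uparrow r\), via Lemma~\ref{lem:alpha-cont}. For \(\alpha<r\), the measure \(\mu_{\alpha,P}\) carries an atom of mass \((r-\alpha)/(d-\alpha)\) at \(a\), and the root branch is selected on a punctured neighborhood. That neighborhood has size \(\rho(\alpha)\), and \(\rho(\alpha)\to0\) as \(\alpha\to r\), since the root branch's height \((r-\alpha)\log|z-a|\) only dominates once \(|z-a|\) is exponentially small in \(1/(r-\alpha)\).

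Joint continuity of the barrier, Lemma~\ref{lem:barrier-cont}, then shows that on \(D\setminus\{a\}\) the barrier at \(\alpha\) near \(r\) is the height of the branch continuing \(w_r\), and this branch is bounded away from \(a\). Near \(a\), by contrast, it is the root branch, whose \(w\to a\). Consider the selection interface between these two regions, an annulus around \(a\) of radius about \(\rho(\alpha)\). It carries support of \(\mu_{\alpha,P}\), and that support is a closed curve around \(a\) with positive mass. By weak convergence this support curve shrinks to \(a\), and its mass plus the atom accounts for the jump in flux of \(\widehat C\) across the annulus. That jump is \(\widehat C_{\mathrm{root}}-\widehat C_{w_r}\approx r/(z-a)\cdot\) const, which is of order \(r\).

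Passing to the limit, weak convergence would therefore place a positive mass of order \(r/(d-r)\) at \(a\) in \(\mu_{r,P}\). This contradicts Proposition~\ref{prop:atoms}, which gives no atom at the threshold \(\alpha=r\). An alternative I would try first, if the flux argument is delicate, is a potential comparison. Write \(U_{\mu_{\alpha,P}}\le\) (the root-branch height) on a small circle, and show that the harmonic extension of the \(w_r\)-height lies strictly above the root-branch height at some point inside. This contradicts the barrier being the maximum over connecting saddles given by Lemma~\ref{lem:multiple-gates}.
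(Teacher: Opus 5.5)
\textbf{There is a genuine gap in your Step~3.} The reduction to \(\alpha=r\) and Steps~1--2 are fine; in fact \(w_0\ne a\) is immediate from \(w_0-a=r/\widehat C_{r,P}(a)\). But the quantitative claim your contradiction rests on is false.

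By Proposition~\ref{prop:atoms}, the root branch satisfies \(w-z=\frac{\alpha}{r-\alpha}(z-a)+O((z-a)^2)\). Hence \(\widehat C_{\alpha,P}(z)=\frac{r-\alpha}{z-a}+O(1)\) there. Its flux about \(a\) is \(r-\alpha=(d-\alpha)\cdot\frac{r-\alpha}{d-\alpha}\), which is exactly the atom, and it tends to \(0\). It is not of order \(r\).

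No zeroth-order contradiction is possible at all. Under your hypothesis \(a\notin\supp\mu_{r,P}\), the weak continuity in Lemma~\ref{lem:alpha-cont} already gives \(\mu_{\alpha,P}(\overline B)\to0\) for any small closed disk \(\overline B\) about \(a\) inside \(D\). That is perfectly consistent with atoms of mass \((r-\alpha)/(d-\alpha)\to0\). So ``passing to the limit'' can never place positive mass at \(a\).

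Several supporting steps are also only heuristic:
\begin{itemize}
\item Continuity of \(\Blev_P\) does not tell you which saddle branch realizes the barrier.
\item The claim that the support near \(a\) is a closed curve of positive mass is unproved.
\item Your alternative potential comparison treats the barrier as a maximum over saddle heights. Remark~\ref{rem:not-max} warns this is false unless the saddle is shown to be connecting.
\end{itemize}

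\textbf{The missing idea.} The contradiction is first order in \(\beta-r\), and it needs control on both sides of \(r\).
\begin{enumerate}
\item Take a closed annulus \(A\) with \(\partial D\) in its interior, disjoint from \(\supp\mu_{r,P}\cup Z(P)\).
\item Lemma~\ref{lem:harmonic-gate} gives a persistent one-saddle separating contour at each \(z\in A\). A finite subcover gives one window \(|\beta-r|<\eta\) on which, by Lemma~\ref{lem:multiple-gates}, the barrier on \(A\) is the real part of a single holomorphic critical-value branch.
\item Hence \(A\) is support-free for all such \(\beta\), and \(\widehat C_{\beta,P}(z)=\beta/(w_\beta(z)-z)\) depends real-analytically on \(\beta\) near \(\partial D\). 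Therefore
\[
 M(\beta):=\frac1{2\pi i}\oint_{\partial D}\widehat C_{\beta,P}(z)\dd z=(d-\beta)\mu_{\beta,P}(D)
\]
is differentiable at \(r\).
\item Since \(M\ge0\) on both sides of \(r\) and \(M(r)=0\), you get \(M'(r)=0\). The atom formula instead gives \(M(r-h)\ge h\), a left slope of at most \(-1\).
\end{enumerate}
Your one-sided limit \(\alpha\uparrow r\) cannot produce this. Without positivity of \(M\) for \(\beta>r\) and differentiability across \(r\), the inequality \(M(\alpha)\ge r-\alpha\) is not contradictory.
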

\begin{proof}
By Proposition~\ref{prop:atoms}, only \(\alpha=r\) remains. Suppose a closed disk \(\overline D\) centered at \(a\), containing no other root, is disjoint from \(\supp\mu_{r,P}\). Choose a closed annulus \(A\) containing \(\partial D\) in its interior and still disjoint from \(\supp\mu_{r,P}\cup Z(P)\). On a neighborhood of \(A\), the potential and therefore the barrier are harmonic at \(\beta=r\).

For every \(z\in A\), Lemma~\ref{lem:harmonic-gate} supplies a unique simple selected saddle and a persistent one-saddle contour. A finite subcover of the compact annulus gives one number \(\eta>0\) for which these contours persist on all of \(A\) whenever \(|\beta-r|<\eta\). Lemma~\ref{lem:multiple-gates} then identifies the barrier locally with the real part of one holomorphic critical-value branch. Hence the fixed-ratio potential is harmonic on \(A\), so \(A\cap\supp\mu_{\beta,P}=\varnothing\), throughout this common \(\beta\)-window. On overlaps, the locally selected expressions
\[
 \widehat C_{\beta,P}(z)=\frac{\beta}{w_\beta(z)-z}
\]
agree because they are the Cauchy transform of the same measure. Thus \(\widehat C_{\beta,P}\) is jointly holomorphic in \(z\) and real-analytic in \(\beta\) on a neighborhood of \(\partial D\).

Consequently
\[
 M(\beta):=\frac{1}{2\pi i}\oint_{\partial D}
             \widehat C_{\beta,P}(z)\dd z
          =(d-\beta)\mu_{\beta,P}(D)
\]
is differentiable. It is nonnegative and \(M(r)=0\), so \(M'(r)=0\). The atom formula, however, gives \(M(r-h)\ge h\) for all sufficiently small \(h>0\), contradicting differentiability at \(r\).
\end{proof}

\begin{lemma}[Separation beyond a root threshold]\label{lem:merged-end}
Let \(a\) have multiplicity \(r\), let \(r<\alpha<d\), and assume \(a\notin\supp\mu_{\alpha,P}\). At the coalesced observation point \(a\), the landscape
\[
 w\longmapsto\log|P(w)|-\alpha\log|w-a|
\]
has a one-saddle separating contour winding once about \(a\). Moreover, there are neighborhoods \(\mathscr U\) of \(P\) in the monic coefficient space, \(V\) of \(a\), and \(I\Subset(r,d)\) of \(\alpha\) such that, for every \((\widetilde P,z,\beta)\in\mathscr U\times V\times I\), the corresponding landscape has a one-saddle separating contour with a uniform strict gap. This remains true when \(z\) is a root of \(\widetilde P\). At exact ratios \(\beta=m/n\), the Cauchy integral has the uniform nonzero asymptotic of Proposition~\ref{prop:uniform-saddle}.
\end{lemma}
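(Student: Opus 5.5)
Since \(a\notin\supp\mu_{\alpha,P}\), the plan is to take the selected inverse branch from Proposition~\ref{prop:selected-C} near \(a\) and show that its saddle realizes a one-saddle separating contour for the coalesced landscape. Choose a small disk \(D\) about \(a\), free of other roots and disjoint from \(\supp\mu_{\alpha,P}\). On \(D\setminus\{a\}\) the potential is harmonic. By Lemma~\ref{lem:harmonic-gate}, each \(z\in D\setminus\{a\}\) then has a unique simple connecting saddle \(w_\alpha(z)\), with a persistent separating contour. By Proposition~\ref{prop:selected-C}, \(w_\alpha\) extends holomorphically across \(a\) with \(F_{P,\alpha}'(w_\alpha(a))\ne0\). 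Since \(\alpha>r\), the root branch is unselected (proof of Proposition~\ref{prop:atoms}), so \(w_*:=w_\alpha(a)\ne a\).

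At \(z=a\), the landscape is \(L_a(w)=\log|P(w)/(w-a)^r|+(r-\alpha)\log|w-a|\), where \(P(w)/(w-a)^r\) is nonvanishing near \(a\). Thus \(a\) is a single positive end with coefficient \(\alpha-r>0\). Its finite saddles solve \((w-a)N_P(w)=\alpha Q_P(w)\) away from the roots, and \(w_*\) is a simple such saddle. First I would check that \(L_a\) is the locally uniform limit of \(L_{P,z,\alpha}\) as \(z\to a\) away from \(a\). Near \(a\) both landscapes are large and positive: \(L_{P,z,\alpha}\ge r\log|w-a|-\alpha\log|w-z|+O(1)\), which is large on a small circle around \(a\) for \(z\) inside it. Consequently, for \(z\) close to \(a\), the negative end \(a\) and the positive end \(z\) are enclosed together inside a small disk \(\{|w-a|<\rho\}\) whose boundary lies in the superlevel set. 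Hence the barrier topology, Lemma~\ref{lem:barrier}, applies verbatim to \(L_a\) with the merged positive end \(a\).

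The continuity argument of Lemma~\ref{lem:barrier-cont} extends to the merged configuration because the end neighborhood is the fixed circle \(|w-a|=\rho\). This gives \(\Blev_P(z,\alpha)\to\Blev_{P,a}\). It also shows that contours for nearby \(z\) (all passing through \(w_\alpha(z)\to w_*\), with strict gaps) converge to a Jordan curve about \(a\) through \(w_*\).

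The main obstacle is ensuring that the strict gap persists in the limit, so that no other saddle of \(L_a\) becomes connecting at \(z=a\). I would argue by harmonicity instead. The function \(z\mapsto\Blev_P(z,\alpha)\) equals \(\Ree\Xi(z)\) along the selected branch on \(D\setminus\{a\}\) and is continuous up to \(a\), with limit the height of \(w_*\) for \(L_a\). Suppose \(L_a\) had another connecting saddle \(w'\). Repeating the lower-touching-function construction from the proof of Lemma~\ref{lem:harmonic-gate} at the merged point, the path through \(w'\) would persist to nearby \(z\). This would yield a gradient \(\alpha/(w'-z)\) incompatible with the holomorphic continuation \(\alpha/(w_\alpha(z)-z)\). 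Therefore \(w_*\) is the unique connecting saddle, and it is simple, so Lemma~\ref{lem:arbitrary-contour} gives a one-saddle separating contour about \(a\).

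For uniformity, the contour at \((P,a,\alpha)\) avoids a fixed disk \(\{|w-a|<\rho\}\) containing \(a\). Under perturbation of \((\widetilde P,z,\beta)\) in \(\mathscr U\times V\times I\), any roots of \(\widetilde P\) splitting from \(a\), as well as \(z\), stay inside this disk. There the landscape remains above the barrier level on \(|w-a|=\rho\), since \(\beta>r\) uniformly on \(I\Subset(r,d)\). This is exactly where \(I\Subset(r,d)\) is used. The simple saddle continues holomorphically, and the strict gap on the compact remainder persists as in Lemma~\ref{lem:multiple-gates}. This also covers the case where \(z\) is a root of \(\widetilde P\). Finally, the uniform nonzero asymptotic follows by running the proof of Proposition~\ref{prop:uniform-saddle} verbatim on this contour family, using that \(|w_*-z|\) stays bounded below.
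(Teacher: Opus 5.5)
Your treatment of the coalesced point itself follows the paper's argument. You extend \(\Blev_P(\cdot,\alpha)\) harmonically across \(a\), pass to the limit \(\Blev_P(z,\alpha)\to B^{\mathrm c}\), and use lower touching paths to force \(2\partial_zH(a)=\alpha/(w'-a)\) for every connecting saddle \(w'\). Simplicity then comes from the holomorphic continuation of the selected branch; citing Proposition~\ref{prop:selected-C} there is equivalent to the paper's differentiation of \(F_{P,\alpha}(w(z))=z\). One small correction: the moving landscape is not large near \(a\), since it tends to \(-\infty\) at the root \(a\). So both \(\liminf_{z\to a}\Blev_P(z,\alpha)\ge B^{\mathrm c}\) and the lower touching paths need an explicit attachment from \(z\) to the high circle. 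The ray from \(a\) through \(z\) works, because \(|w-a|\ge|w-z|\) along it.

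The genuine gap is in the uniform part. A one-saddle separating contour is a barrier contour, so for \((\widetilde P,z,\beta)\) you must show two things: the continued saddle \(\widetilde w\) is at barrier height, and it is the only connecting saddle. Your persistent contour gives only the upper min--max bound \(\Blev_{\widetilde P}(z,\beta)\le L(\widetilde w)\). The reverse bound needs a path from \(z\) through \(\widetilde w\) to infinity that stays above \(L(\widetilde w)\).

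Appealing to persistence ``as in Lemma~\ref{lem:multiple-gates}'' does not supply this path. That lemma's end attachments persist only because the ends are stable. Here the end \(a\) splits into the positive end \(z\) and up to \(r\) negative ends \(b_j\), which may lie arbitrarily close to \(z\) or coincide with it. Knowing that the circle \(|w-a|=\rho\) is high does not connect \(z\) to it. A naive segment from \(z\) can pass next to some \(b_j\), where the landscape tends to \(-\infty\), and nothing in your argument prevents the superlevel component of \(z\) from being enclosed inside the disk.

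The paper closes this with a pigeonhole on directions. Remove from the unit circle the \(r\) arcs of angular radius \(\vartheta_r=\pi/(2r+2)\) about the directions of \(b_j-z\). Some direction \(u\) remains, and it satisfies \(|z+tu-b_j|\ge t\sin\vartheta_r\) for all \(j\). Along that ray the landscape is at least \(\log g+r\log\sin\vartheta_r+(r-\beta)\log(2\delta)\), which exceeds \(B^{\mathrm c}+3\) uniformly for \(\beta\in I\) once \(\delta\) is small. This attachment, not only the circle estimate, is where \(I\Subset(r,d)\) is needed.

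With both bounds, the barrier equals the continued saddle height, the component \(\Omega_z\) lies inside the contour and \(\Omega_\infty\) outside it, and their closures can meet only at \(\widetilde w\). This gives uniqueness and hence the separating contour, including when \(z\) is a root of \(\widetilde P\). Your final Cauchy-integral asymptotic does survive without this step, because the proof of Proposition~\ref{prop:uniform-saddle} uses only the contour's strict gap. What is unproved is the separating-contour assertion for the perturbed data.
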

\begin{proof}
Write \(P(w)=(w-a)^rG(w)\), where \(G(a)\ne0\). At observation point \(a\), the coalesced landscape is
\begin{equation}\label{eq:coalesced-landscape}
 L^{\mathrm c}(w)=(r-\alpha)\log|w-a|+\log|G(w)|.
\end{equation}
This is the logarithmic landscape for the polynomial \(G\), the observation point \(a\), and the ratio \(\alpha-r\in(0,d-r)\). The barrier arguments of Section~\ref{sec:barrier} use only this strict ratio inequality and apply verbatim also when \(\deg G=1\). Thus they apply to \eqref{eq:coalesced-landscape}; denote its barrier by \(B^{\mathrm c}\).

We first identify \(B^{\mathrm c}\) with the limiting barrier as the observation point approaches \(a\). A fixed contour that is near-minimizing for \(B^{\mathrm c}\) also winds around every sufficiently nearby \(z\), and its landscape converges uniformly, giving
\[
 \limsup_{z\to a}\Blev_P(z,\alpha)\le B^{\mathrm c}.
\]
For the reverse inequality, take a path for the coalesced landscape joining \(a\) to infinity and lying above \(B^{\mathrm c}-\eps\), and truncate it on a circle \(|w-a|=\delta\). On the compact truncated part, the moving landscapes converge uniformly to the coalesced one. On the circle and on a radial attachment from \(z\) to the circle,
\[
 \log|P(w)|-\alpha\log|w-z|
 \ge C+(r-\alpha)\log\delta,
\]
uniformly for \(|z-a|<\delta/4\); here \(C\) is independent of small \(\delta\). Since \(r-\alpha<0\), this lower bound tends to \(+\infty\) as \(\delta\downarrow0\). The attachment therefore gives \(\liminf_{z\to a}\Blev_P(z,\alpha)\ge B^{\mathrm c}\). Hence
\begin{equation}\label{eq:coalesced-barrier-limit}
 \lim_{z\to a}\Blev_P(z,\alpha)=B^{\mathrm c}.
\end{equation}

Because \(a\notin\supp\mu_{\alpha,P}\), the fixed-ratio potential is harmonic on a disk about \(a\). The potential formula and \eqref{eq:coalesced-barrier-limit} therefore extend \(\Blev_P(\cdot,\alpha)\) to a harmonic function \(H\) on that disk, with \(H(a)=B^{\mathrm c}\). Let \(w_0\) be any connecting saddle of the coalesced landscape. Choose an ascending path through \(w_0\) joining the two positive ends and having its unique minimum \(B^{\mathrm c}\) at \(w_0\). The preceding attachment construction turns its truncated part into a path for nearby observation points. Its minimum is a lower touching function for \(H\), and expansion in \(z=a+\zeta\) gives
\[
 H(a+\zeta)\ge B^{\mathrm c}
   +\Ree\!\left(\frac{\alpha}{w_0-a}\zeta\right)+o(|\zeta|).
\]
Applying this in the directions \(\zeta\) and \(-\zeta\) yields
\begin{equation}\label{eq:coalesced-gradient}
 2\partial_zH(a)=\frac{\alpha}{w_0-a}.
\end{equation}
Distinct connecting saddles would give distinct right-hand sides, so the coalesced landscape has exactly one connecting saddle.

On the punctured disk, Lemma~\ref{lem:harmonic-gate} gives
\[
 w(z)=z+\frac{\alpha}{2\partial_zH(z)},
 \qquad F_{P,\alpha}(w(z))=z.
\]
The first formula extends holomorphically to \(z=a\) by \eqref{eq:coalesced-gradient}; the second then extends by the identity theorem. Differentiating at \(a\) gives \(F_{P,\alpha}'(w(a))w'(a)=1\), so the unique coalesced connecting saddle is simple. Lemma~\ref{lem:arbitrary-contour}, applied to \eqref{eq:coalesced-landscape}, now gives a one-saddle separating contour with a strict gap.

It remains to prove uniform persistence when the multiplicity-\(r\) root splits. Choose an open interval \(I\) with \(\alpha\in I\Subset(r,d)\) and put \(\vartheta_r=\pi/(2r+2)\) and \(c_r=\sin\vartheta_r\). Choose \(\delta>0\) so small that \(a\) is the only root of \(P\) in \(|w-a|\le4\delta\), the base barrier contour is disjoint from \(|w-a|\le2\delta\), and \(|G(w)|\ge2g\) there for some \(g>0\). Since \(\inf_{\beta\in I}(\beta-r)>0\), decrease \(\delta\) further so that, for every \(\beta\in I\), both
\begin{align}
 \log g+r\log c_r+(r-\beta)\log(2\delta),
 \label{eq:split-root-ray-bound}\\
 \log g+r\log(7\delta/8)-\beta\log(9\delta/8)
 \label{eq:split-root-circle-bound}
\end{align}
exceed \(B^{\mathrm c}+3\).

After shrinking the coefficient neighborhood, every \(\widetilde P\in\mathscr U\) has exactly \(r\) roots \(b_1,\ldots,b_r\), counted with multiplicity, in \(|w-a|<\delta/8\) and can be factored on \(|w-a|\le2\delta\) as
\[
 \widetilde P(w)=\widetilde G(w)\prod_{j=1}^r(w-b_j),
 \qquad |\widetilde G(w)|\ge g.
\]
For \(z\in V:=\{|z-a|<\delta/8\}\), remove from the unit circle the \(r\) arcs of angular radius \(\vartheta_r\) centered at the directions of the nonzero vectors \(b_j-z\). Their total length is less than \(2\pi\), so a remaining unit vector \(u\) exists. Elementary minimization in \(t>0\) then gives
\[
 |z+tu-b_j|\ge c_r t\qquad(1\le j\le r),
\]
with the same inequality when \(b_j=z\). Along this ray from \(z\) until it meets \(|w-a|=\delta\), one has \(t\le2\delta\) and therefore
\begin{equation}\label{eq:split-root-attachment}
 \log|\widetilde P(z+tu)|-\beta\log t
 \ge \log g+r\log c_r+(r-\beta)\log(2\delta).
\end{equation}
On the whole circle \(|w-a|=\delta\) one has \(|w-b_j|\ge7\delta/8\) and \(|w-z|\le9\delta/8\), which gives the lower bound in \eqref{eq:split-root-circle-bound}.

The base contour and an ascending path through its saddle have compact parts outside this circle that avoid all roots. Under coefficient, observation-point, and ratio perturbations, the simple saddle continues by the implicit-function theorem. Replace the local contour segment by its continued descending axis and the local path segment by its continued ascending axis. Shrink the parameter neighborhoods so that the continued saddle height differs from \(B^{\mathrm c}\) by less than one. Uniform convergence on the remaining compact parts, together with \eqref{eq:split-root-attachment} and \eqref{eq:split-root-circle-bound}, gives an upper contour and a lower joining path with the same unique saddle height and with uniform strict gaps away from the saddle. The min--max formula therefore identifies that height with the perturbed barrier. The strict superlevel component containing \(z\) lies inside the contour and the component containing infinity lies outside it; their closures can meet only on the contour, where the continued saddle is the sole point at barrier height. It is therefore the unique connecting saddle. Proposition~\ref{prop:uniform-saddle} now gives the asserted uniform nonzero asymptotic, irrespective of root splitting or of a collision between \(z\) and a perturbed root.
\end{proof}

\subsection{Hausdorff convergence at a fixed ratio \texorpdfstring{\(\alpha\in(0,d)\)}{alpha}}
\label{subsec:fixed-sets}
The preceding lemmas now give convergence of the complete zero sets, uniformly through root and critical-point collisions.

\begin{theorem}[Fixed-ratio Hausdorff convergence]\label{thm:fixed-Hausdorff}
Let \(P_j\to P\) be a sequence of monic polynomials of degree \(d\) converging to \(P\), let \(n_j\to\infty\), and let \(m_j/n_j\to\alpha\in(0,d)\). Then
\begin{equation}\label{eq:fixed-Hausdorff}
 d_{\mathrm H}\!\left(Z((P_j^{n_j})^{(m_j)}),
                  \supp\mu_{\alpha,P}\right)\longrightarrow0.
\end{equation}
The map \((P,\alpha)\mapsto\supp\mu_{\alpha,P}\) is Hausdorff-continuous. For every compact coefficient set \(\mathscr P\) of monic degree-\(d\) polynomials and compact interval \(I\subset(0,d)\),
\begin{equation}\label{eq:fixed-uniform-Hausdorff}
 \sup_{\substack{P\in\mathscr P,\ 0\le m<dn\\m/n\in I}}
 d_{\mathrm H}\!\left(Z((P^n)^{(m)}),\supp\mu_{m/n,P}\right)
 \longrightarrow0.
\end{equation}
These statements include integer ratios, multiple roots, and multiple critical points of \(P\).
\end{theorem}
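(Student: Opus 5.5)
The Hausdorff distance in \eqref{eq:fixed-Hausdorff} is the maximum of two one-sided distances, and I will bound them separately. The first is the lower semicontinuity direction, \(\sup_{x\in\supp\mu_{\alpha,P}}\dist(x,Z_j)\to0\), where \(Z_j:=Z((P_j^{n_j})^{(m_j)})\). This follows from weak convergence. By Theorem~\ref{thm:fixed} in its varying-polynomial form, the normalized root-counting measures of \((P_j^{n_j})^{(m_j)}\) converge to \(\mu_{\alpha,P}\). Hence every open disk about a point of the support eventually carries positive mass, and so contains zeros. Compactness of \(\supp\mu_{\alpha,P}\), together with a finite cover by \(\eps\)-disks, makes this uniform.

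The second is the upper direction: no zeros persist at a fixed distance \(\eps\) from the support. By Gauss--Lucas all zeros lie in a fixed compact disk. So it suffices to show that every point \(z_0\) of the compact set \(\{\dist(\cdot,\supp\mu_{\alpha,P})\ge\eps\}\cap\overline D\) has a neighborhood that is zero-free for \((P_j,n_j,m_j)\) when \(j\) is large; a finite subcover then finishes. I split into three cases.

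\begin{itemize}
\item[(a)] \(z_0\notin Z(P)\). The potential is harmonic near \(z_0\), so by \eqref{eq:fixed-potential-intro} the barrier \(\Blev_P(\cdot,\alpha)\) is harmonic there. Lemma~\ref{lem:harmonic-gate} then gives a persistent one-saddle separating contour, and Proposition~\ref{prop:uniform-saddle} gives a nonvanishing uniform asymptotic on a neighborhood, uniformly in nearby coefficients and ratios. This yields zero-freeness for large \(j\).
\item[(b)] \(z_0=a\) is a root of multiplicity \(r\). By Proposition~\ref{prop:atoms} and Lemma~\ref{lem:threshold-root}, a root outside the support forces \(\alpha>r\). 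Lemma~\ref{lem:merged-end} then supplies a uniform nonzero asymptotic on \(\mathscr U\times V\times I\), valid even when \(z\) collides with split roots of \(P_j\). This again gives zero-freeness near \(a\).
\item[(c)] A root lies outside the support only isolated from it. This cannot happen when \(\alpha\le r\), because Lemma~\ref{lem:threshold-root} places the root in the support. So case (b) covers all roots at distance \(\ge\eps\).
\end{itemize}

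Both cases (a) and (b) produce constants that are uniform over a coefficient neighborhood of \(P\) and a ratio window around \(\alpha\), and this uniformity is what handles \(P_j\to P\) and \(m_j/n_j\to\alpha\) simultaneously.

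Continuity of \((P,\alpha)\mapsto\supp\mu_{\alpha,P}\) then follows by a diagonal argument, as in Lemma~\ref{lem:alpha-cont}. Given \((P_k,\alpha_k)\to(P,\alpha)\), choose \(n_k\) so that \(d_{\mathrm H}(Z((P_k^{n_k})^{(\lfloor\alpha_kn_k\rfloor)}),\supp\mu_{\alpha_k,P_k})<1/k\), using \eqref{eq:fixed-Hausdorff} for the constant sequence. Applying \eqref{eq:fixed-Hausdorff} to the diagonal sequence and using the triangle inequality gives \(\supp\mu_{\alpha_k,P_k}\to\supp\mu_{\alpha,P}\). The uniform statement \eqref{eq:fixed-uniform-Hausdorff} is proved by contradiction. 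A violating sequence \((P^{(k)},m_k,n_k)\) has, after passing to subsequences, \(P^{(k)}\to P\in\mathscr P\) and \(m_k/n_k\to\alpha\in I\). Since \(n_k\to\infty\) is forced (for bounded \(n\) only finitely many ratios occur, which can be absorbed), \eqref{eq:fixed-Hausdorff} combined with the continuity just proved contradicts the assumed discrepancy.

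The main obstacle is case (b). One must ensure that zeros of the descendants cannot hide near a split multiple root at ratio \(\alpha>r\), where the observation point may coincide with one of the split roots. The uniform one-saddle asymptotic of Lemma~\ref{lem:merged-end} is precisely the tool for this, and applying it requires checking that its neighborhood \(V\) can be taken of fixed size, independent of \(j\).
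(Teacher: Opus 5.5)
Your proposal is correct and follows the paper's proof essentially step for step. The lower Hausdorff estimate comes from the varying-polynomial weak convergence of Theorem~\ref{thm:fixed}. The upper estimate uses Lemma~\ref{lem:harmonic-gate} with Proposition~\ref{prop:uniform-saddle} off the roots, and Proposition~\ref{prop:atoms}, Lemma~\ref{lem:threshold-root} and Lemma~\ref{lem:merged-end} at a root. Continuity then follows from the same diagonal argument, and uniformity from the same compactness-and-contradiction argument. Two small points of detail. In \eqref{eq:fixed-uniform-Hausdorff} the limit is already taken as \(n\to\infty\), so a violating sequence has \(n_k\to\infty\) by construction and your parenthetical about bounded \(n\) is unnecessary. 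In the diagonal argument you should choose the \(n_k\) increasing, so that \(\lfloor\alpha_kn_k\rfloor/n_k\to\alpha\).
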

\begin{proof}
Weak convergence in Theorem~\ref{thm:fixed} gives the lower Hausdorff estimate. For the upper estimate, suppose descendant zeros approach \(z\notin\supp\mu_{\alpha,P}\). If \(P(z)\ne0\), Lemma~\ref{lem:harmonic-gate} and Proposition~\ref{prop:uniform-saddle} exclude zeros near \(z\) for all large \(j\). If \(z=a\) is a root of multiplicity \(r\), Proposition~\ref{prop:atoms} and Lemma~\ref{lem:threshold-root} give \(\alpha>r\), and Lemma~\ref{lem:merged-end} gives the same exclusion. All zeros lie in a common compact root hull, proving \eqref{eq:fixed-Hausdorff}.

For continuity, let \((P_j,\alpha_j)\to(P,\alpha)\) and choose increasing \(n_j\) such that the fixed-polynomial conclusion gives
\[
 d_{\mathrm H}\!\left(Z((P_j^{n_j})^{(\lfloor\alpha_jn_j\rfloor)}),
                         \supp\mu_{\alpha_j,P_j}\right)<1/j.
\]
Apply \eqref{eq:fixed-Hausdorff} to this diagonal sequence. Finally, a failure of \eqref{eq:fixed-uniform-Hausdorff} would supply a sequence in a compact parameter set. A convergent subsequence contradicts \eqref{eq:fixed-Hausdorff} and the continuity just established.
\end{proof}

Figure~\ref{fig:fixed-ratio} illustrates Theorems~\ref{thm:fixed} and~\ref{thm:fixed-Hausdorff} for the quartic of Figure~\ref{fig:landscape}. For \(\alpha<1\) each root is an atom of mass \((1-\alpha)/(d-\alpha)\) (Proposition~\ref{prop:atoms}), and the rest of the support consists of arcs near the critical points of \(P\). For larger \(\alpha\) the support in this example is a tree. Its endpoints are critical values of \(F_{P,\alpha}\), that is, points where two saddles collide.

\begin{figure}[htbp]
\centering
\includegraphics[width=0.49\textwidth]{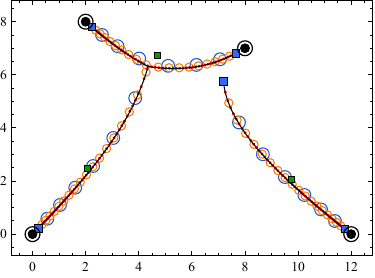}\hfill
\includegraphics[width=0.49\textwidth]{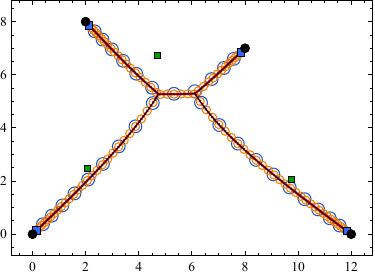}
\caption{The zeros of \((P^n)^{(m)}\) with \(m/n=\alpha\) for \(P(z)=z(z-12)(z-2-8i)(z-8-7i)\) and \(n=12\) (blue circles), \(n=36\) (orange circles) and \(n=108\) (red dots), together with the limiting support \(\supp\mu_{\alpha,P}\) (black). Left: \(\alpha=\frac12\); the ringed roots are atoms of \(\mu_{\alpha,P}\) of mass \(\frac17\) each, and the multiple zeros at the roots are not drawn. Right: \(\alpha=\frac32\). Blue squares: the endpoints of \(\supp\mu_{\alpha,P}\); green squares: the critical points of \(P\).}
\label{fig:fixed-ratio}
\end{figure}

\begin{proof}[Proof of Theorem~\ref{thm:intro-fixed-ratio}]
The weak convergence, the independence of the approximating integer sequence, and stability under coefficient perturbations are Theorem~\ref{thm:fixed}. Joint weak continuity and uniformity of the measure convergence are Lemma~\ref{lem:alpha-cont}. The Hausdorff convergence of the complete zero sets, joint continuity of their limiting supports, and the corresponding uniformity are Theorem~\ref{thm:fixed-Hausdorff}.
\end{proof}

\section{Endpoint ratios}\label{sec:endpoints}
The asymptotic in Section~\ref{sec:limits} does not cover the endpoint regimes. Crude considerations show that if \(1\le m_n=o(n)\), then the zero sets should approach \(Z(P)\cup Z(P')\), whereas \(m_n=0\) gives exactly \(Z(P)\). Similarly, if \(dn-m_n=o(n)\), the zero sets should approach the centroid of the root multiset. We treat these endpoint regimes by a uniform small-order coefficient estimate.

\subsection{A uniform coefficient estimate}

\begin{lemma}[Small-order coefficients]\label{lem:small-coeff}
Let \(\mathscr A(z,t)\) be jointly holomorphic on a neighborhood of \(X\times\{t\in\C:|t|\le r\}\), where \(r>0\) and \(X\subset\C\) is compact, and write \(\mathscr A_z(t)=\mathscr A(z,t)\). Suppose that \(|\mathscr A(z,0)|\) and \(|\mathfrak b(z)|\), where \(\mathfrak b(z):=\frac{\partial_t\mathscr A(z,0)}{\mathscr A(z,0)}\), are bounded below by positive constants on \(X\). If \(k_n\ge1\) and \(k_n/n\to0\), then
\[
 [t^{k_n}]\mathscr A_z(t)^n\ne0\qquad(z\in X)
\]
for all sufficiently large \(n\). This remains true for jointly holomorphic functions \(\mathscr A_n(z,t)\), with \(\mathscr A_{n,z}(t):=\mathscr A_n(z,t)\), that are uniformly bounded on a common neighborhood of \(X\times\{t\in\C:|t|\le r\}\), provided both lower bounds are uniform in \(n\). On a smaller parameter neighborhood the coefficients also satisfy the uniform logarithmic-derivative estimate
\begin{equation}\label{eq:small-coeff-logder}
 \partial_z\log\bigl([t^{k_n}]\mathscr A_{n,z}(t)^n\bigr)
 =n\,\partial_z\log\mathscr A_{n,z}(0)+O(k_n).
\end{equation}
Here \(\partial_z\log f\) means \((\partial_z f)/f\), so no global logarithm is required. The estimate is valid for \(k_n=0\) as an exact identity.
\end{lemma}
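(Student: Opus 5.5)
The plan is to factor out the constant term and write \(\mathscr A_z(t)=\mathscr A_z(0)\,e^{g_z(t)}\), where \(g_z(t)=\mathfrak b(z)t+\sum_{\ell\ge2}g_{\ell}(z)t^\ell\) is holomorphic on a smaller disk \(|t|\le r'\). Such an \(r'\) can be chosen uniformly, because \(|\mathscr A(z,0)|\) is bounded below and \(\mathscr A\) is uniformly bounded, so Cauchy estimates keep \(\mathscr A\) away from zero on \(|t|\le r'\). Then
\[
 [t^{k}]\mathscr A_z(t)^n=\mathscr A_z(0)^n\,[t^{k}]e^{n g_z(t)} ,
\]
and it suffices to show that \(c_{n}(z):=[t^{k_n}]e^{ng_z(t)}\) is dominated by its main term \((n\mathfrak b)^{k_n}/k_n!\).

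To see this, I would use the Cauchy integral over a circle \(|t|=\rho\) adapted to the saddle point: \(\rho=k_n/(n|\mathfrak b(z)|)\), which tends to zero because \(k_n=o(n)\). In the rescaled variable \(t=\rho u\),
\[
 c_n=\rho^{-k_n}\frac1{2\pi i}\oint_{|u|=1}e^{k_n(\mathfrak b u/|\mathfrak b|)+n O(\rho^2|u|^2)}u^{-k_n-1}\dd u.
\]
The perturbation \(n\rho^2=k_n^2/(n|\mathfrak b|^2)\) is \(o(k_n)\). That alone is not enough in general, so I would compare directly with the unperturbed coefficient. Write \(e^{ng}=e^{n\mathfrak b t}e^{n h(t)}\) with \(h=O(t^2)\), and expand \(e^{nh}=\sum_\ell \eta_\ell t^\ell\). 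Then
\[
 c_n=\sum_{\ell}\eta_\ell\frac{(n\mathfrak b)^{k_n-\ell}}{(k_n-\ell)!},
\]
and the ratio to the main term is \(\sum_\ell\eta_\ell\,k_n!/((k_n-\ell)!(n\mathfrak b)^{\ell})\), which is bounded by \(\sum_\ell|\eta_\ell|(k_n/(n|\mathfrak b|))^\ell\). Since \(e^{nh}\) is the exponential of a series whose \(t^\ell\)-coefficients (\(\ell\ge2\)) are \(n\) times uniformly bounded numbers, majorizing gives
\[
 \sum_\ell|\eta_\ell|x^\ell\le\exp\Bigl(nC\sum_{\ell\ge2}(x/r')^\ell\Bigr).
\]
With \(x=k_n/(n|\mathfrak b|)\) the exponent is \(O(k_n^2/n)\). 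This is small only when \(k_n=o(\sqrt n)\), so the naive bound fails in the range \(\sqrt n\lesssim k_n\ll n\). That range is the main obstacle.

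To handle it, I would instead carry out a genuine saddle-point analysis for \(e^{ng(t)}t^{-k}\). The saddle \(t_*\) solves \(t g'(t)=k/n\). Since \(g'(0)=\mathfrak b\ne0\), this gives \(t_*=(k/n)/\mathfrak b+O((k/n)^2)\) by the implicit function theorem in the small parameter \(\kappa=k/n\), uniformly in \(z\). Along \(|t|=|t_*|\) the phase \(\Ree(ng(t)-k\log t)\) has a strict maximum at \(t_*\) with quadratic decay of order \(k\theta^2\). This follows because \(g(t)\approx\mathfrak b t\) on that small circle, and \(e^{\mathfrak b t}\) with \(|t|=\kappa/|\mathfrak b|\) behaves like the exponential-function case up to relative errors \(O(\kappa)\). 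The Gaussian approximation, valid because \(k\to\infty\), then yields
\[
 c_n=\frac{e^{ng(t_*)}t_*^{-k}}{\sqrt{2\pi k}}\bigl(1+o(1)\bigr)\ne0 .
\]
For bounded \(k_n\), the direct expansion of the previous paragraph already suffices. Uniformity in \(z\), and in \(n\) for the families \(\mathscr A_n\), comes from the uniform bounds, so no compactness argument beyond the Cauchy estimates is needed.

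For the logarithmic derivative, I would differentiate in \(z\). The factor \(\mathscr A_z(0)^n\) contributes exactly \(n\,\partial_z\log\mathscr A_{n,z}(0)\). The remaining factor \(c_n(z)\) has the representation above with holomorphic dependence on \(z\), and \(\log c_n = ng(t_*)-k\log t_*+O(\log k)\) with an error holomorphic and uniformly bounded on a neighborhood. By Cauchy estimates on a smaller neighborhood, \(\partial_z\) of that error is bounded. Moreover, \(\partial_z[ng(t_*)-k\log t_*]=n\,\partial_zg(z,t_*)\) at the saddle, and this is \(O(n\kappa)=O(k_n)\) because \(g(z,0)=0\) identically. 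This gives \eqref{eq:small-coeff-logder}. For \(k_n=0\) the coefficient equals \(\mathscr A_{n,z}(0)^n\), so the identity is exact.
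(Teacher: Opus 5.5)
Your proposal is correct and is essentially the paper's proof. It uses the same normalization \(\log(\mathscr A_z(t)/\mathscr A_z(0))\), a direct expansion for small \(k\), and a saddle point \(t_*\) solving \(t g_z'(t)=k/n\) with a Gaussian evaluation on \(|t|=|t_*|\) for large \(k\). The logarithmic-derivative mechanism is also the same: no \(\partial_z t_*\) term, and \(n\,\partial_z g_z(t_*)=O(k)\).

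The one step you leave heuristic is the decay on the whole circle. Your heuristic literally gives only an absolute \(O(k\kappa)\) error, which would not suffice near \(\theta=0\). The paper makes the step exact by combining the saddle equation with \(|e^{ij\theta}-1-j(e^{i\theta}-1)|\le j^2(1-\cos\theta)\), which gives \(\Ree\) of the exponent at most \(-\tfrac12 k(1-\cos\theta)\) for all \(\theta\). Also, a sequence \(k_n\) that is neither bounded nor divergent should be handled explicitly by passing to subsequences.
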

\begin{proof}
After enlarging \(X\) slightly inside the holomorphy neighborhood, Cauchy estimates and the uniform lower bound for \(|\mathscr A(z,0)|\) give a number \(\rho>0\) such that all functions under consideration are zero-free for \(|t|\le\rho\). On finitely many parameter charts choose
\[
 \varpi_z(t):=\log\frac{\mathscr A_z(t)}{\mathscr A_z(0)}.
\]
For a varying family, the normalized logarithms and all derivatives needed below are uniformly bounded on a common smaller polydisk. The quantity \(\mathfrak b(z)=\varpi_z'(0)\) is uniformly bounded away from zero. Thus the family index may be included among the parameters. Write \(k=k_n\).

If \(k\) remains bounded, direct coefficient expansion, uniformly on the parameter charts, gives
\[
 [t^k]\mathscr A_z(t)^n
 =\mathscr A_z(0)^n\frac{n^k\mathfrak b(z)^k}{k!}
  \bigl(1+O(n^{-1})\bigr).
\]
Now suppose \(k\to\infty\) and put \(\eps=k/n\to0\). The implicit-function theorem, uniformly in the parameters, gives a unique solution of
\[
 t\varpi_z'(t)=\eps,
 \qquad
 t_*=\frac{\eps}{\mathfrak b(z)}+O(\eps^2),
\]
in \(|t|<\rho/2\). On the coefficient circle \(t=t_*e^{i\theta}\), \(-\pi\le\theta\le\pi\), the exponent relative to its value at \(\theta=0\) is
\[
 k(e^{i\theta}-1-i\theta)
 +n\sum_{j\ge2}c_j(z)t_*^j
 \bigl(e^{ij\theta}-1-j(e^{i\theta}-1)\bigr),
\]
where \(\varpi_z(t)=\sum_{j\ge1}c_j(z)t^j\). With \(x=e^{i\theta}\),
\[
 x^j-1-j(x-1)=(x-1)^2
       \sum_{q=0}^{j-2}(j-1-q)x^q.
\]
Since \(|x-1|^2=2(1-\cos\theta)\), this proves the uniform estimate
\begin{equation}\label{eq:coefficient-circle-bound}
 \bigl|e^{ij\theta}-1-j(e^{i\theta}-1)\bigr|
 \le j^2(1-\cos\theta),
 \qquad j\ge2,
\end{equation}
for \(-\pi\le\theta\le\pi\). Cauchy bounds for \(c_j\) and \(|t_*|=O(\eps)\) therefore bound the real part of the second sum by \(Ck\eps(1-\cos\theta)\). For small \(\eps\) the full real part is at most \(-\tfrac12k(1-\cos\theta)\), uniformly on the entire circle. Its quadratic coefficient at zero is
\[
 -\frac n2\bigl(t_*\varpi_z'(t_*)+t_*^2\varpi_z''(t_*)\bigr)
 =-\frac k2\bigl(1+O(\eps)\bigr).
\]
The third through sixth derivatives of the exponent are \(O(k)\) uniformly on a fixed neighborhood of zero. After the scaling \(\theta=s/\sqrt{k}\), its cubic correction is odd and integrates to zero on the symmetric contour; the quartic term and the square of the cubic term are \(O(k^{-1})\) after Gaussian integration. On \(k^{-2/5}\le|\theta|\le\pi\), the preceding global bound is \(O(e^{-c k^{1/5}})\). Thus the uniform saddle estimate is
\[
 [t^k]\mathscr A_z(t)^n=
 \frac{\mathscr A_z(t_*)^n t_*^{-k}}
 {\sqrt{2\pi n\bigl(t_*\varpi_z'(t_*)+t_*^2\varpi_z''(t_*)\bigr)}}
 \bigl(1+O(k^{-1})\bigr).
\]
The radicand is \(2\pi k(1+O(\eps))\), so the square root is chosen coherently near the positive real square root and is uniformly nonzero. All other displayed factors are nonzero. Every sequence has either a bounded-\(k\) subsequence or a subsequence with \(k\to\infty\), which proves the asserted eventual nonvanishing.

For \eqref{eq:small-coeff-logder}, carry out the same estimates on a slightly larger complex parameter neighborhood. The relative errors are holomorphic there and uniformly \(O(n^{-1})\) in the bounded-\(k\) case and \(O(k^{-1})\) in the saddle case, so Cauchy estimates control their parameter derivatives. In the saddle case, differentiation of
\[
 n\log\mathscr A_z(t_*)-k\log t_*
\]
produces no \(\partial_z t_*\) term because \(nt_*\varpi_z'(t_*)=k\). The remaining phase derivative beyond \(n\partial_z\log\mathscr A_z(0)\) is \(n\partial_z\varpi_z(t_*)=O(n|t_*|)=O(k)\), and the logarithmic derivative of the radicand is bounded. This proves \eqref{eq:small-coeff-logder}, uniformly also for varying families. The case \(k=0\) is the stated exact identity.
\end{proof}

\subsection{Endpoint exclusion and exact cluster counts}
\begin{theorem}[Endpoint exclusion]\label{thm:endpoint-zeros}
Let \(0\le m_n<dn\) and let \(z_n\in Z(R_{P;m_n,n})\) with \(z_n\to z\). Then
\begin{align}
 m_n/n\to0&\quad\Longrightarrow\quad
 z\in Z(P)\cup Z(P'),\label{eq:low-end}\\
 m_n/n\to d&\quad\Longrightarrow\quad
 z=\centroid_P,\label{eq:high-end}
\end{align}
where \(\centroid_P\) denotes the centroid of the root multiset of \(P\).
\end{theorem}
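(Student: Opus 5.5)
The plan is to reduce both endpoint statements to Lemma~\ref{lem:small-coeff} through Taylor's formula. The key identity is
\[
 \frac{R_{P;m,n}(z)}{m!}=[t^m]\,P(z+t)^n .
\]
For each regime I will choose a compact disk \(X\) around the limit point \(z\), and a holomorphic family \(\mathscr A\), such that the coefficient of order \(k_n=o(n)\) in \(\mathscr A_z^n\) equals \(R_{P;m_n,n}/m_n!\) up to a nonvanishing factor. The lemma then makes this coefficient zero-free on \(X\) for large \(n\). That contradicts \(z_n\in X\) and \(R_{P;m_n,n}(z_n)=0\). All arguments will be run along subsequences, so no uniformity beyond the lemma is needed.

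\emph{Low end.} Suppose \(m_n/n\to0\) and \(z\notin Z(P)\cup Z(P')\).

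Along the indices with \(m_n=0\), the zeros of \(P^n\) lie in \(Z(P)\). A closed disk \(X\) about \(z\) that avoids \(Z(P)\cup Z(P')\) therefore contains none of them.

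For the indices with \(m_n\ge1\), take \(\mathscr A(z,t)=P(z+t)\) and \(k_n=m_n\). This is jointly holomorphic on a neighborhood of \(X\times\{|t|\le r\}\). On \(X\) we have the lower bounds \(|\mathscr A(z,0)|=|P(z)|\ge c>0\) and \(|\mathfrak b(z)|=|P'(z)/P(z)|\ge c>0\). Lemma~\ref{lem:small-coeff} gives \([t^{m_n}]P(z+t)^n\ne0\) on \(X\) for all large \(n\). Hence no zeros of \(R_{P;m_n,n}\) lie in \(X\) eventually, contradicting \(z_n\to z\). This proves \eqref{eq:low-end}.

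\emph{High end.} Suppose \(m_n/n\to d\) and set \(j_n:=dn-m_n\), so \(1\le j_n=o(n)\). Substitute \(t=1/u\) and introduce the reversed family
\[
 \mathscr A(z,u):=u^dP(z+1/u)=\prod_{i=1}^d\bigl(1+(z-a_i)u\bigr),
\]
where the roots are listed with multiplicity. Since \(P(z+t)^n=t^{dn}\mathscr A_z(1/t)^n\), we get
\[
 [t^{m_n}]P(z+t)^n=[u^{j_n}]\mathscr A_z(u)^n .
\]
This family is entire in \((z,u)\) and satisfies \(\mathscr A(z,0)=1\). Its normalized linear coefficient is
\[
 \mathfrak b(z)=\sum_i(z-a_i)=d\,(z-\centroid_P).
\]
If \(z\ne\centroid_P\), take \(X\) to be a closed disk about \(z\) that avoids \(\centroid_P\). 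Then \(|\mathfrak b|\) is bounded below on \(X\), and the lemma with \(k_n=j_n\) again excludes zeros in \(X\) for large \(n\). This forces \(z=\centroid_P\), which is \eqref{eq:high-end}.

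\emph{Main obstacle.} Essentially none remains once the reversal \(u=1/t\) is set up correctly. The hard uniform saddle analysis is already contained in Lemma~\ref{lem:small-coeff}. The step to check with care is the coefficient bookkeeping in the high-end reversal:
\begin{itemize}
 \item the index shift is \(m_n\mapsto dn-m_n\);
 \item the lemma needs \(k_n\ge1\), and this holds because \(m_n<dn\);
 \item the cases with \(k_n\) bounded and \(k_n\to\infty\) are both handled inside the lemma, so splitting into subsequences is harmless.
\end{itemize}
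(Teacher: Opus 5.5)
Your proposal is correct and follows the paper's own proof essentially verbatim. At the low end you apply Lemma~\ref{lem:small-coeff} to \(P(z+t)\) and handle order zero directly; at the high end you apply it to the reversed family \(u^dP(z+1/u)\), which has constant term \(1\) and linear coefficient \(d(z-\centroid_P)\), with index \(k=dn-m\ge1\).
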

\begin{proof}
On a compact set avoiding \(Z(P)\cup Z(P')\), apply Lemma~\ref{lem:small-coeff} to \(\mathscr A_z(t)=P(z+t)\). Its constant term and logarithmic linear coefficient are bounded away from zero, and
\[
 [t^m]\mathscr A_z(t)^n=\frac{R_{P;m,n}(z)}{m!}.
\]
Order \(m=0\) is immediate. This proves \eqref{eq:low-end}.

For the upper endpoint, put \(k=dn-m\ge1\) and reverse the coefficient polynomial:
\[
 \mathscr A_z(t):=t^dP(z+t^{-1}).
\]
It has constant term \(1\) and linear coefficient \(d(z-\centroid_P)\). Furthermore,
\[
 [t^k]\mathscr A_z(t)^n=[u^{dn-k}]P(z+u)^n
 =\frac{R_{P;dn-k,n}(z)}{(dn-k)!}.
\]
Apply Lemma~\ref{lem:small-coeff} on compact sets avoiding the centroid. Gauss--Lucas confines all zeros to a common compact set, so these local zero-free statements apply to every convergent zero sequence and give \eqref{eq:high-end}.
\end{proof}

\begin{theorem}[Exact cluster counts for sublinear orders]
\label{thm:sublinear-counts}
Let \(P_n\to P\) be a sequence of monic polynomials of degree \(d\), and let \(M_n\ge0\) be integers with \(M_n/n\to0\). Write \(a_j,r_j\) as in \eqref{eq:root-data}. Let \(c_1,\ldots,c_t\) be the distinct points of \(Z(P')\setminus Z(P)\), and put \(\varkappa_i=\operatorname{ord}_{c_i}P'\). Choose pairwise disjoint open disks \(D_j\) about \(a_j\) and \(E_i\) about \(c_i\), small enough that their closures contain no other point of \(Z(P)\cup Z(P')\). For all sufficiently large \(n\), simultaneously for every \(0\le m\le M_n\),
\begin{equation}\label{eq:sublinear-counts}
 (\divz (P_n^n)^{(m)})(D_j)=nr_j-m,
 \qquad
 (\divz (P_n^n)^{(m)})(E_i)=m\varkappa_i.
\end{equation}
All zeros lie in these disks. No assumption is made on the separation or multiplicity of the roots of \(P_n\) inside a disk.

Consequently, for \(1\le m_n<dn\) with \(m_n/n\to0\),
\begin{equation}\label{eq:complete-low-endpoint}
 Z((P_n^n)^{(m_n)})\longrightarrow Z(P)\cup Z(P')
 \quad\text{in Hausdorff distance}.
\end{equation}
For \(0\le m_n<dn\) with \(m_n/n\to d\), the corresponding limit is \(\{\centroid_P\}\).
\end{theorem}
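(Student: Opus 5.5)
The plan is to combine Lemma~\ref{lem:small-coeff}, which shows the descendants are zero-free away from the disks, with the argument principle on each disk boundary to fix the counts.

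\emph{Step 1 (exclusion off the disks).} Set \(X:=\Hull'\setminus(\bigcup_j D_j\cup\bigcup_i E_i)\), where \(\Hull'\) is a closed disk containing all root hulls of \(P_n\) for large \(n\). On \(X\), apply the varying-family version of Lemma~\ref{lem:small-coeff} to \(\mathscr A_{n}(z,t)=P_n(z+t)\). Its constant term \(P_n(z)\) and the quantity \(\mathfrak b_n(z)=P_n'(z)/P_n(z)\) are bounded below uniformly, because \(P_n\to P\) and \(X\) avoids \(Z(P)\cup Z(P')\). Since \([t^m]P_n(z+t)^n=(P_n^n)^{(m)}(z)/m!\), this gives nonvanishing on \(X\).

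The lemma is stated for a single sequence \(k_n\), whereas I need it simultaneously for all \(0\le m\le M_n\). I will handle this by contradiction: a failure would produce a sequence \(m_n\le M_n\) with a zero in \(X\), and the lemma applies along that sequence (the case \(m_n=0\) is trivial). The same reasoning applies to the circles \(\partial D_j\) and \(\partial E_i\), which lie in \(X\). So for large \(n\) all zeros lie in the disks and none lie on their boundaries.

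\emph{Step 2 (counting via the argument principle).} The count in a disk \(D\) equals \(\frac1{2\pi i}\oint_{\partial D}\partial_z\log R\,dz\) with \(R=(P_n^n)^{(m)}\). By \eqref{eq:small-coeff-logder}, uniformly in \(m\le M_n\) by the same contradiction argument, \(\partial_z\log R=n P_n'/P_n+\partial_z\log\bigl([t^m]\cdots\bigr)-n P_n'/P_n+\dots\). This error of size \(O(m)\) is not small compared with an integer gap, so it is too crude for exact counts; I need the next-order term.

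I will use the explicit expansions from the proof of the lemma instead. When \(k\) is bounded, \([t^k]\mathscr A^n=\mathscr A(0)^n n^k\mathfrak b^k/k!\,(1+O(n^{-1}))\), whose logarithmic derivative is \(nP_n'/P_n+k\,\mathfrak b'/\mathfrak b+O(n^{-1})\). In the saddle case the dominant factor is \(\mathscr A(t_*)^n t_*^{-k}\) with \(t_*=\varepsilon/\mathfrak b+O(\varepsilon^2)\). This gives a logarithmic derivative \(nP_n'/P_n+k\,\mathfrak b'/\mathfrak b+O(k\varepsilon)+O(1/k)\), with the errors holomorphic and controlled by Cauchy estimates. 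Integrating over the circle, the winding of \(nP_n'/P_n\) counts \(n r_j\) around \(D_j\) and \(0\) around \(E_i\). The winding of \(k\,\mathfrak b'/\mathfrak b\), where \(\mathfrak b=P_n'/P_n\) has a pole at each root and zeros at critical points, contributes \(-k\) around \(D_j\) and \(k\varkappa_i\) around \(E_i\). Here I use that the root and critical-point counts of \(P_n\) inside each disk equal those of \(P\) for large \(n\) (Hurwitz), and that \(P_n'/P_n\) has no zeros in \(D_j\) near the clustered roots, by again invoking Hurwitz on the disk. The error integrals are \(o(1)\) since \(k\varepsilon=k^2/n\)... This is the main obstacle: \(k^2/n\) need not be small when \(k\) is sublinear.

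To remove that obstacle, I would instead compare \(R\) with the model \(\mathscr A(t_*)^nt_*^{-k}\) through a homotopy. The quantity \(\Psi_n(z)=n\log P_n(z+t_*)-k\log t_*\) is holomorphic and single-valued up to winding on the circles, and its winding equals that of \(n\log P_n(z)\) plus \(k\) times the winding of \(\log(1/t_*)\sim\log\mathfrak b\). The reason is that \(t_*\) is small and \(z+t_*\) stays uniformly near the circle without crossing any zero of \(P_n\). So the integer winding is computed exactly from \(\Psi_n\), and the relative error \(1+O(1/k)\) contributes no winding. This yields \eqref{eq:sublinear-counts}.

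\emph{Step 3 (consequences).} For \(1\le m_n=o(n)\), every disk \(D_j\) has \(nr_j-m_n>0\) zeros and every \(E_i\) has \(m_n\varkappa_i>0\) zeros, for arbitrarily small disks. Together with Step 1 this gives the Hausdorff convergence \eqref{eq:complete-low-endpoint}. For \(m_n/n\to d\), I will apply the same Step 1 argument to the reversed polynomial \(t^dP_n(z+1/t)\), as in Theorem~\ref{thm:endpoint-zeros}, on compact sets avoiding \(\centroid_P\). Since the zero set is nonempty (the degree is positive), it converges to \(\{\centroid_P\}\).
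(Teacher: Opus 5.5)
Your proof is correct, but Step~2 takes a different and heavier route than the paper.

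\textbf{The paper's route.} The paper never integrates the logarithmic derivative. It notes that \(R_{P_n;m,n}/R_{P_n;m-1,n}=(R_{P_n;m-1,n})'/R_{P_n;m-1,n}\) is meromorphic, and that its winding number on each circle is exactly the change in the local zero count under one differentiation. Estimate \eqref{eq:small-coeff-logder}, applied at order \(m-1\), gives \(\frac1nR_{P_n;m,n}/R_{P_n;m-1,n}=P_n'/P_n+O(m/n)\to P'/P\) uniformly on the circles. Hence each differentiation changes the count by \(-1\) in \(D_j\) and by \(\varkappa_i\) in \(E_i\), and telescoping from \(m=0\) finishes the proof. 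So the \(O(m)\) error you judged too crude is actually enough. It is a relative error \(O(m/n)\) against \(nP_n'/P_n\), and only the winding of the logarithmic derivative is needed, not its contour integral.

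\textbf{Your route.} You compute the winding of each coefficient \([t^m]P_n(z+t)^n=R_{P_n;m,n}/m!\) directly, using the two regimes inside the proof of Lemma~\ref{lem:small-coeff}. For bounded \(m\) the model is \(P_n(z)^n(nP_n'/P_n)^m/m!\). For \(m\to\infty\) it is \(P_n(z+t_*)^nt_*^{-m}\), whose winding is exactly \(n\operatorname{wind}(P_n)+m\operatorname{wind}(P_n'/P_n)\). This holds because \(t_*P_n'/P_n=\eps(1+O(\eps))\) and \(z+st_*\), \(0\le s\le1\), avoids the roots. The remaining factors \(1+O(n^{-1})\), \(1+O(m^{-1})\), and \(\sqrt{2\pi m}\,(1+O(\eps))\) have zero winding; you should list the square-root factor explicitly. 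This is valid, and it gives each count in one step together with an explicit boundary model for the descendant. However, it relies on the uniform internal expansions rather than on the lemma's stated conclusion, and it needs an explicit split into bounded and unbounded subsequences. The paper's telescoping uses only \eqref{eq:small-coeff-logder}.

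\textbf{A false remark in Step~2.} You claim \(P_n'/P_n\) has no zeros in \(D_j\), but this is false, though your argument does not need it. When a multiple root of \(P\) splits into \(\ell\ge2\) distinct roots of \(P_n\), the disk \(D_j\) contains \(\ell-1\) critical points of \(P_n\) that are not roots. Only the net winding \((r_j-1)-r_j=-1\) of \(P_n'/P_n\) on \(\partial D_j\) matters. That follows from your Hurwitz counts for \(P_n'\) and \(P_n\), or directly from uniform convergence to \(P'/P\) on \(\partial D_j\).
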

\begin{proof}
On a neighborhood of the union of the disk boundaries, both \(P\) and \(P'/P\) are nonzero. Apply the logarithmic-derivative estimate \eqref{eq:small-coeff-logder} with \(\mathscr A_{n,z}(t)=P_n(z+t)\). Writing \(R_{P_n;m,n}=(P_n^n)^{(m)}\), we obtain, uniformly there and for \(0\le m\le M_n\),
\begin{equation}\label{eq:sublinear-logder}
 \frac1n\frac{(R_{P_n;m,n})'}{R_{P_n;m,n}}
 =\frac{P_n'}{P_n}+O(m/n)
 \longrightarrow\frac{P'}P.
\end{equation}
Nonvanishing on these neighborhoods is part of the same lemma; at \(m=0\) it follows directly from \(P_n\to P\). To pass from the sequential lemma to the simultaneous range, argue by contradiction: any failure would produce indices \(n_j\) and orders \(m_j\le M_{n_j}\) with \(m_j/n_j\to0\), contradicting the lemma for that sequence. The same argument gives the uniform estimate in \eqref{eq:sublinear-logder}.

For \(1\le m\le M_n\), the meromorphic function \(R_{P_n;m,n}/R_{P_n;m-1,n}\), divided by \(n\), is uniformly close on each boundary to \(P'/P\). Its winding number about zero is therefore that of \(P'/P\). At \(a_j\) the rational function \(P'/P\) has a simple pole, even when \(r_j>1\), so this number is \(-1\). At \(c_i\) it is \(\varkappa_i\). Thus each differentiation changes the local zero count by \(-1\) in \(D_j\) and by \(\varkappa_i\) in \(E_i\). At order zero the counts are \(nr_j\) and zero, respectively, by coefficient convergence. Telescoping proves \eqref{eq:sublinear-counts}. Since \(\sum_{i=1}^t\varkappa_i=s-1\), the total count is
\[
 \sum_{j=1}^s(nr_j-m)+\sum_{i=1}^t m\varkappa_i
 =dn-sm+m(s-1)=dn-m,
\]
the full degree. There are no remaining zeros.

The disks can be arbitrarily small. For \(m_n\ge1\) all critical-point counts are positive, and \(nr_j-m_n>0\) for large \(n\). This proves \eqref{eq:complete-low-endpoint}. At the upper endpoint the reverse coefficient argument in Theorem~\ref{thm:endpoint-zeros} is uniform for \(P_n\to P\): its constant term is \(1\) and its linear coefficient is \(d(z-\centroid_{P_n})\). It excludes every compact set away from \(\centroid_P\). Gauss--Lucas gives a common compact enclosure and the zero sets are nonempty, proving the asserted Hausdorff limit.
\end{proof}

\begin{remark}[Order zero is an endpoint exception]
The lower endpoint in \eqref{eq:complete-low-endpoint} requires at least one differentiation. At order zero the limit is \(Z(P)\), not in general \(Z(P)\cup Z(P')\). A sequence alternating between order zero and positive sublinear orders need not have a single zero-set limit. The critical clusters in \eqref{eq:sublinear-counts} explain exactly why weak root-measure convergence misses this distinction.
\end{remark}

\subsection{Completing the family of limiting supports}
\begin{theorem}[Endpoint limits of the supports]\label{thm:endpoint-support}
In Hausdorff distance,
\begin{equation}\label{eq:endpoint-supports}
 d_{\mathrm H}\bigl(\supp\mu_{\alpha,P},Z(P)\cup Z(P')\bigr)\longrightarrow0
 \quad(\alpha\downarrow0),
 \qquad
 d_{\mathrm H}\bigl(\supp\mu_{\alpha,P},\{\centroid_P\}\bigr)\longrightarrow0
 \quad(\alpha\uparrow d).
\end{equation}
In particular,
\begin{equation}\label{eq:distinguished}
 Z(P)\cup Z(P')\cup\{\centroid_P\}\subseteq\Sh_P.
\end{equation}
\end{theorem}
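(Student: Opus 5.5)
The plan is a diagonal argument that transfers the endpoint zero-set limits of Theorem~\ref{thm:sublinear-counts} to the supports. For each small $\alpha>0$ we approximate $\supp\mu_{\alpha,P}$ by finite-level zero sets using the uniform Hausdorff convergence \eqref{eq:fixed-uniform-Hausdorff}. We then let $\alpha\downarrow0$ along with $n\to\infty$, so that $m/n\to0$, and invoke the endpoint results. The inclusion \eqref{eq:distinguished} follows at once: $\Sh_P$ is closed and contains every $\supp\mu_{\alpha,P}$, so it contains their Hausdorff limits as $\alpha\to0$ and $\alpha\to d$.

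For the lower endpoint, suppose the first limit fails. Then there are $\alpha_j\downarrow0$ and $\eps>0$ with
\[
 d_{\mathrm H}\bigl(\supp\mu_{\alpha_j,P},Z(P)\cup Z(P')\bigr)\ge\eps .
\]
For each $j$, Theorem~\ref{thm:fixed-Hausdorff} applied at the fixed ratio $\alpha_j$ lets us choose $n_j$ large and $m_j=\lceil\alpha_jn_j\rceil$. We take $n_j$ large enough that $1\le m_j<dn_j$, that $|m_j/n_j-\alpha_j|<1/j$, and that
\[
 d_{\mathrm H}\bigl(Z(R_{P;m_j,n_j}),\supp\mu_{\alpha_j,P}\bigr)<1/j .
\]
The pointwise statement \eqref{eq:fixed-Hausdorff} with the constant sequence $P_j=P$ suffices here. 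Since $\alpha_j$ varies with $j$, we apply it for each fixed $j$ and then select $n_j$ increasing. Along this diagonal sequence $m_j/n_j\to0$ and $m_j\ge1$, so \eqref{eq:complete-low-endpoint} with $P_n=P$ gives $Z(R_{P;m_j,n_j})\to Z(P)\cup Z(P')$. The triangle inequality for $d_{\mathrm H}$ then contradicts the choice of $\eps$.

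The main point to check is that \eqref{eq:complete-low-endpoint} applies to a subsequence $n_j$ rather than to all $n$. Its proof is sequential: it goes through Lemma~\ref{lem:small-coeff}, applied by contradiction along arbitrary index sequences. Alternatively, we can extend the orders to all $n$ by setting $m_n=\max(1,\lfloor n/\sqrt{\,\cdot\,}\rfloor)$-type interpolation between the $n_j$. That is routine, but I would state it explicitly. The requirement $m\ge1$ matters, because order zero only sees $Z(P)$ (Remark on order zero). This is why $m_j=\lceil\alpha_jn_j\rceil\ge1$ is chosen.

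The upper endpoint is identical. Take $\alpha_j\uparrow d$ and $m_j=\lfloor\alpha_jn_j\rfloor<dn_j$, so that $m_j/n_j\to d$. Then use the final assertion of Theorem~\ref{thm:sublinear-counts}, that the zero sets converge to $\{\centroid_P\}$, together with the nonemptiness of the supports of the probability measures $\mu_{\alpha,P}$. No new analysis is required beyond these earlier results. The only genuine obstacle is this bookkeeping of diagonal sequences through the non-uniform regime near the endpoints, where \eqref{eq:fixed-uniform-Hausdorff} is not available.
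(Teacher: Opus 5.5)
Your argument is correct, but it reaches the result by a different route than the paper.

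\textbf{The half ``supports stay near the endpoint set.''} The paper uses only weak convergence at each fixed ratio $\alpha_j$, so that every support point is approximated by zeros, together with the pointwise exclusion of Theorem~\ref{thm:endpoint-zeros}. This is essentially what you do in that direction; Theorem~\ref{thm:sublinear-counts} is just a stronger form of that exclusion.

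\textbf{The half ``every root and critical point is approached by supports.''} Here the paper does not pass through zero sets. Roots are atoms of $\mu_{\alpha,P}$ for $\alpha<r_j$ by Proposition~\ref{prop:atoms}. For $c\in Z(P')\setminus Z(P)$ it argues by Hurwitz: supports avoiding a disk about $c$ would make $C_{\alpha_j,P}$ holomorphic and zero-free there by Proposition~\ref{prop:selected-C}. Yet these transforms converge uniformly to $\rootC_P=P'/(dP)$, which vanishes at $c$. You instead get this half from the no-outlier direction of Theorem~\ref{thm:fixed-Hausdorff}, combined with the positive cluster counts $m\varkappa_i$ and $nr_j-m$ of Theorem~\ref{thm:sublinear-counts}.

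\textbf{Comparison.} Your route is pure diagonal bookkeeping once the two two-sided Hausdorff theorems are available, and it needs no Cauchy-transform argument. The price is that it rests on the harder half of Theorem~\ref{thm:fixed-Hausdorff}, namely the saddle-point exclusion of zeros off the support, and on the argument-principle counting. The paper's version needs only weak convergence, the atom formula and nonvanishing of the Cauchy transform. It also yields $\mu_{\alpha,P}\Longrightarrow\rootmu_P$ as $\alpha\downarrow0$ along the way.

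\textbf{Minor points.} The subsequence issue you flag is harmless and needs no interpolation. Fill the unused indices with $m_n=1$, or with $m_n=dn-1$ at the upper endpoint. Then \eqref{eq:complete-low-endpoint} and its centroid analogue apply to the whole sequence, hence along $n_j$. Also, only the pointwise statement \eqref{eq:fixed-Hausdorff} is used, not \eqref{eq:fixed-uniform-Hausdorff} as your opening paragraph suggests; your detailed argument already reflects this.
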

\begin{proof}
For the upper estimates, suppose there are endpoint ratios \(\alpha_j\) and points \(z_j\in\supp\mu_{\alpha_j,P}\) staying a fixed distance from the indicated endpoint set. Fixed-ratio weak convergence supplies increasing \(n_j\) and zeros of \(R_{P;\lfloor\alpha_jn_j\rfloor,n_j}\) within \(1/j\) of \(z_j\), contrary to Theorem~\ref{thm:endpoint-zeros}.

At the lower endpoint, Proposition~\ref{prop:atoms} puts every root in the support for all sufficiently small \(\alpha\). It also gives
\[
 \mu_{\alpha,P}\Longrightarrow\rootmu_P\qquad(\alpha\downarrow0),
\]
because for \(0<\alpha<1\) the nonatomic mass is \(\alpha(s-1)/(d-\alpha)\). Let \(c\in Z(P')\setminus Z(P)\). If supports for a sequence \(\alpha_j\downarrow0\) avoided a fixed disk about \(c\), then \(C_{\alpha_j,P}\) would be holomorphic and zero-free there by Proposition~\ref{prop:selected-C}. On a smaller disk, weak convergence and separation from the supports give uniform convergence to \(\rootC_P=P'/(dP)\). The limit is not identically zero but vanishes at \(c\), contradicting Hurwitz's theorem. Thus every such critical point of \(P\) is approached by the supports.

At the upper endpoint, the supports are nonempty and all their points approach the centroid. This is the full Hausdorff limit. The definition of \(\Sh_P\) now gives \eqref{eq:distinguished}.
\end{proof}

\begin{remark}
The lower endpoint is a limit of supports, not the support of the weak limit. The support pieces approaching critical points of \(P\) carry vanishing mass as \(\alpha\downarrow0\).
\end{remark}

\section{The full shadow and its limiting core; Proof of Theorems~\ref{thm:intro-sets} and \ref{thm:p1-perfect-summary} }\label{sec:all-orders}
We now combine the interior and the endpoint limits. Averaging gives the shadow measure, while complete zero convergence determines which points persist at arbitrarily large power levels. The remaining assertions about connectedness, transients, and the full closure then follow from compactness and planar topology.

\subsection{Averaging over the derivative order}
\begin{theorem}[The pooled limit]\label{thm:averaging}
The pooled measures converge weakly to \(\rho_P\), with \(\supp\rho_P=\Sh_P\). More generally, \(P_n\to P\) in fixed degree implies \(\overline\mu_{P_n,n}^{\mathrm{root}}\Longrightarrow\rho_P\). For each continuous test function, convergence to the corresponding \(\rho_P\) is uniform on compact coefficient families. Their limiting logarithmic potential, the \emph{shadow potential}, is
\begin{equation}\label{eq:shadow-potential}
 \ShadowPot_P(z):=\frac2{d^2}\int_0^d\Blev_P(z,\alpha)\dd\alpha-1,
\end{equation}
and \(\rho_P=(2\pi)^{-1}\Delta\ShadowPot_P\). The integral is interpreted in \(L^1_{\mathrm{loc}}\) and may be \(-\infty\) at an atom. Moreover,
\begin{equation}\label{eq:atoms-intro}
 \rho_P^{\mathrm{at}}=\sum_{j=1}^{s}p_j^2\delta_{a_j}.
\end{equation}
The nonatomic mass is \(1-\sum_jp_j^2\), and
\begin{equation}\label{eq:lower-Hausdorff}
 \sup_{z\in\Sh_P}\dist(z,\mathcal Z_{P,n})\longrightarrow0.
\end{equation}
\end{theorem}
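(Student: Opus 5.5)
The plan is to write the pooled measure as a Riemann sum over the ratio \(\alpha=m/n\) and pass to the limit using the uniform fixed-ratio convergence of Lemma~\ref{lem:alpha-cont}.

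\textbf{Weak convergence.} For a continuous test function \(\varphi\),
\[
 \int\varphi\dd\overline\mu_{P_n,n}^{\mathrm{root}}
 =\frac{1}{\mathcal N_{d,n}}\sum_{m=0}^{dn-1}(dn-m)\int\varphi\dd\mu_{P_n;m,n}^{\mathrm{root}}.
\]
Fix \(\eps>0\) and split the orders into \(m/n\in[\eps,d-\eps]\) and the two endpoint ranges. The endpoint ranges carry total weight \(O(\eps)\), since \(\mathcal N_{d,n}\sim d^2n^2/2\), and every integral there is bounded by \(\|\varphi\|_\infty\) on a common compact hull. On the middle range, Lemma~\ref{lem:alpha-cont} with \(I=[\eps,d-\eps]\) lets me replace each integral by \(\int\varphi\dd\mu_{m/n,P_n}\) with a uniformly small error. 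Joint weak continuity then lets me replace it by \(\int\varphi\dd\mu_{m/n,P}\); this continuity is uniform on the compact set formed by a neighborhood of \(P\) and \(I\). What remains is a Riemann sum for \(\frac2{d^2}\int_\eps^{d-\eps}(d-\alpha)\int\varphi\dd\mu_{\alpha,P}\dd\alpha\), and the integrand is continuous in \(\alpha\). Letting \(\eps\to0\) gives convergence to \(\int\varphi\dd\rho_P\). The same splitting, together with a compactness and contradiction argument as in Lemma~\ref{lem:alpha-cont}, gives uniformity on compact coefficient families.

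\textbf{Potential and atoms.} Integrating \eqref{eq:fixed-potential-intro} with weight \(\frac{2}{d^2}(d-\alpha)\) gives \(\frac2{d^2}\int_0^d\Blev_P\dd\alpha-\frac2{d^2}\int_0^d\mathfrak h_d\dd\alpha\). A short computation gives \(\int_0^d\mathfrak h_d=d^2/2\), so the constant is \(1\). To justify Fubini I use the uniform integrability of potentials from Lemma~\ref{lem:pot-compact}. Then \(\rho_P=(2\pi)^{-1}\Delta\ShadowPot_P\) follows by integrating \(\Delta U_{\mu_\alpha}=2\pi\mu_\alpha\). For the atoms, Proposition~\ref{prop:atoms} gives
\[
 \rho_P(\{a_j\})=\frac2{d^2}\int_0^{r_j}(r_j-\alpha)\dd\alpha=\frac{r_j^2}{d^2}=p_j^2,
\]
and there are no atoms off \(Z(P)\), because each \(\mu_\alpha\) has none there.

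\textbf{Support and lower Hausdorff estimate.} For \(\supp\rho_P\subseteq\Sh_P\), note that if \(\varphi\) vanishes on \(\Sh_P\) then every \(\int\varphi\dd\mu_\alpha\) vanishes. For the reverse inclusion, take \(z\in\supp\mu_{\alpha_0,P}\) with \(\alpha_0\in(0,d)\) and a disk \(D\) about \(z\). By Hausdorff continuity in \(\alpha\) (Theorem~\ref{thm:fixed-Hausdorff}), \(\supp\mu_{\alpha,P}\) meets \(\frac12D\) for \(\alpha\) near \(\alpha_0\). I expect the main obstacle to be converting this into positive mass \(\mu_\alpha(D)>0\) on a set of \(\alpha\) of positive measure. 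Mass in \(D\) is lower semicontinuous under weak convergence for open \(D\), and it is positive whenever the support meets \(D\), so \(\alpha\mapsto\mu_\alpha(D)\) is positive near \(\alpha_0\) and lower semicontinuous. Hence \(\rho_P(D)>0\). Taking closures covers the whole of \(\Sh_P\).

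For \eqref{eq:lower-Hausdorff}, the pooled zero set \(\mathcal Z_{P,n}\) contains the support of \(\overline\mu_{P,n}^{\mathrm{root}}\). Weak convergence then forces every point of \(\supp\rho_P\) to be approached. Uniformity over \(\Sh_P\) follows from a finite \(\eps\)-net of the compact set \(\Sh_P\), each net point having positive \(\rho_P\)-mass in its \(\eps\)-disk.
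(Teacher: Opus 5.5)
Your proof is correct. Your treatment of the potential, the atoms, the support and \eqref{eq:lower-Hausdorff} coincides with the paper's.

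The real difference is in the weak-convergence step. You pass to the limit uniformly on $[\eps,d-\eps]$ via Lemma~\ref{lem:alpha-cont} and joint continuity. You then compare with a Riemann sum of the continuous function $\alpha\mapsto(d-\alpha)\int\varphi\dd\mu_{\alpha,P}$ and discard the endpoint ranges by their $O(\eps)$ weight. The paper instead uses only pointwise convergence on $(0,d)$ of the bounded step functions $\alpha\mapsto\int\varphi\dd\mu^{\mathrm{root}}_{P_n;\lfloor n\alpha\rfloor,n}$, which comes straight from the sequential Theorem~\ref{thm:fixed}, together with dominated convergence. In the paper's route the endpoints need no separate treatment, and neither uniformity in $\alpha$ nor continuity of the limiting integrand is needed.

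Your route has its own advantage: every estimate is already uniform over a compact coefficient family. Uniformity therefore follows directly from your splitting, without the contradiction argument. The splitting also gives the continuity of $P\mapsto\rho_P$, which a contradiction argument would need. The paper proceeds the other way: it first gets continuity of $P\mapsto\rho_P$ by dominated convergence, then deduces uniformity by compactness.

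One step in your support argument is unnecessary. You appeal to Theorem~\ref{thm:fixed-Hausdorff}, but $\mu_{\alpha_0,P}(D)>0$ together with lower semicontinuity of $\alpha\mapsto\mu_{\alpha,P}(D)$ already gives positivity for $\alpha$ near $\alpha_0$. That is exactly the paper's test-function argument.
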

\begin{proof}
Fix a continuous test function \(\varphi\) on \(\Hull_P\). The bounded step functions \(\alpha\mapsto\int\varphi\dd\mu_{P;\lfloor n\alpha\rfloor,n}^{\mathrm{root}}\) converge pointwise on \((0,d)\) by Theorem~\ref{thm:fixed}. Multiply by \(2(d-\alpha)/d^2\) and apply dominated convergence. The resulting integral differs from the pooled sum by \(O(n^{-1})\|\varphi\|_\infty\): on each interval of length \(1/n\) the weight varies by \(O(n^{-1})\), and \(\mathcal N_{d,n}=d^2n^2/2+O(n)\). This proves the measure limit without a uniform saddle expansion in \(\alpha\). For \(P_n\to P\), the same argument works on a common compact root enclosure: the varying- polynomial statement of Theorem~\ref{thm:fixed} gives pointwise convergence of the step functions at every interior ratio. Joint weak continuity from Lemma~\ref{lem:alpha-cont}, followed by dominated convergence in \eqref{eq:rho-intro}, gives continuity of \(P\mapsto\rho_P\). The sequential assertion and compactness then give the stated uniformity.

By definition, the support of the average is contained in \(\Sh_P\). Conversely, a nonnegative continuous function with positive integral against one \(\mu_{\alpha_0,P}\) has positive integral on an interval of ratios, by Lemma~\ref{lem:alpha-cont}. The averaging weight is positive there. Hence \(\supp\rho_P=\Sh_P\).

Integrate the fixed-ratio potential formula in local \(L^1\), using Lemma~\ref{lem:pot-compact} to justify Fubini. Since
\[
 \frac2{d^2}\int_0^d \mathfrak h_d(\alpha)\dd\alpha
 =2\int_0^1[-t\log t-(1-t)\log(1-t)]\dd t=1,
\]
we obtain \eqref{eq:shadow-potential}. Integrating \eqref{eq:fixed-atoms} gives mass \(r_j^2/d^2\) at \(a_j\); Tonelli's theorem applied to each singleton excludes all other atoms.

Every ball centered on \(\supp\rho_P\) has positive limiting mass. Weak convergence therefore forces a pooled zero into a smaller such ball for all large \(n\). A finite covering of the compact support gives \eqref{eq:lower-Hausdorff}.
\end{proof}

\begin{proposition}[Covariance and power invariance]\label{prop:covariance}
For \(\lambda\in\C\setminus\{0\}\) and \(\beta\in\C\), set \(P_{\lambda,\beta}(z)=\lambda^{-d}P(\lambda z+\beta)\) and \(A_{\lambda,\beta}(z)=\lambda z+\beta\). Then
\[
 (A_{\lambda,\beta})_*\rho_{P_{\lambda,\beta}}=\rho_P,
 \qquad A_{\lambda,\beta}(\Sh_{P_{\lambda,\beta}})=\Sh_P,
\]
and
\[
 A_{\lambda,\beta}(\AllSh_{P_{\lambda,\beta}})=\AllSh_P.
\]
For every integer \(k\ge1\),
\[
 \rho_{P^k}=\rho_P,\qquad \Sh_{P^k}=\Sh_P.
\]
\end{proposition}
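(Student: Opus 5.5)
The plan is to get the affine covariance directly at the level of finite zero sets and then pass to the limits. The power invariance needs a separate argument, through the fixed-ratio measures \(\mu_{\alpha,P}\).

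\emph{Covariance.} Put \(\widetilde P=P_{\lambda,\beta}\), which is monic of degree \(d\). By the chain rule,
\[
 (\widetilde P^n)^{(m)}(z)=\lambda^{m-dn}(P^n)^{(m)}(\lambda z+\beta).
\]
Hence \(Z(R_{\widetilde P;m,n})=A_{\lambda,\beta}^{-1}(Z(R_{P;m,n}))\), with the same multiplicities, since \(A_{\lambda,\beta}\) is biholomorphic. It follows that
\[
 (A_{\lambda,\beta})_*\mu^{\mathrm{root}}_{\widetilde P;m,n}=\mu^{\mathrm{root}}_{P;m,n},
 \qquad
 A_{\lambda,\beta}(\mathcal Z_{\widetilde P,n})=\mathcal Z_{P,n}.
\]
Taking unions over \(n\) and closures gives the statement for \(\AllSh\), because \(A_{\lambda,\beta}\) is a homeomorphism. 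Pushforward under a continuous map commutes with weak limits. Theorem~\ref{thm:fixed} therefore gives \((A_{\lambda,\beta})_*\mu_{\alpha,\widetilde P}=\mu_{\alpha,P}\) for every \(\alpha\in(0,d)\), so \(A_{\lambda,\beta}(\supp\mu_{\alpha,\widetilde P})=\supp\mu_{\alpha,P}\). Averaging in \eqref{eq:rho-intro} and taking the closed union in \eqref{eq:core-union} gives the assertions for \(\rho\) and \(\Sh\). Alternatively, Theorem~\ref{thm:averaging} gives them directly from the pooled measures.

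\emph{Power invariance.} Let \(Q=P^k\), of degree \(dk\). Its descendants are \(R_{Q;m,n}=(P^{kn})^{(m)}=R_{P;m,kn}\). Fix \(\beta\in(0,dk)\) and take \(m_n=\lfloor\beta n\rfloor\). Along the power levels \(N=kn\) the ratio is \(m_n/N\to\beta/k\in(0,d)\). Theorem~\ref{thm:fixed} allows arbitrary integer sequences with the correct ratio limit, and a subsequence \(N=kn\) still tends to infinity, so it applies here. This gives
\[
 \mu_{\beta,Q}=\mu_{\beta/k,P}.
\]
As a consistency check, the barrier formula \eqref{eq:fixed-potential-intro} gives the same identity, since \(L_{Q,z,\beta}=k L_{P,z,\beta/k}\). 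Now substitute \(\alpha=\beta/k\) in \eqref{eq:rho-intro}:
\[
 \rho_Q=\frac{2}{d^2k^2}\int_0^{dk}(dk-\beta)\mu_{\beta/k,P}\dd\beta
 =\frac{2}{d^2}\int_0^d(d-\alpha)\mu_{\alpha,P}\dd\alpha=\rho_P.
\]
The unions in \eqref{eq:core-union} are reparametrizations of each other, so \(\Sh_Q=\Sh_P\). This also follows from \(\supp\rho=\Sh\) in Theorem~\ref{thm:averaging}.

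The main point to handle carefully is that Theorem~\ref{thm:fixed} concerns all powers \(N\), while the sequence \(N=kn\) is sparse. This is harmless: the limit \(\mu_{\alpha,P}\) is independent of the sequence, so restricting to a subsequence of power levels, with any integer orders \(m_N\) satisfying \(m_N/N\to\alpha\), yields the same limit.

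Note that \(\AllSh\) is \emph{not} claimed to be power invariant, and it cannot be. The level-\(n\) zeros of \(Q\) are only the level-\(kn\) zeros of \(P\), which is consistent with the nesting along divisibility in Theorem~\ref{thm:p1-perfect-summary}(iv).
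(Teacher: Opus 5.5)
Your proof is correct. The full-shadow covariance is argued as in the paper, at finite level by the chain rule. For \(\rho\) and \(\Sh\), however, you take a different route.

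The paper works with barriers. It uses the change-of-variable identities \(\Blev_{P_{\lambda,\beta}}(z,\alpha)=\Blev_P(\lambda z+\beta,\alpha)-(d-\alpha)\log|\lambda|\) and \(\Blev_{P^k}(z,\alpha)=k\Blev_P(z,\alpha/k)\), and integrates them in the shadow-potential formula \eqref{eq:shadow-potential}. This gives \(\ShadowPot_{P_{\lambda,\beta}}=\ShadowPot_P\circ A_{\lambda,\beta}-\log|\lambda|\) and \(\ShadowPot_{P^k}=\ShadowPot_P\), and hence the measure and support statements.

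You instead stay at finite level throughout, using \(R_{P_{\lambda,\beta};m,n}=\lambda^{m-dn}\,(R_{P;m,n}\circ A_{\lambda,\beta})\) and \(R_{P^k;m,n}=R_{P;m,kn}\). You then pass to the limit using only existence and sequence-independence in Theorem~\ref{thm:fixed}. The sparse levels \(N=kn\) are harmless once you fill in the missing levels with, say, \(\lfloor\alpha N\rfloor\) and take a subsequence of the resulting convergent sequence.

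\textbf{Comparison.} The paper's argument is a two-line computation once the potential formulas are available, and it records the transformation laws of the barrier itself. Yours never needs the explicit barrier formula. It also produces the sharper fixed-ratio identities \((A_{\lambda,\beta})_*\mu_{\alpha,P_{\lambda,\beta}}=\mu_{\alpha,P}\) and \(\mu_{\gamma,P^k}=\mu_{\gamma/k,P}\) along the way.

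\textbf{A shortcut for power invariance.} Your remark about pooled measures can be pushed further. Since \(\mathcal N_{dk,n}=\mathcal N_{d,kn}\), one has exactly \(\overline\mu_{P^k,n}^{\mathrm{root}}=\overline\mu_{P,kn}^{\mathrm{root}}\). Theorem~\ref{thm:averaging} then gives \(\rho_{P^k}=\rho_P\) at once, without the reparametrized integral.

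\textbf{Notation.} You reuse \(\beta\) both for the translation and for the ratio of \(P^k\); rename one of them, for instance to \(\gamma\) as above.
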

\begin{proof}
A change of variable in the landscape gives
\[
 \Blev_{P_{\lambda,\beta}}(z,\alpha)=\Blev_P(\lambda z+\beta,\alpha)
 -(d-\alpha)\log|\lambda|.
\]
The integrated potential changes by \(-\log|\lambda|\), proving covariance of the measure and its support. The chain rule gives covariance at each finite level and hence for the full shadow. Likewise, \(\Blev_{P^k}(z,\alpha)=k\Blev_P(z,\alpha/k)\); changing variables in \eqref{eq:shadow-potential} proves power invariance.
\end{proof}

\subsection{Complete zero convergence}
\label{sec:full}
The interior and endpoint estimates now cover every derivative order. Throughout this subsection, polynomial convergence means coefficient convergence of sequences of polynomials in fixed degree.

\begin{theorem}[Hausdorff convergence over all orders]\label{thm:no-outliers}
If \(P_n\to P\) are monic of degree \(d\ge2\), then
\begin{equation}\label{eq:stable-Hausdorff}
 d_{\mathrm H}(\mathcal Z_{P_n,n},\Sh_P)\longrightarrow0.
\end{equation}
For fixed \(P\), in particular,
\begin{equation}\label{eq:complete-limit}
 \mathcal Z_{P,n}\longrightarrow\Sh_P,
 \qquad
 \bigcap_{M\ge1}\overline{\bigcup_{n\ge M}\mathcal Z_{P,n}}=\Sh_P.
\end{equation}
\end{theorem}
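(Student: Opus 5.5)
The Hausdorff distance splits into two one-sided estimates, and I handle them separately.

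\emph{Lower estimate:} \(\sup_{z\in\Sh_P}\dist(z,\mathcal Z_{P_n,n})\to0\). The fixed-\(P\) case is \eqref{eq:lower-Hausdorff} in Theorem~\ref{thm:averaging}, and the same argument works for varying \(P_n\). Since \(\overline\mu_{P_n,n}^{\mathrm{root}}\Longrightarrow\rho_P\) and \(\supp\rho_P=\Sh_P\), every small ball centred on \(\Sh_P\) has positive limiting mass. Weak convergence then puts a pooled zero of \(P_n^n\) in it for all large \(n\). A finite cover of the compact set \(\Sh_P\) makes this uniform over \(\Sh_P\).

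\emph{Upper estimate:} \(\sup_{z\in\mathcal Z_{P_n,n}}\dist(z,\Sh_P)\to0\). I argue by contradiction. Suppose not. Then there are \(\eps>0\), a subsequence \(n_j\), orders \(0\le m_j<dn_j\), and zeros \(z_j\) of \((P_{n_j}^{n_j})^{(m_j)}\) with \(\dist(z_j,\Sh_P)\ge\eps\). Gauss--Lucas keeps all zeros in a common compact enclosure of the root hulls, so after passing to a further subsequence \(z_j\to z\) and \(m_j/n_j\to\alpha\in[0,d]\). Then \(\dist(z,\Sh_P)\ge\eps\), and I rule this out by cases on \(\alpha\):
\begin{itemize}
\item If \(\alpha\in(0,d)\), Theorem~\ref{thm:fixed-Hausdorff} applied to \(P_{n_j}\to P\) gives \(z\in\supp\mu_{\alpha,P}\subseteq\Sh_P\), a contradiction.
\item If \(\alpha=0\), consider two sub-cases.
  \begin{itemize}
  \item If \(m_j=0\) infinitely often, then \(z_j\in Z(P_{n_j})\to Z(P)\).
  \item Otherwise \(m_j\ge1\), and the exact cluster counts of Theorem~\ref{thm:sublinear-counts} (with \(M_n\) chosen to dominate the \(m_j\) along the subsequence, still sublinear) place \(z_j\) in arbitrarily small disks about \(Z(P)\cup Z(P')\).
  \end{itemize}
  In both sub-cases \(z\in Z(P)\cup Z(P')\subseteq\Sh_P\) by \eqref{eq:distinguished}.
\item If \(\alpha=d\), the upper-endpoint part of Theorem~\ref{thm:sublinear-counts}, which is uniform for \(P_n\to P\), gives \(z=\centroid_P\in\Sh_P\).
\end{itemize}
Every case contradicts \(\dist(z,\Sh_P)\ge\eps\). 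This proves \eqref{eq:stable-Hausdorff}.

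One technical point: Theorem~\ref{thm:fixed-Hausdorff} and Theorem~\ref{thm:sublinear-counts} are stated for full sequences indexed by \(n\), not for a subsequence \(n_j\). I expect this to be the main, though mild, obstacle. It is handled by noting that \eqref{eq:fixed-Hausdorff} is already formulated with arbitrary \(n_j\to\infty\). For the endpoint theorems, the polynomials \(P_{n_j}\) can be padded to a full sequence, taking \(P_n=P\) off the subsequence and the orders arbitrary sublinear (respectively \(dn-o(n)\)) there. The conclusions of those theorems only concern the indices used.

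For fixed \(P\), the first part of \eqref{eq:complete-limit} is the special case \(P_n=P\). For the tail identity, write \(T:=\bigcap_M\overline{\bigcup_{n\ge M}\mathcal Z_{P,n}}\). The lower estimate shows that every point of \(\Sh_P\) is a limit of points of \(\mathcal Z_{P,n}\) with \(n\to\infty\), so \(\Sh_P\subseteq T\). Conversely, a point of \(T\) is a limit of zeros from levels \(n_j\to\infty\). The upper estimate, applied along that subsequence, puts its limit in \(\Sh_P\). Hence \(T\subseteq\Sh_P\), and the two sets are equal.
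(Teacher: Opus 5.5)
Your proof is correct and follows the paper's argument. The lower estimate comes from the pooled weak convergence \(\overline\mu_{P_n,n}^{\mathrm{root}}\Longrightarrow\rho_P\) with \(\supp\rho_P=\Sh_P\), and the upper estimate comes from extracting \(m_n/n\to\alpha\in[0,d]\) and splitting into endpoint and interior ratios. You cite the packaged results (Theorem~\ref{thm:fixed-Hausdorff} for \(0<\alpha<d\), Theorem~\ref{thm:sublinear-counts} for \(\alpha=0,d\), with a legitimate padding of \(M_n\) or \(m_n\); padding the polynomials is unnecessary since \((P_n)\) is already a full sequence), whereas the paper reruns Lemma~\ref{lem:harmonic-gate}, Proposition~\ref{prop:uniform-saddle} and Lemma~\ref{lem:small-coeff} directly, which is only a repackaging.
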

\begin{proof}
The root hulls lie in one compact set. Suppose \(z_n\in\mathcal Z_{P_n,n}\) and \(z_n\to z\). Choose an order \(m_n\) realizing each zero and pass to a subsequence with \(m_n/n\to\alpha\in[0,d]\).

If \(\alpha=0\) or \(d\), the proof of Theorem~\ref{thm:endpoint-zeros} applies to \(P_n\). The coefficient families and their first coefficients converge locally uniformly, and the lower bounds of Lemma~\ref{lem:small-coeff} remain uniform. Thus \(z\in Z(P)\cup Z(P')\) or \(z=\centroid_P\), respectively; both lie in \(\Sh_P\) by Theorem~\ref{thm:endpoint-support}.

Suppose \(0<\alpha<d\) and \(z\notin\Sh_P\). A disk about \(z\) disjoint from \(\Sh_P\) contains no root, and the fixed-ratio potential is harmonic there. Theorem~\ref{thm:fixed} makes the barrier harmonic as well. Lemma~\ref{lem:harmonic-gate} gives a one-saddle separating contour persisting near \((P,z,\alpha)\), and Proposition~\ref{prop:uniform-saddle} excludes zeros near \(z\) at \((P_n,m_n/n)\) for all large \(n\). This contradicts \(z_n\to z\) and proves the upper Hausdorff estimate.

For the lower estimate, the varying-polynomial statement of Theorem~\ref{thm:fixed} and the averaging proof of Theorem~\ref{thm:averaging} give \(\overline\mu_{P_n,n}^{\mathrm{root}}\Longrightarrow\rho_P\). Indeed, the same bounded step functions converge pointwise at every interior ratio, so dominated convergence applies. Every disk centered on \(\supp\rho_P=\Sh_P\) therefore meets \(\mathcal Z_{P_n,n}\) for all large \(n\). A finite cover gives the lower estimate. The intersection identity in \eqref{eq:complete-limit} follows from Hausdorff convergence.
\end{proof}

\begin{corollary}[Hausdorff continuity and uniform convergence]\label{cor:set-stability}
In each fixed degree, both \(P\mapsto\Sh_P\) and \(P\mapsto\AllSh_P\) are Hausdorff-continuous, including at root collisions. For every compact set \(\mathscr P\) of monic degree-\(d\) polynomials,
\[
 \sup_{P\in\mathscr P}d_{\mathrm H}(\mathcal Z_{P,n},\Sh_P)\longrightarrow0.
\]
\end{corollary}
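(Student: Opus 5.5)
The first claim is Hausdorff continuity of \(P\mapsto\Sh_P\). I will get it from Theorem~\ref{thm:no-outliers} by a diagonal argument. Let \(P_j\to P\) in fixed degree. For each \(j\), apply the fixed-polynomial case of Theorem~\ref{thm:no-outliers} to \(P_j\) and pick \(n_j\), increasing, with \(d_{\mathrm H}(\mathcal Z_{P_j,n_j},\Sh_{P_j})<1/j\). Next define a full sequence \(\widetilde P_n\) by \(\widetilde P_{n_j}=P_j\) and \(\widetilde P_n=P\) for \(n\) not of this form. Then \(\widetilde P_n\to P\), and the varying-polynomial statement \eqref{eq:stable-Hausdorff} gives \(d_{\mathrm H}(\mathcal Z_{P_j,n_j},\Sh_P)\to0\). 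The triangle inequality for \(d_{\mathrm H}\) then yields \(\Sh_{P_j}\to\Sh_P\). The uniform statement follows by contradiction. If it failed, there would be \(\eps>0\), \(n_k\to\infty\) and \(Q_k\in\mathscr P\) with \(d_{\mathrm H}(\mathcal Z_{Q_k,n_k},\Sh_{Q_k})\ge\eps\). Compactness gives a subsequence \(Q_k\to Q\). Interlacing with \(Q\) as above and applying \eqref{eq:stable-Hausdorff} gives \(\mathcal Z_{Q_k,n_k}\to\Sh_Q\). Continuity just proved gives \(\Sh_{Q_k}\to\Sh_Q\), which is a contradiction. Root collisions need no separate treatment, since Theorem~\ref{thm:no-outliers} already allows arbitrary coefficient convergence.

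For \(P\mapsto\AllSh_P\) I will write \(\AllSh_P=\Sh_P\cup\overline{\bigcup_n\mathcal Z_{P,n}}\). The uniform convergence just established lets me split each \(\AllSh_{P_j}\) into two pieces. The first is the finite truncation \(\mathcal F_{P_j,N}\). The second is a tail lying within \(\eps\) of \(\Sh_{P_j}\), and hence within \(2\eps\) of \(\Sh_P\) for large \(j\). Here \(N\) is chosen uniformly on a compact neighbourhood of \(P\), by the uniform statement. For a fixed finite \(N\), I need \(\mathcal F_{P_j,N}\to\mathcal F_{P,N}\) in Hausdorff distance. Each descendant \((P^n)^{(m)}\) with \(m<dn\) has exact degree \(dn-m\), and its coefficients depend continuously on those of \(P\). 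Its zero set therefore varies Hausdorff-continuously: roots are continuous as a multiset, and the distinct zero set is the same closed set.

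Assembling the pieces, I will show
\(\AllSh_{P_j}\subseteq(\mathcal F_{P_j,N}\cup\Sh_{P_j})_{\eps}\), which lies in \((\mathcal F_{P,N}\cup\Sh_P)_{3\eps}\subseteq(\AllSh_P)_{3\eps}\). For the reverse inclusion I need \(\AllSh_P\subseteq(\mathcal F_{P,N}\cup\Sh_P)_\eps\), which holds by the same choice of \(N\). Then \(\mathcal F_{P,N}\) is close to \(\mathcal F_{P_j,N}\subseteq\AllSh_{P_j}\), and \(\Sh_P\) is close to \(\Sh_{P_j}\subseteq\AllSh_{P_j}\). The main obstacle is this tail control. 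I have to make sure that the single \(N\) works simultaneously for all \(P_j\) near \(P\), and this is exactly what the uniform-on-compacts statement provides. Everything else is routine continuity of roots.
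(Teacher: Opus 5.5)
Your proposal is correct and matches the paper's proof. The diagonal choice of \(n_j\) and the compactness contradiction for \(\Sh_P\) are exactly the paper's argument, and your interlaced sequence \(\widetilde P_n\) simply makes explicit how Theorem~\ref{thm:no-outliers} is applied along the levels \(n_j\). For \(\AllSh_P\), your uniform cutoff \(N\) (root continuity for levels up to \(N\), core convergence beyond it) is the quantitative form of the paper's split into bounded and unbounded power levels, essentially the prefix estimate later recorded as Theorem~\ref{thm:cumulative-shadow}, so the two arguments differ only in packaging.
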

\begin{proof}
For \(P_j\to P\), choose increasing \(n_j\) with \(d_{\mathrm H}(\mathcal Z_{P_j,n_j},\Sh_{P_j})<1/j\). Theorem~\ref{thm:no-outliers} gives \(\mathcal Z_{P_j,n_j}\to\Sh_P\), hence \(\Sh_{P_j}\to\Sh_P\). Compactness and the same theorem give uniformity by contradiction.

For the full shadow, approximate a finite net in \(\AllSh_P\) by finite-level zeros. Continuity of the roots of each nonconstant derivative gives the lower estimate. For the upper estimate, approximate a proposed sequence of offending points in \(\AllSh_{P_j}\) by zeros at power levels \(n_j\). If the power levels are bounded, pass to fixed \(n,m\) and use root continuity. Otherwise, pass to increasing power levels and apply Theorem~\ref{thm:no-outliers}. In either case the limit belongs to \(\AllSh_P\). The common compact enclosure comes from Gauss--Lucas.
\end{proof}

\subsection{Connectedness of the core}

The following elementary fact applies to arbitrary compact sets.

\begin{lemma}[A continuous compact-set family with a singleton fiber]
\label{lem:connected-union}
Let \(I\) be a compact interval and let \(t\mapsto E_t\) be a Hausdorff-continuous family of nonempty compact subsets of a metric space with metric \(\delta\). If \(E_{t_*}\) is a singleton for some \(t_*\in I\), then \(\bigcup_{t\in I}E_t\) is compact and connected.
\end{lemma}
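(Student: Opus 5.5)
Compactness and connectedness will be proved separately, with compactness coming from a sequential argument and connectedness from a chain argument through the singleton fiber. Write \(S=\bigcup_{t\in I}E_t\) and \(E_{t_*}=\{p\}\).

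\emph{Compactness.} Take a sequence \(x_k\in E_{t_k}\). Since \(I\) is compact, we may pass to a subsequence with \(t_k\to t\in I\). Hausdorff continuity gives points \(y_k\in E_t\) with \(\delta(x_k,y_k)\le d_{\mathrm H}(E_{t_k},E_t)\to0\). Because \(E_t\) is compact, a further subsequence has \(y_k\to y\in E_t\), and then \(x_k\to y\in S\). So \(S\) is sequentially compact in the metric space, hence compact.

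\emph{Connectedness.} The individual fibers need not be connected, so connectedness must come from the continuity in \(t\). Suppose \(S=A\cup B\) with \(A,B\) disjoint, nonempty, and closed in \(S\), and say \(p\in A\). Both sets are compact, so they are at a positive distance \(\eta\). Define
\[
 J_A=\{t\in I:E_t\cap A\ne\varnothing\},\qquad J_B=\{t\in I:E_t\cap B\ne\varnothing\}.
\]
Each is closed in \(I\), by the same limit argument as in the compactness step applied to points of \(E_{t_k}\cap A\), using that \(A\) is closed. Together they cover \(I\), and \(t_*\in J_A\setminus J_B\). The set \(J_B\) is nonempty, so connectedness of \(I\) forces \(J_A\cap J_B\ne\varnothing\).

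The main obstacle is that this intersection is not yet a contradiction, since a single \(E_t\) may meet both \(A\) and \(B\). To handle it, I would follow a path in \(t\) starting at \(t_*\). Assume \(J_B\) contains points larger than \(t_*\); otherwise argue to the left. Put \(u=\inf\{t\ge t_*:t\in J_B\}\). Then \(u>t_*\) because \(t_*\notin J_B\) and \(J_B\) is closed, and \(u\in J_B\). Next define
\[
 v=\sup\{t\in[t_*,u]:E_t\subseteq A\}.
\]
By closedness of \(J_B\) and the positive gap \(\eta\), the condition \(E_t\subseteq A\) is closed in \(t\), so \(E_v\subseteq A\) and \(v<u\).

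Now use the gap. For \(t\) near \(v\), Hausdorff continuity puts \(E_t\) in the \(\eta/2\)-neighborhood of \(E_v\subseteq A\), and therefore \(E_t\subseteq A\). This contradicts the choice of \(v\) as a supremum unless \(v=u\). That in turn contradicts \(E_u\cap B\ne\varnothing\). More directly, the same gap shows that \(\{t:E_t\subseteq A\}\) is both open and closed in \(I\). This set is nonempty, since it contains \(t_*\), so it is all of \(I\), contradicting \(B\ne\varnothing\).

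On reflection, then, the simplest route is this clopen argument. The set \(\{t:E_t\subseteq A\}\) is open by Hausdorff continuity and the gap \(\eta\). It is closed because its complement \(J_B\) is open, again by continuity and the gap. The singleton fiber \(E_{t_*}=\{p\}\) guarantees this set is nonempty. Connectedness of \(I\) then makes it all of \(I\), so \(B=\varnothing\), which is the contradiction.
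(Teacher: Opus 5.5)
Your proof is correct and is essentially the paper's argument. Compactness uses the same sequential argument. Connectedness shows that a parameter set is clopen in \(I\), using Hausdorff continuity and the positive gap between \(A\) and \(B\); the paper shows that both \(I_A\) and \(I_B\) are clopen and nonempty, hence equal to \(I\), which contradicts the singleton fiber, whereas you show that \(\{t:E_t\subseteq A\}=I\setminus J_B\) is clopen and nonempty via the singleton, so your intermediate path-following detour is unnecessary.
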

\begin{proof}
The union \(E\) is compact: from any sequence \(x_j\in E_{t_j}\) choose \(t_j\to t\in I\); Hausdorff continuity supplies \(y_j\in E_t\) with \(\delta(x_j,y_j)\to0\), and compactness of \(E_t\) supplies a convergent subsequence. If \(E\) were disconnected, write it as the disjoint union of two nonempty compact sets \(A\) and \(B\). They have positive distance. The sets
\[
 I_A=\{t:E_t\cap A\ne\varnothing\},\qquad
 I_B=\{t:E_t\cap B\ne\varnothing\}
\]
are closed by compactness and open by Hausdorff continuity and the positive distance between \(A\) and \(B\). Both are nonempty, so connectedness of \(I\) forces \(I_A=I_B=I\). This contradicts the singleton fiber at \(t_*\). Thus \(E\) is connected.
\end{proof}

\begin{theorem}[Connectedness and distinguished points of the core]
\label{thm:connected}
For every nonzero polynomial \(P\), the limiting core \(\Sh_P\) is connected. For a nonconstant polynomial it is compact and
\begin{equation}\label{eq:canonical-continuum-chain}
 Z(P)\cup Z(P')\cup\{\centroid_P\}
 \subseteq\Sh_P\subseteq\AllSh_P\subseteq\Hull_P.
\end{equation}
If \(P\) has at least two distinct roots, the core is perfect.
\end{theorem}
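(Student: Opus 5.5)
The plan is to apply Lemma~\ref{lem:connected-union} to the family of supports, extended continuously to the closed interval of ratios. First I dispose of the degenerate cases. For a nonzero constant the core is empty, hence connected. For a polynomial with one distinct root the core is that root, so it is connected and the chain \eqref{eq:canonical-continuum-chain} is trivial. After normalizing to a monic polynomial (this changes nothing), assume \(d\ge2\) and \(s\ge2\).

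Define \(E_\alpha:=\supp\mu_{\alpha,P}\) for \(0<\alpha<d\), \(E_0:=Z(P)\cup Z(P')\), and \(E_d:=\{\centroid_P\}\). Each \(E_\alpha\) is nonempty and compact. The map \(\alpha\mapsto E_\alpha\) is Hausdorff-continuous on \((0,d)\) by Theorem~\ref{thm:fixed-Hausdorff}, and at the two endpoints by Theorem~\ref{thm:endpoint-support}. Since \(E_d\) is a singleton, Lemma~\ref{lem:connected-union} shows that \(\bigcup_{\alpha\in[0,d]}E_\alpha\) is compact and connected. This union contains \(\bigcup_{0<\alpha<d}E_\alpha\) and is contained in its closure, because \(E_0\) and \(E_d\) are Hausdorff limits of interior supports. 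Being closed, it therefore equals \(\Sh_P\), so the core is compact and connected.

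For the chain, the left inclusion is \eqref{eq:distinguished}. The middle inclusion \(\Sh_P\subseteq\AllSh_P\) follows from the lower Hausdorff estimate \eqref{eq:lower-Hausdorff}, since every point of \(\Sh_P\) is a limit of finite-level zeros. The right inclusion is Gauss--Lucas.

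For perfectness when \(s\ge2\), I argue that a compact connected set with more than one point has no isolated points. An isolated point \(x\) of such a set would make \(\{x\}\) both open and closed in it, contradicting connectedness. So it suffices that \(\Sh_P\) contains two distinct points, which holds because it contains the two distinct roots \(a_1\ne a_2\).

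The main obstacle is checking the hypotheses of Lemma~\ref{lem:connected-union}, namely continuity at the endpoints and nonemptiness of \(E_0\). Theorem~\ref{thm:endpoint-support} supplies exactly the endpoint continuity, and \(E_0\supseteq Z(P)\) is nonempty. So the connectedness argument should go through without further analysis.
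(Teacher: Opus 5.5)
Your proposal is correct and follows essentially the same route as the paper. Both use the endpoint-completed support family, which is Hausdorff-continuous by Theorems~\ref{thm:fixed-Hausdorff} and~\ref{thm:endpoint-support}, and apply Lemma~\ref{lem:connected-union} using the singleton centroid fiber. The chain then comes from \eqref{eq:distinguished} and Gauss--Lucas, and perfectness from the clopen-singleton argument. The only cosmetic difference is that you derive \(\Sh_P\subseteq\AllSh_P\) from \eqref{eq:lower-Hausdorff} rather than from \eqref{eq:complete-limit}; this works equally well.
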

\begin{proof}
For degree \(d\ge2\), define the endpoint-completed support family by
\[
 E_P(t)=
 \begin{cases}
 Z(P)\cup Z(P'),&t=0,\\
 \supp\mu_{dt,P},&0<t<1,\\
 \{\centroid_P\},&t=1.
 \end{cases}
\]
Theorem~\ref{thm:endpoint-support} and the joint continuity in Theorem~\ref{thm:fixed-Hausdorff} make this a Hausdorff-continuous family of nonempty compact sets. Its union is compact, and it is exactly \(\Sh_P\) by the definition of the core and the endpoint limits. Lemma~\ref{lem:connected-union} proves connectedness. Equation~\eqref{eq:distinguished} gives the first inclusion in \eqref{eq:canonical-continuum-chain}; the middle inclusion follows from \eqref{eq:complete-limit}, and the last is Gauss--Lucas. A connected closed set containing two distinct points has no isolated point, so it is perfect. A nonzero constant has empty full shadow and empty core, while a polynomial with one distinct root has singleton full shadow and singleton core; these cases are immediate.
\end{proof}

Thus the core is a canonical connected refinement of the Gauss--Lucas hull. The singleton endpoint is essential to the connectedness argument.

Figure~\ref{fig:core-boundary}(a) shows the core as the union of the continuous family used in the proof. For small \(\alpha\) the support consists of the roots, which are atoms, and of short arcs near the critical points. The arcs grow, absorb the atoms at the simple roots when \(\alpha=1\) (Lemma~\ref{lem:threshold-root}), and then shrink to the centroid as \(\alpha\to d\) from below.

\subsection{Two mechanisms forming the boundary of the core}
\label{rem:boundary-mechanisms}
This subsection gives some insight about the boundary of a core which will be further developed and extended in \cite{Shadow 2}.

The supports \(\supp\mu_{\alpha,P}\) produce the boundary of \(\Sh_P\) in two different ways. In the examples, and typically, \(\supp\mu_{\alpha,P}\) is a finite union of analytic arcs. Along an arc two continued simple saddles \(w_A(z,\alpha)\) and \(w_B(z,\alpha)\) are the connecting ones and have equal critical heights \(\psi_A=\psi_B\), where \(\psi_j=\Ree\Phi_{P,z,\alpha}(w_j)\). Arcs may meet at junctions, where three saddles have the same height, and their free endpoints are critical values of \(F_{P,\alpha}\). At a critical value \(z=F_{P,\alpha}(w)\) one has \(\alpha h_P'(w)=1\), so, as \(\alpha\) varies, the endpoints move on the curve
\[
 \Bigl\{\,w-\frac{h_P(w)}{h_P'(w)}:\ P(w)P'(w)\ne0,\
 \frac1{h_P'(w)}\in(0,d)\Bigr\},
\]
that is, the image under the Newton map \(w\mapsto w-h_P(w)/h_P'(w)\) of \(h_P=P/P'\) of the part of the real-algebraic curve \(\{\operatorname{Im}h_P'=0\}\) on which \(h_P'>1/d\). Parts of \(\partial\Sh_P\) are traced by these moving endpoints.

A boundary point can also lie in the relative interior of an arc. Suppose that \(z_0\in\partial\Sh_P\) lies on \(\supp\mu_{\alpha_0,P}\) and that the connecting saddles at \((z_0,\alpha_0)\) are exactly two simple saddles \(w_A\ne w_B\). Put \(E=\psi_A-\psi_B\). Since \(w_j\) is a stationary point of the phase,
\[
 \partial_\alpha\psi_j=-\log|w_j-z|,\qquad
 \partial_\alpha E=\log|w_B-z|-\log|w_A-z|.
\]
By Lemma~\ref{lem:multiple-gates}, near \((z_0,\alpha_0)\) the barrier is \(\max(\psi_A,\psi_B)\), whose gradients in \(z\) are distinct because \(w_A\ne w_B\). Hence, near \(z_0\), the support of \(\mu_{\alpha,P}\) is the curve \(\{E(\cdot,\alpha)=0\}\) for every \(\alpha\) near \(\alpha_0\). If \(\partial_\alpha E(z_0,\alpha_0)\ne0\), the implicit-function theorem gives, for each \(z\) near \(z_0\), a ratio \(\alpha(z)\) with \(E(z,\alpha(z))=0\). Then \(z\in\supp\mu_{\alpha(z),P}\subseteq\Sh_P\), contradicting \(z_0\in\partial\Sh_P\). Therefore \(|w_A-z_0|=|w_B-z_0|\). Since the heights agree and \(w_j-z_0=\alpha_0h_P(w_j)\), this gives
\[
 |P(w_A)|=|P(w_B)|,\qquad |P'(w_A)|=|P'(w_B)|.
\]
Geometrically, such a boundary point lies on an envelope (a fold) of the family \(\{\supp\mu_{\alpha,P}\}\) rather than on the path of an endpoint. On an axis of symmetry of the root configuration the condition holds identically, and the core may contain one-dimensional pieces there, like the segment of the negative real axis for \(\Pi_2\).

Both mechanisms occur; see Figure~\ref{fig:core-boundary}. For the quartic in part~(a), the whole boundary is traced by endpoints, while other endpoint paths run through the interior, where they are covered by the supports of other ratios. For \(\Pi_2\) (part~(b) and Figure~\ref{fig:pi2-transients}), the two boundary arcs through the corner at the origin are folds. They consist of interior points of the supports with \(\alpha\) close to \(2\), and no endpoint path passes there. Folds also appear next to a crossing of two endpoint paths (part~(c)). A systematic description of \(\partial\Sh_P\) will be given in \cite{Shadow caustics}. There it is shown that for cubic polynomials the boundary is always contained in the planar closure of the ordinary critical values of the maps \(F_{P,\alpha}\), \(0<\alpha<3\); this is the compactified essential caustic, including its endpoint-ratio limits. In contrast, a quartic can have boundary arcs that avoid all these critical values. It is also shown that, for every \(0<a<4\), the two boundary arcs of \(\Sh_{\Pi_a}\) at the origin are folds. A separate quartic construction shows that even a simple critical point of \(P\), and the initial caustic germs issuing from it, can lie in \(\operatorname{int}\Sh_P\); this phenomenon persists under coefficient perturbations. Thus neither being a critical point of \(P\) nor belonging to a caustic endpoint path is sufficient for boundary membership.

\begin{figure}[htbp]
\centering
\begin{minipage}[b]{0.555\textwidth}\centering
 \includegraphics[width=\linewidth]{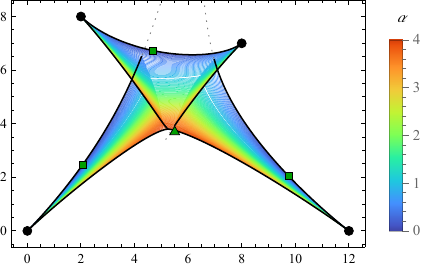}\\[1pt]
 {\small (a)}
\end{minipage}\hfill
\begin{minipage}[b]{0.425\textwidth}\centering
 \includegraphics[width=\linewidth]{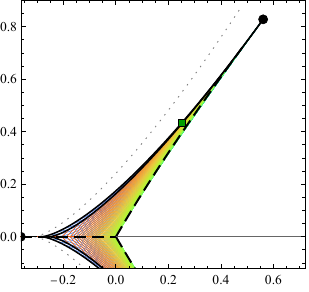}\\[1pt]
 {\small (b)}
\end{minipage}\\[6pt]
\begin{minipage}[b]{0.17\textwidth}\centering
 \raisebox{0.1661\linewidth}{\includegraphics[width=\linewidth]{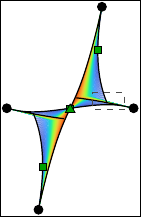}}\\[1pt]
{\small\phantom{(c)}}
\end{minipage}\quad
\begin{minipage}[b]{0.62\textwidth}\centering
 \includegraphics[width=\linewidth]{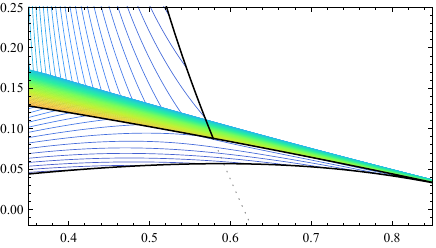}\\[1pt]
 {\small (c)}
\end{minipage}
\caption{The limiting core as the union of the supports \(\supp\mu_{\alpha,P}\) (thin curves, colored by \(\alpha\)). Black curves: the paths of the endpoints of the supports, which are critical values of \(F_{P,\alpha}\); dotted gray: critical values of \(F_{P,\alpha}\) that are not endpoints. (a) \(P(z)=z(z-12)(z-2-8i)(z-8-7i)\), \(\alpha=k/30\). The whole boundary is traced by endpoints. (b) \(\Pi_2\) near the corner at the origin, \(\alpha=k/24\); dashed: \(\supp\mu_{2,\Pi_2}\). The edge of the upper arm facing the positive real axis is a fold formed by supports with \(\alpha\) near \(2\); the opposite edge is traced by endpoints. (c) \(P(z)=(z^2-1)(z^2-c^2)\) with \(c=\frac12+\frac85i\), near a point where two endpoint paths cross (the box in the small overview). To the right of the crossing the core extends slightly beyond both paths, and the boundary there is again a fold.}
\label{fig:core-boundary}
\end{figure}

\subsection{Transient zeros}
\begin{theorem}[Decomposition of the full shadow]\label{thm:literal}
Let \(T_P=\AllSh_P\setminus\Sh_P\). Then
\begin{equation}\label{eq:literal-decomp}
 \AllSh_P=\Sh_P\cup T_P,
 \qquad
 T_P\subset\bigcup_{n\ge1}\mathcal Z_{P,n}.
\end{equation}
Every point of \(T_P\) is isolated in \(\AllSh_P\) and occurs at only finitely many power levels. The set \(T_P\) is countable and locally finite in \(\C\setminus\Sh_P\): each compact subset of that open set meets \(T_P\) in finitely many points. Consequently,
\begin{equation}\label{eq:literal-interior}
 \operatorname{int}\AllSh_P=\operatorname{int}\Sh_P,
 \qquad \Area(\AllSh_P)=\Area(\Sh_P),
 \qquad \partial\AllSh_P=\partial\Sh_P\cup T_P.
\end{equation}
\end{theorem}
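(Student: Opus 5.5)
The plan is to reduce everything to the tail identity \eqref{eq:complete-limit} of Theorem~\ref{thm:no-outliers}, namely \(\Sh_P=\bigcap_{M}\overline{\bigcup_{n\ge M}\mathcal Z_{P,n}}\), together with \(\Sh_P\subseteq\AllSh_P\) from Theorem~\ref{thm:connected}. The decomposition \(\AllSh_P=\Sh_P\cup T_P\) is then a tautology.

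\textbf{Step 1: every transient is a finite-level zero.} Let \(z\in\AllSh_P\setminus\Sh_P\) and pick finite-level zeros \(z_k\to z\), with \(z_k\in\mathcal Z_{P,n_k}\). Suppose the levels \(n_k\) were unbounded along a subsequence. Then \(z\) would lie in every tail closure \(\overline{\bigcup_{n\ge M}\mathcal Z_{P,n}}\), hence in \(\Sh_P\), which is false. So the \(n_k\) are bounded. Finitely many levels contribute only finitely many zeros, since each \(\mathcal Z_{P,n}\) is finite. Therefore \(z_k\) is eventually constant and equal to \(z\), which proves \(T_P\subset\bigcup_n\mathcal Z_{P,n}\).

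\textbf{Step 2: local finiteness and its consequences.} Let \(K\subset\C\setminus\Sh_P\) be compact, so \(\dist(K,\Sh_P)=\eta>0\). By the upper Hausdorff estimate in Theorem~\ref{thm:no-outliers}, there is \(M\) with \(\mathcal Z_{P,n}\) inside the \(\eta/2\)-neighbourhood of \(\Sh_P\) for all \(n\ge M\). Then \(\AllSh_P\cap K\subseteq\mathcal F_{P,M-1}\cap K\), which is finite.

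Isolation follows by applying this to a small closed disk around \(z\in T_P\) that avoids \(\Sh_P\). The same disk shows that \(z\) belongs to \(\mathcal Z_{P,n}\) only for \(n<M\), so it occurs at finitely many power levels. Countability follows because \(\C\setminus\Sh_P\) is a countable union of compact sets, or directly from Step 1.

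\textbf{Step 3: the set-theoretic identities \eqref{eq:literal-interior}.}

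\emph{Interior.} Since \(\Sh_P\subseteq\AllSh_P\), we have \(\operatorname{int}\Sh_P\subseteq\operatorname{int}\AllSh_P\). Conversely, let \(U\) be an open disk contained in \(\AllSh_P\). The set \(U\setminus\Sh_P\) is open and contained in \(T_P\), which is countable. A nonempty open set is uncountable, so \(U\setminus\Sh_P=\varnothing\) and \(U\subseteq\Sh_P\).

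\emph{Area.} The two sets differ by the countable set \(T_P\), which has planar measure zero.

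\emph{Boundary.} Write \(\partial\AllSh_P=\AllSh_P\setminus\operatorname{int}\AllSh_P=(\Sh_P\cup T_P)\setminus\operatorname{int}\Sh_P\). This equals \(\partial\Sh_P\cup T_P\), using that \(T_P\) is disjoint from \(\Sh_P\supseteq\operatorname{int}\Sh_P\).

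\textbf{Main obstacle.} The only delicate point is Step 1, specifically the claim that an unbounded sequence of levels forces the limit into the core. This is exactly where the tail intersection formula enters. Its proof required the full uniform exclusion over all derivative orders in Theorem~\ref{thm:no-outliers}: the endpoint regimes from Theorem~\ref{thm:endpoint-zeros} and the harmonic-gate saddle exclusion. Since that theorem is already available, the remaining argument is only compactness bookkeeping.
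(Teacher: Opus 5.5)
Your proposal is correct and follows the paper's proof essentially step for step. The tail identity \eqref{eq:complete-limit} handles closure points coming from unbounded power levels, the upper Hausdorff estimate of Theorem~\ref{thm:no-outliers} leaves only the finitely many zeros of \(\mathcal F_{P,M-1}\) near a compact set avoiding \(\Sh_P\), and the interior, area and boundary identities are elementary point-set consequences; using countability of \(T_P\) instead of isolation for the interior identity is only a cosmetic variation.
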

\begin{proof}
A point in the closure of the finite-level union is either a limit along unbounded power levels, and hence lies in \(\Sh_P\) by \eqref{eq:complete-limit}, or is a zero at a bounded power level. If a compact set avoids \(\Sh_P\), the upper Hausdorff estimate excludes all sufficiently large power levels from a small neighborhood of it. Only finitely many finite-level zeros remain there. This proves local finiteness, isolation, and finite occurrence, as well as countability. Adding isolated points outside a closed set changes neither its interior nor its area, and gives the boundary identity in \eqref{eq:literal-interior}.
\end{proof}

A \emph{hole} of a closed planar set is a bounded connected component of its complement.

\begin{corollary}[The full shadow retains the core's holes]
\label{cor:full-holes}
For every component \(\Omega\) of \(\C\setminus\Sh_P\), the set \(\Omega\setminus T_P\) is a component of \(\C\setminus\AllSh_P\). This gives a bijection of complementary components preserving boundedness, area, and closure. In particular, the full shadow and the core have exactly the same number of holes.
\end{corollary}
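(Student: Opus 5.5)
The plan is to exploit Theorem~\ref{thm:literal}: \(T_P\) is a closed-in-\(\C\setminus\Sh_P\), locally finite set of isolated points, and \(\AllSh_P=\Sh_P\cup T_P\). Hence \(\C\setminus\AllSh_P=(\C\setminus\Sh_P)\setminus T_P\). Fix a component \(\Omega\) of the open set \(\C\setminus\Sh_P\). Since \(\Sh_P\) is closed, \(\Omega\) is open and connected. Moreover, \(T_P\cap\Omega\) is a relatively closed discrete subset of \(\Omega\): it is locally finite in \(\C\setminus\Sh_P\), so every compact subset of \(\Omega\) meets it in finitely many points.

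The key topological step is the standard fact that a connected open planar set with a relatively closed discrete subset removed remains connected and open. For openness, points of \(\Omega\setminus T_P\) have neighborhoods avoiding the locally finite set. For connectedness, I will use path-connectedness of \(\Omega\). Join two points of \(\Omega\setminus T_P\) by a polygonal path in \(\Omega\). This path is compact, so it meets only finitely many points of \(T_P\) in a compact neighborhood inside \(\Omega\). Detour around each such point along a small circular arc inside a disk contained in \(\Omega\) and free of other transients.

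Next I will show that the sets \(\Omega\setminus T_P\) are exactly the components of \(\C\setminus\AllSh_P\). They are connected, pairwise disjoint, open, and they cover \(\C\setminus\AllSh_P\). Each is relatively closed in \(\C\setminus\AllSh_P\), because a limit point in \(\C\setminus\AllSh_P\subseteq\C\setminus\Sh_P\) lies in \(\overline\Omega\cap(\C\setminus\Sh_P)=\Omega\), the components of an open set being relatively closed. Hence each \(\Omega\setminus T_P\) is a component, and \(\Omega\mapsto\Omega\setminus T_P\) is a bijection whose inverse sends a component \(V\) to the unique \(\Omega\) containing it. Every \(\Omega\) is nonempty after removal, since a nonempty open set cannot be a discrete set.

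Finally I will check the preserved quantities. Boundedness is clear because \(\Omega\setminus T_P\subseteq\Omega\), and conversely \(\Omega\) is the union of \(\Omega\setminus T_P\) with a countable set. The countability statement of Theorem~\ref{thm:literal} gives equal area. The closures agree because isolated points removed from an open set are limits of the remaining points. In particular, bounded components correspond to bounded components, so the number of holes is the same. The only mildly delicate step is the connectedness after removing the discrete set, and I expect this to be the main point needing care, specifically ensuring that the detour arcs stay inside \(\Omega\) and avoid transients. This follows from local finiteness of \(T_P\) on compact subsets of \(\Omega\).
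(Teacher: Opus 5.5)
Your proposal is correct and follows the paper's argument essentially step for step: local finiteness of \(T_P\) from Theorem~\ref{thm:literal}, polygonal paths with small detours to get connectedness of \(\Omega\setminus T_P\), density for closures, and countability for area. One small precision: in the converse boundedness direction, adjoining a countable set does not by itself preserve boundedness, so use instead that \(T_P\subseteq\AllSh_P\) is bounded, or derive boundedness from your closure equality as the paper does.
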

\begin{proof}
The set \(T_P\) is locally finite in \(\C\setminus\Sh_P\) by Theorem~\ref{thm:literal}. The open connected set \(\Omega\) is polygonally connected. A polygonal path joining two points of \(\Omega\setminus T_P\) has a compact neighborhood contained in \(\Omega\), which meets \(T_P\) in finitely many points. Small detours inside that neighborhood avoid those points. Thus \(\Omega\setminus T_P\) is path-connected. It is nonempty, open, and dense in \(\Omega\), and components coming from distinct \(\Omega\) cannot join in the smaller complement. These are therefore exactly the complementary components of \(\AllSh_P\). Density preserves their closures and boundedness. Countability of \(T_P\) preserves their areas.
\end{proof}

\begin{remark} Note that in all illustrations in this text the core is simply connected. But as we will show in \cite{Shadow 2}, this is not always the case and therefore Corollary~\ref{cor:full-holes} is a non-trivial statement.
\end{remark}

\begin{proposition}[Passing to powers removes the transients]\label{prop:power-transients}
For every \(k\ge1\),
\[
 \Sh_{P^k}=\Sh_P\subseteq\AllSh_{P^k}\subseteq\AllSh_P,
 \qquad
 \bigcap_{k\ge1}\AllSh_{P^k}=\Sh_P,
 \qquad
 d_{\mathrm H}(\AllSh_{P^k},\Sh_P)\longrightarrow0\quad(k\to\infty).
\]
Moreover, if \(k\mid j\), then \(\AllSh_{P^j}\subseteq\AllSh_{P^k}\).
\end{proposition}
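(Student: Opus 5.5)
The whole statement rests on one identity at the finite level: \(\mathcal Z_{P^k,n}\subseteq\mathcal Z_{P,kn}\). Indeed \((P^k)^n=P^{kn}\), and the admissible orders \(0\le m<(kd)n=d(kn)\) for the degree-\(kd\) polynomial \(P^k\) at power \(n\) are exactly the admissible orders for \(P\) at power \(kn\). So the Rodrigues descendants coincide, \(R_{P^k;m,n}=R_{P;m,kn}\). The plan is to prove this identity first and read off every claim from it together with Proposition~\ref{prop:covariance} and Theorem~\ref{thm:no-outliers}. One caveat: \(P^k\) is monic of degree \(kd\), so the earlier results are applied in that degree; they hold for any degree at least \(2\).

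\emph{Inclusions.} Taking the union over \(n\) and then closures gives \(\AllSh_{P^k}\subseteq\AllSh_P\). Applying the same identity with \(P^k\) in place of \(P\) and \(j/k\) in place of \(k\) gives \(\AllSh_{P^j}=\AllSh_{(P^k)^{j/k}}\subseteq\AllSh_{P^k}\) whenever \(k\mid j\). Proposition~\ref{prop:covariance} gives \(\Sh_{P^k}=\Sh_P\). Theorem~\ref{thm:connected}, applied to \(P^k\), gives \(\Sh_{P^k}\subseteq\AllSh_{P^k}\). Together these yield the displayed chain.

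\emph{Intersection and Hausdorff limit.} By the chain, \(\Sh_P\subseteq\bigcap_k\AllSh_{P^k}\). For the reverse inclusion, I would note that \(\AllSh_{P^k}\) is the closure of \(\bigcup_{n\ge1}\mathcal Z_{P,kn}\), which is contained in \(\overline{\bigcup_{N\ge k}\mathcal Z_{P,N}}\). The tail identity \eqref{eq:complete-limit} of Theorem~\ref{thm:no-outliers} then gives
\[
\bigcap_k\AllSh_{P^k}\subseteq\bigcap_M\overline{\bigcup_{N\ge M}\mathcal Z_{P,N}}=\Sh_P.
\]
For Hausdorff convergence, \(\Sh_P\subseteq\AllSh_{P^k}\) means only the one-sided quantity \(\sup_{x\in\AllSh_{P^k}}\dist(x,\Sh_P)\) has to be controlled. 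By the upper Hausdorff estimate in Theorem~\ref{thm:no-outliers} for fixed \(P\), given \(\eps>0\) there is \(M\) such that \(\mathcal Z_{P,N}\) lies in the closed \(\eps\)-neighborhood of \(\Sh_P\) for all \(N\ge M\). For \(k\ge M\) all levels \(kn\) satisfy \(kn\ge M\), so \(\AllSh_{P^k}\) lies in that closed neighborhood, and hence \(d_{\mathrm H}(\AllSh_{P^k},\Sh_P)\le\eps\).

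The main obstacle is only bookkeeping: checking that the index ranges of descendant orders match exactly under \(P\mapsto P^k\), so that no order is lost or added. That is immediate from \(\deg P^k=kd\).
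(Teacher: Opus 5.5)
Your proposal is correct and follows essentially the same route as the paper: \((P^k)^n=P^{kn}\) with matching order ranges gives \(\mathcal Z_{P^k,n}=\mathcal Z_{P,kn}\), which yields the inclusions and the divisibility nesting. Proposition~\ref{prop:covariance} then gives \(\Sh_{P^k}=\Sh_P\), and the upper Hausdorff estimate from Theorem~\ref{thm:no-outliers} over power levels \(\ge k\) gives the limit. The only difference is that you derive the intersection formula directly from the tail identity \eqref{eq:complete-limit}, whereas the paper argues via finite occurrence of transients and taking closures; your version is equally valid and slightly cleaner.
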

\begin{proof}
The descendants of powers of \(P^k\) use exactly the power levels of \(P\) divisible by \(k\); this also proves the divisibility inclusion. Proposition~\ref{prop:covariance} gives power invariance of the limiting measure and core. A transient occurs at only finitely many original power levels and is absent once \(k\) exceeds them all. Every power level used for \(P^k\) is at least \(k\), and the upper Hausdorff estimate for \(P\) is uniform over that tail. Taking closures proves the Hausdorff limit and the intersection formula.
\end{proof}

\subsection{Finite truncations and the full shadow}

The limit of a single power level describes the core. Taking all power levels up to a cutoff instead describes the full shadow, uniformly also when the coefficients vary.

\begin{theorem}[Uniform convergence of finite prefixes to the full shadow]
\label{thm:cumulative-shadow}
For \(M\ge1\) put
\[
 \varepsilon_M(P):=\sup_{n\ge M}d_{\mathrm H}(\mathcal Z_{P,n},\Sh_P).
\]
Then \(\varepsilon_M(P)\to0\) and
\begin{equation}\label{eq:cumulative-bound}
 d_{\mathrm H}(\mathcal F_{P,M},\AllSh_P)\le2\varepsilon_M(P).
\end{equation}
Both convergences are uniform on compact sets of monic polynomials of a fixed degree. Thus \(P_n\to P\) in that coefficient space implies
\[
 \mathcal F_{P_n,n}\longrightarrow\AllSh_P,
 \qquad \mathcal Z_{P_n,n}\longrightarrow\Sh_P.
\]
These are in general different limits.
\end{theorem}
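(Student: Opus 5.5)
The first claim, \(\varepsilon_M(P)\to0\), is immediate: Theorem~\ref{thm:no-outliers} gives \(d_{\mathrm H}(\mathcal Z_{P,n},\Sh_P)\to0\), and a supremum over the tail \(n\ge M\) of a null sequence tends to zero. Uniformity on a compact coefficient family \(\mathscr P\) comes from Corollary~\ref{cor:set-stability}, which gives \(\sup_{P\in\mathscr P}d_{\mathrm H}(\mathcal Z_{P,n},\Sh_P)\to0\). Hence \(\sup_{P\in\mathscr P}\varepsilon_M(P)\to0\).

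For \eqref{eq:cumulative-bound} I will bound the two one-sided distances separately, writing \(\eps=\varepsilon_M(P)\).

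\emph{First direction.} Since \(\mathcal F_{P,M}\subseteq\AllSh_P\), every point of \(\mathcal F_{P,M}\) has distance zero to \(\AllSh_P\).

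\emph{Second direction.} Take \(x\in\AllSh_P\). I split into three cases.
\begin{itemize}
\item[(a)] If \(x\) is a zero at some level \(n\le M\), then \(x\in\mathcal F_{P,M}\).
\item[(b)] If \(x\) is a zero at some level \(n>M\), then \(x\in\mathcal Z_{P,n}\). So \(\dist(x,\Sh_P)\le\eps\), and there is \(y\in\Sh_P\) with \(|x-y|\le\eps\). Hausdorff closeness at level \(M\) gives \(z\in\mathcal Z_{P,M}\subseteq\mathcal F_{P,M}\) with \(|y-z|\le\eps\). Thus \(\dist(x,\mathcal F_{P,M})\le2\eps\).
\item[(c)] If \(x\) is a limit point of the finite-level union, the same bound follows by closedness of the \(\eps\)-neighborhoods of the compact set \(\mathcal F_{P,M}\) and passage to the limit.
\end{itemize}
This proves \(d_{\mathrm H}(\mathcal F_{P,M},\AllSh_P)\le2\varepsilon_M(P)\), uniformly wherever \(\varepsilon_M\) is uniform.

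For the sequential statement, let \(P_n\to P\) and put \(\mathscr P=\{P\}\cup\{P_n\}\), which is compact. Then
\[
 d_{\mathrm H}(\mathcal F_{P_n,n},\AllSh_P)\le 2\varepsilon_n(P_n)+d_{\mathrm H}(\AllSh_{P_n},\AllSh_P).
\]
The first term tends to zero by the uniformity just shown. The second tends to zero by Hausdorff continuity of \(P\mapsto\AllSh_P\) in Corollary~\ref{cor:set-stability}. The statement \(\mathcal Z_{P_n,n}\to\Sh_P\) is \eqref{eq:stable-Hausdorff}.

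The closing remark that the limits are in general different follows from Theorem~\ref{thm:intro-infinite}: there \(T_{\Pi_a}\neq\varnothing\), so \(\AllSh_{\Pi_a}\neq\Sh_{\Pi_a}\).

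The only delicate point is the continuity of \(\AllSh_P\), and it is already supplied by Corollary~\ref{cor:set-stability}. Otherwise the argument is a direct triangle-inequality estimate.
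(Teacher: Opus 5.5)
Your proposal is correct and follows the paper's proof essentially step for step. It obtains \(\varepsilon_M\to0\) and its uniformity from Theorem~\ref{thm:no-outliers} and Corollary~\ref{cor:set-stability}, bounds a late-level zero by passing through \(\Sh_P\) to \(\mathcal Z_{P,M}\subseteq\mathcal F_{P,M}\) at cost \(\varepsilon_M+\varepsilon_M\), takes closures, and notes that the reverse distance vanishes because \(\mathcal F_{P,M}\subseteq\AllSh_P\). Your triangle-inequality treatment of \(P_n\to P\) through the compact family \(\{P\}\cup\{P_n\}\) is an explicit version of the paper's closing appeal to continuity of the two compact-set maps.
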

\begin{proof}
Theorem~\ref{thm:no-outliers} and Corollary~\ref{cor:set-stability} give \(\varepsilon_M\to0\), uniformly on each compact coefficient family. A zero from a power level at most \(M\) already lies in \(\mathcal F_{P,M}\). A zero from a later power level is within \(\varepsilon_M\) of \(\Sh_P\), and every point of \(\Sh_P\) is within \(\varepsilon_M\) of \(\mathcal Z_{P,M}\subset\mathcal F_{P,M}\). Thus every zero, and then every point of their closure, is within \(2\varepsilon_M\) of the prefix. The reverse distance is zero since \(\mathcal F_{P,M}\subset\AllSh_P\). This proves \eqref{eq:cumulative-bound} and its uniform version. Combine it with the continuity of the two compact-set maps to obtain the varying-polynomial statements.
\end{proof}

\begin{proof}[Proof of Theorem~\ref{thm:intro-sets}]
The pooled measure convergence, the identity \(\supp\rho_P=\Sh_P\), continuity of \(P\mapsto\rho_P\), and the uniform measure statement are Theorem~\ref{thm:averaging}. Convergence of the complete zero sets and the tail formula \eqref{eq:tail-core-intro} are Theorem~\ref{thm:no-outliers}. Hausdorff continuity of \(P\mapsto\Sh_P\) and \(P\mapsto\AllSh_P\), together with uniform complete-zero convergence, are Corollary~\ref{cor:set-stability}. Finally, convergence of the finite prefixes to the full shadow, including the varying-polynomial and uniform forms, is Theorem~\ref{thm:cumulative-shadow}.
\end{proof}

\subsection{Accumulation points and the perfect kernel}
\begin{corollary}[Accumulation points of all finite zeros]\label{cor:realization}
If \(P\) has at least two distinct roots, then
\[
 \acc\!\left(\bigcup_{n\ge1}\mathcal Z_{P,n}\right)=\acc(\AllSh_P)=\Sh_P.
\]
For a power of a linear polynomial, the full shadow is a singleton and both accumulation sets are empty.
\end{corollary}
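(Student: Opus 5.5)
Write \(\mathcal U:=\bigcup_{n\ge1}\mathcal Z_{P,n}\). The plan is to prove the two equalities by showing three inclusions: \(\acc(\mathcal U)\subseteq\acc(\AllSh_P)\), \(\acc(\AllSh_P)\subseteq\Sh_P\), and \(\Sh_P\subseteq\acc(\mathcal U)\). These close the chain.

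\textbf{First inclusion.} This is immediate, since \(\mathcal U\subseteq\AllSh_P\) and accumulation points are monotone in the set.

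\textbf{Second inclusion.} Take a limit \(z\) of pairwise distinct points of \(\AllSh_P\), and suppose \(z\notin\Sh_P\). Because \(\Sh_P\) is closed, some closed disk about \(z\) misses \(\Sh_P\). By Theorem~\ref{thm:literal}, \(\AllSh_P\setminus\Sh_P=T_P\) is locally finite in \(\C\setminus\Sh_P\), so that disk contains only finitely many points of \(\AllSh_P\). This contradicts the existence of infinitely many distinct points converging to \(z\).

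\textbf{Third inclusion.} This is the only substantive step. Fix \(z\in\Sh_P\) and \(\eps>0\). By Theorem~\ref{thm:connected}, \(\Sh_P\) is perfect when \(P\) has at least two distinct roots. Hence the disk \(D(z,\eps)\) contains a point \(z'\in\Sh_P\) with \(z'\ne z\). We may also choose \(z'\notin\{z\}\) with \(z'\) in the open disk, which gives room to shrink.

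Next we use the lower Hausdorff estimate \eqref{eq:lower-Hausdorff}: for all large \(n\), \(\mathcal Z_{P,n}\) meets every small disk about a point of \(\Sh_P\). Since \(\mathcal U\) is countable and \(\Sh_P\) is perfect, the disk \(D(z,\eps)\) contains infinitely many distinct points of \(\Sh_P\), for instance a sequence \(z_k\to z\) of distinct points. Around these we choose pairwise disjoint small disks inside \(D(z,\eps)\). Each such disk contains a finite-level zero by \eqref{eq:lower-Hausdorff}. Disjointness makes these zeros distinct, so \(D(z,\eps)\cap\mathcal U\) is infinite.

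Letting \(\eps\downarrow0\), we extract a sequence of pairwise distinct points of \(\mathcal U\) converging to \(z\). Hence \(z\in\acc(\mathcal U)\).

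This argument needs perfectness of the core, and that is exactly where the hypothesis of at least two distinct roots enters. A connected set with two distinct points has no isolated points, and Theorem~\ref{thm:connected} puts both roots in \(\Sh_P\).

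\textbf{Degenerate case.} For \(P=(z-a)^d\), every descendant is a multiple of a power of \(z-a\). So \(\mathcal U=\{a\}\), the full shadow is the singleton \(\{a\}\), and both accumulation sets are empty.
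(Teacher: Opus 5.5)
Your proof is correct and is essentially the paper's argument. Perfectness of \(\Sh_P\) (Theorem~\ref{thm:connected}) together with the lower Hausdorff estimate \eqref{eq:lower-Hausdorff} gives \(\Sh_P\subseteq\acc(\mathcal U)\), and local finiteness of \(T_P\) from Theorem~\ref{thm:literal} gives \(\acc(\AllSh_P)\subseteq\Sh_P\). The differences are cosmetic: you get \(\acc(\mathcal U)\subseteq\Sh_P\) by monotonicity through \(\acc(\AllSh_P)\) rather than citing Theorem~\ref{thm:no-outliers} directly; a single small disk about one other core point, chosen to avoid \(z\), would replace your infinitely many disjoint disks; and the appeal to countability of \(\mathcal U\) plays no role.
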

\begin{proof}
Connectedness and the presence of two roots imply that \(\Sh_P\) has no isolated points. Given a point of \(\Sh_P\) and any neighborhood of it, choose another support point in that neighborhood and a smaller disk around the latter avoiding the original point. The disk has positive limiting mass and meets \(\mathcal Z_{P,n}\) for every sufficiently large \(n\). Thus every point of \(\Sh_P\) is an accumulation point of the finite-level union. Conversely, a sequence of distinct finite-level zeros uses unbounded power levels, so Theorem~\ref{thm:no-outliers} places its limit in \(\Sh_P\). Theorem~\ref{thm:literal} excludes further accumulation points of the full closure. The one-root case follows by direct differentiation.
\end{proof}

\begin{corollary}[The perfect kernel of the full shadow]
\label{cor:perfect-kernel}
If \(P\) has at least two distinct roots, then \(\Sh_P\) is the unique largest perfect subset of \(\AllSh_P\). Thus a single accumulation-set operation recovers the core from the full shadow:
\[
 \acc(\AllSh_P)=\Sh_P=\acc(\Sh_P).
\]
\end{corollary}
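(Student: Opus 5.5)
The plan is to assemble the corollary from three earlier results: Corollary~\ref{cor:realization}, which gives \(\acc(\AllSh_P)=\Sh_P\); Theorem~\ref{thm:connected}, which shows that \(\Sh_P\) is perfect when \(P\) has at least two distinct roots; and Theorem~\ref{thm:literal}, which shows that every point of \(T_P=\AllSh_P\setminus\Sh_P\) is isolated in \(\AllSh_P\).

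First, \(\Sh_P\) is closed, since by definition it is a closure. It is perfect by Theorem~\ref{thm:connected}, because it is connected and contains at least two distinct roots. A perfect set is its own accumulation set, so \(\acc(\Sh_P)=\Sh_P\). Combining this with Corollary~\ref{cor:realization} gives the displayed chain \(\acc(\AllSh_P)=\Sh_P=\acc(\Sh_P)\).

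Second, I would show that \(\Sh_P\) is the largest perfect subset. Let \(K\subseteq\AllSh_P\) be perfect. Every point of \(K\) is an accumulation point of \(K\), hence of \(\AllSh_P\). By Corollary~\ref{cor:realization} it therefore lies in \(\Sh_P\). Equivalently, \(K\) cannot contain a transient point, because such a point is isolated in \(\AllSh_P\) by Theorem~\ref{thm:literal} and so is isolated in \(K\). Thus every perfect subset of \(\AllSh_P\) lies in \(\Sh_P\). Since \(\Sh_P\) is itself a perfect subset of \(\AllSh_P\), it is the unique largest one. This agrees with the Cantor--Bendixson perfect kernel, and here a single derivation step already stabilizes.

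There is no real obstacle, since everything reduces to the previously established isolation of transients and perfectness of the core. The only point needing care is the hypothesis of two distinct roots. Without it the core is a singleton, which is not perfect, and both accumulation sets are empty. This case is excluded by the statement.
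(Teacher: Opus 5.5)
Your proposal is correct and matches the paper's proof: perfectness of \(\Sh_P\) comes from Theorem~\ref{thm:connected} (closed, connected, and containing two distinct points), and maximality comes from \(E=\acc(E)\subseteq\acc(\AllSh_P)=\Sh_P\) via Corollary~\ref{cor:realization}. Your alternative argument through isolation of transients (Theorem~\ref{thm:literal}) is valid but redundant.
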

\begin{proof}
The set \(\Sh_P\) is closed, connected, and contains two points, so it is perfect. Any perfect subset \(E\subseteq\AllSh_P\) satisfies \(E=\acc (E)\subseteq\acc(\AllSh_P)=\Sh_P\) by Corollary~\ref{cor:realization}. This proves maximality and uniqueness.
\end{proof}

\begin{proof}[Proof of Theorem~\ref{thm:p1-perfect-summary}]
Compactness, connectedness, the distinguished points, and perfection of the core are Theorem~\ref{thm:connected}. The decomposition of the full shadow, finite occurrence and local finiteness of the transients, and the interior and area statements are Theorem~\ref{thm:literal}. The intersection formula, Hausdorff convergence under passage to powers, and nesting along divisibility are Proposition~\ref{prop:power-transients}. The accumulation-set and perfect-kernel assertions are Corollaries~\ref{cor:realization} and~\ref{cor:perfect-kernel}. The claimed failure of nesting is Proposition~\ref{prop:non-nesting}.
\end{proof}

\section{Quartics with infinitely many transient zeros; proof of Theorem~\ref{thm:intro-infinite}}
\label{sec:infinite-transients}
We finish by showing that the transient set \(T_P\) of the full shadow need not be finite. An exact modulus inequality identifies the relevant contours. A three-point asymptotic expansion then produces positive zeros outside the core at every sufficiently large power level. For \(0<a<4\), put
\begin{equation}\label{eq:infinite-quartic}
 \Pi_a(z):=z^4+az^3-\beta_a z^2+az+1,\qquad
 \beta_a:=a-\frac{a^2}{8},\qquad M_a:=\frac{(a+4)^2}{8},
\end{equation}
and define
\begin{equation}\label{eq:infinite-parameters}
 \theta_a:=\arccos(-a/4)\in(\pi/2,\pi),\qquad
 \omega_a:=\frac{\sqrt{16-a^2}}2,\qquad
 \chi_a:=\frac4{\sqrt{4-a}}.
\end{equation}
The family allows multiple roots: at \(a=12-8\sqrt2\) there is a double root at \(-1\). No member is excluded on that account.

\subsection{An exact modulus certificate}
\begin{lemma}[The three equality points]\label{lem:infinite-modulus}
For every \(r>0\),
\[
 \Pi_a(r)>0,\qquad \Pi_a'(r)>0,\qquad
 \frac{\dd}{\dd r}\frac{r\Pi_a'(r)}{\Pi_a(r)}>0,
 \qquad |\Pi_a(re^{i\theta})|\le\Pi_a(r).
\]
Apart from \(\theta\in2\pi\mathbb Z\), equality in the last inequality holds exactly when \(r=1\) and \(\theta\equiv\pm\theta_a\pmod{2\pi}\).
\end{lemma}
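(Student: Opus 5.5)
The plan rests on the palindromic structure of \(\Pi_a\) and the substitution \(u=z+z^{-1}\). Writing \(\Pi_a(z)=z^2g_a(z+z^{-1})\) with
\[
 g_a(u):=u^2+au-\beta_a-2,
\]
the first two inequalities are elementary. For \(r>0\) we have \(u=r+r^{-1}\ge2\), and \(g_a\) is increasing on \([2,\infty)\), so \(g_a(u)\ge g_a(2)=2+a+a^2/8=M_a>0\). For \(\Pi_a'(r)=4r^3+3ar^2-2\beta_ar+a\), AM--GM gives \(3ar^2+a\ge2\sqrt3\,ar>2\beta_ar\), since \(0<\beta_a<a\) for \(0<a<4\).

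For the logarithmic-derivative monotonicity, put \(v=r-r^{-1}\), which is increasing in \(r\) and ranges over \(\R\). From the factorization,
\[
 \frac{r\Pi_a'(r)}{\Pi_a(r)}=2+v\,\frac{g_a'(u)}{g_a(u)},\qquad u=\sqrt{v^2+4}.
\]
It then suffices to show that \(v\mapsto v(2u+a)/g_a(u)\) is increasing. For this I would differentiate in \(v\), using \(\dd u/\dd v=v/u\). The task reduces to a one-variable polynomial inequality in \(u\ge2\). I expect that after clearing \(g_a(u)^2>0\) this inequality has manifestly positive terms, using \(g_a(u)\ge M_a\) and \(\beta_a>0\).

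The substantive part is the modulus inequality. At \(r=1\) it is exact and explicit: with \(c=\cos\theta\) one has \(\Pi_a(e^{i\theta})=e^{2i\theta}(4c^2+2ac-2-\beta_a)\). This quadratic in \(c\) attains \(M_a=\Pi_a(1)\) at \(c=1\) and its minimum \(-M_a\) exactly at \(c=-a/4\). This is precisely how \(\beta_a=a-a^2/8\) was chosen, and it gives the equality points \(\pm\theta_a\).

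For general \(r\), I would expand \(D(r,c):=\Pi_a(r)^2-|\Pi_a(re^{i\theta})|^2\) as a polynomial in \(c\) whose coefficients are Laurent polynomials in \(r\); after dividing by \(r^4\) these become polynomials in \(u=r+r^{-1}\) by palindromicity. Since \(D\) vanishes at \(c=1\), it factors as \((1-c)\,E(u,c)\). The aim is to show \(E\ge0\) on \([2,\infty)\times[-1,1]\), by writing \(E\) as a sum of two pieces. The first is a nonnegative multiple of \((c+a/4)^2\). The second is a term vanishing at \(u=2\), i.e.\ a multiple of \(u-2=(r-1)^2/r\), with positive coefficient. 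Such a decomposition would force equality exactly at \(u=2\) (that is, \(r=1\)) and \(c=-a/4\), with \(c=1\) being the trivial case \(\theta\in2\pi\mathbb Z\).

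As a fallback for \(E\ge0\), I would fix \(c\) and regard \(E\) as a quadratic (or low-degree) polynomial in \(u\). I would then check that it is nonnegative at \(u=2\), where it reduces to \(4(M_a^2-(4c^2+2ac-2-\beta_a)^2)/(1-c)\ge0\), and that its \(u\)-derivative is positive for \(u\ge2\).

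The main obstacle is obtaining this explicit sum-of-squares type certificate for \(E(u,c)\) uniformly in \(a\in(0,4)\). I would first compute \(E\) symbolically and attempt the decomposition \(E=A(u,a)(c+a/4)^2+(u-2)B(u,c,a)\). The remaining work is to verify that \(A\) and \(B\) are positive on the relevant ranges, and I expect these checks to reduce to elementary inequalities in \(a\) and \(c\).
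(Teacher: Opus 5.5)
Your plan for the modulus inequality is the paper's proof. With \(y=\cos\theta\) and \(\upsilon=r+r^{-1}-2=u-2\), the paper writes \(\Pi_a(r)^2-|\Pi_a(re^{i\theta})|^2=2(1-y)r^4\mathcal E_a(\upsilon,y)\). Here \(\mathcal E_a\) is a cubic in \(\upsilon\) whose \(\upsilon\), \(\upsilon^2\), \(\upsilon^3\) coefficients are strictly positive for \(-1\le y\le1\), and whose constant term is \((a+4y)^2(a+2y+2)/4\). This is your decomposition, with two corrections. First, the multiplier of \((c+a/4)^2\) cannot be a function \(A(u,a)\) alone: at \(u=2\) the cofactor is proportional to \((a+2c+2)(c+a/4)^2\), which is cubic in \(c\). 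The correct multiplier is \(a+2c+2\ge a>0\). Second, positivity of the linear coefficient \(\frac a8\bigl(32c^2+(8a-32)c+a^2+16\bigr)\) needs a discriminant check (the discriminant is \(-64(a+4)^2\)), not just sign inspection.

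You genuinely differ from the paper on the third inequality. The paper does not differentiate. Comparing \(\theta^2\)-coefficients in the same identity gives \(\Pi_a(r)^2\,r\,\mathfrak D_a'(r)=r^4\mathcal E_a(\upsilon,1)>0\), where \(\mathfrak D_a(r)=r\Pi_a'(r)/\Pi_a(r)\), so monotonicity is a free byproduct of the modulus certificate. Your computation in \(v=r-r^{-1}\) also works. It gives \(\frac{\dd}{\dd v}\frac{v(2u+a)}{g_a(u)}=N(u)/(u\,g_a(u)^2)\) with \(N(u)=au^3+\frac{a(a-8)}2u^2+a\bigl(6-a+\frac{a^2}8\bigr)u+16+8a+3a^2\). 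The \(u^2\)-coefficient is negative, so positivity is not manifest in \(u\) as you expected. After the substitution \(u=2+\upsilon\), however, \(N=a\upsilon^3+\frac{a(a+4)}2\upsilon^2+\frac{a(a+4)^2}8\upsilon+\frac{(a+4)^3}4\), which is exactly \(\mathcal E_a(\upsilon,1)\). The two routes therefore compute the same polynomial. The paper's route avoids doing the computation twice, while yours establishes the monotonicity independently of the modulus expansion.
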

\begin{proof}
Set \(y=\cos\theta\) and \(\upsilon=r+r^{-1}-2\ge0\). Direct expansion gives
\begin{equation}\label{eq:infinite-modulus-factor}
 \Pi_a(r)^2-|\Pi_a(re^{i\theta})|^2
   =2(1-y)r^4\,\mathcal E_a(\upsilon,y),
\end{equation}
where
\begin{align*}
 \mathcal E_a(\upsilon,y)={}&a\upsilon^3
 +\frac a4\bigl((a-8)y+a+16\bigr)\upsilon^2\\
 &+\frac a8\bigl(32y^2+(8a-32)y+a^2+16\bigr)\upsilon
 +\frac{(a+4y)^2(a+2y+2)}4.
\end{align*}
For \(-1\le y\le1\), all three coefficients of positive powers of \(\upsilon\) are strictly positive. For the linear coefficient, the quadratic in \(y\) has positive leading coefficient and discriminant \(-64(a+4)^2<0\). The constant term is nonnegative, and vanishes exactly at \(y=-a/4\). This proves the modulus inequality and its equality cases.

Writing \(u=r+r^{-1}\ge2\) gives
\[
 \Pi_a(r)=r^2\bigl(u^2+au-2-\beta_a\bigr)\ge M_a r^2>0.
\]
Also \(\Pi_a'(r)=4r^3+3ar^2-2\beta_a r+a>0\): the quadratic \(3ar^2-2\beta_a r+a\) has negative discriminant, since \(0<\beta_a<a\). Finally, with \(\mathfrak D_a(r)=r\Pi_a'(r)/\Pi_a(r)\), the coefficient of \(\theta^2\) in the left side of \eqref{eq:infinite-modulus-factor} is \(\Pi_a(r)^2r\mathfrak D_a'(r)\). On the right it is \(r^4\mathcal E_a(\upsilon,1)>0\). Thus \(\mathfrak D_a'(r)>0\).
\end{proof}

\begin{proposition}[A ray outside the limiting core and three connecting saddles]
\label{prop:infinite-ray}
For every \(0<a<4\),
\begin{equation}\label{eq:infinite-ray}
 \Sh_{\Pi_a}\cap(0,\infty)=\varnothing,
 \qquad 0\in\supp\mu_{2,\Pi_a}\subseteq\Sh_{\Pi_a}.
\end{equation}
At \((z,\alpha)=(0,2)\) the barrier is \(\log M_a\). Its connecting saddles are exactly \(1,e^{i\theta_a},e^{-i\theta_a}\), and all three are simple.
\end{proposition}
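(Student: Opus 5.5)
The plan rests on one observation: \(\Pi_a\) is palindromic. Writing \(u=w+w^{-1}\) and \(g(u)=u^2+au-2-\beta_a\), we have \(\Pi_a(w)=w^2g(u)\). The two parts of \eqref{eq:infinite-ray} use different arguments. For the ray I would work at each point \(x>0\) separately and use Lemma~\ref{lem:infinite-modulus}. For the origin I would use the Joukowski structure of \(L_{\Pi_a,0,2}(w)=\log|g(w+w^{-1})|\).

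\emph{The ray.} Fix \(x>0\) and \(0<\alpha<4\).
\begin{itemize}
\item On \((x,\infty)\), consider \(f(w)=(w-x)\Pi_a'(w)/\Pi_a(w)=(1-x/w)\mathfrak D_a(w)\). By Lemma~\ref{lem:infinite-modulus} it is a product of two positive increasing factors. It rises from \(0\) to \(d=4\), so there is a unique real saddle \(w_*>x\) with \(f(w_*)=\alpha\). It is simple because \(f'(w_*)>0\).
\item Lower bound. The sign change of \(\Pi_a'/\Pi_a-\alpha/(w-x)\) makes \(w_*\) the unique minimum of \(L_{\Pi_a,x,\alpha}\) on \((x,\infty)\). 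This segment joins the two positive ends, so \(\Blev\ge L(w_*)\).
\item Upper bound. On the circle \(|w|=w_*\), Lemma~\ref{lem:infinite-modulus} gives \(|\Pi_a(w)|\le\Pi_a(w_*)\). Trivially \(|w-x|\ge w_*-x\), with equality only at \(w=w_*\). Hence \(L\) attains its maximum on this circle only at \(w_*\), and the circle winds once about \(x\). Thus \(\Blev=L(w_*)\), and \(w_*\) is the unique connecting saddle.
\item After smoothing the circle near \(w_*\) along the descending axis, this is a one-saddle separating contour.
\item Lemma~\ref{lem:multiple-gates} with \(k=1\) then shows that the barrier is locally the real part of one holomorphic critical-value branch. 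So \(U_{\mu_{\alpha,\Pi_a}}\) is harmonic near \(x\), and \(x\notin\supp\mu_{\alpha,\Pi_a}\).
\item Endpoint ratios. The proof of Theorem~\ref{thm:connected} writes \(\Sh_{\Pi_a}\) as the union of the endpoint-completed family \(E_{\Pi_a}(t)\). It therefore suffices that \(Z(\Pi_a)\), \(Z(\Pi_a')\) and \(\centroid=-a/4\) avoid \((0,\infty)\). This follows from \(\Pi_a,\Pi_a'>0\) there.
\end{itemize}

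\emph{The origin at \(\alpha=2\).}
\begin{itemize}
\item Saddles. Here \(\Phi'(w)=g'(u)(1-w^{-2})/g(u)\), so the finite saddles are \(w=\pm1\) and the solutions of \(g'(u)=0\), i.e.\ \(u=-a/2\), giving \(w=e^{\pm i\theta_a}\). A direct computation gives \(|g(2)|=|g(-a/2)|=M_a\).
\item Upper bound. On \(|w|=1\) we have \(L=\log|g(2\cos\theta)|\), and Lemma~\ref{lem:infinite-modulus} gives \(\max_{|w|=1}L=\log M_a\). This maximum is attained exactly at \(1,e^{\pm i\theta_a}\). Hence \(\Blev\le\log M_a\).
\item Lower bound. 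The positive real axis is a path from \(0\) to \(\infty\). Along it \(L(r)=\log g(r+r^{-1})\ge\log M_a\), with equality only at \(r=1\). Hence \(\Blev=\log M_a\).
\item Connecting. The unit circle lies in \(\{L\le\log M_a\}\) and touches the barrier level only at the three points. Near each of them, the higher sectors therefore lie partly inside and partly outside the disk. By Lemma~\ref{lem:barrier}, every strict superlevel component contains a positive end. Inside the disk the only end is \(0\), outside it is \(\infty\). So each of the three saddles is adjacent to both \(\Omega_0\) and \(\Omega_\infty\), i.e.\ it is connecting. The symmetry \(w\mapsto1/w\), which preserves \(L\) and swaps the ends, makes this consistent.
\item Exactly these three. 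The point \(-1\) has \(|g(-2)|=|2-a-\beta_a|<M_a\), so it is not at barrier height.
\item Simplicity. At \(w=\pm1\) the map \(u\) has a simple critical point while \(g'(2)\ne0\). At \(e^{\pm i\theta_a}\) the map \(u\) is locally invertible and \(g''\ne0\). In both cases \(\Phi''\ne0\), so all three saddles are simple.
\item Support. Lemma~\ref{lem:multiple-gates} gives \(\Blev=\max_j\psi_j\) near \((0,2)\). By \eqref{eq:phase-z} the gradients \(2/(w_j-0)\) are pairwise distinct. A maximum of harmonic functions that coincide at \(0\) with distinct gradients is not harmonic at \(0\). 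Hence \(0\in\supp\mu_{2,\Pi_a}\).
\end{itemize}

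I expect the main obstacle to be the connecting step at the origin. It must be shown that the local higher sectors really reach \(\Omega_0\) and \(\Omega_\infty\), and not some spurious component. My first attempt would use the unit-circle separation together with the end-containment argument from the proof of Lemma~\ref{lem:barrier}, as sketched above. The double-root parameter \(a=12-8\sqrt2\) only affects the noncritical point \(-1\) and should need no separate treatment.
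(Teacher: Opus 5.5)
Your proposal is correct and is essentially the paper's proof. It uses the same min--max certificate at each $x>0$: a circle through the unique real saddle $w_*$ for the upper bound and the positive ray for the lower bound. It has the same endpoint check via $\Pi_a\Pi_a'>0$ on $(0,\infty)$ and $\centroid_{\Pi_a}=-a/4$. At the origin it has the same unit-circle and radial-path bounds, the same separation argument for connectivity, and the same non-harmonicity of $\max_j\psi_j$ with distinct gradients $2/w_j$.

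The variations are cosmetic: you center the circle at $0$ and use $|w-x|\ge w_*-x$ rather than centering it at $x$, and you exclude $x$ from $\supp\mu_{\alpha,\Pi_a}$ by harmonicity via Lemma~\ref{lem:multiple-gates} rather than via Proposition~\ref{prop:uniform-saddle}. You should still indent the circles at roots, as the paper does; the unit circle always contains roots of $\Pi_a$, since $g(-a/2)=-M_a<0<g(2)$. Also correct the slip $g(-2)=2-2a-\beta_a$, which still has modulus below $M_a$.
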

\begin{proof}
Fix \(x>0\). For \(r>0\) set \(u=x+r\) and \(\mathfrak D_{a,x}(r)=r\Pi_a'(x+r)/\Pi_a(x+r)\). If \(\lambda_a(u)=\log\Pi_a(u)\), then
\[
 \mathfrak D_{a,x}'(r)=\frac xu\lambda_a'(u)
       +\frac ru\bigl(\lambda_a'(u)+u\lambda_a''(u)\bigr)>0
\]
by Lemma~\ref{lem:infinite-modulus}. Its endpoint values are \(0\) and \(4\). For every \(0<\alpha<4\) there is therefore a unique \(r\) with \(\mathfrak D_{a,x}(r)=\alpha\). On the circle \(w=x+re^{i\theta}\),
\[
 |\Pi_a(w)|\le\Pi_a(|w|)\le\Pi_a(x+r),
\]
with equality only at \(\theta=0\); use strict increase of \(\Pi_a\) and \(|x+re^{i\theta}|<x+r\) otherwise. The angular maximum of the landscape is nondegenerate, with second derivative \(-r\mathfrak D_{a,x}'(r)<0\). The positive ray from \(x\) to infinity has its unique landscape minimum at the same point. The ray gives the lower min--max bound and the circle gives the matching upper bound. The circle separates \(x\) from infinity and is strictly below the common barrier value except at the stationary point. Hence the two strict superlevel components lie on opposite sides of the circle and their closures can meet the circle only at that point. It is therefore the unique connecting saddle, and the circle is a one-saddle separating contour. A circle passing through a root may be indented there, where the landscape is strictly below its maximum. The resulting contour persists for nearby observation points. Proposition~\ref{prop:uniform-saddle} therefore excludes \(x\) from every fixed-ratio support. The endpoint supports also stay away from \(x\): \(\Pi_a(x)\Pi_a'(x)>0\) and the centroid is \(-a/4<0\). By Hausdorff continuity, the support family completed at the two endpoints has compact union. This union is the core and misses \(x\), proving the first assertion of \eqref{eq:infinite-ray}.

At \(z=0\) the saddle equation for \(\alpha=2\) factors as
\[
 w\Pi_a'(w)-2\Pi_a(w)=(w^2-1)(2w^2+aw+2).
\]
On the unit circle,
\begin{equation}\label{eq:infinite-symbol}
 e^{-2i\theta}\Pi_a(e^{i\theta})
 =f_a(\theta):=2\cos(2\theta)+2a\cos\theta-\beta_a.
\end{equation}
Its largest absolute value is \(M_a\), attained only at \(0,\theta_a,-\theta_a\), with values \(M_a,-M_a,-M_a\), respectively. The fourth candidate, \(w=-1\), has strictly smaller height whenever it is a finite saddle. Roots on the circle can be avoided by small indentations strictly below \(\log M_a\), leaving the three equality neighborhoods unchanged. The positive radial path attains its minimum \(\log M_a\) at \(w=1\). The circle upper bound and radial lower bound therefore give the barrier \(\log M_a\).

The whole unit circle has landscape at most \(\log M_a\). At each of its three equality points the angular maximum is nondegenerate. The two local higher-level sectors lie on opposite sides of the circle. No strict superlevel component can cross that circle, and every such component contains a positive end. The inner and outer sectors therefore belong to the components of \(0\) and infinity, respectively. All three saddles are connecting. Define their normalized angular curvatures \(\varsigma_0,\varsigma_+,\varsigma_-\) by
\[
 \log|f_a(\theta_*+t)|
 =\log M_a-\frac12\varsigma_*t^2+O(t^3),
 \qquad \theta_*\in\{0,\theta_a,-\theta_a\}.
\]
A direct differentiation gives
\begin{equation}\label{eq:infinite-variances}
 \varsigma_0=\frac{16}{a+4},\qquad
 \varsigma_+=\varsigma_-=\frac{4(4-a)}{a+4}.
\end{equation}
All three curvatures are positive. Lemma~\ref{lem:multiple-gates} now expresses the nearby barrier as the maximum of the three continued critical heights. The values \(2\partial_z\psi_j=2/w_j\) are distinct, so that maximum is not harmonic at the origin. The potential formula gives \(0\in\supp\mu_{2,\Pi_a}\).
\end{proof}

Figure~\ref{fig:three-saddles} shows this landscape for \(a=2\). The three connecting saddles on the unit circle are the source of the interference in Theorem~\ref{thm:infinite-transients} below.

\begin{figure}[htbp]
\centering
\includegraphics[width=0.62\textwidth]{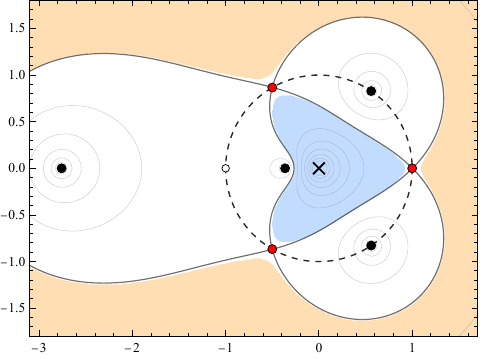}
\caption{The landscape \(L_{\Pi_2,0,2}(w)=\log|\Pi_2(w)|-2\log|w|\) of Proposition~\ref{prop:infinite-ray}. The unit circle (dashed) is a barrier contour: on it the landscape is at most \(\log M_2=\log\frac92\), with equality exactly at the three connecting saddles \(1\) and \(e^{\pm2\pi i/3}\) (red). The fourth saddle \(-1\) (white) is lower and is not connecting. Blue: the component of \(\{L_{\Pi_2,0,2}>\log M_2\}\) containing \(0\); orange: the component containing \(\infty\); black disks: the roots of \(\Pi_2\).}
\label{fig:three-saddles}
\end{figure}

\begin{proposition}[Failure of natural-order nesting]
\label{prop:non-nesting}
For \(P=\Pi_2\), there are points
\[
 \xi_2\in\AllSh_{P^2}\setminus\AllSh_{P^3},\qquad
 \xi_3\in\AllSh_{P^3}\setminus\AllSh_{P^2}.
\]
In particular, \(\AllSh_{P^2}\) and \(\AllSh_{P^3}\) are incomparable.
\end{proposition}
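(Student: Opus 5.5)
The plan is to reduce everything to finitely many explicit power levels of \(P=\Pi_2\) and then handle the small neighborhood of the core by the quantitative estimates already proved. By Proposition~\ref{prop:power-transients}, the power levels of \(P^k\) are exactly the levels of \(P\) divisible by \(k\). Hence
\[
 \AllSh_{P^2}=\overline{\textstyle\bigcup_{n\text{ even}}\mathcal Z_{P,n}},\qquad
 \AllSh_{P^3}=\overline{\textstyle\bigcup_{3\mid n}\mathcal Z_{P,n}}.
\]
I will take \(\xi_2\) to be a positive real zero of some descendant \((P^2)^{(m)}\) at level \(n=2\), and \(\xi_3\) a positive real zero of some \((P^3)^{(m)}\) at level \(n=3\). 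These are the points \(\xi_2\approx0.1191\) and \(\xi_3\approx0.0936\) marked in Figure~\ref{fig:pi2-transients}.

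Both points are transients, since by Proposition~\ref{prop:infinite-ray} the core misses \((0,\infty)\). Thus \(\xi_2\in\AllSh_{P^2}\) and \(\xi_3\in\AllSh_{P^3}\) trivially. It remains to show \(\xi_2\notin\AllSh_{P^3}\) and \(\xi_3\notin\AllSh_{P^2}\). By Theorem~\ref{thm:literal}, each is isolated in \(\AllSh_P\) and occurs at only finitely many levels. So it suffices to show that \(\xi_2\) is not a zero at any level divisible by \(3\), and \(\xi_3\) is not a zero at any even level. Membership in the closure then reduces to membership in the countable union, because a transient is isolated in \(\AllSh_P\supseteq\AllSh_{P^k}\).

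To make this effective, I need an explicit bound \(N_0\) such that no descendant at level \(n\ge N_0\) vanishes in a fixed small real interval \(J\subset(0,\infty)\) around \(\xi_2\) and \(\xi_3\). The route is the circle contour from the proof of Proposition~\ref{prop:infinite-ray}. For \(x\in J\) and every ratio \(\alpha\), the circle centered at \(x\) through the unique saddle \(x+r\) is a one-saddle separating contour. Proposition~\ref{prop:uniform-saddle} then gives a nonvanishing asymptotic, uniform on compact \(\alpha\)-ranges. The endpoint ranges are covered by Lemma~\ref{lem:small-coeff}, using \(\Pi_2(x)\Pi_2'(x)>0\) and the fact that the centroid is negative.

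Making \(N_0\) explicit is the main obstacle. The error terms in these asymptotics are not quantified in the paper. I would therefore try to replace them by a direct positivity argument on \(J\). Writing
\[
 \frac{R_{P;m,n}(x)}{m!}=\frac{1}{2\pi}\int_{-\pi}^{\pi}\Pi_2(x+re^{i\theta})^n\,(re^{i\theta})^{-m}\dd\theta ,
\]
Lemma~\ref{lem:infinite-modulus} shows the integrand is dominated by its \(\theta=0\) value. The aim is to prove the real part is positive once \(n\) exceeds an explicit bound. Concretely, I would bound the \(\theta\)-integral away from \(0\) using the strict modulus gap, and near \(0\) use a quadratic lower bound for the real part of the phase. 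The three-saddle interference from Theorem~\ref{thm:infinite-transients} occurs only near \(x=0\) and \(\alpha=2\), and \(J\) is separated from \(0\), so this should work with a moderate \(N_0\).

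With \(N_0\) in hand, the finitely many levels \(n<N_0\) with \(3\mid n\), respectively \(n\) even, are checked directly. For each such \(n\) and each \(m\), an exact rational-arithmetic Sturm sequence, or a certified evaluation, shows that \((P^n)^{(m)}\) has no zero at \(\xi_2\), respectively \(\xi_3\). Since both points are algebraic, one can alternatively compute the resultant of \((P^n)^{(m)}\) with the minimal polynomial of \(\xi_2\), respectively \(\xi_3\), and check that it is nonzero. This completes incomparability.
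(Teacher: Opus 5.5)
\textbf{There is a genuine gap: the effective threshold \(N_0\).} Your reduction is correct and matches the paper's. Both points are positive, hence transient by Proposition~\ref{prop:infinite-ray}. Since \(\AllSh_{P^k}\subseteq\AllSh_P\) and transients are isolated in \(\AllSh_P\) (Theorem~\ref{thm:literal}), \(\xi_2\notin\AllSh_{P^3}\) is equivalent to \(\xi_2\) not being a zero at any level divisible by \(3\). The same holds for \(\xi_3\) and even levels.

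The gap is the step meant to make this checkable: you need an explicit \(N_0\), and you do not produce one. Nothing in the paper is effective. The \(O(n^{-1})\) in Proposition~\ref{prop:uniform-saddle} is unquantified, and Lemma~\ref{lem:small-coeff} is proved by passing to subsequences. Your sketch also does not close uniformly in \(m\):
\begin{itemize}
\item On the circle through the saddle \(x+r\), the curvature of the phase at \(\theta=0\) is \(n\,r\mathfrak D_{2,x}'(r)\). This is approximately \(m\) when \(m/n\) is small, and approximately \(4n-m\) when \(m/n\) is near \(4\).
\item When \(m\) or \(4n-m\) is bounded there is no large Gaussian parameter. A modulus gap away from \(0\) plus a quadratic bound near \(0\) then cannot certify positivity, so those regimes need separate explicit arguments.
\item For \(\alpha\) near \(2\), the competing near-maxima of \(|\Pi_2|\) on the circle near angles \(\pm\theta_2\) are suppressed only by a factor of order \(e^{-cnx}\). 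This is the same mechanism that produces the transients \(x_n\approx\sigma_2/n\). The margin per power level is therefore only of order \(x\), and \(N_0\) must be tracked with real care.
\end{itemize}
Without \(N_0\) your finite check has no stopping point. You also infer the existence of \(\xi_2,\xi_3\) only from the figure rather than proving it.

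\textbf{The missing idea is that positivity of coefficients is multiplicative; it removes the need for any asymptotics.} Put \(B_x(u):=P(x+u)\). Since \((P^N)^{(m)}(x)/m!=[u^m]B_x(u)^N\), a point \(x\) is a zero at level \(N\) exactly when some coefficient of \(B_x^N\) vanishes. If all coefficients of \(B_x^3\) are positive, then so are all coefficients of every \(B_x^{3j}\).

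The paper proceeds as follows:
\begin{itemize}
\item It takes \(\xi_2\) to be the unique root of \((P^2)^{(5)}=240g_2\) in \(I_2=[119/1000,3/25]\), and \(\xi_3\) the unique root of \((P^3)^{(6)}=270g_3\) in \(I_3=[93/1000,47/500]\). Existence and uniqueness come from exact sign changes and monotonicity.
\item On \(I_2\) it certifies, via Bernstein coefficients, that every coefficient of \(B_x^3\) is positive.
\item On \(I_3\) it certifies that no coefficient of \(B_x^2\) vanishes, and that all coefficients of \(B_x^4\) and \(B_x^6\) are positive. Every even \(N\ge4\) has the form \(4j\) or \(6+4j\), so all even levels are covered.
\end{itemize}
This is a finite exact computation with no threshold at all. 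Your route would amount to proving, effectively and uniformly for \(x\in J\), that all coefficients of \(P(x+u)^n\) are positive for every large \(n\). That statement is true here but much harder to establish, and your proposal does not carry it out.
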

\begin{proof}
Let
\[
 P=\Pi_2=z^4+2z^3-\frac32z^2+2z+1,
 \qquad B_x(u):=P(x+u),
\]
and put
\begin{align*}
 g_2(x)&:=28x^3+42x^2+3x-1,\\
 g_3(x)&:=2464x^6+7392x^5+4200x^4-896x^3
          +1176x^2+588x-65.
\end{align*}
Direct differentiation gives
\begin{equation}\label{eq:nonnesting-derivatives}
 (P^2)^{(5)}(x)=240g_2(x),
 \qquad
 (P^3)^{(6)}(x)=270g_3(x).
\end{equation}
Let
\[
 I_2=\left[\frac{119}{1000},\frac3{25}\right],
 \qquad
 I_3=\left[\frac{93}{1000},\frac{47}{500}\right].
\]
Exact substitution gives opposite signs for \(g_j\) at the two endpoints of \(I_j\). Moreover, \(g_2'(x)>0\) on \(I_2\), while on \(I_3\)
\[
 g_3'(x)>
 588+2352\frac{93}{1000}-2688\left(\frac{47}{500}\right)^2>0.
\]
Thus there are unique points \(\xi_j\in I_j\) with \(g_j(\xi_j)=0\). In particular, \eqref{eq:nonnesting-derivatives} puts \(\xi_2\) at power level \(2\) and \(\xi_3\) at power level \(3\).

For completeness, the coefficient signs used below admit a short exact certificate. If \(c(x)\) is a coefficient polynomial of degree \(e_c\) on \(I=[x_-,x_+]\), write
\[
 c\bigl(x_-+(x_+-x_-)t\bigr)
 =\sum_{h=0}^{e_c}b_h\binom{e_c}{h}t^h(1-t)^{e_c-h}.
\]
The Bernstein basis is nonnegative on \(0\le t\le1\). Direct expansion of \(B_x(u)^N\) gives the following signs, in increasing powers of \(u\); the last column is the smallest absolute value of any Bernstein coefficient appearing in that row.
\[
\begin{array}{c|c|c|c}
 I & N & \text{coefficient signs} & \min|b_h|\\ \hline
 I_2&3&\text{all \(13\) positive}&1\\
 I_3&2&(+,+,+,+,+,-,+,+,+)&\dfrac{10113489}{15625000}\\
 I_3&4&\text{all \(17\) positive}&1\\
 I_3&6&\text{all \(25\) positive}&1
\end{array}
\]
For reproducibility, the following exact-arithmetic Mathematica code produces the sign lists and minima in the table; no floating-point evaluation is used.
\begingroup\small
\begin{verbatim}
p[x_] := x^4 + 2 x^3 - 3 x^2/2 + 2 x + 1;
bern[q_, {l_, r_}] := Module[{v, e},
  v = Expand[q /. x -> l + (r - l) t]; e = Exponent[v, t];
  Table[Sum[Coefficient[v, t, j] Binomial[h, j]/Binomial[e, j],
            {j, 0, h}], {h, 0, e}]];
row[N_, I_] := Module[{v},
  v = bern[#, I] & /@ CoefficientList[Expand[p[x + u]^N], u];
  {DeleteDuplicates[Sign[#]] & /@ v, Min[Abs[Flatten[v]]]}];
row[3, {119/1000, 3/25}]
row[2, {93/1000, 47/500}]
row[4, {93/1000, 47/500}]
row[6, {93/1000, 47/500}]
\end{verbatim}
\endgroup
This is a finite calculation over the rationals and proves the stated signs throughout the two intervals.

All coefficients of \(B_{\xi_2}(u)^3\) are therefore positive, and hence so are all coefficients of \(B_{\xi_2}(u)^{3j}\) for \(j\ge1\). Consequently \(\xi_2\) is not a descendant zero at any power level divisible by \(3\). For \(\xi_3\), no coefficient of \(B_{\xi_3}(u)^2\) vanishes, and all coefficients of its fourth and sixth powers are positive. Every even integer \(N\ge4\) has the form \(4j\) with \(j\ge1\) or \(6+4j\) with \(j\ge0\), so no coefficient of \(B_{\xi_3}(u)^N\) vanishes at any even power level either.

Both \(\xi_2\) and \(\xi_3\) are positive, so Proposition~\ref{prop:infinite-ray} puts them outside \(\Sh_P\). Using \(\Sh_{P^k}=\Sh_P\) from Proposition~\ref{prop:power-transients}, Theorem~\ref{thm:literal}, applied also to powers of \(P\), turns the preceding absence of finite-level zeros into absence from the corresponding full shadows. Hence
\[
 \xi_2\in\AllSh_{P^2}\setminus\AllSh_{P^3},
 \qquad
 \xi_3\in\AllSh_{P^3}\setminus\AllSh_{P^2}.
\]
\end{proof}

\subsection{Infinitely many transients for a fixed polynomial}
\begin{theorem}[Three-saddle interference and infinite transients]
\label{thm:infinite-transients}
Fix \(0<a<4\) and an integer \(h\). Uniformly for \(\sigma\) in compact subsets of \(\C\), as \(n\to\infty\),
\begin{align}
 &\frac{4\sqrt{2\pi n}}{\sqrt{a+4}\,M_a^n}
       \frac{(\Pi_a^n)^{(2n+h)}(\sigma/n)}{(2n+h)!}
 \label{eq:infinite-interference}\\
 &\hspace{1cm}=
 e^{2\sigma}+(-1)^n\chi_a e^{-a\sigma/2}
                   \cos(\omega_a \sigma+h\theta_a)+O(n^{-1}).\notag
\end{align}
The derivative order is in \([0,4n-1]\) for all sufficiently large \(n\).

Let \(\sigma_a\) be the unique solution in \((0,\pi/\sqrt{16-a^2})\) of
\begin{equation}\label{eq:infinite-scale}
 e^{(a+4)\sigma_a/2}=\chi_a\cos(\omega_a \sigma_a).
\end{equation}
For every sufficiently large odd \(n\), \((\Pi_a^n)^{(2n)}\) has a simple positive zero
\begin{equation}\label{eq:infinite-zero-scale}
 x_n=\frac{\sigma_a}{n}+O(n^{-2}).
\end{equation}
Every such zero is a transient. In particular \(T_{\Pi_a}\) is infinite. In fact every sufficiently large power level has a positive transient: for even \(n\) one can use order \(2n+h_a\), where
\[
 h_a=\begin{cases}2,&0<a\le2,\\1,&2<a<4.\end{cases}
\]
Its simple positive zero has the form \(\widetilde \sigma_a/n+O(n^{-2})\), where \(\widetilde \sigma_a>0\) is the first positive zero of \(e^{2\sigma}+\chi_a e^{-a\sigma/2}\cos(\omega_a \sigma+h_a\theta_a)\).
\end{theorem}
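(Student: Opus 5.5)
The plan is to start from Cauchy's formula \eqref{eq:cauchy} at the observation point \(z=\sigma/n\) and the exact order \(m=2n+h\):
\[
 \frac{(\Pi_a^n)^{(2n+h)}(\sigma/n)}{(2n+h)!}=\frac1{2\pi i}\oint\frac{\Pi_a(w)^n}{(w-\sigma/n)^{2n+h+1}}\dd w .
\]
I integrate over the unit circle \(w=e^{i\theta}\), indented inward near any roots of \(\Pi_a\) on it; the indentations lie strictly below \(\log M_a\) and contribute \(O(e^{-cn}M_a^n)\). On the circle I expand
\[
 (w-\sigma/n)^{-2n-h-1}=w^{-2n-h-1}\exp\bigl(2\sigma/w+O(n^{-1})\bigr),
\]
uniformly for \(\sigma\) in compacts and \(|w|=1\). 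Writing \(\Pi_a(e^{i\theta})=e^{2i\theta}f_a(\theta)\) by \eqref{eq:infinite-symbol}, the integrand becomes
\[
 \frac{1}{2\pi}\,f_a(\theta)^n\,e^{-ih\theta}\,e^{2\sigma e^{-i\theta}}\bigl(1+O(n^{-1})\bigr)\dd\theta .
\]
Because \(f_a\) is real, no oscillating phase is left. By Proposition~\ref{prop:infinite-ray}, \(|f_a|\le M_a\) with equality only at \(\theta=0,\pm\theta_a\), where \(f_a=M_a,-M_a,-M_a\). Away from small neighbourhoods of these three points the integral is therefore exponentially smaller than \(M_a^n\).

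Next I apply Laplace's method at each of the three maxima, using the curvatures \eqref{eq:infinite-variances}. This requires one honest expansion per point: a quadratic approximation of \(\log|f_a|\) plus the first-order correction of the smooth amplitude.
\begin{itemize}
\item At \(\theta=0\) the contribution is \(M_a^n\sqrt{2\pi/(n\varsigma_0)}\,e^{2\sigma}/(2\pi)\).
\item At \(\pm\theta_a\) the contributions are \((-1)^nM_a^n\sqrt{2\pi/(n\varsigma_+)}\,e^{\mp ih\theta_a}e^{2\sigma e^{\mp i\theta_a}}/(2\pi)\).
\end{itemize}
Since \(\cos\theta_a=-a/4\) and \(\sin\theta_a=\omega_a/2\), we have \(2e^{-i\theta_a}=-a/2-i\omega_a\). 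The two conjugate terms therefore sum to
\[
 2e^{-a\sigma/2}\cos(\omega_a\sigma+h\theta_a).
\]
The ratio \(\sqrt{\varsigma_0/\varsigma_+}=2/\sqrt{4-a}\) turns the factor \(2\) into \(\chi_a\). Normalising by \(\sqrt{\varsigma_0}/\sqrt{2\pi n}=4/\sqrt{(a+4)2\pi n}\) gives exactly \eqref{eq:infinite-interference}. The odd cubic terms cancel as in Proposition~\ref{prop:uniform-saddle}, so the error is \(O(n^{-1})\). Both sides are holomorphic in \(\sigma\), so the error is uniform on compact sets in \(\C\). The constraint \(0\le 2n+h<4n\) is immediate for large \(n\).

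For the zeros, take \(n\) odd and \(h=0\). The limit function is
\[
 g(\sigma)=e^{2\sigma}-\chi_ae^{-a\sigma/2}\cos(\omega_a\sigma),
\]
which is real on \(\R\), with \(g(0)=1-\chi_a<0\) and \(g>0\) at \(\omega_a\sigma=\pi/2\). Dividing by \(e^{-a\sigma/2}\), the function \(e^{(a+4)\sigma/2}-\chi_a\cos(\omega_a\sigma)\) is strictly increasing on \((0,\pi/(2\omega_a))=(0,\pi/\sqrt{16-a^2})\). Hence \(\sigma_a\) exists, is unique, and is a simple zero of \(g\). Hurwitz's theorem, or the implicit function theorem applied to the uniformly convergent holomorphic family, gives a zero \(\sigma_n\to\sigma_a\) of the rescaled descendant. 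It is unique near \(\sigma_a\) and real, because the polynomial is real and the zero is simple. Differentiating the \(O(n^{-1})\) error by Cauchy estimates gives \(\sigma_n=\sigma_a+O(n^{-1})\), that is, \eqref{eq:infinite-zero-scale}.

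Each \(x_n>0\) is a finite-level zero and lies outside \(\Sh_{\Pi_a}\) by Proposition~\ref{prop:infinite-ray}, so it is a transient. The points \(x_n\) are distinct for distinct large \(n\), which makes \(T_{\Pi_a}\) infinite. This is consistent with Theorem~\ref{thm:literal}, since the \(x_n\) accumulate only at \(0\in\Sh\).

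For even \(n\), the sign \((-1)^n=+1\) gives the limit function
\[
 e^{2\sigma}+\chi_ae^{-a\sigma/2}\cos(\omega_a\sigma+h_a\theta_a).
\]
For this to have a simple positive zero I need \(\cos(h_a\theta_a)<-\chi_a^{-1}\), so that the function starts negative, and then a transversal crossing. With \(h=2\) we have \(\cos 2\theta_a=a^2/8-1\); with \(h=1\), \(\cos\theta_a=-a/4\). Checking \(1+\chi_a\cos(h_a\theta_a)<0\) in the two ranges of \(a\) is an elementary but fiddly inequality. Simplicity at the first positive zero is a further point that I will verify by a monotonicity argument on \(\sigma\mapsto e^{(a+4)\sigma/2}+\chi_a\cos(\omega_a\sigma+h_a\theta_a)\) over the relevant interval. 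The same Hurwitz argument then produces the even-level transients.

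The main obstacle is the uniform three-point Laplace analysis. It has to handle the indentations around possible roots on the unit circle, including the double root at \(a=12-8\sqrt2\), and it must keep the error at \(O(n^{-1})\) uniformly in complex \(\sigma\). I would handle this by doing all estimates with the integrand written as \(f_a(\theta)^n\) times a holomorphic amplitude in \(\sigma\).
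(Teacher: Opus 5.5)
Your proof of the asymptotic \eqref{eq:infinite-interference} and of the odd-level zeros follows the paper's proof: Cauchy's formula on the unit circle, reduction to \(f_a(\theta)^n e^{-ih\theta}e^{2\sigma e^{-i\theta}}\), Laplace's method at \(0,\pm\theta_a\) with the curvatures \eqref{eq:infinite-variances}, and a Rouch\'e/Hurwitz argument at the simple zero \(\sigma_a\). These parts are correct.

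One simplification: no indentations are needed. The integrand \(\Pi_a(w)^n(w-\sigma/n)^{-2n-h-1}\) is holomorphic at the roots of \(\Pi_a\). Roots on the circle are only zeros of \(f_a\), so the double root at \(a=12-8\sqrt2\) is not an obstacle.

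The genuine gap is the even-level assertion, which you defer.

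\emph{The sign condition.} This part is quick. For \(a\le2\), \(1+\chi_a\cos(2\theta_a)<0\) reduces to \(4-a^2/2>\sqrt{4-a}\). For \(a>2\), \(1+\chi_a\cos\theta_a<0\) reduces to \(a^2+a-4>0\).

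\emph{Simplicity.} The monotonicity argument you propose fails for \(h_a=1\). The derivative of \(e^{(a+4)\sigma/2}+\chi_a\cos(\omega_a\sigma+\theta_a)\) at \(\sigma=0\) equals \(\tfrac{a+4}{2}\bigl(1-\sqrt{4-a}\bigr)\), which is negative for \(2<a<3\). The reason is that the phase starts in \((2\pi/3,\pi)\), where \(-\cos\) is still increasing, so the function first decreases and is not monotone up to its zero.

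The paper closes this step as follows.
\begin{itemize}
\item For \(a\le2\), \(2\theta_a\in(\pi,4\pi/3]\), so \(-\cos(2\theta_a+\omega_a\sigma)\) decreases strictly while the exponential increases, until the cosine vanishes.
\item For \(a>2\), the ratio \(e^{(a+4)\sigma/2}/\bigl(-\cos(\theta_a+\omega_a\sigma)\bigr)\) is strictly increasing until its denominator vanishes. Its logarithmic derivative is \((a+4)/2+\omega_a\tan(\theta_a+\omega_a\sigma)\). This starts at \((a+4)(a-2)/a>0\) and only increases: \(\tan\) increases on \((\pi/2,\pi)\) and is positive after \(\pi\).
\end{itemize}

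Convexity is an alternative that works in both cases. While \(\cos(\omega_a\sigma+h_a\theta_a)<0\), the function has second derivative \(\tfrac{(a+4)^2}{4}e^{(a+4)\sigma/2}-\chi_a\omega_a^2\cos(\omega_a\sigma+h_a\theta_a)>0\). It starts negative and is positive once the cosine vanishes. Hence its first positive zero is a transversal upward crossing, and your Hurwitz step then applies exactly as in the odd case.
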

\begin{proof}
Cauchy's formula on the unit circle gives the exact integral
\begin{equation}\label{eq:infinite-cauchy}
 \frac{(\Pi_a^n)^{(2n+h)}(\sigma/n)}{(2n+h)!}
 =\frac1{2\pi}\int_{-\pi}^{\pi}
 f_a(\theta)^n e^{-ih\theta}
 \left(1-\frac{\sigma}{n}e^{-i\theta}\right)^{-2n-h-1}\dd\theta.
\end{equation}
For bounded complex \(\sigma\), the product of the last two factors, together with all \(\theta\)-derivatives needed below, converges uniformly to \(e^{-ih\theta}\exp(2\sigma e^{-i\theta})\), with error \(O(n^{-1})\). The only maxima of \(|f_a|\) are the three points in Proposition~\ref{prop:infinite-ray}. Near zero,
\[
 f_a(\theta)=M_a-(a+4)\theta^2+O(\theta^4),
\]
and near either \(\pm\theta_a\) it is \(-M_a\) plus a positive quadratic term with second derivative \((16-a^2)/2\). Laplace's method on three fixed disjoint neighborhoods therefore gives Gaussian weights \(\varsigma_0^{-1/2}\) and \(\varsigma_+^{-1/2}\), while the rest of the integral is exponentially smaller. Taylor expansion in each Gaussian coordinate makes the remainder after normalization \(O(n^{-1})\), uniformly for bounded \(\sigma\); the odd first-order terms integrate to zero. At the negative extrema \(f_a^n\) contributes \((-1)^n\). The two conjugate contributions combine to a cosine. Since \(2\varsigma_+^{-1/2}/\varsigma_0^{-1/2}=\chi_a\), this proves \eqref{eq:infinite-interference}. The estimate is locally uniform as an identity of holomorphic functions of \(\sigma\).

For odd \(n\) and \(h=0\), the limiting function is negative at zero. After multiplication by \(e^{a\sigma/2}\), its first positive zero is the intersection of the strictly increasing function \(e^{(a+4)\sigma/2}\) and the strictly decreasing positive function \(\chi_a\cos(\omega_a \sigma)\) on
\[
 (0,\pi/(2\omega_a))=(0,\pi/\sqrt{16-a^2}).
\]
It is unique and simple. Rouch\'e's theorem on a small circle around \(\sigma_a\) gives exactly one nearby zero, with error \(O(n^{-1})\) in the \(\sigma\) coordinate. Real coefficients and uniqueness make this zero real; it is positive and simple. This proves \eqref{eq:infinite-zero-scale}.

For even \(n\), the stated choice of \(h_a\) gives \(\chi_a\cos(h_a\theta_a)<-1\). If \(a\le2\), then \(2\theta_a\in(\pi,4\pi/3]\); before the cosine next reaches zero, \(-\cos(2\theta_a+\omega_a \sigma)\) decreases strictly while the exponential increases. If \(a>2\), then \(\theta_a\in(2\pi/3,\pi)\) and the ratio
\[
 \frac{e^{(a+4)\sigma/2}}{-\cos(\theta_a+\omega_a \sigma)}
\]
increases strictly until the denominator next vanishes. On the part before angle \(\pi\), its logarithmic derivative is \((a+4)/2+\omega_a\tan(\theta_a+\omega_a \sigma)\), whose initial value is \((a+4)(a-2)/a>0\); thereafter both terms are positive. The first positive zero is consequently simple in either case. The same Rouch\'e argument proves the assertion for even power levels.

Proposition~\ref{prop:infinite-ray} excludes every positive real point from the limiting core. Thus all these zeros belong to \(T_{\Pi_a}\). The zeros from odd power levels tend to zero through positive values, so there are infinitely many distinct ones. Their isolation follows from Theorem~\ref{thm:literal}.
\end{proof}

\begin{proof}[Proof of Theorem~\ref{thm:intro-infinite}]
Proposition~\ref{prop:infinite-ray} proves that the core of \(\Pi_a\) is disjoint from the positive real axis. Theorem~\ref{thm:infinite-transients} provides the stated three-saddle asymptotic, the simple positive zeros at all sufficiently large odd power levels with the displayed scale, their infinitude and transience, and the bounded order shift that supplies a positive transient at every sufficiently large even power level.
\end{proof}

A subset of \(\C\simeq\R^2\) is \emph{semialgebraic} if it is a finite Boolean combination of sets defined by real polynomial equations and inequalities in the two coordinates.

\begin{corollary}[Infinite topology of the full shadow]
\label{cor:infinite-topology}
For every \(0<a<4\), the full shadow of \(\Pi_a\) has infinitely many isolated connected components and is not semialgebraic. No finite list of power levels, even one chosen separately for this fixed polynomial, contains all its transients.
\end{corollary}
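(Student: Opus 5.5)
The plan is to deduce all three assertions from Theorem~\ref{thm:infinite-transients} together with Theorem~\ref{thm:literal} and Proposition~\ref{prop:infinite-ray}.

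\emph{Isolated components.} By Theorem~\ref{thm:infinite-transients}, for all large odd \(n\) there is a transient \(x_n=\sigma_a/n+O(n^{-2})>0\). These points are pairwise distinct for large \(n\) and lie in \(T_{\Pi_a}\). Theorem~\ref{thm:literal} says each point of \(T_{\Pi_a}\) is isolated in \(\AllSh_{\Pi_a}\). Hence each singleton \(\{x_n\}\) is open and closed in \(\AllSh_{\Pi_a}\), so it is a connected component. This gives infinitely many isolated connected components.

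\emph{Not semialgebraic.} A semialgebraic subset of \(\R^2\) has finitely many connected components; this is the standard finiteness theorem for semialgebraic sets. Infinitely many components therefore rule out semialgebraicity. If I want to avoid citing the general theorem, there is a direct route. Intersect with the real axis, which preserves semialgebraicity. A semialgebraic subset of \(\R\) is a finite union of points and intervals. On the other hand, \(\AllSh_{\Pi_a}\cap(0,\infty)\) is infinite, since it contains all the \(x_n\). It is also discrete, because it avoids \(\Sh_{\Pi_a}\) by Proposition~\ref{prop:infinite-ray} and \(T_{\Pi_a}\) is locally finite there. An infinite discrete set is not a finite union of points and intervals, which is a contradiction.

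\emph{No finite list of power levels suffices.} Suppose a finite list \(n_1,\dots,n_k\) were given. Take an odd \(n\) larger than every \(n_i\) and large enough for Theorem~\ref{thm:infinite-transients} to apply. The transient \(x_n\) occurs at level \(n\). I must show it occurs at none of the levels \(n_i\). For this I use the following argument. Theorem~\ref{thm:literal} says each transient appears at only finitely many power levels. Moreover, only finitely many zeros exist at the levels \(n_1,\dots,n_k\) altogether. Each of those zeros that lies in \((0,\infty)\) is at positive distance from \(0\), whereas \(x_n\to0\). Therefore, for \(n\) large, \(x_n\) differs from all of them, and \(x_n\) is not a zero at any listed level.

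The only delicate point is the claim that semialgebraic sets have finitely many components, and the real-axis reduction settles it. The remainder is bookkeeping.
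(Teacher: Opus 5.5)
Your proposal is correct and follows the paper's own argument. Theorem~\ref{thm:infinite-transients} and the isolation from Theorem~\ref{thm:literal} give infinitely many singleton components, finiteness of components of semialgebraic sets excludes semialgebraicity, and finitely many power levels contribute only finitely many zeros; your real-axis reduction and your explicit choice of a small \(x_n\) are hands-on versions of the same steps.

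One small nit: the expansion \(x_n=\sigma_a/n+O(n^{-2})\) does not by itself make the \(x_n\) pairwise distinct, since the error term has the same order as \(x_n-x_{n+2}\). However, positivity together with \(x_n\to0\) already gives infinitely many distinct points, which is all you use.
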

\begin{proof}
Theorem~\ref{thm:infinite-transients} and isolation give infinitely many singleton components. A semialgebraic set has only finitely many connected components \cite{BCR}. Each finite list of power levels contributes only finitely many zeros.
\end{proof}

\appendix
\section{First variation at degenerate connecting saddles}\label{app:variation}
The barrier-contour construction also gives a quantitative first-variation formula in all parameters. This result is not needed for the zero-set convergence proofs, but records the dependence at saddle collisions.

\begin{theorem}[First variation of the barrier]
\label{thm:barrier-first-variation}
Fix a monic degree-\(d\) polynomial \(P\) with \(s\) distinct roots, \(z_0\notin Z(P)\), and \(0<\alpha<d\). Let its connecting saddles be \(w_1,\ldots,w_k\), with phase orders \(q_1,\ldots,q_k\), and put \(\kappa_j=\alpha/(w_j-z_0)\). For a polynomial \(H\) of degree at most \(d-1\), a complex increment \(\zeta\), and a real increment \(\tau\), set \(\eta=\|H\|+|\zeta|+|\tau|\), using any fixed coefficient norm. Put \(q_*:=\max_j q_j\) and \(\theta_*:=1/(q_*-1)\ge1/s\). As \(\eta\to0\),
\begin{align}
 &\Blev_{P+H}(z_0+\zeta,\alpha+\tau)-\Blev_P(z_0,\alpha)\notag\\
 &\quad=\max_{1\le j\le k}
 \left\{\Ree\left(\frac{H(w_j)}{P(w_j)}+\kappa_j\zeta\right)
             -\tau\log|w_j-z_0|\right\}
                   +O(\eta^{1+\theta_*}).
 \label{eq:barrier-first-variation}
\end{align}
The constant depends on the base data, but not on the direction of the increments. In particular the error is always \(O(\eta^{1+1/s})\); if every connecting saddle is simple it is \(O(\eta^2)\). The barrier is locally Lipschitz in the monic coefficients, the nonroot observation point, and the interior derivative ratio. At fixed \(P,\alpha\), it is differentiable in \(z_0\) if and only if there is exactly one connecting saddle.
\end{theorem}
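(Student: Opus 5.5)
The proof gives matching lower and upper bounds, modelled on the proofs of Lemmas~\ref{lem:multiple-gates} and~\ref{lem:harmonic-gate}. Degenerate connecting saddles are handled through the normal form $\Phi=\Xi_j-v_j^{q_j}$ of Lemma~\ref{lem:arbitrary-contour}. Write $\lambda=(P+H,z_0+\zeta,\alpha+\tau)$ and let $L_\lambda$ be the perturbed landscape. Near each $w_j$, expand
\[
L_\lambda(w)-L_0(w)=\Ree\!\left(\frac{H(w)}{P(w)}+\frac{\alpha\zeta}{w-z_0}\right)-\tau\log|w-z_0|+O(\eta^2)=:\ell_j+\Ree\bigl(g_j(w)\bigr)+O(\eta^2).
\]
Here $\ell_j$ is the linear term in the statement, evaluated at $w_j$. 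The function $g_j$ is holomorphic, vanishes at $w_j$, and satisfies $|g_j(w)|\le C\eta|w-w_j|$.

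\emph{Lower bound.} For each $j$, take an ascending path through $w_j$ that joins the two positive ends. It runs along the central rays of one higher sector of $\Omega_z$ and one of $\Omega_\infty$ (available by the block structure of Lemma~\ref{lem:outer-regularization}), and continues inside the strict components. Away from a disk of radius $\delta$ about $w_j$ the path has a strict margin, as in Lemma~\ref{lem:harmonic-gate}; the end attachments persist for small $\eta$. Inside the disk, the unperturbed height along the central rays is $B+|v|^{q_j}$. The perturbation changes it by $\ell_j+O(\eta|v|)+O(\eta^2)$. Minimizing $|v|^{q_j}-C\eta|v|$ over $|v|$ gives a deficit of order $\eta^{q_j/(q_j-1)}=\eta^{1+1/(q_j-1)}$. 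So the perturbed barrier is at least $B+\ell_j-O(\eta^{1+\theta_*})$ for every $j$, and taking the maximum over $j$ gives the lower half of \eqref{eq:barrier-first-variation}.

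\emph{Upper bound.} Use the barrier contour of Lemma~\ref{lem:arbitrary-contour}, which passes through each $w_j$ along two decreasing rays. Outside the saddle disks it has a strict gap below $B$, which survives perturbation once $\eta$ is small. On the decreasing rays the height is $B-|v|^{q_j}+\ell_j+O(\eta|v|)+O(\eta^2)$, whose maximum is at most $B+\ell_j+O(\eta^{1+1/(q_j-1)})$. The contour still winds once about $z_0+\zeta$ and avoids the perturbed roots, because the disks and the contour keep a fixed distance from $Z(P)$ and from $z_0$. The min--max formula of Lemma~\ref{lem:barrier} then gives the upper half. Combining the two halves proves \eqref{eq:barrier-first-variation}. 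The constant depends only on the Cauchy bounds for the $g_j$, the radii of the normal-form coordinates and the gaps, so it is independent of the direction of the increments. The bound $\theta_*\ge1/s$ follows from \eqref{eq:phase-order-budget}. If every connecting saddle is simple, then $q_*=2$ and the error is $O(\eta^2)$.

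\emph{Consequences.} The maximum of finitely many affine functions is Lipschitz, so the barrier is locally Lipschitz. For differentiability in $z_0$, take $H=0$ and $\tau=0$. The leading term is $\max_j\Ree(\kappa_j\zeta)$, and the $\kappa_j$ are pairwise distinct because the $w_j$ are distinct. A maximum of distinct real-linear forms is linear in $\zeta$ if and only if $k=1$. The error is $o(|\zeta|)$, so differentiability holds exactly when $k=1$.

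The step I expect to be hardest is making the local perturbation estimate uniform at degenerate saddles. The exact perturbed critical points near $w_j$ may split into up to $q_j-1$ saddles, so I avoid tracking them. Instead I work only with fixed test paths and contours in the unperturbed coordinate $v_j$, and I need the $O(\eta|v|)$ bound on $g_j$ to hold uniformly on a fixed disk. This is why the expansion must be taken relative to the value at $w_j$ and not at the moving saddles.
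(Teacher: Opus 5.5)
Your two-sided estimate follows the paper's own argument. For the lower bound you take ascending paths along central rays of higher sectors of $\Omega_z$ and $\Omega_\infty$. For the upper bound you use the barrier contour of Lemma~\ref{lem:arbitrary-contour} along decreasing rays. You then apply $\sup_{r\ge0}\{-r^q+C\eta r\}=O(\eta^{q/(q-1)})$ and the min--max formula. The differentiability criterion via the distinct $\kappa_j$ is also correct.

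\textbf{The gap is in the local Lipschitz claim.} The sentence ``the maximum of finitely many affine functions is Lipschitz'' only gives $|\Blev(\lambda)-\Blev(\lambda_0)|\le K(\lambda_0)|\lambda-\lambda_0|$ for $\lambda$ near the \emph{fixed} base $\lambda_0=(P,z_0,\alpha)$. Here $K(\lambda_0)$, the error constant and the radius of validity all depend on $\lambda_0$. Local Lipschitz continuity compares two nearby points $\lambda_1,\lambda_2$, neither of which is the base. Expanding both around $\lambda_0$ leaves errors $O(|\lambda_i-\lambda_0|^{1+\theta_*})$, which are not bounded by $|\lambda_1-\lambda_2|$. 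Calmness at every point does not imply local Lipschitz continuity in general; $x^2\sin(x^{-2})$ is a counterexample. You also cannot simply re-center, because at nearby data the connecting saddles, their number and their phase orders may change. Degenerate saddles split, and other saddles can reach barrier height and become connecting. So the affine forms at nearby data are not the $k$ forms you have at $\lambda_0$.

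\textbf{The missing ingredient is a uniform bound on the linear terms over a whole neighborhood.} The paper supplies it as follows.
\begin{enumerate}
\item Shrink a parameter neighborhood of $\lambda_0$. By Lemma~\ref{lem:barrier-cont} the barrier is bounded on its closure. The landscape tends uniformly to $-\infty$ at the roots and to $+\infty$ at the observation point and at infinity. Hence every connecting saddle, for every data point in that closure, lies in one fixed compact set away from roots, observation point and infinity.
\item Consequently the linear forms $\Ree\bigl(H(w)/\widetilde P(w)+\widetilde\alpha\zeta/(w-\widetilde z)\bigr)-\tau\log|w-\widetilde z|$, based at any nearby data $(\widetilde P,\widetilde z,\widetilde\alpha)$, have norm at most one constant $K$.
\item Apply your first-variation formula at \emph{each} point of a straight parameter segment. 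This bounds the upper and lower Dini derivatives along the segment by $K$ times the speed. Only the linear term enters, so the base-dependent error constants do no harm.
\item The Dini mean-value inequality for the continuous function $\Blev$ then gives a uniform Lipschitz bound.
\end{enumerate}
With this addition your proof is complete.
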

\begin{proof}
For each connecting saddle choose a locally ascending arc on either side, one in each of the two positive-end components. Join these arcs to fixed neighborhoods of the corresponding ends. After truncation, the result is a compact path whose unique minimum is the chosen saddle; the parts used for attachment have a strict height margin. Together with the barrier contour of Lemma~\ref{lem:arbitrary-contour}, these paths, which lie strictly above the barrier away from their saddles, provide the lower and upper variational bounds. On each of these fixed compact paths and on the contour, uniformly in \(w\),
\[
 L_{P+H,z_0+\zeta,\alpha+\tau}(w)
 =L_{P,z_0,\alpha}(w)+
 \Ree\left(\frac{H(w)}{P(w)}+\frac{\alpha\zeta}{w-z_0}\right)
 -\tau\log|w-z_0|+O(\eta^2).
\]
Here and below the error bounds are on fixed compact paths, so they are uniform over unit increment directions. In the local coordinate at \(w_j\), the unperturbed contour has height \(B-|v_j|^{q_j}\). Choose the ascending paths along centers of higher sectors, where the height is \(B+|v_j|^{q_j}\). The displayed linear perturbation changes from its value at \(w_j\) by at most \(C\eta|v_j|\). The elementary bound
\[
 \sup_{r\ge0}\{-r^q+C\eta r\}
       =O(\eta^{q/(q-1)}),\qquad q\ge2,
\]
and its negative give the upper contour bound and lower path bounds with errors \(O(\eta^{q_j/(q_j-1)}+\eta^2)\). Away from the saddle neighborhoods the fixed strict height margins make these estimates automatic for sufficiently small \(\eta\). The min--max formula bounds the perturbed barrier above by the contour maximum and below by the maximum of the path minima. Taking \(q_* =\max_j q_j\) proves \eqref{eq:barrier-first-variation} with the stated error. The phase-order budget \eqref{eq:phase-order-budget} gives \(q_*-1\le s\).

To justify the Lipschitz assertion also across saddle collisions, shrink a parameter neighborhood of the base data. By continuity of the barrier, every connecting saddle for data in its closure lies in one compact set away from roots, the observation point, and infinity: the phase height tends uniformly to \(-\infty\) near roots and to \(+\infty\) at the two positive ends. Thus every linear form on the right of \eqref{eq:barrier-first-variation}, now based at any point in that neighborhood, has norm bounded by one constant. On each straight parameter segment, the limsup and liminf of its one-sided difference quotients (the upper and lower Dini derivatives) are consequently bounded in absolute value by that constant times the segment speed. The elementary one-variable Dini-derivative mean-value inequality gives the Lipschitz estimate.

With only \(z_0\) varying, the first variation is \(\max_j\Ree(\kappa_j\zeta)\). Distinct connecting saddles give distinct \(\kappa_j\). A maximum of distinct real-linear forms is not linear. This proves both directions of the differentiability claim.
\end{proof}

\section{Equal critical heights without contour connectivity}\label{app:equal-height}
The condition that a saddle have barrier height is weaker than the condition that it connect the positive-end components.

\begin{example}[Equal height without contour contribution]
\label{ex:equal-height-not-connecting}
Take \(P(w)=w^2-1\), \(\alpha=1/2\), and \(z=3/(2\sqrt2)>1\). The two finite saddles are
\[
 w_\pm=\frac{3\pm\sqrt3}{3\sqrt2},\qquad
 \exp\bigl(2L_{P,z,1/2}(w_\pm)\bigr)=\frac{4\sqrt6}{9}.
\]
This common height is the barrier. Indeed, on the positive real ray from \(z\) to infinity the landscape has its unique minimum at \(w_+\). On the circle \(|w-z|=w_+-z\), both factors \(|w-1|\) and \(|w+1|\) are maximized uniquely at \(w=w_+\), so the circle has its unique maximum at that saddle. The two variational bounds coincide. The circle is a one-saddle separating contour through \(w_+\) and does not pass through \(w_-\). If \(w_-\) were connecting, an ascending path through it from one positive end to the other would have to cross that circle at height strictly greater than the barrier, which is impossible. Thus \(w_-\) is at barrier height but is not connecting. Also \(z\) is outside the root hull \([-1,1]\), and hence outside every limiting zero support.
\end{example}

\section{Differentiators and the essential spectrum}
\label{app:differentiator-spectrum}

The purpose of this appendix is to package the zero sets studied above into the spectrum of a single bounded operator. In this model, the full shadow becomes the spectrum, the limiting core becomes the Fredholm essential spectrum, and the transient set becomes the discrete spectrum. The finite-dimensional construction is classical \cite{Pereira}; the new input from the preceding results is the uniform resolvent control needed when these nonnormal matrices are assembled into an infinite direct sum.

For a polynomial \(H\) of degree \(r\ge1\), list its roots with multiplicity and let \(D_H\) be the resulting \(r\times r\) diagonal matrix. Put \(e_r=r^{-1/2}(1,\ldots,1)^T\) and \(Q_r=I-e_re_r^*\), the orthogonal projection onto \(e_r^\perp=\{v:e_r^*v=0\}\). For \(r\ge2\), set \(B_H=Q_rD_H|_{e_r^\perp}\), acting on \(e_r^\perp\). Here \(*\) denotes the Hermitian adjoint. Writing \(\widehat H\) for the monic normalization of \(H\), the classical differentiator identity is \(\det(zI-B_H)=\widehat H'(z)/r\); thus the eigenvalues of \(B_H\) are exactly the zeros of \(H'\), counted with multiplicity. To see the identity, extend \(B_H\) by zero on \(\C e_r\): its full-space matrix is \(Q_rD_HQ_r\), whose characteristic polynomial is \(z\widehat H'(z)/r\) by the rank-one determinant lemma. The zero summand contributes the factor \(z\), which cancels as a polynomial identity. For \(r\ge2\) and \(z\notin Z(H)\), define the diagonal resolvent and the root Cauchy transform by
\[
 \mathcal R_H(z)=(zI-D_H)^{-1},\qquad
 \rootC_H(z)=e_r^*\mathcal R_H(z)e_r
   =C_{\mu_H^{\mathrm{root}}}(z)=\frac{H'(z)}{rH(z)}.
\]
For \(z\notin Z(H)\cup Z(H')\), the compressed inverse satisfies
\begin{equation}\label{eq:appendix-compressed-resolvent}
 \begin{aligned}
 (zI-B_H)^{-1}
 &=\left(\mathcal R_H(z)-
       \frac{\mathcal R_H(z)e_re_r^*\mathcal R_H(z)}{\rootC_H(z)}\right)
       \big|_{e_r^\perp},\\
 \|(zI-B_H)^{-1}\|&\le\delta_H(z)^{-1}
       +\frac{\delta_H(z)^{-2}}{|\rootC_H(z)|},
 \end{aligned}
\end{equation}
where \(\delta_H(z)=\dist(z,Z(H))\). Indeed, the operator in parentheses annihilates \(e_r\), has range in \(e_r^\perp\), and multiplication by \(Q_r(zI-D_H)\) gives the identity there. The norm bound follows from \(\|\mathcal R_H(z)\|=\delta_H(z)^{-1}\).

Now let \(N=\deg H\ge1\) and define
\begin{equation}\label{eq:appendix-pooled-matrix}
 A(H)=D_H\oplus\bigoplus_{m=0}^{N-2}B_{H^{(m)}},\qquad
 M_N=\dim A(H)=\frac{N(N+1)}2,
\end{equation}
with the sum empty for \(N=1\). Each \(B_{H^{(m)}}\) is formed from the normal root matrix \(D_{H^{(m)}}\), not from a previously compressed matrix. Its characteristic polynomial and normalized eigenvalue-counting measure are
\[
 \det(zI-A(H))=\prod_{m=0}^{N-1}\widehat{H^{(m)}}(z),\qquad
 \pi_H:=\frac1{M_N}\sum_{m=0}^{N-1}\divz H^{(m)}
\]
respectively. Gauss--Lucas and compression give \(\|A(H)\|\le\max_{a\in Z(H)}|a|\). In particular, the normalized eigenvalue-counting measure of \(A_{P,n}:=A(P^n)\) is \(\overline\mu_{P,n}^{\mathrm{root}}\), and \(A_P:=\bigoplus_{n\ge1}A_{P,n}\) is bounded on the Hilbert direct sum of the finite block spaces. Explicitly, if \(\mathscr H_{P,n}\) is the space of the \(n\)th block, this sum consists of vectors \((v_n)\) with \(\sum_n\|v_n\|^2<\infty\), has norm \(\|(v_n)\|^2=\sum_n\|v_n\|^2\), and \(A_P(v_n)=(A_{P,n}v_n)\). For a bounded operator \(T\), write \(\sigma(T)\) for its spectrum and \(\sigma_{\mathrm{ess}}(T)\) for its Fredholm essential spectrum, the set of \(z\) for which \(zI-T\) is not Fredholm. Here Fredholm means that the range is closed, the kernel is finite-dimensional, and the range has finite codimension. We write \(\sigma_{\mathrm{disc}}(T)\) for the isolated spectral points whose associated Riesz (spectral) projections have finite rank. At an isolated point \(\lambda\), the projection is \((2\pi i)^{-1}\int_{|z-\lambda|=\varepsilon}(zI-T)^{-1}\dd z\), with a positively oriented circle enclosing no other spectral point.

\begin{theorem}\label{thm:appendix-differentiator-spectrum}
For every nonconstant polynomial \(P\),
\begin{equation}\label{eq:appendix-spectral-identities}
 \sigma(A_P)=\AllSh_P,\qquad
 \sigma_{\mathrm{ess}}(A_P)=\Sh_P,\qquad
 \sigma_{\mathrm{disc}}(A_P)=T_P.
\end{equation}
For every compact \(E\subset\C\setminus\Sh_P\) there are \(n_E\) and \(C_E<\infty\) such that
\begin{equation}\label{eq:appendix-tail-resolvent}
 \sup_{n\ge n_E}\sup_{z\in E}\|(zI-A_{P,n})^{-1}\|\le C_E.
\end{equation}
\end{theorem}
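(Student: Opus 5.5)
The plan is to prove the tail resolvent bound \eqref{eq:appendix-tail-resolvent} first and derive the three spectral identities from it.

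\textbf{Tail resolvent bound.} Fix a compact $E\subset\C\setminus\Sh_P$. The block $A_{P,n}$ is the direct sum of $D_{P^n}$ and the compressions $B_{H}$ with $H=(P^n)^{(m)}$, $0\le m\le dn-2$. Its resolvent norm on $E$ is therefore the maximum of the block resolvent norms. For the diagonal block, $\|(zI-D_{P^n})^{-1}\|=\dist(z,Z(P))^{-1}$, which is bounded on $E$ because $Z(P)\subseteq\Sh_P$. For the compressed blocks I will use \eqref{eq:appendix-compressed-resolvent} with $H=R_{P;m,n}$. This requires two uniform estimates over $z\in E$, $n\ge n_E$ and all $m$:
\begin{itemize}
\item[(a)] $\dist(z,Z(R_{P;m,n}))$ is bounded below;
\item[(b)] $|\rootC_{R_{P;m,n}}(z)|=|R_{P;m+1,n}(z)|/\bigl((dn-m)|R_{P;m,n}(z)|\bigr)$ is bounded below.
\end{itemize}
Estimate (a) is exactly the upper Hausdorff estimate of Theorem~\ref{thm:no-outliers}: for large $n$, every zero of every descendant lies in a small neighbourhood of $\Sh_P$ that misses an $\eps$-neighbourhood of $E$.

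For (b), the Cauchy transform is an average of $(z-\xi)^{-1}$ over the roots $\xi$, and all roots lie in $\Hull_P$ within a small neighbourhood of $\Sh_P$. A lower bound can still fail through cancellation. The cleanest route is a covering argument by cases on $m/n$.
\begin{itemize}
\item \emph{Interior ratios} $m/n\in I\Subset(0,d)$. Proposition~\ref{prop:uniform-saddle}, which applies uniformly on $E$ via Lemma~\ref{lem:harmonic-gate} because the barrier is harmonic off $\Sh_P$, gives $\frac{1}{n}R'/R\to\partial_z\Xi_*$. Dividing by $(dn-m)/n$ yields convergence to $\widehat C_{\alpha,P}/(d-\alpha)=C_{\alpha,P}$, which is zero-free off the support by Proposition~\ref{prop:selected-C}.
\item \emph{Small ratios.} Estimate \eqref{eq:small-coeff-logder} shows that the transform is close to $P'/(dP)$, which is nonzero on $E$ since $Z(P')\subseteq\Sh_P$.
\item \emph{Ratios near $d$.} Use the reversed expansion from Theorem~\ref{thm:endpoint-zeros}. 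There the transform is approximately $1/(z-\centroid_P)$, which is also nonzero, with $\centroid_P\in\Sh_P$.
\end{itemize}
A compactness/contradiction argument over sequences $(m_n,n)$, together with the uniformity in $\alpha$ from Lemma~\ref{lem:alpha-cont}, joins these three regimes. Passing from uniform convergence of the logarithmic potential to convergence of its $z$-derivative uses holomorphy of the asymptotic in $z$ and Cauchy estimates.

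\textbf{The spectral identities.} With the tail bound in hand, split $A_P=F_n\oplus T_n$, where $F_n=\bigoplus_{k<n_E}A_{P,k}$ is finite-dimensional and $T_n$ is the tail.
\begin{itemize}
\item \emph{Spectrum.} By \eqref{eq:appendix-tail-resolvent}, the tail is invertible at each $z\in E$ with uniformly bounded inverse. Hence $\sigma(A_P)\cap E=\sigma(F_n)\cap E$, which is the finite set of eigenvalues of the finitely many blocks. Since $\sigma(F_n)\subseteq\AllSh_P$, this shows $\sigma(A_P)\setminus\Sh_P\subseteq\AllSh_P$. Conversely, every eigenvalue of every block lies in $\sigma(A_P)$, and the spectrum is closed, so $\AllSh_P\subseteq\sigma(A_P)$.
\item \emph{Discrete spectrum.} Points of $T_P$ are isolated in $\sigma(A_P)$. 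The Riesz projection at such a point is the sum of the projections of the finitely many blocks in which it occurs (Theorem~\ref{thm:literal}) and is zero on the tail, so it has finite rank. Off the spectrum $zI-A_P$ is invertible, so these observations give $\sigma_{\mathrm{ess}}\subseteq\Sh_P$ and $T_P\subseteq\sigma_{\mathrm{disc}}$.
\item \emph{Essential spectrum.} Take $\lambda\in\Sh_P$. By \eqref{eq:lower-Hausdorff} there are eigenvalues $\lambda_n$ of $A_{P,n}$ with $\lambda_n\to\lambda$. The eigenvectors $v_n$ from distinct blocks are orthonormal, and $\|(\lambda I-A_P)v_n\|=|\lambda-\lambda_n|\to0$. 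This is a singular Weyl sequence, so $\lambda I-A_P$ is not Fredholm. Thus $\Sh_P\subseteq\sigma_{\mathrm{ess}}$, and since essential points are not discrete, $\sigma_{\mathrm{disc}}=T_P$.
\end{itemize}
The case of a single root is direct.

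\textbf{Main obstacle.} The hardest step is estimate (b): a uniform lower bound on the descendant Cauchy transforms over \emph{all} orders at once, especially through the two endpoint transitions. I would first try to obtain it by contradiction from sequential statements with $m_n/n\to\alpha\in[0,d]$, choosing the appropriate one of the three regimes above for each limit $\alpha$.
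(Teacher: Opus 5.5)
Your architecture is the paper's. Both proofs use zero separation from Theorem~\ref{thm:no-outliers}, then a uniform lower bound on the descendant Cauchy transforms proved by contradiction along sequences with $m_j/n_j\to\alpha\in[0,d]$, then the compressed-resolvent bound \eqref{eq:appendix-compressed-resolvent}. The three identities in \eqref{eq:appendix-spectral-identities} then follow by the same tail-invertibility, Weyl-sequence and Riesz-projection arguments.

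The real difference is how you identify the limit of $\rootC_{H_j}(z_j)$. The paper handles all three regimes at once with a soft argument. The normalized root measures of $H_j$ converge weakly to $\rootmu_P$, $\mu_{\alpha,P}$ or $\delta_{\centroid_P}$. Every root stays at distance at least $\delta_E$ from $E$, so the kernel $(z-\xi)^{-1}$ can be replaced by a bounded, uniformly continuous truncation. Weak convergence then passes directly to the Cauchy transforms at the moving points.

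You instead differentiate asymptotic expansions. In the interior this is sound. The error term in \eqref{eq:saddle-asymptotic} is holomorphic in $z$ and uniformly $O(n^{-1})$, so Cauchy estimates give $\frac1nR'/R=\alpha_n/(w_*-z)+O(n^{-1})$. Dividing by $d-\alpha_n$, this tends to $C_{\alpha,P}$. At small ratios, \eqref{eq:small-coeff-logder} gives $\rootC_H=P'/(dP)+O(m/n)$ directly. Your route yields explicit rates, but it needs derivative control that the paper never uses.

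The one step that does not work as written is the upper endpoint. For the reversed family $\mathscr A_z(t)=t^dP(z+t^{-1})$ one has $\mathscr A_z(0)=1$. So \eqref{eq:small-coeff-logder} gives only $R'/R=O(k)$ with $k=dn-m=\deg R$, that is, $\rootC_H=O(1)$. The error is as large as the expected main term $k/(z-\centroid_P)$, so this estimate cannot show $\rootC_H(z)\to(z-\centroid_P)^{-1}$.

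There are two ways to repair it. You can extract the first-order term from the saddle computation inside the proof of Lemma~\ref{lem:small-coeff}. More simply, use the Hausdorff collapse $Z(R_{P;m_n,n})\to\{\centroid_P\}$ from Theorem~\ref{thm:sublinear-counts}. Then $\rootC_H(z)$ is an average of $(z-\xi)^{-1}$ over roots $\xi$ all within $o(1)$ of $\centroid_P$, while $E$ stays a fixed distance from $\centroid_P$. This is exactly the paper's argument in that regime, and with this repair your proof is complete.
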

\begin{proof}
If \(P\) has only one distinct root, all blocks are scalar multiples of the identity and the result is immediate. Otherwise Theorem~\ref{thm:no-outliers} separates all descendant zero sets at sufficiently large levels from \(E\) by a common distance \(\delta_E>0\). We claim that the quantities \(\rootC_H\) in \eqref{eq:appendix-compressed-resolvent}, for \(H=(P^n)^{(m)}\), \(0\le m\le dn-2\), are uniformly bounded away from zero on \(E\) at large levels. If not, choose \(n_j\to\infty\), \(z_j\to z\in E\), and \(m_j/n_j\to\alpha\in[0,d]\), set \(H_j=(P^{n_j})^{(m_j)}\), and suppose that \(\rootC_{H_j}(z_j)\to0\). The normalized root measures of \(H_j\) tend to
\[
 \rootmu_P\quad(\alpha=0),\qquad
 \mu_{\alpha,P}\quad(0<\alpha<d),\qquad
 \delta_{\centroid_P}\quad(\alpha=d).
\]
The interior limit is Theorem~\ref{thm:fixed}. At zero, the retained root factors carry asymptotically all the mass; at \(d\), use Theorem~\ref{thm:endpoint-zeros}, which concentrates the entire root carrier at the centroid. Since \(\rootC_{H_j}=C_{\mu_{H_j}^{\mathrm{root}}}\), separation from \(E\) permits passage to Cauchy transforms at the moving points \(z_j\). Indeed, truncate \((z-a)^{-1}\) continuously inside \(|z-a|<\delta_E\): the truncation changes none of these integrals, and its boundedness and uniform continuity justify the simultaneous weak and moving-point limit. The limiting values are respectively \(P'(z)/(dP(z))\), \(C_{\mu_{\alpha,P}}(z)\), and \((z-\centroid_P)^{-1}\). They are nonzero: roots, critical points, and the centroid lie in \(\Sh_P\), and Proposition~\ref{prop:selected-C} treats the interior ratios. This contradiction proves the claim. Equation \eqref{eq:appendix-compressed-resolvent} now bounds all compression resolvents, while the diagonal blocks have bound \(\delta_E^{-1}\). This proves \eqref{eq:appendix-tail-resolvent}.

At \(z\notin\Sh_P\), the uniformly bounded block inverses in \eqref{eq:appendix-tail-resolvent} give an invertible tail for \(zI-A_P\). The remaining initial part is finite-dimensional, so \(zI-A_P\) is Fredholm of index zero. Off \(\AllSh_P\), the initial part is also invertible; conversely, closedness of the spectrum includes the closure of all finite-block eigenvalues. This proves the full-spectrum identity. For \(z\in\Sh_P\), Hausdorff convergence supplies eigenvalues \(z_n\in\sigma(A_{P,n})\) tending to \(z\). Their unit eigenvectors, placed in distinct blocks, have pairwise distance at least one and satisfy \(\|(A_P-zI)x_n\|=|z_n-z|\to0\). A Fredholm operator has a continuous quasi-inverse modulo a finite-rank map, so every bounded sequence with images tending to zero has a convergent subsequence. This contradicts the separation of these eigenvectors and proves the essential-spectrum identity. Finally Theorem~\ref{thm:literal} makes every remaining spectral point isolated and present in only finitely many blocks. Choose a closed isolating disk disjoint from the core. The tail estimate puts this disk in every late block's resolvent set, so coordinate evaluation of the Riesz contour integral vanishes in those blocks. Its range is thus finite-dimensional. Conversely, an isolated spectral point with a finite-rank Riesz projection is outside the Fredholm essential spectrum: the Cauchy-weighted resolvent integral is a quasi-inverse modulo that projection.
\end{proof}

The resolvent estimate is not automatic from convergence of zero sets: the bounded direct sum of nilpotent shifts of all sizes has spectrum the closed unit disk, although every finite spectrum is \(\{0\}\). Thus the theorem uses more than a diagonal listing of the zeros. It is nevertheless a consequence of our zero and transform estimates, not an independent proof of the geometry of the core.

\paragraph{Mixed traces.}
There is also a statement about the matrices and their adjoints. For every polynomial \(q\) in two noncommuting variables,
\begin{equation}\label{eq:appendix-mixed-traces}
 \lim_{n\to\infty}\frac1{\mathcal N_{d,n}}
       \operatorname{Tr}q(A_{P,n},A_{P,n}^*)
       =\int q(z,\bar z)\dd\rho_P(z).
\end{equation}
Here \(q\) is a finite linear combination of words in two letters, evaluated at \(A_{P,n}\) and \(A_{P,n}^*\); on the right it is evaluated at commuting scalars. In fact, for any nonconstant polynomial \(H\) of degree \(N\) with roots in \(|z|\le R\),
\[
 \left|\frac1{M_N}\operatorname{Tr}q(A(H),A(H)^*)
             -\int q(z,\bar z)\dd\pi_H(z)\right|
       \le\frac{C_{q,R}}N.
\]
For \(q=\sum_w c_w w\) one may take \(C_{q,R}=2\sum_w |c_w|(|w|+2)R^{|w|}\), where \(|w|\) is the word length. To see this, first let \(q\) be a word of length \(k\ge1\) and take \(R\ge1\). The full-space trace representing a compressed word has \(k\) factors \(Q_r\) after cyclicity of trace is used. Deleting these projections by telescoping changes the trace by at most \(kR^k\): each difference contains the rank-one factor \(I-Q_r\), and all remaining factors have the stated norm bound. Thus replacing every compression by its normal predecessor changes the pooled trace by \(O_{q,R}(N)\). Compared with the desired pooled root sum, the normal predecessor sum has one extra order-zero term and lacks the final degree-one term, another \(O_{q,R}(N)\) error. Constants cause no error in the normalized assertion. Divide by \(M_N\) and use Theorem~\ref{thm:averaging} to obtain \eqref{eq:appendix-mixed-traces}.

This is convergence in \(*\)-distribution; that is, convergence of all normalized mixed moments in the matrices and their adjoints, to a normal variable with law \(\rho_P\). The essential/discrete split is not a continuous/point split: every root of \(P\) is already an eigenvalue of infinite multiplicity of \(A_P\). Nor is \(\rho_P\) asserted to be a vector spectral measure of this generally nonnormal Hilbert-space operator.

\end{document}